\newcommand{\NN} {\mathbb N}
\newcommand{\ZZ} {\mathbb Z}
\renewcommand{\AA} {\mathbb A}
\newcommand{\GG} {\mathbb G_m }
\newcommand{\PP} {\mathbb P}
\newcommand{\T}{\mathscr T} %catégorie triangulée ambiante
\newcommand{\C}{\mathscr C}
\newcommand{\D}{\mathscr D}
\newcommand{\cM}{\mathcal M}
\newcommand{\E}{\mathbb E} %spectres
\newcommand{\F}{\mathbb F} %spectres
\newcommand{\I}{\mathcal I} %idéaux qcohérents
\newcommand{\J}{\mathcal J} %idéaux qcohérents
\newcommand{\un}{\mathbbm 1} %unité de \T
\renewcommand{\L}{\lambda}
\newcommand{\Q}{\xi}
\newcommand{\Hs}{\mathscr H^s_\bullet(S)}
\renewcommand{\H}{\mathscr H_\bullet(S)}
\newcommand{\SH}{S\mathscr{H}}
\newcommand{\sm}{\mathscr Sm_S}
\newcommand{\smc} {\mathscr Sm^{\mathrm{cor}}_S}
\newcommand{\DMgme} {DM_{gm}^{eff}\!(S)}
\newcommand{\DMgm} {DM_{gm}\!(S)}
\newcommand{\ftr} {Sh(\smc)}
\newcommand{\DMe} {DM^{eff}\!(S)}
\newcommand{\plim}[1] { \underset{#1}{\varprojlim} \ }
\DeclareMathOperator{\tra}{{}^t \!}
\newcommand{\M}{M}
\newcommand{\Mr}{\tilde M}
\newcommand{\MTh}{M\mathrm{Th}}
\newcommand{\MD}[2]{M\left(\frac{#1}{#2}\right)}
\DeclareMathOperator{\ncup} {\scriptstyle\cup\textstyle}
\DeclareMathOperator{\ncap} {\scriptstyle\cap\textstyle}
\DeclareMathOperator{\gcup} {\scriptstyle\boxtimes\textstyle}
\DeclareMathOperator\pic{Pic}
\newcommand{\spec}[1] {\operatorname{\mathrm{Spec}}(#1)}
\newcommand{\specx}[2] {\operatorname{\mathrm{Spec}_{#1}}\left(#2\right)}
\DeclareMathOperator{\Hom}{Hom}
\DeclareMathOperator{\uHom}{\underline{Hom}}
\newcommand{\lef}[2] {\mathfrak l_{#1}(#2)} %lefschetz "embedding"
\newcommand{\pur}[1] { \mathfrak p_{(#1)} } %isomorphisme de pureté
\newcommand{\MU}{\mathbf{MU}}
\newcommand{\MGl}{\mathbf{MGL}}
\newcommand{\BGL}{\mathbf{BGL}}
\newtheorem{thm}{Theorem}[section]
\newtheorem{prop}[thm]{Proposition}
\newtheorem{cor}[thm]{Corollary}
\newtheorem{lm}[thm]{Lemma}
\theoremstyle{definition}
\newtheorem{df}[thm]{Definition}
\newtheorem{ex}[thm]{Example}
\newtheorem{num}[thm]{}
\theoremstyle{remark}
\newtheorem{rem}[thm]{Remark}
\numberwithin{equation}{section}
\def\YEAR{\year}\newcount\VOL\VOL=\YEAR\advance\VOL by-1995
\def\firstpage{1}\def\lastpage{1000}
\def\received{}\def\revised{}
\def\communicated{}
\def\magnification{\afterassignment\m@g\count@}
\def\m@g{\mag=\count@\hsize6.5truein\vsize8.9truein\dimen\footins8truein}
\font\eightrm=cmr8
\font\caps=cmcsc10                    % Theorem, Lemma etc
\font\Caps=cmcsc10 scaled \magstep1   % Title
\def\DocMath{}
\renewcommand{\@evenhead}{%
    \ifnum\thepage>\lastpage\rlap{\thepage}\hfill%
    \else\rlap{\thepage}\slshape\leftmark\hfill{\caps\SAuthor}\hfill\fi}%
\renewcommand{\@oddhead}{%
    \ifnum\thepage=\firstpage{\DocMath\hfill\llap{\thepage}}%
    \else{\slshape\rightmark}\hfill{\caps\STitle}\hfill\llap{\thepage}\fi}%
\def\TSkip{\bigskip}
\newbox\TheTitle{\obeylines\gdef\GetTitle #1
\ShortTitle  #2
\SubTitle    #3
\Author      #4
\ShortAuthor #5
\EndTitle
{\setbox\TheTitle=\vbox{\baselineskip=20pt\let\par=\cr\obeylines%
\halign{\centerline{\Caps##}\cr\noalign{\medskip}\cr#1\cr}}%
	\copy\TheTitle\TSkip\TSkip%
\def\next{#2}\ifx\next\empty\gdef\STitle{#1}\else\gdef\STitle{#2}\fi%
\def\next{#3}\ifx\next\empty%
    \else\setbox\TheTitle=\vbox{\baselineskip=20pt\let\par=\cr\obeylines%
    \halign{\centerline{\caps##} #3\cr}}\copy\TheTitle\TSkip\TSkip\fi%
%\setbox\TheTitle=\vbox{\let\par=\cr\obeylines%
%\halign{\centerline{\caps##} #4\cr}}\copy\TheTitle\TSkip\TSkip%
\centerline{\caps #4}\TSkip\TSkip%
\def\next{#5}\ifx\next\empty\gdef\SAuthor{#4}\else\gdef\SAuthor{#5}\fi%
\ifx\received\empty\relax
    \else\centerline{\eightrm Received: \received}\fi%
\ifx\revised\empty\TSkip%
    \else\centerline{\eightrm Revised: \revised}\TSkip\fi%
\ifx\communicated\empty\relax
    \else\centerline{\eightrm Communicated by \communicated}\fi\TSkip\TSkip%
\catcode'015=5}}\def\Title{\obeylines\GetTitle}
\def\Abstract{\begingroup\narrower
    \parskip=\medskipamount\parindent=0pt{\caps Abstract. }}
\def\EndAbstract{\par\endgroup\TSkip}
\long\def\MSC#1\EndMSC{\def\arg{#1}\ifx\arg\empty\relax\else
     {\par\narrower\noindent%
     2000 Mathematics Subject Classification: #1\par}\fi}
\long\def\KEY#1\EndKEY{\def\arg{#1}\ifx\arg\empty\relax\else
	{\par\narrower\noindent Keywords and Phrases: #1\par}\fi\TSkip}
\newbox\TheAdd\def\Addresses{\vfill\copy\TheAdd\vfill
    \ifodd\number\lastpage\vfill\eject\phantom{.}\vfill\eject\fi}
{\obeylines\gdef\GetAddress #1
\Address #2 
\Address #3
\Address #4
\EndAddress
{\def\xs{4.3truecm}\parindent=0pt
\setbox0=\vtop{{\obeylines\hsize=\xs#1\par}}\def\next{#2}
\ifx\next\empty % 1 address
     \setbox\TheAdd=\hbox to\hsize{\hfill\copy0\hfill}
\else\setbox1=\vtop{{\obeylines\hsize=\xs#2\par}}\def\next{#3}
\ifx\next\empty % 2 addresses
     \setbox\TheAdd=\hbox to\hsize{\hfill\copy0\hfill\copy1\hfill}
\else\setbox2=\vtop{{\obeylines\hsize=\xs#3\par}}\def\next{#4}
\ifx\next\empty\ % 3 addresses
     \setbox\TheAdd=\vtop{\hbox to\hsize{\hfill\copy0\hfill\copy1\hfill}
                \vskip20pt\hbox to\hsize{\hfill\copy2\hfill}}
\else\setbox3=\vtop{{\obeylines\hsize=\xs#4\par}}
     \setbox\TheAdd=\vtop{\hbox to\hsize{\hfill\copy0\hfill\copy1\hfill}
	        \vskip20pt\hbox to\hsize{\hfill\copy2\hfill\copy3\hfill}}
\fi\fi\fi\catcode'015=5}}\gdef\Address{\obeylines\GetAddress}
\begin{document}
%%%%% ------------- fill in your data below this line  -------------------
%%%%%    The following lines \Title ... \EndAddress must ALL be present
%%%%%    and in the given order.
\Title
Around the Gysin triangle {II}.
\ShortTitle 
%%%%%    Running title for odd numbered pages, ONE line, please. 
%%%%%    If none is given, \Title will be used instead.          
\SubTitle   
%%%%%    A possible subtitle goes here.
\Author 
Fr\'ed\'eric D\'eglise\footnote{Partially supported by the \emph{Agence Nationale de la Recherche}, project no.\ ANR-07-BLAN-0142 ``M\'ethodes \`a la Voevodsky, motifs mixtes et G\'eom\'etrie d'Arakelov''. }
\ShortAuthor 
F. D\'eglise
\EndTitle
\Abstract 
The notions of orientation and duality are well understood 
in algebraic topology in the framework of the stable homotopy category. 
In this work, we follow these lines in algebraic geometry, 
in the framework of motivic stable homotopy, introduced by F. Morel and V. Voevodsky. 
We use an axiomatic treatment which allows us to consider 
both mixed motives and oriented spectra over an arbitrary base scheme. 
In this context, we introduce the Gysin triangle and prove several formulas 
extending the traditional panoply of results 
on algebraic cycles modulo rational equi\-valence.
We also obtain the Gysin morphism of a projective morphism
 and prove a duality theorem in the (relative) pure case.
These constructions involve certain characteristic classes 
(Chern classes, fundamental classes, cobordism classes) 
together with their usual properties.
They imply statements in motivic cohomology, algebraic K-theory 
(assuming the base is regular) 
 and "abstract" algebraic cobordism as well as the dual statements in 
  the corresponding homology theories.
They apply also to ordinary cohomology theories in algebraic geometry 
through the notion of a mixed Weil cohomology theory,
 introduced by D.-C. Cisinski and the author in \cite{CD2}, 
 notably rigid cohomology.
\EndAbstract
\MSC 
14F42, 14C17.
\EndMSC
\KEY 
Orientation, transfers, duality, characteristic classes.
\EndKEY
%%%%%    All 4 \Address lines below must be present. To center the last
%%%%%    entry, no empty lines must be between the following \Address
%%%%%    and \EndAddress lines.
\Address 
CNRS, LAGA, Institut Galil\'ee, Universit\'e Paris 13
99 avenue Jean-Baptiste Cl\'ement, 93430 Villetaneuse, FRANCE
\Address
%%%%%    Address of second Author here etc.
\Address
\Address
\EndAddress
%%
%%       Make sure the last tex command in your manuscript
%%       before the first \end{document} is the command  \Addresses
%%
%%---------------------Here the prologue ends---------------------------------

%%--------------------Here the manuscript starts------------------------------
%
%\title{Around the Gysin triangle}
%
%\author{Fr\'ed\'eric D\'eglise}
%\date{november 2007}
%
%\begin{abstract}
%\end{abstract}
%
%\address{F. D\'eglise --
% Institut Galil\'ee,
% Universit\'e Paris 13,
% 99 avenue Jean-Baptiste Cl\'ement,
% 93430 Villetaneuse
% FRANCE \\
% Tel.:  +33 1 49 40 35 77
%}
%              
%\email{\url{deglise@math.univ-paris13.fr}}

%\maketitle
\setcounter{tocdepth}{1}
\tableofcontents

\section*{Notations}

We fix a noetherian base scheme $S$. 
The schemes considered in this paper are always assumed to be finite type 
$S$-schemes. Similarly, a smooth scheme (resp. morphism of schemes) 
means a smooth $S$-scheme (resp. $S$-morphism of $S$-schemes).
We eliminate the reference to the base $S$ in
all notation (e.g. $\times$, $\PP^n$, ...)

An immersion $i$ of schemes will be a locally closed immersion
and we say $i$ is an open (resp. closed) immersion when $i$ is open
(resp. closed).
We say a morphism $f:Y \rightarrow X$ is projective\footnote{
If $X$ admits an amble line bundle, this definition coincide
with that of \cite{EGA2}.}
 if $Y$ admits a closed $X$-immersion into a \emph{trivial}
  projective bundle over $X$.
  
Given a smooth closed subscheme $Z$ of a scheme $X$,
we denote by $N_ZX$ the normal vector bundle of $Z$ in $X$.
Recall a morphism $f:Y \rightarrow X$ of schemes is said
to be transversal to $Z$ if $T=Y \times_X Z$ is smooth
and the canonical morphism
$N_TY \rightarrow T \times_Z N_ZX$ is an isomorphism.

%We will need the following finiteness restriction in some instances~:
%\begin{enumerate}
%\item[(Fnt)] \begin{itemize}
%\item[(i)] The scheme $S$ admits an ample line bundle
% (cf \cite[4.5.3]{EGA2}). 
%\item[(ii)]
%The schemes considered under this assumption
%  are quasi-projective $S$-schemes.
%\end{itemize}
%\end{enumerate}
%The point (i) implies that for any vector bundle $E/S$,
% there exists an immersion $\PP(E) \rightarrow \PP^N$ for a suitable
%  integer $N$ -- the reader can take this property for definition.
%The point (ii) implies that any proper morphism $f:Y \rightarrow X$
%(over $S$) admits a factorization
% $Y \xrightarrow i \PP^n_X \xrightarrow p X$
%such that $i$ is a closed immersion and $p$ the canonical projection
%(cf \cite[5.3.3]{EGA2}).

For any scheme $X$, we denote by $\pic(X)$ the Picard group of $X$.

Suppose $X$ is a smooth scheme.
Given a vector bundle $E$ over $X$, 
we let $P=\PP(E)$ be the projective bundle of lines in $E$.
Let $p:P \rightarrow X$ be the canonical projection.
There is a canonical line bundle $\L_P$ on $P$
 such that $\L_P \subset p^{-1}(E)$. 
We call it the \emph{canonical line bundle} on $P$.
We set $\Q_P=p^{-1}(E)/\L_P$, called the \emph{universal quotient bundle}. 
For any integer $n \geq 0$, we also use the abbreviation 
$\L_n=\L_{\PP^{n}_S}$. We call the projective bundle $\PP(E \oplus 1)$,
with its canonical open immersion $E \rightarrow \PP(E \oplus 1)$,
the \emph{projective completion} of $E$.

\section{Introduction}

In algebraic topology, it is well known that 
oriented multiplicative cohomology theories
correspond to algebras over the complex cobordism spectrum $\MU$. Using the
stable homotopy category allows a systematic treatment of this
kind of generalized cohomology theory, 
which are considered as oriented ring spectra.

In algebraic geometry, the motive associated to a smooth scheme plays the
role of a universal cohomology theory.
In this article, we unify the two approaches~:
on the one hand, we replace ring spectra by spectra with a structure 
of modules over a suitable oriented ring spectra - e.g. the spectrum $\MGl$ of
algebraic cobordism.
On the other hand, we introduce and consider formal group laws in the
motivic theory, generalizing the classical point of view.

More precisely, we use an axiomatic treatment based on homotopy invariance 
and excision property which allows to formulate results in a triangulated category 
which models both stable homotopy category and mixed motives. 
A suitable notion of orientation is introduced which implies 
the existence of Chern classes together with a formal group law. 
This allows to prove a purity theorem which implies the existence of Gysin morphisms
 for closed immersions and their companion residue morphisms. 
We extend the definition of the Gysin morphism to the case of a projective morphism, 
which involves a delicate study of cobordism classes 
in the case of an arbitrary formal group law. 
This theory then implies very neatly the duality statement 
in the projective smooth case.
Moreover, these constructions are obtained over an arbitrary base scheme,
eventually singular and with unequal characteristics.

Examples are given
which include triangulated mixed motives, 
generalizing the constructions and results of V.~Voevodsky, and $\MGl$-modules.
Thus, this work can be applied in motivic cohomology (and motivic homology), 
as well as in algebraic cobordism.
It also applies in homotopy algebraic $K$-theory\footnote{Recall homotopy algebraic
K-theory was introduced by Weibel in \cite{Wei}. This cohomology theory
coincide with algebraic K-theory when $S$ is regular.}
 and some of the formulas obtained here are new in this context.
It can be applied finally to classical cohomology theories through the notion
of a \emph{mixed Weil theory} introduced in \cite{CD2}.
In the case of rigid cohomology, the formulas and constructions given here
generalize some of the results obtained by P.~Berthelot and D.~P\'etrequin.
Moreover, the theorems proved here are used in an essential way in \cite{CD2}.

%The moto of this article is to transport this principle in algebraic geometry,
% the natural category being the stable homotopy category of schemes introduced
% by F.~Morel and V.~Voevodsky.
%The motivic point of view leads to associate to any variety an
%oriented cohomology or, rephrased, a "universal oriented motive".
%On this object, 
%there is no ring structure and we thus are led to drop this structure, 
%considering the category of modules over the algebraic cobordism ring $\MGl$.
%
%It appears that this motivic point of view allows to obtain the usual 
%construction of algebraic geometry
%such as the purity theorem, the Gysin triangle
%and the duality in the pure case. 
%Moreover, we are able to make these constructions 
%over an arbitrary (noetherian) base scheme $S$.

\subsection{The axiomatic framework}

%As we want to unify several constructions (triangulated mixed motives,
%$\MGl$-modules, modules over an oriented ring spectrum) we choose
%to give an axiomatic treatment of our results of which we give now
%an overview.

We fix a triangulated symmetric monoidal category $\T$,
 with unit $\un$,
 whose objects are simply called \emph{motives}\footnote{
A correct terminology would be to call these objects
 \emph{generalized triangulated motives}
  or triangulated motives with coefficients as the
  triangulated mixed motives defined by Voevodsky
  are particular examples.}.
To any pair of smooth schemes $(X,U)$ such that $U \subset X$ is associated
 a motive $\M(X/U)$ functorial with respect to $U \subset X$,
  and a canonical distinguished triangle~:
$$
\M(U) \rightarrow \M(X) \rightarrow \M(X/U) \xrightarrow{\partial} \M(U)[1],
$$
where we put $M(U):=M(U/\emptyset)$ and so on.
The first two maps are obtained by functoriality.
As usual, the Tate motive is defined to be
 $\un(1):=\M(\PP^1_S/S_\infty)[-2]$
where $S_\infty$ is the point at infinity.

The axioms we require are, for the most common,
additivity (Add), homotopy invariance (Htp), 
Nisnevich excision (Exc), K\"unneth formula for pairs of schemes (Kun)
and stability (Stab) -- \emph{i.e.} invertibility of $\un(1)$ 
(see paragraph \ref{axioms} for the precise statement). 
All these axioms are satisfied
 by the stable homotopy category of schemes of F.~Morel and V.~Voevodsky. 
However, we require a further axiom which is in fact our principal object
of study, the orientation axiom (Orient)~: to any line bundle $L$ over a smooth
scheme $X$ is associated a morphism $c_1(L):\M(X) \rightarrow \un(1)[2]$
-- the first Chern class of $L$ --
compatible with base change and constant on the isomorphism class of $L/X$.

The best known example of a category satisfying this set of axioms 
is the triangulated category of (geometric) mixed motives over $S$, 
denoted by $\DMgm$.
It is defined according to V. Voevodsky along the lines of the case of 
a perfect base field but replacing Zariski topology by the Nisnevich one
 (cf section \ref{motives}).
Another example can be obtained by considering
the category of oriented spectra in the sense of F. Morel (see \cite{Vez}).
However, in order to define a monoidal structure on that category,
we have to consider modules over the algebraic cobordism spectrum
$\MGl$, in the $E_\infty$-sense. One can see that oriented spectra are
equivalent to $\MGl$-modules, but the tensor product is given
with respect to the $\MGl$-module structure.

Any object $\E$ of the triangulated category $\T$ defines a bigraded cohomology
  (resp. homology) theory on smooth schemes by the formulas
$$
\E^{n,p}(X)=\Hom_\T\big(\M(X),\un(p)[n]\big)
 \ resp. \ \E_{n,p}(X)=\Hom_\T\big(\un(p)[n],\E \otimes \M(X)\big).
$$
As in algebraic topology, there is a rich algebraic structure on these
graded groups (see section \ref{sec:products}). 
The K\"unneth axiom (Kun) implies that,
in the case where $\E$ is the unit object $\un$,
we obtain a multiplicative cohomology theory simply denoted by $H^{**}$.
It also implies that for any smooth scheme $X$,
 $\E^{**}(X)$ has a module structure over $H^{**}(X)$.
%$$
%\left.
%\begin{array}{r}
%x:\M(X) \rightarrow \E(p)[n] \\
%c:\M(X) \rightarrow \un(q)[m]
%\end{array}
%\right\} \mapsto
%x \gcup c:(x \otimes c) \circ \delta_*:\M(X) \rightarrow \E(p+q)[n+m]
%$$
%where $\delta_*$ is induced by the diagonal embedding of $X/S$ using axiom (Kun).
More generally, if we put $A=H^{**}(S)$, called the ring of (universal) coefficients,
cohomology and homology groups of the previous kind are graded $A$-modules.

\subsection{Central constructions}

These axioms are sufficient to establish an essential basic fact,
 the projective bundle theorem~: \\
(\textbf{th. \ref{th:projbdl}})\footnote{The proof is essentially
based on a very elegant lemma due to F. Morel.}
Let $X$ be a smooth scheme,
 $P \xrightarrow p X$ be a projective bundle of dimension $n$,
  and $c$ be the first Chern class of the canonical line bundle.
Then the map: \\
 $\sum_{0 \leq i \leq n} p_* \gcup c^i:
M(P) \rightarrow \bigoplus_{0 \leq i \leq n} M(X)(i)[2i]$
is an isomorphism.

Remark that considering any motive $\E$, even without ring structure,
we obtain \\ $\E^{**}(P)=\E^{**}(X) \otimes_{H^{**}(X)} H^{**}(P)$
where tensor product is taken with respect 
to the $H^{**}(X)$-module structure.
In the case $\E=\un$, we thus obtain the projective
bundle formula for $H^{**}$ which allows the definition of 
\textit{(higher) Chern classes} following the classical method 
of Grothendieck~:

\noindent (\textbf{def. \ref{df:Chern}})
For any smooth scheme $X$, 
 any vector bundle $E$ over $X$
  and any integer $i \geq 0$, $c_i(E):\M(X) \rightarrow \un(i)[2i]$.

Moreover, the projective bundle formula leads to the following
constructions~:
\begin{enumerate}
\item[(i)] (\textbf{\ref{fgl} \& \ref{nilpotence&FGL}})
A \emph{formal group law} $F(x,y)$ over $A$ such that 
 for any smooth scheme $X$ which admits an ample line bundle,
  for any line bundles $L,L'$ over $X$, the formula
$$
c_1(L \otimes L')=F(c_1(L),c_1(L'))
$$
is well defined and holds in the $A$-algebra $H^{**}(X)$.
\item[(ii)] (\textbf{def. \ref{df:Gysin}})
For any smooth schemes $X$, $Y$
 and any projective morphism $f:Y \rightarrow X$ of relative dimension $n$, 
  the associated \emph{Gysin morphism} $f^*:\M(X) \rightarrow \M(Y)(-n)[-2n]$.
\item[(iii)] (\textbf{def. \ref{df_purity&Gysin}})
For any closed immersion $i:Z \rightarrow X$ of codimension $n$
between smooth schemes, with complementary open immersion $j$,
the Gysin triangle~:
$$
\M(X-Z) \xrightarrow{j_*} \M(X) \xrightarrow{i^*} \M(Z)(n)[2n]
 \xrightarrow{\partial_{X,Z}} \M(X-Z)[1].
$$
The last morphism in this triangle is called the \emph{residue morphism}.
\end{enumerate}

The Gysin morphism permits the construction of a \emph{duality pairing}
 in the pure case~: \\
(\textbf{th. \ref{thm:duality}})
For any smooth projective scheme $p:X \rightarrow S$ 
of relative dimension  $n$, 
with diagonal embedding $\delta:X \rightarrow X \times X$, 
there is a strong duality\footnote{Note we use essentially 
 the axiom (Kun) here.} (in the sense of Dold-Puppe)~:
\begin{equation} \tag{iv}
\begin{split}
\mu_X: \ & \un \xrightarrow{p^*} \M(X)(-n)[-2n]
 \xrightarrow{\delta_*} \M(X)(-n)[-2n] \otimes \M(X) \\
\epsilon_X: \ & \M(X) \otimes \M(X)(-n)[-2n]
 \xrightarrow{\delta^*} \M(X) \xrightarrow{p_*} \un.
\end{split}
\end{equation}
In particular,
 the Hom object $\uHom(\M(X),\un)$ is defined in the monoidal
category $\T$ and $\mu_X$ induces a canonical duality isomorphism~:
\begin{center}
$\uHom(\M(X),\un) \rightarrow \M(X)(-n)[-2n]$.
\end{center}
This explicit duality allows us to recover the usual form of duality 
between cohomology and homology as in algebraic topology,
 in terms of the fundamental class of $X$ and cap-product on one hand
  and in terms of the fundamental class of $\delta$ and slant product
   on the other hand.
Moreover, considering a motive $\E$ with a monoid structure in $\T$
and such that the cohomology $\E^{**}$ satisfies the K\"unneth formula, 
we obtain the usual Poincar\'e duality theorem in terms of the trace morphism 
(induced by the Gysin morphism $p^*:\un \rightarrow \M(X)(n)[2n]$)
and cup-product
(see \textbf{paragraph \ref{explicit_duality_coh_hom}}). \\
Note also we deduce easily from our construction that the Gysin morphism
associated to a morphism $f$ between smooth projective scheme
is the dual of $f_*$ (\textbf{prop. \ref{Gysin=transpose}}).

Remark finally that, considering any closed subscheme $Z_0$ of $S$, 
and taking tensor product with the motive $M(S/S-Z_0)$
 in the constructions (ii), (iii) and (iv), 
  we obtain a \emph{Gysin morphism and a Gysin triangle with support}. 
For example, given a projective morphism 
$f:Y \rightarrow X$ as in $(ii)$, $Z=X \times_S Z_0$ and
$T=Y \times_S Z_0$, we obtain the morphism
 $\M_Z(X) \rightarrow \M_T(Y)(-n)[-2n]$.
Similarly, if $X$ is projective smooth of relative dimension $n$,
 $\M_Z(X)$ admits a strong dual, $M_Z(X)(-n)[-2n]$.
Of course,
 all the other formulas given below are valid for these motives with support.

\subsection{Set of formulas}

The advantage of the
motivic point of view is to obtain universal formulae which 
imply both cohomological and homological statements, 
 with a minimal amount of algebraic structure involved.
%As mentionned in the abstract, this applies to most of the known
%cohomology (homology) theory in algebraic geometry.
%motivic homology/cohomology,
%algebraic cobordism defined by $\MGl$,
%algebraic $K$-theory ($S$ regular),
%mixed Weil cohomology theories in the sense of \cite{CD2} 
%(algebraic De Rham cohomology, rigid cohomology, $l$-adic \'etale cohomology).

\subsubsection{Gysin morphism}
We prove the basic properties of the Gysin morphisms
such as functoriality ($g^*f^*=(fg)^*$),
compatibility with the monoidal structure $(f \times g)^*=f^* \otimes g^*$), 
the projection formula ($(1_{Y*} \gcup f_*)f^*=f^* \gcup 1_{X*}$)
 and the base change formula in the transversal case 
 ($f^*p_*=q_*g^*$).

For the needs of the following formulae, we introduce a useful notation
 which appear in the article. For any smooth scheme $X$,
  any cohomology class $\alpha \in H^{n,p}(X)$ and
   any morphism $\phi:\M(X) \rightarrow \M$ in $\T$,
we put
$$
\phi \gcup \alpha:=(\phi \otimes \alpha) \circ \delta_*
 :\M(X) \rightarrow \M(p)[n]
$$
where $\alpha$ is considered as a morphism $\M(X) \rightarrow \un(p)[n]$,
 and $\delta_*:\M(X) \rightarrow \M(X) \otimes \M(X)$
  is the morphism induced by the diagonal of $X/S$ and by
  the K\"unneth axiom (Kun).

More striking are the following formulae which express 
 the \emph{defect} in base change formulas.
Fix a commutative square of smooth schemes
\begin{equation} \label{squareDelta}
\xymatrix@=10pt{
T\ar^q[r]\ar_g[d]\ar@{}|\Delta[rd] & Y\ar^f[d] \\
Z\ar_p[r] & X
}
\end{equation}
which is cartesian on the underlying topological spaces,
 and such that $p$ (resp. $q$) is projective of
 relative dimension $n$ (resp. $m$).

\noindent \textit{Excess of intersection}
(\textbf{prop. \ref{excess}}).-- 
Suppose the square $\Delta$ is cartesian.
We then define the \emph{excess intersection bundle} $\xi$ associated to $\Delta$
 as follows.
Choose a projective bundle $P/X$
and a closed immersion $Z \xrightarrow i P$ over $X$ with normal
bundle $N_ZP$. Consider the pullback $Q$ of $P$ over $Y$ and 
the normal bundle $N_YQ$ of $Y$ in $Q$. 
Then $\xi=N_YQ/g^{-1}N_ZP$ is independent up to isomorphism 
 of the choice of $P$ and $i$. 
The rank of $\xi$ is the integer $e=n-m$.

Then, $p^*f_*= \big(g_* \gcup c_e(\xi)\big)q^*$.

\noindent \textit{Ramification formula} (\textbf{th. \ref{ramification}}).--
Consider the square $\Delta$ and assume $n=m$.
Suppose that $T$ admits an ample line bundle
 and (for simplicity) that $S$ is integral\footnote{
We prove in the text a stronger statement assuming only that $S$ is reduced.}. \\
Let $T=\cup_{i \in I} T_i$ be the decomposition of $T$ into connected components. 
Consider an index $i \in I$.
We let $p_i$ and $g_i$ be the restrictions of $p$ and $g$ to $T_i$.
The canonical map $T \rightarrow Z \times_X Y$ is a thickening.
Thus, the connected component $T_i$ corresponds to a unique connected
component $T'_i$ of $Z \times_X Y$. According to the classical definition,
the \emph{ramification index} of $f$ along $T_i$ is the geometric 
multiplicity $r_i \in \NN^*$ of $T'_i$.
We define (cf \textbf{def. \ref{inter_mult}})
a generalized intersection multiplicity 
for $T_i$ which takes into account the formal group law $F$, called for this reason
the \emph{$F$-intersection multiplicity}. 
It is an element $r(T_i;f,g) \in H^{0,0}(T_i)$.
We then prove the formula~:
$$
p^*f_*=\sum_{i \in I} \big(r(T_i;f,g) \gcup_{T_i} g_{i*}\big) q_i^*.
$$
In general, $r(T_i;f,g)=r_i+\epsilon$ where 
the correction term $\epsilon$ is a function of the coefficients of $F$ -- 
it is zero when $F$ is additive.

\subsubsection{Residue morphism}

A specificity of the present work is the study of the Gysin triangle,
 notably its boundary morphism, called the residue morphism.
%Consider a smooth scheme $X$ and a smooth closed subscheme $Z \subset X$
% with open complement $U=X-Z$.
Consider a square $\Delta$ as in \eqref{squareDelta}.
Put $U=X-Z$, $V=Y-T$ and let $h:V \rightarrow U$ be the morphism
induced by $f$.

We obtain the following
 formulas~:
\begin{enumerate}
\item $(j_* \gcup 1_{U*}) \partial_{X,Z}=\partial_{X,Z} \gcup i_*$.
\item For any smooth scheme $Y$,
 $\partial_{X \times Y,Z \times Y}=\partial_{X,Z} \otimes 1_{Y*}$. 
\item If $f$ is a closed immersion,
$\partial_{X-Z,Y-T} \partial_{Y,T}
 +\partial_{X-Y,Z-T} \partial_{Z,T}=0$.
\item If $f$ is projective,
 $\partial_{Y,T} g^*=h^* \partial_{X,Z}$.
\item When $f$ is transversal to $i$, $h_* \partial_{Y,T}=\partial_{X,Z} g_*$.
\item When $\Delta$ satisfies the hypothesis of \emph{Excess of intersection}, \\
\indent $h_* \partial_{Y,T}=\partial_{X,Z} \big(g_* \gcup c_e(\xi)\big)$.
\item When $\Delta$ satisfies the hypothesis of \emph{Ramification formula}, \\
\indent $\sum_{i \in I} h_* \partial_{Y,T_i}
=\sum_{i \in I} \partial_{X,Z} \big(r(T_i;f,g) \gcup g_{i*}\big)$.
\end{enumerate}

The differential taste of the residue morphism appears clearly in the last formula
(especially the cohomological formulation) where the multiplicity $r(T_i;f,g)$
takes into account the ramification index $r_i$.
Even in algebraic $K$-theory, this formula seems to be new.
 
\subsubsection{Blow-up formulas}
Let $X$ be a smooth scheme
 and $Z \subset X$ be a smooth closed subscheme of codimension $n$.
Let $B$ be the blow-up of $X$ with center $Z$ and consider the cartesian square
$
\xymatrix@=10pt{
P\ar^k[r]\ar_p[d] & B\ar^f[d] \\
Z\ar^i[r] & X
}
$.
Let $e$ be the top Chern class of the canonical quotient bundle 
on the projective space $P/Z$.
\begin{enumerate}
\item (\textbf{prop. \ref{proj_bdl_fml1}})
Let $\M(P)/\M(Z)$ be the kernel of the split monomorphism $p_*$.
The morphism $(k_*,f^*)$ induces an isomorphism~:
\begin{center}
 $\M(P)/\M(Z) \oplus \M(X) \rightarrow \M(B)$.
\end{center}
\item (\textbf{prop. \ref{proj_bdl_fml2}}) 
The short sequence
$$
0 \rightarrow \M(B)
 \xrightarrow{
\text{\scriptsize $\begin{pmatrix} k^* \\ f_* \end{pmatrix}$}}
 \M(P)(1)[2] \oplus \M(X)
  \xrightarrow{(p_* \gcup e,-i^*)} \M(Z)(n)[2n] \rightarrow 0
$$
is split exact. Moreover, 
$(p_* \gcup e,-i^*) \circ ${\tiny $\begin{pmatrix} p^* \\ 0 \end{pmatrix}$}
is an isomorphism\footnote{This isomorphism is the identity at least in the
 case when $F(x,y)=x+y$}.
\end{enumerate}

The first formula was obtained by V. Voevodsky using resolution of singularities
in the case where $S$ is the spectrum of a perfect field.
The second formula is the analog of a result on Chow groups,
formulated by W. Fulton (cf \cite[6.7]{Ful}).

\subsection{Characteristic classes}

Besides Chern classes, we can introduce the following characteristic
classes in our context.

Let $i:Z \rightarrow X$ be a closed immersion of codimension $n$
between smooth schemes, $\pi:Z \rightarrow S$ the canonical projection. 
We define the \emph{fundamental class} of $Z$ in $X$ 
(\textbf{paragraph \ref{fund_class}})
as the cohomology class represented by the morphism
$$
\eta_X(Z):\M(X) \xrightarrow{i^*} \M(Z)(n)[2n] \xrightarrow{\pi_*} \un(n)[2n].
$$
It is a cohomology class in $H^{2n,n}(X)$
 satisfying the more classical expression $\eta_X(Z)=i_*(1)$.

Considering the hypothesis of the ramification formula above,
 when $n=m=1$,
we obtain the enlightening formula (cf \textbf{cor. \ref{pullback_fund_class}})~:
$$
f^*(\eta_X(Z))=\sum_{i \in I} [r_i]_F \cdot \eta_Y(T_i)
$$
where $r_i$ is the ramification index of $f$ along $T_i$ and
$[r_i]_F$ is the $r_i$-th formal sum with respect to $F$ applied
to the cohomological class $\eta_Y(T_i)$. Indeed, 
the fact $T$ admits an ample line bundle implies this class is nilpotent.

The most useful fundamental class in the article is
 the Thom class of a vector bundle $E/X$ of rank $n$. 
Let $P=\PP(E \oplus 1)$ be its projective completion
and consider the canonical section $X \xrightarrow s P$.
The \emph{Thom class} of $E/X$ is $t(E):=\eta_P(X)$. 
By the projection formula,
 $s^*=p_* \gcup t(E)$, where $p:P \rightarrow X$
is the canonical projection.
Let $\L$ (resp. $\Q$) be the canonical line bundle 
(resp. universal quotient bundle) on $P/X$. 
We also obtain the following equality\footnote{This
corrects an affirmation of I. Panin 
in the introduction of \cite[p. 268]{Panin_r}
where equality {\tiny $(*)$} is said not to hold.}~:
$$
t(E)=c_n(\L^\vee \otimes p^{-1}E)\stackrel{(*)}=c_n(\Q)
 =\sum_{i=0}^n c_i(p^{-1}E) \ncup \big(-c_1(\L)\big)^i.
$$
This is straightforward in the case where $F(x,y)=x+y$ but more
difficult in general.

We also obtain a computation which the author has not seen in
the literature (even in complex cobordism).
Write $F(x,y)=\sum_{i,j} a_{ij}.x^iy^j$ with $a_{ij} \in A$.
Consider the diagonal embedding 
$\delta:\PP^n \rightarrow \PP^n \times \PP^n$.
Let $\L_1$ and $\L_2$ be the respective canonical line bundle on
the first and second factor of $\PP^n \times \PP^n$.
Then (\textbf{prop. \ref{fdl_class_diagonal}}) the fundamental class 
of $\delta$ satisfies
$$
\eta_{\PP^n \times \PP^n}(\PP^n)
=\sum_{0 \leq i,j \leq n} a_{1,i+j-n}.c_1(\L_1^\vee)^ic_1(\L_2^\vee)^j.
$$

Another kind of characteristic classes are \emph{cobordism classes}.
Let $p:X \rightarrow S$ be a smooth projective scheme of relative dimension $n$.
The cobordism class of $X/S$ is the cohomology class represented by the morphism
$$
[X]:\un \xrightarrow{p^*} \M(X)(-n)[-2n] \xrightarrow{p_*} \un(-n)[-2n].
$$
It is a class in $A^{-2n,-n}$. As an application of the previous equality,
we obtain the following computation (\textbf{cor. \ref{cor:Myschenko}})~:
$$
[\PP^n]=(-1)^n.\det\left(
\raisebox{0.5\depth}{
\xymatrix@=0.001pt{
0\ar@{.}[rr]\ar@{.}[dd]
 & & 0\ar@{.}[lldd] & 1\ar@{-}[lllddd]
 & a_{1,1}\ar@{-}[lllldddd] \\
&&&& a_{1,2}\ar@{-}[lllddd]\ar@{.}[ddd] \\
0 & & & & \\
1 & & & & \\
a_{1,1} & a_{1,2}\ar@{.}[rrr] & & & a_{1,n}
}
}
\right)
$$
which of course coincides with the expression given by the classical theorem
of Myschenko in complex cobordism. 
In fact, our method gives a new proof of the latter theorem.

\subsection{Outline of the work}

In section 2, we give the list of axioms (cf \ref{axioms}) 
satisfied by the category $\T$ and discuss the first consequences
of these. Remark an originality of our axiomatic is that we 
not only consider pairs of schemes but also quadruples
(used in the proof of \ref{thm:assoc}).
The last subsection 2.3 gives the principle examples
which satisfy the axiomatic \ref{axioms}. Section 3 contains 
the projective bundle theorem and its consequences,
the formal group law and Chern classes. \\
Section 4 contains the study of the Gysin triangle. 
The fundamental result in this section is the purity theorem \ref{prop:purity}. 
Usually,
one constructs the Thom isomorphism using the Thom class (\ref{Thom_class}).
Here however, we directly construct the former isomorphism from the
projective bundle theorem and the deformation to the normal cone. 
This makes the construction more canonical 
-- though there is a delicate choice of signs hidden 
(cf beginning of section \ref{sec:purity}) -- 
and it thus gives a canonical Thom class. 
We then study the two principle subjects around the Gysin
triangle~: the base change formula and its defect (section 4.2
which contains notably \ref{ramification} and \ref{excess} cited above) 
and the interaction (containing notably the functoriality of the Gysin
morphism) of two Gysin triangles attached with smooth subschemes of
a given smooth scheme (th. \ref{thm:assoc}). \\
In section 5, we first recall the notion of
\emph{strong duality} introduced by A. Dold and D. Puppe and give
some complements. 
Then we give the construction of the Gysin morphism
 in the projective case and the duality statement.
The general situation is particularly complicated when
the formal group law $F$ is not the additive one, as the Gysin morphism
associated to the projection $p$ of $\PP^n$ is not easy to handle.
Our method is to exploit the strong duality on $\PP^n$ 
implied by the projective bundle theorem. 
We show that the fundamental class of the diagonal $\delta$ of $\PP^n/S$
determines canonically the Gysin morphism of the projection 
(see def. \ref{gysin4projection}). This is due to the explicit form
of the duality pairing for $\PP^n$ cited above~: 
the motive $\M(\PP^n)$ being strongly dualizable, 
one morphism of the duality pairing $(\mu_X,\epsilon_X)$ 
determines the other; the first one is induced by $\delta^*$
and the other one by $p^*$. Once this fact is determined,
we easily obtain all the properties required to define the Gysin
morphism and then the general duality pairing.
The article ends with the explicit determination of the cobordism
class of $\PP^n$ and the blow-up formulas as illustrations of
the theory developed here.

\subsection{Final commentary}

In another work \cite{Deg6},
we study the Gysin triangle directly in the category of geometric
mixed motives over a perfect field. In the latter, we used the isomorphism
of the relevant part of motivic cohomology groups and Chow groups and prove
our Gysin morphism induces the usual pushout on Chow groups 
via this isomorphism (cf \cite[1.21]{Deg6}).
This gives a shortcut for the definitions and propositions 
proved here
in the particular case of motives over a perfect field.
In \emph{loc. cit.} moreover, we also use the isomorphism
between the diagonal part of the motivic cohomology groups
of a field $L$ and the Milnor K-theory of $L$ and prove
our Gysin morphism induces the usual norm morphism on Milnor
K-theory (cf \cite[3.10]{Deg6}) -- after a limit process,
considering $L$ as a function field.

The present work is obviously linked with the fundamental book 
on algebraic cobordism by Levine and Morel \cite{LM}
(see also \cite{Lev2}), 
but here, we study oriented cohomology theories 
from the point of view of stable homotopy.
This point of view is precisely that of \cite{Lev1}.
It is more directly linked with the prepublication \cite{Panin} 
of I. Panin which was mainly concerned with the construction
of pushforwards in cohomology, corresponding to our Gysin morphism
(see also \cite{Smi} and \cite{Pim} for extensions of this work).
Our study gives a unified self-contained treatment of all these works,
 except that we have not considered here the theory of transfers
  and Chern classes with support (see \cite{Smi},
   \cite[part 5]{Lev1}).

The final work we would like to mention is the thesis of J. Ayoub
on cross functors (\cite{Ayoub}). In fact, it is now folklore that
the six functor formalism yields a construction of the Gysin morphism.
In the work of Ayoub however, the questions of orientability are not
treated. In particular, 
the Gysin morphism we obtain takes value in a certain Thom space.
To obtain the Gysin morphism in the usual form,
 we have to consider the Thom isomorphism introduced here. 
Moreover, we do not need the localization property in our study 
 whereas it is essentially used in the formalism of cross functors. 
This is a strong property which is not known in general 
for triangulated mixed motives.
Finally, the interest of this article relies in the study of the 
defect of the base change formula which is not covered by the six 
functor formalism.

\subsection*{Remerciements}
J'aimerais remercier tout sp\'ecialement Fabien Morel car ce travail,
commenc\'e \`a la fin de ma th\`ese, 
a b\'en\'efici\'e de ses nombreuses id\'ees et de son support. 
Aussi, l'excellent rapport d'une version pr\'eliminaire de l'article 
 \cite{Deg6}
 m'a engag\'e \`a le g\'en\'eraliser sous la forme pr\'esente;
 j'en remercie le rapporteur, ainsi que J\"org Wildeshaus pour son soutien.
Je tiens \`a remercier Geoffrey Powell pour m'avoir grandement aid\'e
\`a clarifier l'introduction de cet article et Jo\"el Riou pour m'avoir 
indiqu\'e une incoh\'erence dans une premi\`ere version de la formule
de ra\-mification.
Mes remerciements vont aussi au rapporteur de cet article
 pour sa lecture attentive qui m'a notamment aid\'ee \`a clarifier
 les axiomes.
Je souhaite enfin adresser un mot \`a Denis-Charles 
Cisinski pour notre amiti\'e math\'ematique qui a \'et\'e la meilleure
des muses.

\section{The general setting : homotopy oriented triangulated systems}

\subsection{Axioms and notations} \label{general_setting}

Let $\D$ be the category whose objects are the cartesian squares
\begin{equation} \label{std}
\xymatrix@=10pt{W\ar[r]\ar[d]\ar@{}|\Delta[rd] & V\ar[d] \\ U\ar[r] & X}
 \tag{$*$}
\end{equation}
made of immersions between smooth schemes.
The morphisms in $\D$ are the evident commutative cubes.
We will define the \emph{transpose} of the square $\Delta$,
denoted by $\Delta'$, as the square
$$
\xymatrix@=10pt{W\ar[r]\ar[d]\ar@{}|{\Delta'}[rd] & U\ar[d] \\ V\ar[r] & X}
$$
made of the same immersions. This defines an endofunctor of $\D$.
%The category $\A$ of immersions $U \rightarrow X$ of smooth schemes
%is a full subcategory of $\D$. We will consider the symmetric 
%monoidal structure on $\A$ defined by
%$$
%(U \rightarrow X) \times_S (V \rightarrow Y)
% =(X \times_S V \cup U \times_S Y) \rightarrow X \times_S Y.
%$$

In all this work, 
we consider a triangulated symmetric monoidal category 
$(\T,\otimes,\un)$ together with a covariant functor $\M:\D \rightarrow \T$. 
Objects of $\T$ are called \emph{premotives}.

Considering a square as in \eqref{std}, we adopt the suggesting notation
$$\M\left(\frac{X/U}{V/W}\right)=\M(\Delta).$$
We simplify this notation in the following two cases~:
\begin{enumerate}
\item If $V=W=\emptyset$, we put $\M(X/U)=\M(\Delta)$.
\item If $U=V=W=\emptyset$, we put $\M(X)=\M(\Delta)$.
\end{enumerate}

We call \emph{closed pair} any pair $(X,Z)$ of schemes such that $X$ is smooth 
and $Z$ is a closed (not necessarily smooth) subscheme of $X$.
As usual, we define the premotive of $X$ with support in $Z$ as $\M_Z(X)=\M(X/X-Z)$.

A \emph{pointed scheme} is a scheme $X$ together with an $S$-point
$x:S \rightarrow X$.
When $X$ is smooth, the reduced premotive associated with $(X,x)$
will be $\Mr(X,x)=M(S \xrightarrow x X)$. Let $n>0$ be an integer. 
We will always assume the smooth scheme $\PP^n_S$ is pointed by the infinity. 
We define the \emph{Tate twist} as the premotive $\un(1)=\Mr(\PP^1_S)[-2]$ of $\T$. 

\num \label{axioms}
We suppose the functor $\M$ satisfies the following axioms~:
\begin{enumerate}
\item[(Add)] For any finite family of smooth schemes $(X_i)_{i \in I}$, \\
\indent $\M(\sqcup_{i \in I} X_i)=\oplus_{i \in I} \M(X_i)$.
\item[(Htp)] For any smooth scheme $X$, the canonical projection of the
affine line induces an isomorphism $\M(\AA^1_X) \rightarrow \M(X)$.
\item[(Exc)] Let $(X,Z)$ be a closed pair and $f:V \rightarrow X$ be an \'etale morphism.
Put $T=f^{-1}(Z)$ and suppose the map $T_{red} \rightarrow Z_{red}$
obtained by restriction of $f$ is an isomorphism. 
Then the induced morphism $\phi:M_T(V) \rightarrow M_Z(X)$ is an isomorphism.
\item[(Stab)] The Tate premotive $\un(1)$ admits an inverse for the tensor product
denoted by $\un(-1)$.
\item[(Loc)] For any square $\Delta$ as in \eqref{std},
a morphism 
$\partial_\Delta:\M\left(\frac{X/U}{V/W}\right) \rightarrow \M(V/W)[1]$
is given natural in $\Delta$ and 
such that the sequence of morphisms
$$
\M(V/W) \rightarrow \M(X/U)
 \rightarrow \M\left(\frac{X/U}{V/W}\right) \xrightarrow{\partial_\Delta}
  \M(V/W)[1]
$$
made of the evident arrows is a distinguished triangle in $\T$.
\item[(Sym)] Let $\Delta$ be a square as in \eqref{std}
and consider its transpose $\Delta'$.
There is given a morphism 
$\epsilon_\Delta:\M\left(\frac{X/U}{V/W}\right) \rightarrow
 \M\left(\frac{X/V}{U/W}\right)$
natural in $\Delta$.

If in the square $\Delta$, $V=W=\emptyset$, we put 
$$
\partial_{X/U}=\partial_{\Delta'} \circ \epsilon_{\Delta}:
 M(X/U) \rightarrow M(U)[1].
$$
 
We ask the following coherence properties~:
\begin{enumerate}
\item[(a)] $\epsilon_{\Delta'} \circ \epsilon_{\Delta}=1$.
\item[(b)] If $\Delta=\Delta'$ then $\epsilon_{\Delta}=1$.
\item[(c)] The following diagram is anti-commutative~:
$$\xymatrix@R=11pt@C=14pt{
M\left(\frac{X/U}{V/W}\right)\ar^{\epsilon_{\Delta}}[r]\ar_{\partial_{\Delta}}[d]
 & M\left(\frac{X/V}{U/W}\right)\ar^/-5pt/{\partial_{\Delta'}}[r]
 & M(U/W)[1]\ar^{\partial_{U/W}[1]}[d] \\
M(V/W)[1]\ar^{\partial_{V/W}[1]}[rr] & & M(W)[2].
}$$
\end{enumerate}
\item[(Kun)] \begin{enumerate}
\item[(a)] For any open immersions $U \rightarrow X$ and $V \rightarrow Y$ of smooth schemes,
 there are canonical isomorphisms: \\
$\M(X/U) \otimes \M(Y/V)=\M(X \times Y/X \times V \cup U \times Y), \quad M(S)=\un
$ \\
satisfying the coherence conditions of a monoidal functor.
\item[(b)] Let $X$ and $Y$ be smooth schemes and $U \rightarrow X$ be an open immersion.
Then, $\partial_{X \times Y/U \times Y}=\partial_{X/U} \otimes 1_{Y*}$
through the preceding canonical isomorphism.
\end{enumerate}
\item[(Orient)] For any smooth scheme $X$,
there is an application, called the \emph{orientation},
$$
c_1:\pic(X) \rightarrow \Hom_\T(\M(X),\un(1)[2])
$$
which is functorial in $X$ and such that
the class $c_1(\L_1):\M(\PP^1_S) \rightarrow \un(1)[2]$
is the canonical projection.
\end{enumerate}
For any integer $n \in \NN$, we let $\un(n)$ 
(resp. $\un(-n)$) be the $n$-th tensor power of $\un(1)$ 
(resp. $\un(-1)$). 
Moreover, for an integer $n \in \ZZ$ and a premotive 
$\E$, we put $\E(n)=\E \otimes \un(n)$.

\num  \label{further_prop}
Using the excision axiom (Exc) and an easy noetherian induction,
we obtain from the homotopy axiom (Htp) the following stronger result~:
\begin{enumerate}
\item[(Htp')]
For any fiber bundle $E$ over a smooth scheme $X$,
the morphism induced by the canonical projection $\M(E) \rightarrow \M(X)$
is an isomorphism.
\end{enumerate}
We further obtain the following interesting property~:
\begin{enumerate}
\item[(Add')]
Let $X$ be a smooth scheme and $Z$, $T$ be disjoint closed subschemes of $X$. \\
Then the canonical map $\M_{Z \sqcup T}(X) \rightarrow \M_Z(X) \oplus \M_T(X)$
induced by naturality is an isomorphism.
\end{enumerate}
Indeed, using (Loc) with $V=X-T$, $W=X-(Z \sqcup T)$ and $U=W$, 
we get a distinguished triangle
$$
\M_Z(V) \rightarrow \M_{Z \sqcup T}(X) \xrightarrow{\pi}
 \M\left(\frac{X/W}{V/W}\right) \rightarrow \M_Z(V)[1].
$$
Using (Exc), we obtain $\M_Z(V)=\M_Z(X)$. The natural map
$\M_{Z \sqcup T}(X) \rightarrow \M_Z(X)$ induces a retraction of the first arrow.
Moreover, we get $\M\left(\frac{X/W}{V/W}\right)=\M_T(X)$ from the symmetry axiom (Sym).
Note that we need (Sym)(b) and the naturality of $\epsilon_\Delta$ 
to identify $\pi$ with the natural map $\M_{Z \sqcup T}(X) \rightarrow \M_T(X)$.

\begin{rem} \textit{About the axioms}.--- 
\begin{enumerate}
\item There is a stronger form
of the excision axiom (Exc) usually called the Brown-Gersten property (or
distinguished triangle). In the situation of axiom (Exc), with $U=X-Z$ and
$W=V-T$,
 we consider the cone in the sense of \cite{Nee}
  of the morphism of distinguished triangles
$$
\xymatrix@=10pt{
\M(W)\ar[r]\ar[d] & \M(V)\ar[r]\ar[d] & \M(V/W)\ar[r]\ar[d] & \M(V)[1]\ar[d] \\
\M(U)\ar[r] & \M(X)\ar[r] & \M(X/U)\ar[r] & \M(U)[1] \\
}
$$
This is a \emph{candidate triangle} in the sense of \textit{op. cit.} of the form
$$
\M(W) \rightarrow \M(U) \oplus \M(V) \rightarrow \M(X) \rightarrow \M(W).
$$
Thus, in our abstract setting, 
 it is not necessarily a distinguished triangle.
We call (BG) the hypothesis that in every such situation, 
the candidate triangle obtained above is a distinguished triangle.
We will not need the hypothesis (BG) ; however, in the applications, 
it is always true and the reader may use this stronger form for
simplification.
%\begin{enumerate}
%\item[(BG)]
%In the situtation of axiom (Exc),
% there is canonical distinguished triangle
%$$
%\M(W) \rightarrow \M(U) \oplus \M(V) \rightarrow \M(X) \xrightarrow{+1}
%$$
%where $U=X-Z$ and $W=V-T$.
%\end{enumerate}
%Beware that in our hypothesis, though we can build this triangle,
% it is not necessarily distinguished.
%However in applications, it is always so and the reader may use this 
%stronger form.
\item We can replace axiom (Kun)(a) by a weaker one
\begin{enumerate}
\item[(wKun)]
The restriction of $\M$ to the category of pairs of schemes $(X,U)$
is a \emph{lax} monoidal symmetric functor.
\end{enumerate}
(Kun)(b) is then replaced by an obvious coherence property of the
boundary operator in (Loc).
This hypothesis is sufficient for the needs of the article
with a notable exception of the duality pairing \ref{thm:duality}.
For example, if one wants to work with cohomology theories directly,
 one has to use rather this axiom, 
  replace $\T$ by an abelian category and
   "distinguished triangle" by "long exact sequence" everywhere.
The arguments given here covers equally this situation, 
except for the general duality pairing.
\item The symmetry axiom (Sym) encodes a part of a richer structure which
possess the usual examples
(all the ones considered in section \ref{sec:htp_funct}).
This is the structure of a derivator as the object $\M(\Delta)$
may be seen as a homotopy colimit. The coherence axioms which appear
in (Sym) are very natural from this point of view.
\end{enumerate}
\end{rem}

\begin{df}
Let $\E$ be a premotive.
For any smooth scheme $X$ and any  couple $(n,p) \in \ZZ \times \ZZ$,
we define respectively the cohomology and the homology groups of $X$ with 
coefficient in $\E$ as
\begin{align*}
\E^{n,p}(X)&=\Hom_\T\big(M(X),\E(p)[n]\big), \\
\text{resp. } \E_{n,p}(X)&=\Hom_\T\big(\un(p)[n],\E \otimes M(X)\big).
\end{align*}
We refer to the corresponding bigraded cohomology group (resp. homology group)
by $\E^{**}(X)$ (resp. $\E_{**}(X)$). The first index is usually refered to
as the cohomological (resp. homological) \emph{degree} 
and the second one as the cohomological (resp. homological) \emph{twist}.
We also define the module of coefficients
attached to $\E$ as $\E^{**}=\E^{**}(S)$. \\
When $\E=\un$, we use the notations
$H^{**}(X)$ (resp. $H_{**}(X)$) for the cohomology (resp. homology) 
with coefficients in $\un$.
Finally, we simply put $A=H^{**}(S)$.
\end{df}
Remark that, from axiom (Kun)(a), $A$ is a bigraded ring.
Moreover, using the axiom (Stab), $A=H_{**}(S)$.
Thus, there are two bigraduations on $A$, one cohomological and the
other homological, and the two are exchanged as usual by a change of sign.
The tensor product of morphisms in $\T$ induces a structure
of left bigraded $A$-module on $\E^{**}(X)$ (resp. $\E_{**}(X)$).
There is a lot more algebraic structures on these bigraded groups that 
we have gathered in section \ref{sec:products}.

The axiom (Orient) gives a natural transformation
$$
c_1:\pic \rightarrow H^{2,1}
$$
of presheaves of \emph{sets} on $\sm$, or in other words,
an \emph{orientation} on the fundamental cohomology $H^{*,*}$ assocciated with 
the functor $\M$. 
In our setting, cohomology classes are morphisms in $\T$~: 
for any element $L \in \pic(X)$,
we view $c_1(L)$ both as a cohomology class, the \emph{first Chern class},
and as a morphism in $\T$.

\rem In the previous definition, we can replace the premotive $M(X)$ 
by any premotive $\cM$. This allows to define as usual 
the cohomology/homology of 
an (arbitrary) pair $(X,U)$ made by a smooth scheme $X$ 
 and a smooth subscheme $U$ of $X$.
Particular cases of this general definition is
the cohomology/homology of a smooth scheme $X$
 with support in a closed subscheme $Z$ 
and the reduced cohomology/homology
 associated with a pointed smooth scheme.
 
\subsection{Products} \label{sec:products}
Let $X$ be a smooth scheme and
$\delta:X \rightarrow X \times X$ its associated diagonal embedding.
Using axiom (Kun)(a) and functoriality, we get a morphism
 $\delta'_*:M(X) \rightarrow M(X) \otimes M(X)$.
Given two morphisms $x:M(X) \rightarrow \E$ and $y:M(X) \rightarrow \F$
in $\T$, we can define a product
$$
x \gcup y=(x \otimes y) \circ \delta'_*:
 M(X) \rightarrow \E \otimes \F.
$$

\num \label{products&ringed_spectra}
By analogy with topology, we will call \emph{ringed premotive} 
any premotive $\E$ equipped with a commutative monoid structure 
in the symmetric  monoidal category $\T$. This means we have
a product map $\mu:\E \otimes \E \rightarrow \E$ and a unit map
$\eta:\un \rightarrow \E$ satisfying the formal properties of a 
commutative monoid. \\
For any smooth scheme $X$ and any couple of integer $(n,p) \in \ZZ^2$,
the unit map induces morphisms
\begin{align*}
\varphi_X:&H^{n,p}(X) \rightarrow \E^{n,p}(X) \\
\psi_X:&H_{n,p}(X) \rightarrow \E_{n,p}(X)
\end{align*}
which we call the \emph{regulator maps}.

Giving such a ringed premotive $\E$, we define\footnote{
We do not indicate the commutativity isomorphisms for the tensor product 
  and the twists in the formulas to make them shorter.}
the following products~:
\begin{itemize}
\item \emph{Exterior products}~:
\begin{align*}
\E^{n,p}(X) \otimes \E^{m,q}(Y) & \rightarrow \E^{n+m,p+q}(X \times Y), \\
  \ (x,y) & \mapsto x \times y:=\mu \circ x \otimes y \\
\E_{n,p}(X) \otimes \E_{m,q}(Y) & \rightarrow \E_{n+m,p+q}(X \times Y), \\
  \ (x,y) & \mapsto x \times y:=(\mu \otimes 1_{X \times Y*}) \circ (x \otimes y)
\end{align*}
\item \emph{Cup-product}~:
$$
\E^{n,p}(X) \otimes \E^{m,q}(X) \rightarrow \E^{n+m,p+q}(X),
(x,x') \mapsto x \ncup x':=\mu \circ (x \gcup x').
$$
Then $\E^{**}$ is a bigraded ring
 and $\E^{**}(X)$ is a bigraded $\E^{**}$-algebra.
Moreover, $\E^{**}$ is a bigraded $A$-algebra
and the regulator map is a morphism of bigraded $A$-algebra.
\item \emph{Slant products}\footnote{For the first slant product defined here,
we took a slightly different covention than \cite[13.50(ii)]{Swi} in order to
obtain formula \eqref{alexander_duality}. Of course, 
the two conventions coincide up to the isomorphism
 $X \times Y \simeq Y \times X$.}~:
\begin{align*}
\E^{n,p}(X \times Y) \otimes \E_{m,q}(X) & \rightarrow \E^{n-m,p-q}(Y), \\
(w,x) & \mapsto w/x
   :=\mu \circ (1_\E \otimes w) \circ (x \otimes 1_{Y*}) \\
\E^{n,p}(X) \otimes \E_{m,q}(X \times Y) & \rightarrow \E_{m-n,q-p}(Y), \\
(x,w) & \mapsto x \backslash w
  :=(\mu \otimes 1_{Y*}) \circ (x \otimes 1_\E \otimes 1_{Y*}) \circ w.
\end{align*}
\item \emph{Cap-product}~:
$$
\E^{n,p}(X) \otimes \E_{m,q}(X) \rightarrow \E_{m-n,q-p}(X),
 (x,x') \mapsto x \ncap x':=x \backslash \big((1_\E \otimes \delta_*) \circ x'\big).
$$
\item \emph{Kronecker product}~:
$$
\E^{n,p}(X) \otimes \E_{m,q}(X) \rightarrow A^{n-m,p-q},
 (x,x') \mapsto \langle x,x' \rangle:=x/x'
$$
where $y$ is identified to a homology class in $\E_{m,q}(S \times X)$.
\end{itemize}
 
The regulator maps (cohomological and homological) are compatible 
with these products in the obvious way.

\begin{rem}
These products satisfy a lot of formal properties.
We will not use them in this text but we refer the interested reader to
\cite[chap. 13]{Swi} for more details 
(see more precisely 13.57, 13.61, 13.62).
\end{rem}

\num \label{product_with_support}
We can extend the definition of these products
to the cohomology of an open pair $(X,U)$. 
We refer the reader to \emph{loc. cit.} for this extension 
but we give details for the cup-product in the case of cohomology with supports
as this will be used in the sequel. \\
Let $X$ be a smooth scheme and $Z$, $T$ be two closed subschemes of $X$.
Then the diagonal embedding of $X/S$ induces using once again  axiom (Kun)(a) 
a morphism
 $\delta''_*:\M_{Z \cap T}(X) \rightarrow \M_Z(X) \otimes \M_T(X)$.
This allows to define a product of motives with support.
Given two morphisms $x:M_Z(X) \rightarrow \E$ and $y:M_T(X) \rightarrow \F$
in $\T$, we define
$$
x \gcup y=(x \otimes y) \circ \delta''_*:
 M_{Z \cap T}(X) \rightarrow \E \otimes \F.
$$
In cohomology, we also define the cup-product with support~:
$$
\E^{n,p}_Z(X) \otimes E^{m,q}_T(X)
 \rightarrow \E^{n+m,p+q}_{Z \cap T}(X), 
(x,y) \mapsto x \ncup_{Z,T} y=\mu \circ (x \otimes y) \circ \delta''_*.
$$
Note that considering the canonical morphism
$\nu_{X,W}:\E^{n,p}_W(X) \rightarrow \E^{n,p}(X)$, for any closed
subscheme $W$ of $X$, we obtain easily~:
\begin{equation} \label{eq:cup_with_support}
\nu_{X,Z}(x) \ncup \nu_{X,T}(y)=\nu_{X,Z \cap T}(x \ncup_{Z,T} y).
\end{equation}

\num \label{general_cup&slant}
Suppose now that $\E$ has no ring structure.
It nethertheless always has a module structure 
over the ringed premotive $\un$
-- given by the structural map (isomorphism)
$\eta:\un \otimes \E \rightarrow \E$.

This induces in particular a structure of left $H^{**}(X)$-module
on $\E^{**}(X)$ for any smooth scheme $X$.
Moreover,
 it allows to extend the definition of slant products and cap products.
Explicitely, this gives in simplified terms~:
\begin{itemize}
\item \emph{Slant products}~:
\begin{align*}
H^{n,p}(X \times Y) \otimes \E_{m,q}(X) & \rightarrow \E^{n-m,p-q}(Y), \\
 (w,y) & \mapsto w/y:=\eta \circ (1_\E \otimes w) \circ (x \otimes 1_{Y*})
\end{align*}
\item \emph{Cap-products}~:
$$
\E^{n,p}(X) \otimes H_{m,q}(X) \rightarrow \E_{m-n,q-p}(X),
 (x,x') \mapsto x \ncap x':=(x \otimes 1_{X*}) \circ \delta_* \circ x'.
$$
%\begin{align*}
%\E^{n,p}(X) \otimes H_{m,q}(X) & \rightarrow \E_{m-n,q-p}(X),
%  (x,x') \mapsto x \ncap x':=(x \otimes 1_{X*}) \circ \delta_* \circ x'. \\
%H^{n,p}(X) \otimes E_{m,q}(X) & \rightarrow \E_{m-n,q-p}(X),
%  (x,x') \mapsto x \ncap x':=(x \otimes 1_{X*}) \circ \delta_* \circ x'.
%\end{align*}
\end{itemize}
These generalized products will be used at the end of the article
to formulate duality with coefficients in $\E$ 
(cf paragraph \ref{explicit_duality_coh_hom}).

Note finally that, analog to the cap-product,
 we have a $H^{**}(X)$-module structure on $E_{**}(X)$
 that can be used to describe the projective bundle formula in
   $\E$-homology (cf formula (2) of \ref{cor:projbdl_formula}).
 
\subsection{Examples} \label{sec:htp_funct}

%\subsubsection{Etale sheaves}
%
%Let $l$ be an integer invertible on $S$
% and $\Lambda=\ZZ/l\ZZ$.
%Let $\fet$ be the category of \'etale sheaves of $\Lambda$-modules
%on the category of smooth schemes.
%
%Consider the bounded derived category $\mathrm D^b(\fet)$.
%According to \cite{SGA4}, it admits a symmetric monoidal structure.
%Consider an immersion $i:U \rightarrow X$.
%Then the induced morphism $i_*:\Lambda(U) \rightarrow \Lambda(X)$
%is a monomorphism. If we let $\Lambda(X/U)$ be the cokernel of this 
%morphism in $\fet$, we obtain a distinguished triangle
%$$
%\Lambda(U) \rightarrow \Lambda(X) \rightarrow \Lambda(X/U)
% \xrightarrow{+1}
%$$
%in $\mathrm D^b(\fet)$.
%Given a commutative square $\Delta$ as in \eqref{std},
%we can define $\Lambda\left(\frac{X/U}{V/W}\right)$
%as the cone of the canonical morphism
% $\Lambda(V/W) \rightarrow \Lambda(X/U)$.
%We thus obtain a functor
%$$
%\Lambda_{\et}:\D \rightarrow \mathrm D^b(\fet).
%$$
%This functor satisfies all the axioms of \ref{axioms}.
%
%Similarly, we could have considered the bounded derived category 
%of $l$-adic sheaves on the \'etale site on $\sm$
%as in \cite{Eke}.

\subsubsection{Motives} \label{motives}

Suppose $S$ is a regular scheme. 
Below, we give the full construction of the category of
geometric motives of Voevodsky over $S$,
and indicate how to check the axioms of \ref{axioms}.
Note however we will give a full construction of this category, 
together with the category of motivic complexes and spectra,
over any noetherian base $S$ in \cite{CD3}.
Here, the reader can find all the details for the proof of the
axioms \ref{axioms} (especially axiom (Orient)).

For any smooth schemes $X$ and $Y$,
we let $c_S(X,Y)$ be the abelian group of
cycles in $X \times_S Y$ whose support is
finite equidimensional over $X$.
As shown in \cite[sec. 4.1.2]{Deg7}, this defines the morphisms
of a category denoted by $\smc$. The category
$\smc$ is obviously additive. It has a
symmetric monoidal structure defined by
the cartesian product on schemes and
by the exterior product of cycles on morphisms.

Following Voevodsky, 
we define the category of effective geometric motives 
$\DMgme$ as the pseudo-abelian envelope\footnote{Recall
that according to the result of \cite{BS}, the pseudo-abelian
envelope of a triangulated category is still triangulated}
of the Verdier triangulated quotient
$$
\mathrm K^b(\smc)/\T
$$
where $\mathrm K^b(\smc)$ is the category of
bounded complexes up to chain homotopy equivalence
and $\T$ is the thick subcategory generated
by the following complexes~:
\begin{enumerate}
\item For any smooth scheme $X$,
$$
\hdots 0 \rightarrow \AA^1_X \xrightarrow p X \rightarrow 0 \hdots
$$
with $p$ the canonical projection.
\item For any cartesian square of smooth schemes
$$
\xymatrix@=10pt{
W\ar^k[r]\ar_g[d] & V\ar^f[d] \\
U\ar^j[r] & X
}
$$
such that $j$ is an open immersion, $f$ is \'etale and
the induced morphism 
$f^{-1}(X-U)_{red} \rightarrow (X-U)_{red}$ is an isomorphism,
\begin{equation} \label{eq:cone}
\hdots 0 \rightarrow W
 \xrightarrow{
\text{\scriptsize $\begin{pmatrix} g \\ -k \end{pmatrix}$}} U \oplus V
  \xrightarrow{(j,f)} X \rightarrow 0 \hdots
\end{equation}
\end{enumerate}
Consider a cartesian square of immersions 
$$
\xymatrix@=10pt{W\ar^k[r]\ar_g[d]\ar@{}|\Delta[rd] & V\ar^f[d] \\ U\ar^j[r] & X}
$$
This defines a morphism of complexes in $\smc$~:
$$
\psi:\left\{
\raisebox{0.6cm}{
\xymatrix@=10pt{
\hdots 0\ar[r] & W\ar^k[r]\ar_g[d] & V\ar^f[d]\ar[r] & 0 \hdots \\
\hdots 0\ar[r] & U\ar^j[r] & X\ar[r] & 0 \hdots
}}\right.
$$
We let $M(\Delta)$ be the cone of $\psi$
 and see it as an object
of $\DMgme$. To fix the convention, we define this cone 
as the triangle \eqref{eq:cone} above.
With this convention, we define $\epsilon_\Delta$
as the following morphism~:
$$
\xymatrix@R=18pt@C=30pt{
\hdots 0\ar[r] 
 & W\ar_{-1}[d]\ar[r]
 & U \oplus V
     \ar^{\text{\tiny $\begin{pmatrix} 0 & 1 \\ 1 & 0 \end{pmatrix}$}}[d]
    \ar[r] 
 & X\ar^{+1}[d]\ar[r]
 & 0 \hdots \\
\hdots 0\ar[r] 
 & W\ar[r]
 & V \oplus U\ar[r]
 & X\ar[r]
 & 0 \hdots \\
}
$$
The reader can now check easily that the resulting functor 
 $M:\D \rightarrow \DMgme$,
satisfies all the axioms of \ref{axioms} except (Stab) and (Orient).
We let $\ZZ=M(S)$ be the unit object for the monoidal structure
of $\DMgme$.

To force axiom (Stab), we formally invert the motive $\ZZ(1)$
in the monoidal category $\DMgme$. This defines
the \emph{triangulated category of (geometric) motives}
denoted by $\DMgm$.
Remark that according to the proof of \cite[lem. 4.8]{V1},
the cyclic permutation of the factors of $\ZZ(3)$ is
the identity. This implies the monoidal structure on
$\DMgme$ induces a unique monoidal structure on $\DMgm$
such that the obvious triangulated functor 
$\DMgme \rightarrow \DMgm$
is monoidal.
Now, the functor $M:\D \rightarrow \DMgm$ still satisfies
all axioms of \ref{axioms} mentioned above but also axiom (Stab).

To check the axiom (Orient), it is sufficient to construct a natural 
application
$$
\pic(X) \rightarrow \Hom_{\DMgme}(\M(X),\ZZ(1)[2]).
$$
We indicate how to obtain this map. 
Note moreover that, from the following construction,
 it is a morphism of abelian group.

Still following Voevodsky, we have defined in \cite{Deg7} 
the abelian category of sheaves with transfers over $S$, 
denoted by $\ftr$.
We define the cateogy $\DMe$ of motivic complexes as the
$\AA^1$-localization of the derived category of $\ftr$.
The Yoneda embedding $\smc \rightarrow \ftr$ sends smooth schemes
to \emph{free} abelian groups. For this reason,
the canonical functor
$$
\DMgme \rightarrow \DMe
$$
is fully faithful.
Let $\GG$ be the sheaf with transfers which associates
to a smooth scheme its group of invertible (global) functions.
Following Suslin and Voevodsky (cf also \cite[2.2.4]{Deg5}),
we construct a morphism in $\DMe$~:
$$
\GG \rightarrow M(\GG)=\ZZ \oplus \ZZ(1)[1]
$$
This allows to define the required morphism~:
\begin{align*}
\pic(X)& =H^1_\mathrm{Nis}(X;\GG) \simeq \Hom_{\DMe}(\M(X),\GG[1]) \\
&\rightarrow \Hom_{\DMe}(\M(X),\ZZ(1)[2])
   \simeq \Hom_{\DMgme}(\M(X),\ZZ(1)[2]).
\end{align*}
The first isomorphism uses that the sheaf $\GG$ is $\AA^1$-local
and that the functor forgetting transfers is exact
(cf \cite[prop. 2.9]{Deg7}).

\subsubsection{Stable homotopy exact functors}

In this example, $S$ is any noetherian scheme.
For any smooth scheme $X$, we let $X_+$ be the pointed sheaf of sets on
$\sm$ represented by $X$ with a (disjoint) base point added.

Consider an immersion $U \rightarrow X$ of smooth schemes.
We let $X/U$ be the pointed sheaf of sets which is the cokernel
of the pointed map $U_+ \rightarrow X_+$.

Suppose moreover given a square $\Delta$ as in \eqref{std}.
Then we obtain an induced morphism of pointed sheaves of sets
$V/W \rightarrow X/U$ which is injective.
We let $\frac{X/U}{V/W}$ be the cokernel of this monomorphism.
Thus, we obtain a cofiber sequence in $\H$
$$
V/W \rightarrow X/U \rightarrow \frac{X/U}{V/W}
 \xrightarrow{\partial_\Delta} S^1_s \wedge V/W.
$$
Moreover, the functor
$$
\D \rightarrow \H, \Delta \mapsto \frac{X/U}{V/W}
$$
satisfies axioms (Add), (Htp), (Exc) and (Kun) from \cite{MV}.

Consider now the stable homotopy category of schemes $\SH(S)$
(cf \cite{Jar})
together with the infinite suspension functor
$$
\Sigma^\infty:\H \rightarrow \SH(S).
$$
The category $\SH(S)$ is a triangulated symmetric monoidal category.
The canonical functor $\D \rightarrow \SH(S)$ satisfies all the
axioms of \ref{axioms} except axiom (Orient).
In fact, (Loc) and (Sym) follows easily from the definitions
and (Stab) was forced in the construction of $\SH(S)$.

Suppose we are given a triangulated symmetric monoidal category $\T$
together with a triangulated symmetric monoidal functor
$$
R:\SH(S) \rightarrow \T.
$$
This induces a canonical functor
$$
M:\D \rightarrow \T, \frac{X/U}{V/W} \mapsto
 M\left(\frac{X/U}{V/W}\right)
  :=R\left(\Sigma^\infty \left(\frac{X/U}{V/W}\right)\right)
$$
and $(M,\T)$ satisfies formally all the axioms \ref{axioms} except (Orient).

Let $B\GG$ be the classifying space of $\GG$ defined in \cite[section 4]{MV}.
It is an object of the simplicial homotopy category $\Hs$
 and from \emph{loc. cit.}, proposition 1.16, $$Pic(X)=\Hom_{\Hs}(X_+,B\GG).$$ 

Let $\pi:\Hs \rightarrow \H$ be the canonical $\AA^1$-localisation functor.
Applying proposition 3.7 of \emph{loc. cit.}, $\pi(B\GG)=\PP^\infty$
where $\PP^\infty$ is the tower of pointed schemes
$$
\PP^1 \rightarrow ..
 \rightarrow \PP^n \xrightarrow{\iota_n} \PP^{n+1} \rightarrow ...
$$
made of the inclusions onto the corresponding hyperplane at infinity.
We let $\M(\PP^\infty)$ (resp. $\Mr(\PP^\infty)$) be the ind-object
of $\T$ obtained by applying $\M$ (resp. $\Mr$) on each degree of the
tower above.

Using this, we can define an application
\begin{align*}
\rho_X:& Pic(X)=\Hom_{\Hs}(X_+,B\GG) \\
 & \rightarrow \Hom_{\H}\left(X_+,\pi(B\GG)\right)=\Hom_{\H}(X_+,\PP^\infty) \\
 & \rightarrow \Hom_\T\left(\M(X),\Mr(\PP^\infty)\right)
\end{align*}
where the last group of morphisms denotes by abuse of notations the 
group of morphisms in the category of ind-objects of $\T$ -- and similarly in what
follows.

\rem \label{pfou}
Note that the sequence $(\L_n)_{n \in \NN}$ of line bundles is sent by
$\rho_{\PP^\infty}$ to the canonical projection
$\M(\PP^\infty) \rightarrow \Mr(\PP^\infty)$ -- this follows
from the construction of the isomorphism of \emph{loc. cit.}, prop. 1.16.

Recall that $\un(1)[2]=\Mr(\PP^1)$ in $\T$.
Let $\pi:\M(\PP^1) \rightarrow \Mr(\PP^1)$ be the canonical
projection and
 $\iota:\PP^1 \rightarrow \PP^\infty$ be the canonical 
 morphism of pointed ind-schemes.
We introduce the following two sets~:
\begin{enumerate}
\item[$(S_1)$] The transformations 
$c_1:\pic(X) \rightarrow \Hom_\T(\M(X),\un(1)[2])$
natural in the smooth scheme $X$
such that $c_1(\L_1)=\pi$.
\item[$(S_2)$] The morphisms $c'_1:\Mr(\PP^\infty) \rightarrow \Mr(\PP^1)$
such that $c'_1 \circ \iota_*=1$.
\end{enumerate}
We define the following applications~:
\begin{enumerate}
\item $\varphi:(S_1) \rightarrow (S_2)$. \\
Consider an element $c_1$ of $(S_1)$.
The collection $\big(c_1(\L_n)\big)_{n \in \NN}$
defines a morphism $\M(\PP^\infty) \rightarrow \Mr(\PP^1)$.
Moreover, the restriction of this latter morphism
$\Mr(\PP^\infty) \rightarrow \Mr(\PP^1)$
is obviously an element of $(S_2)$,
denoted by $\varphi(c_1)$.
\item $\psi:(S_2) \rightarrow (S_1)$. \\
Let $c'_1$ be an element of $(S_2)$. For any smooth scheme $X$, 
we define 
$$
\psi(c'_1):\pic(X) \xrightarrow{\rho_X} \Hom_\T\left(\M(X),\Mr(\PP^\infty)\right)
 \xrightarrow{c'_{1*}} \Hom_\T\left(\M(X),\Mr(\PP^1)\right).
$$
Using remark \ref{pfou}, we check easily that $\psi(c'_1)$ belongs to $(S_1)$.
\end{enumerate}
The following lemma is obvious from these definitions~:
\begin{lm}
Given, the hypothesis and definitions above,
 $\varphi \circ \psi=1$.
\end{lm}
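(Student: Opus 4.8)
The plan is to unwind the definitions of $\psi$ and $\varphi$ and reduce the statement to Remark~\ref{pfou} together with the elementary fact that $\Mr(\PP^\infty)$ is a canonical direct summand of $\M(\PP^\infty)$. Concretely, the point at infinity $\infty\colon S\to\PP^\infty$ is a section of the structural morphism, so $\M(\infty)$ is a split monomorphism and the triangle $\M(S)\to\M(\PP^\infty)\xrightarrow{p}\Mr(\PP^\infty)\to\M(S)[1]$ splits; this exhibits $\Mr(\PP^\infty)$ as the summand of $\M(\PP^\infty)$ complementary to $\M(S)$, with $p$ the projection and a canonical section $s\colon\Mr(\PP^\infty)\to\M(\PP^\infty)$ satisfying $p\circ s=1$. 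It is precomposition with this $s$ that is meant by "restriction" in the definition of $\varphi$.

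Fix $c'_1\in(S_2)$. First I would evaluate $\psi(c'_1)$ on the tautological line bundles $\L_n$: by definition $\psi(c'_1)(\L_n)=c'_1\circ\rho_{\PP^n}(\L_n)$. Since $\rho$ is natural in the smooth scheme and the canonical line bundle of $\PP^{n+1}$ restricts along the hyperplane inclusion $\iota_n\colon\PP^n\to\PP^{n+1}$ to the canonical line bundle of $\PP^n$, the family $\big(\rho_{\PP^n}(\L_n)\big)_n$ is compatible with the transition maps $\M(\iota_n)$ and hence assembles into a morphism of ind-objects $\M(\PP^\infty)\to\Mr(\PP^\infty)$; by Remark~\ref{pfou} this morphism is exactly $p$. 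Post-composing with $c'_1$, the family $\big(\psi(c'_1)(\L_n)\big)_n$ assembles into $c'_1\circ p\colon\M(\PP^\infty)\to\Mr(\PP^\infty)\to\Mr(\PP^1)$ — and this is precisely the morphism $\M(\PP^\infty)\to\Mr(\PP^1)$ out of which $\varphi(\psi(c'_1))$ is formed.

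Then I would apply the definition of $\varphi$, which restricts this assembled morphism along $s$, giving $\varphi(\psi(c'_1))=c'_1\circ p\circ s$. Invoking $p\circ s=1$ from the first paragraph yields $\varphi(\psi(c'_1))=c'_1$; since $c'_1\in(S_2)$ was arbitrary, $\varphi\circ\psi=1$.

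There is no genuinely hard step here — the argument is entirely formal — but the mildly delicate point is the bookkeeping with morphisms of ind-objects in the second paragraph: one must check that assembling the family $\big(\rho_{\PP^n}(\L_n)\big)_n$ into a morphism of ind-objects and then post-composing with $c'_1$ produces the same morphism as post-composing each term and then assembling. This is immediate from the description of morphisms in the category of ind-objects together with the naturality already used to set up $\rho$, which is why the authors are entitled to call the lemma obvious.
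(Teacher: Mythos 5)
Your argument is correct and is exactly the unwinding the paper has in mind when it declares the lemma ``obvious from these definitions'': the only inputs are Remark \ref{pfou} (the family $(\rho_{\PP^n}(\L_n))_n$ assembles to the canonical projection $p$) and the canonical splitting $s$ of $p$ coming from the base point, giving $\varphi(\psi(c'_1))=c'_1\circ p\circ s=c'_1$. Your reading of ``restriction'' as precomposition with the canonical summand inclusion $s:\Mr(\PP^\infty)\to\M(\PP^\infty)$, and your remark on compatibility of assembling ind-morphisms with postcomposition by $c'_1$, are the right (and only) points to make explicit.
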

Thus, an element of $(S_2)$ determines canonically an element of $(S_1)$.
This gives a way to check the axiom (Orient) for a functor $R$ as above.
Moreover, 
 we will see below (cf paragraph \ref{fgl}) that given an element of $(S_1)$,
  we obtain a canonical isomorphism $H^{**}(\PP^\infty)=A[[t]]$
   of bigraded algebra, $t$ having bidegree $(2,1)$.
Then elements of $(S_2)$ are in bijection with the set of generators
of the the bigraded algebra $H^{**}(\PP^\infty)$.
Thus in this case, elements of $(S_2)$ are equivalent to 
\emph{orientations} of the cohomology $H^{**}$ in the classical sense
of algebraic topology.

\begin{ex}
\begin{enumerate}
\item Let $S=\spec k$ be the spectrum of a field,
 or more generally any regular scheme.
In \cite[2.1.4]{CD2},
 D.C. Cisinski and the author introduce the notion of
mixed Weil theory (and more generally of stable theory)
as axioms for cohomology theories on smooth $S$-schemes
which extends the classical axioms of Weil.
Examples of such cohomology theories are algebraic De Rham 
cohomology if $k$ has characteristic $0$,
rigid cohomology if $k$ has caracteristic $p$
and \'etale $l$-adic cohomology in any case,
$l$ being invertible in $k$ (cf part 3 of \emph{loc. cit.}).
To a mixed Weil theory (or more generally a stable theory)
is associated a commutative ring spectrum
(cf \emph{loc. cit.} 2.1.5) and a triangulated closed 
symmetric monoidal category $D_{\AA^1}(S,\mathcal E)$ --
which is obtained by localization of a derived category.
By construction (see \emph{loc. cit.} (1.5.3.1)), we have
a triangulated monoidal symmetric functor
$$
\SH(S) \rightarrow D_{\AA^1}(S,\mathcal E).
$$
In \emph{loc. cit.} 2.2.9,
we associate a canonical element of the set $(S_2)$ for this functor.
Thus the resulting functor $\D \rightarrow D_{\AA^1}(S,\mathcal E)$
 satisfies all the axioms of \ref{axioms}.
\item Consider a noetherian scheme $S$
and the model category of symmetric $T$-spectra $Sp_S$ over $S$
defined by R.~Jardine in \cite{Jar}.
It is a cofibrantly generated, symmetric monoidal model category 
which satisfies the monoid axiom of \cite[3.1]{SwSh} 
(cf \cite[4.19]{Jar} for this latter fact).

A commutative monoid $\E$ in the category $Sp_S$ will be called 
a (homotopy) \emph{coherent} ring spectrum.
Given such a ring spectrum, according to \cite[4.1(2)]{SwSh},
the category of $\E$-modules in 
the symmetric monoidal category $Sp_S$
carries a structure of a cofibrantly generated, 
symmetric monoidal model category such that the pair
of adjoint functors $(F,\mathcal O)$
given by the free $\E$-module functor and the
obvious forgetful functor is a Quillen adjunction.
We denote by $\SH(S;\E)$ the associated homotopy category
and consider the left derived free $\E$-module functor
$$
\SH(S) \rightarrow \SH(S;\E).
$$
It is a triangulated symmetric monoidal functor.
Then, as indicated in the previous remark, 
 an element of $(S_2)$ relative to this functor is
  equivalent to an orientation on the ring spectrum 
   $\E$ in the classical sense (see \cite[3.1]{Vez}).

The basic example of such a ring spectrum is the cobordism ring 
spectrum $\MGl$. Indeed, $\MGl$ has a structure of a coherent
ring spectrum in our sense and is evidently oriented 
(see \cite[1.2.3 and 2.1]{PPR} for details).
Thus the homotopy category $\SH(S;\MGl)$
of $\MGl$-modules satisfies the axioms \ref{axioms}.

Another example is given by the spectrum $\BGL$ 
introduced by Voevodsky in \cite[par. 6.2]{V2}.
According to \emph{loc. cit.}, th. 6.9, it
represents the homotopy invariant algebraic K-theory defined
by Weibel (cf \cite{Wei}). 
However, it is not at all clear to get a coherent structure
on the ring spectrum $\BGL$ with the definition given in 
 \emph{loc. cit}.
To obtain such a coherent ring structure on $\BGL$
we invoke a recent result of Gepner and Snaith
which construct a coherent ring spectrum homotopy equivalent
to $\BGL$ in \cite[5.9]{GS}.
\end{enumerate}
\end{ex}

\section{Chern classes}

\subsection{The projective bundle theorem}

Let $X$ be a smooth scheme
 and $P$ be a projective bundle over $X$ of rank $n$. 
We denote by $p:P \rightarrow X$ the canonical projection
 and by $\L$ the canonical line bundle on $P$.
Put $c=c_1(\L):\M(P) \rightarrow \un(1)[2]$.
We can define a canonical map~:
$$
\epsilon_{P}:=\sum_{0 \leq i \leq n} p_* \gcup c^i:
M(P) \rightarrow \bigoplus_{0 \leq i \leq n} M(X)(i)[2i]
$$
Consider moreover an open subscheme $U \subset X$, $P_U=P \times_X U$.
We let $\pi:P/P_U \rightarrow X/U$ be the canonical projection
 and $\nu:P/P_U \rightarrow (P \times P)/(P \times P_U)$ the
  morphism induced by the diagonal embedding and the graph of 
   the immersion $P_U \rightarrow P$.
Using the product of motives with support 
 (cf \ref{product_with_support}),
  we also define a canonical map~:
$$
\epsilon_{P/P_U}:=\sum_{0 \leq i \leq n} \pi_* \gcup c^i:
M(P/P_U) \rightarrow \bigoplus_{0 \leq i \leq n} M(X/U)(i)[2i]
$$
\begin{lm}
Using the above notations, the following diagram is commutative~:
$$
\xymatrix@R=16pt@C=9pt{
M(P_U)\ar[r]\ar_{\epsilon_{P_U}}[d]\ar@{}|{(1)}[rd]
 & M(P)\ar[r]\ar_{\epsilon_P}[d]\ar@{}|{(2)}[rd]
 & M(P/P_U)\ar[r]\ar_{\epsilon_{P/P_U}}[d]\ar@{}|{(3)}[rd]
 & M(P_U)[1]\ar^{\epsilon_{P_U}}[d] \\
\bigoplus_i M(U)(i)[2i]\ar[r]
 & \bigoplus_i M(X)(i)[2i]\ar[r]
 & \bigoplus_i M(X/U)(i)[2i]\ar[r]
 & \bigoplus_i M(U)(i)[2i+1]
}
$$
where the top (resp. bottom) line is the distinguished 
triangle (resp. sum of distinguished triangles)
 obtained using \emph{(Loc)} (resp. and tensoring with $\un(i)[2i]$).
\end{lm}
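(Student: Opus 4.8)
The statement asserts the commutativity of three squares labelled $(1)$, $(2)$, $(3)$ in a diagram comparing the localization triangle for $(P,P_U)$ with the sum of localization triangles for $(X,U)$, the comparison maps being the $\epsilon$'s built from $p_* \gcup c^i$. The plan is to treat the three squares separately, reducing each to the naturality of the ingredients out of which $\epsilon$ is assembled: the pushforward $p_*$ (really the functoriality of $\M$ applied to the relevant cartesian squares over $X/U$), the Chern class $c = c_1(\L)$ (which by (Orient) is functorial in the scheme, so compatible with the restriction maps $\M(P_U) \to \M(P)$ etc.), the diagonal maps $\delta_*$, $\delta'_*$, $\delta''_*$ coming from (Kun)(a) (whose naturality in open pairs is what \ref{product_with_support} records), and the boundary maps $\partial$ from (Loc), which are natural in $\Delta$ by hypothesis.

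First I would handle square $(1)$. Here both horizontal maps are induced by the open immersion $P_U \to P$ over $U \to X$. Since $c_1(\L|_{P_U})$ is the restriction of $c = c_1(\L)$ by the functoriality clause in (Orient), and since the diagonal map $\delta'_*$ for $P_U$ is compatible with that of $P$ through the open-immersion maps (this is the case $Z = T = $ everything of the compatibility in \ref{product_with_support}, together with (Kun)(a)'s monoidality), each summand $p_{U*} \gcup c_1(\L|_{P_U})^i$ is carried to $p_* \gcup c^i$ composed with the horizontal maps; summing over $i$ gives commutativity of $(1)$. Square $(2)$ is entirely analogous, now with the map $\M(P) \to \M(P/P_U)$ and the projection $\pi_* $ in place of $p_*$: one uses that $\pi$ and $\nu$ are the ``with support'' versions of $p$ and $\delta$, so that $\pi_* \gcup c^i$ restricts correctly; here the relevant instance of \ref{product_with_support} is the genuine cup-product with support, and \eqref{eq:cup_with_support} is the identity that glues the supported picture to the unsupported one.

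The main obstacle is square $(3)$, the one involving the boundary maps $\partial$. There the top boundary is $\partial$ for the pair $(P, P_U)$ and the bottom one is $\bigoplus_i \partial_{X/U} \otimes 1$, and we must show $\epsilon_{P_U}[1] \circ \partial_{P,P_U} = (\text{bottom }\partial) \circ \epsilon_{P/P_U}$. The plan is to write $\epsilon_{P/P_U} = \sum_i \pi_* \gcup c^i$ and push $\partial$ through: naturality of $\partial$ in $\Delta$ (axiom (Loc)) handles the $\pi_*$ part — $\partial$ commutes with the maps induced by the cartesian square relating $P \to X$ to $P_U \to U$ — while the behaviour of $\partial$ under the cup product $\gcup c^i$ requires formula (1) of the residue subsection in the spirit of $(j_* \gcup 1) \partial = \partial \gcup i_*$, i.e.\ that the boundary of a cup product with a \emph{globally defined} class $c^i$ is the cup product of the boundary with $c^i$; concretely this is the compatibility of $\partial$ with the (Kun) diagonal, which is exactly (Kun)(b) applied after tensoring the localization triangle with $\un(i)[2i]$. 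Granting these two compatibilities, the $i$-th summand of $\epsilon_{P_U}[1] \circ \partial_{P,P_U}$ is $\big(p_{U*} \gcup c_1(\L|_{P_U})^i\big) \circ \partial$ and this equals $\big(\partial_{X/U} \otimes 1\big) \circ \big(\pi_* \gcup c^i\big)$, which is the $i$-th summand on the other side. The only real care is bookkeeping of signs and of the shift $[1]$ — the $\partial$'s introduce a degree shift and one must check the Koszul sign introduced by moving $[1]$ past $\un(i)[2i]$ is consistent on both sides — but since (Kun)(b) is stated precisely so that $\partial$ tensors cleanly, no genuine difficulty remains beyond this.
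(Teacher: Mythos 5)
Your proof is correct and follows essentially the same route as the paper: squares (1) and (2) by functoriality of $\M$ (naturality of the projections, diagonals and of $c_1$ under the open immersions), and square (3) by the naturality of $\partial$ in (Loc) combined with axiom (Kun)(b) to pass the boundary through the cup product with the global class $c^i$. The paper's own proof is just a terser statement of exactly these points.
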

\begin{proof}
Coming back to the definition of product and product with supports,
 squares (1) and (2) are commutative by functoriality of $\M$.
For square (3), besides this functoriality, 
we have to use axiom (Kun)(b).
\end{proof}

\begin{thm} \label{th:projbdl}
With the above hypothesis and notations,
 the morphism $\epsilon_{P}:M(P) \rightarrow \bigoplus_{0 \leq i \leq n} M(X)(i)[2i]$
 is an isomorphism in $\T$.
\end{thm}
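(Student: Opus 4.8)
The strategy is to reduce to a trivial bundle and then induct on the rank, following F.~Morel.

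\emph{Reduction to a trivial bundle.} For an open subscheme $U\subseteq X$ with $P_U=P\times_X U$, the preceding lemma presents $\epsilon_P$ as the middle vertical of a morphism of distinguished triangles whose outer verticals are $\epsilon_{P_U}$ and $\epsilon_{P/P_U}$; by the five lemma it is enough to prove the statement for these two. Since a projective bundle is Zariski-locally trivial, one runs a noetherian induction on $X$: choose $U$ dense open with $P_U\cong\PP^n\times U$; the trivial case treated below gives that $\epsilon_{P_U}$ is an isomorphism, and for $\epsilon_{P/P_U}$ — a map between motives with support in $Z=X\setminus U$ — a further dévissage along a smooth stratification of $Z$ (trivializing the restricted bundle on each stratum by (Exc), splitting off connected components by (Add'), and using the localization triangle (Loc) for nested closed subschemes) again reduces everything to the trivial case. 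So we may assume $P=\PP^n\times X$, with projection $p$ and $c=c_1(\L)$ for $\L$ the tautological line bundle.

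\emph{Induction on the rank.} For $n=0$ one has $\epsilon=p_*\gcup c^0=\mathrm{id}_{\M(X)}$. Let $n\ge 1$ and fix the constant section $s\colon X\to\PP^n\times X$, say $s=[0:\dots:0:1]$. Projection away from $s$ realizes $U:=(\PP^n\times X)\setminus s(X)\to\PP^{n-1}\times X$ as an affine line bundle with the standard hyperplane $\iota\colon\PP^{n-1}\times X\hookrightarrow U$ as zero section; by (Htp') the structural map gives $\M(U)\cong\M(\PP^{n-1}\times X)$, and the composite $\PP^{n-1}\times X\hookrightarrow U\hookrightarrow\PP^n\times X$ is $\iota$. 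The localization triangle (Loc) of the closed pair $(\PP^n\times X,s(X))$ then reads
$$
\M(\PP^{n-1}\times X)\xrightarrow{\ \iota_*\ }\M(\PP^n\times X)\xrightarrow{\ q\ }\M_{s(X)}(\PP^n\times X)\xrightarrow{\ \partial\ }\M(\PP^{n-1}\times X)[1].
$$
Applying (Exc) to an affine chart $\AA^n\times X\supseteq s(X)$, then (Kun)(a) to the identity of pairs $(\AA^n,\AA^n\setminus 0)=(\AA^1,\AA^1\setminus 0)^{\wedge n}$ together with $\M(\AA^1/\AA^1\setminus 0)\cong\Mr(\PP^1_S)=\un(1)[2]$ (itself a consequence of (Exc), (Htp) and the definition of the Tate twist), one identifies $\M_{s(X)}(\PP^n\times X)\cong\M(X)(n)[2n]$.

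\emph{Recognition of $\epsilon$ and the main obstacle.} Since $\iota^*\L=\L_{\PP^{n-1}}$ (by (Orient) and functoriality) and $p\iota$ is the projection of $\PP^{n-1}\times X$, naturality of the diagonal yields $(p_*\gcup c^i)\circ\iota_*=p'_*\gcup(c')^i$ for every $i$, where $p',c'$ denote the corresponding data on $\PP^{n-1}\times X$; in particular the first $n$ components of $\epsilon_{\PP^n\times X}\circ\iota_*$ assemble into $\epsilon_{\PP^{n-1}\times X}$, which is an isomorphism by the inductive hypothesis. Hence $\alpha:=\iota_*\circ\epsilon_{\PP^{n-1}\times X}^{-1}$ is a split monomorphism — a retraction being the projection of $\epsilon_{\PP^n\times X}$ onto the first $n$ summands — with cokernel $\M(X)(n)[2n]$ by the triangle above, so $\M(\PP^n\times X)\cong\bigl(\bigoplus_{i<n}\M(X)(i)[2i]\bigr)\oplus\M(X)(n)[2n]$, and with respect to this splitting $\epsilon_{\PP^n\times X}$ is block lower-triangular with identity upper-left block (after absorbing the upper-right block into the choice of section of $q$). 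It is therefore an isomorphism if and only if its bottom-right block — namely $p_*\gcup c^n$ composed with a chosen section of $q$ — is an automorphism of $\M(X)(n)[2n]$. This last assertion, that the top Chern power $c^n=c_1(\L)^n$ ``integrates along $p$ to a unit'' near the section, is the only non-formal point: it is precisely what the elegant lemma of F.~Morel is designed to supply, extracting it from the local triviality of $\L$ on the affine chart and the normalisation $c_1(\L_1)=\text{canonical projection}$ of (Orient), without any explicit computation of the residue $\partial$. I expect this recognition step to be the crux of the proof; the reductions in the first two paragraphs use only the axioms.
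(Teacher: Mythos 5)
You have organized the proof correctly and you have put your finger on the right pressure point, but you have not proved it: the assertion that the bottom-right block — $p_*\gcup c^n$ composed with a section of $q$, equivalently the endomorphism of $\un(n)[2n]$ induced by $c^n$ through $\M_{s(X)}(\PP^n\times X)\simeq\M(X)(n)[2n]$ — is an automorphism is exactly the non-formal heart of the theorem, and your proposal only says that Morel's lemma ``is designed to supply'' it. The mechanism you gesture at (triviality of $\L$ on the affine charts plus the normalization $c_1(\L_1)=$ canonical projection) gives, by the long exact sequences of (Loc) and cup products with support, only that $c^n$ \emph{factors} through $\M(\PP^n/\cup_i\bar U_i)\simeq\un(n)[2n]$; nothing in the axioms up to this point identifies the factored map, which a priori could fail to be invertible, and then your block-triangular reduction proves nothing. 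The paper closes this gap with Morel's lemma: the square relating the $n$-fold diagonal $\delta_{n*}:\Mr(\PP^n)\to\Mr(\PP^n)^{\otimes,n}$, the projection $\pi_n:\Mr(\PP^n)\to\M(\PP^n/\PP^{n-1})$, the isomorphism $\tau_n$ onto $\Mr(\PP^1)^{\otimes,n}$ and $(\iota_{1,n*})^{\otimes,n}$ commutes. This is proved by restricting the diagonal to $\M(\AA^n/\cup_i U_i)$ and exhibiting an explicit $\AA^1$-homotopy between that restriction and the product of the coordinate-axis embeddings $\iota^{(i)}$, using (Htp), (Kun)(a) and contractibility of the charts $\bar U_i$. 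Combined with $c_{1,n}\circ\iota_{1,n*}=c_1(\L_1)$ from (Orient), it yields both $c_{1,n}^n=\tau_n\pi_n$ and $c_{1,n-1}^n=0$, i.e. precisely the invertibility (and the vanishing that makes your matrix genuinely triangular) that you need. Until you state and prove this lemma, or an equivalent computation, the proof is incomplete.

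A secondary point: your reduction to the trivial bundle is shakier than necessary. Handling $\epsilon_{P/P_U}$ by a ``smooth stratification of $Z=X\setminus U$'' is not justified: over a general noetherian base $S$ an arbitrary closed subscheme need not admit a stratification by smooth $S$-schemes, the axioms only speak of smooth schemes, and (Exc) does not let you trivialize $P$ on (a neighborhood of) a stratum. The paper never touches the closed complement: for an open cover $X=U\cup V$ with the statement known over $U$, $V$ and $U\cap V$, the compatibility lemma gives $\epsilon_{P_V/P_{U\cap V}}$, (Exc) transports this to $\epsilon_{P/P_U}$, and the lemma again gives $\epsilon_P$; a noetherian induction over a trivializing open cover then reduces to $P=\PP^n_X$, and (Kun)(a) further to $X=S$. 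You should replace your d\'evissage by this Mayer--Vietoris induction.
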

\begin{proof}
Consider an open cover $X=U \cup V$, $W=U \cap V$. 
Assume that $\epsilon_{P_U}$, $\epsilon_{P_V}$ and
$\epsilon_{P_W}$ are isomorphisms.
Then according to the previous lemma, $\epsilon_{P_V/P_W}$ is
an isomorphism. Using the compatibility of the first
Chern class with pullback, we obtain a commutative diagram
$$
\xymatrix@C=16pt@R=10pt{
M(P_V/P_W)\ar[r]\ar_{\epsilon_{P_V/P_W}}[d]
 & M(P/P_U)\ar^{\epsilon_{P/P_U}}[d] \\
\bigoplus_i M(V/W)(i)[2i]\ar[r]
 & \bigoplus_i M(X/U)(i)[2i]
}
$$
where the horizontal maps are obtained by functoriality.
According to axiom (Exc), these maps are isomorphisms which
implies $\epsilon_{P/P_U}$ is an isomorphism.
Applying ance again the previous lemma, we deduce that
$\epsilon_P$ is an isomorphism. \\
This reasoning shows that we can argue locally on $X$
and assume $P$ is trivializable as a projective bundle over $X$. 
Then, 
as the map depends only on the isomorphism class of the projective bundle $P$,
we can assume $P=\PP^n_X$.
Finally, by property (Kun)(a),
$\epsilon_{\PP^n_X}=M(X) \otimes \epsilon_{\PP^n}$ and we can assume $X=S$.
Put simply $\epsilon_n=\epsilon_{\PP^n}$.

For $n=0$, the statement is trivial. Assume $n>0$.
Recall we consider the scheme $\PP^n$ pointed by the infinite point.
The morphism $\epsilon_n$
induces a map
$\Mr(\PP^n) \rightarrow \oplus_{0 < i \leq n} \Mr(\PP^1)^{\otimes,i}$
still denoted by $\epsilon_n$
and we have to prove this later is an isomorphism.
Put $c_{1,n}=c_1(\L_n)$ for any integer $n \geq 0$.

The canonical inclusion $\PP^{n-1} \rightarrow \PP^n-\{0\}$ is the zero section 
of a vector bundle. 
For any integer $i \in [1,n]$, we put
 $U_i=\{(x_1,...,x_n) \in \AA^n \mid x_i \neq 0\}$
  considered as an open subscheme of $\AA^n$.
We obtain the canonical isomorphism denoted by $\tau_n$~:
\begin{align*}
\M(\PP^n/\PP^{n-1}) & \stackrel{(1)}{\simeq} \M(\PP^n/\PP^n-\{0\})
 \stackrel{(2)}\simeq \M(\AA^n/\AA^n-\{0\}) \\
 & =\M(\AA^n/\cup_i U_i)
 \stackrel{(3)}=\M(\AA^1/\AA^1-\{0\})^{\otimes,n}
  \stackrel{(4)}\simeq \Mr(\PP^1)^{\otimes,n}
\end{align*}
where (1) follows from (Htp) and (Loc), (2) from (Exc), (3) from (Kun)(a)
and (4) from (Exc), (Htp) and (Loc).

Consider the following diagram
\begin{equation} \label{diag_proof_prjbdlthm}
\xymatrix@R=14pt@C=25pt{
\Mr(\PP^{n-1})\ar^{\iota_{n-1*}}[r]\ar_{\epsilon_{n-1}}[d]\ar@{}|{(a)}[rd]
 & \Mr(\PP^n)\ar^/-7pt/{\pi_n}[r]\ar_{\epsilon_{n}}[d]\ar@{}|{(b)}[rd]
 & \M(\PP^n/\PP^{n-1})\ar^{\tau_n}[d] \\
\oplus_{0 < i < n} \Mr(\PP^1)^{\otimes,i}\ar[r]
 & \oplus_{0 < i \leq n} \Mr(\PP^1)^{\otimes,i}\ar[r]
 & \Mr(\PP^1)^{\otimes,n}
}
\end{equation}
where $\iota_{n-1}$ is the canonical inclusion, 
$\pi_n$ is the obvious morphism obtained by functoriality in $\D$,
and the bottom line is made up of the evident split distinguished triangle.
We prove by induction on $n>0$ the following statement~:
\begin{equation} \label{proof_projbdlthm}
\begin{split}
\text{(i) } & \iota_{n-1*} \text{ is a split monomorphism.} \\
\text{(ii) } & c_{1,n-1}^n=0 \text{ which means square (a) is commutative.} \\
\text{(iii) } & c_{1,n}^n=\tau_n \pi_n \text{ which means square (b) is commutative.} \\
\text{(iv) } & \epsilon_n \text{ is an isomorphism.}
\end{split}
\end{equation}
For $n=1$, this is obvious as (iii) is a part of axiom (Orient).

The induction relies on the following lemma due to Morel.
\begin{lm}
Let $\delta_n:\PP^n \rightarrow (\PP^n)^n$ be the iterated $n$-th diagonal
of $\PP^n/S$ and denote by $\delta_{n*}:\Mr(\PP^n) \rightarrow \Mr(\PP^n)^{\otimes,n}$
the morphism induced by $\delta_n$ and axiom \emph{(Kun)(a)}. 
Let $\iota_{1,n}:\PP^1 \rightarrow \PP^n$ be the canonical inclusion. \\
\indent Then the following square commutes~:
$$
\xymatrix@=12pt{
\Mr(\PP^n)\ar^/-4pt/{\delta_{n*}}[r]\ar_{\pi_n}[d]
 & \Mr(\PP^n)^{\otimes,n} \\
\M(\PP^n/\PP^{n-1})\ar^/2pt/{\tau_n}[r]
 & \Mr(\PP^1)^{\otimes,n}.\ar_{(\iota_{1,n*})^{\otimes,n}}[u]
 }
$$
\end{lm}
%As it is a central fact, 
%we recall the proof to the reader though it is essentially the same
%in our setting than the proof given in \cite{Mor1}. \\
Consider an integer $i \in [1,n]$ and let $\bar U_i$ be
 the open subscheme of $\PP^n$ made of points $(x_1:...:x_n:x_{n+1})$
such that $x_i \neq 0$ and put $\Omega_i=\PP^{i-1} \times U_i \times \PP^{n-i}$. \\
\indent 
We consider the following commutative diagram~:
$$
\xymatrix@R=12pt@C=20pt{
\Mr(\PP^n)\ar^/-12pt/{(1)}[r]\ar^{\pi_n}[d]
 & \M\big((\PP^n)^n/\cup_i \Omega_i\big) & \\
\M(\PP^n/\PP^{n-1})\ar^\sim[r]
 & \M\big(\PP^n/\cup_i \bar U_i\big)\ar[u]
 & \M(\AA^n/\cup_i U_i)\ar_/-6pt/{(2)}[lu]\ar_\sim[l]
 }
$$
where the map (1) is induced by $\delta_n$, 
the maps on the lower horizontal line are isomorphisms given
 respectively by the inclusions
  $\PP^{n-1} \subset \cup_i \bar U_i$ and $U_i \subset \bar U_i$. \\
\indent
Consequently, the map (2) is induced by the restriction of $\delta_n$.
However, this map is $\AA^1$-homotopic to the product 
$\iota^{(1)}\times ... \times \iota^{(n)}$ where
$\iota^{(i)}:\AA^1 \rightarrow \PP^n$ is the embedding defined by $\iota^{(i)}(x)=(x_1:...:x_{n+1})$ with $x_j=0$ if $j \notin \{i,n+1\}$, $x_i=x$, $x_{n+1}=1$. It follows from property (Htp) and (Kun)(a) that
the map (2) is equal to the morphism
$$
\M(\AA^1/\AA^1-\{0\})^{\otimes,n}
 \xrightarrow{\iota^{(1)}_* \otimes ... \otimes \iota^{(n)}_*}
  \M(\PP^n/\bar U_1) \otimes ... \otimes \M(\PP^n/\bar U_n).
$$
Note finally the scheme $\bar U_i \simeq \AA^n$ is contractible
and, from property (Htp), the corresponding map 
$\iota^{(i)}_*:\M(\AA^1/\AA^1-\{0\}) \rightarrow \Mr(\PP^n)$
does not depend on the integer $i$. 
Thus the preceding commutative diagram together with the identifications 
just described allows to conclude.

With that lemma in hand, we conclude as follows. 
Suppose the property \eqref{proof_projbdlthm} is true for $n-1$. \\
\noindent
The composite map $\left(\sum_{0<i<n} p_* \gcup c_n^i\right) \circ \iota_{n-1}$
is equal to $\epsilon_{n-1}$ as $c_n \circ \iota_{n-1*}=c_{n-1}$. 
This shows \eqref{proof_projbdlthm}(i). 
Then, the preceding lemma implies properties (ii) and (iii).
Now, using (Loc) and (Sym), the upper horizontal line of 
diagram \eqref{diag_proof_prjbdlthm} is a split distinguished triangle which
concludes.
\end{proof}

Using axiom (Stab), we obtain the following corollary~:
\begin{cor}
\label{cor:projbdl_formula}
Consider the hypothesis and notations of the previous theorem. \\
Then $H^{**}(P)$ is a free $H^{**}(X)$-module with base $1,...,c^n$.
% and $H_{**}(P)$ is the dual $H^{**}(X)$-module of $H^{**}(P)$
%  with dual base given by $(c^i \ncap .)_{0 \leq i \leq n}$.

Let $\E$ be a motive.
\begin{enumerate}
\item The map
$$
\E^{**}(X) \otimes_{H^{**}(X)} H^{**}(P) \rightarrow \E^{**}(P),
x \otimes \lambda \rightarrow \lambda.p^*(x)
$$
is an isomorphism. If moreover $\E$ has a ringed motive structure,
 it is an isomorphism of $\E^{**}(X)$-algebra.
\item Considering the $H^{**}(X)$-module structure on $\E_{**}(X)$
 (cf the end of \ref{general_cup&slant}), the map
$$
\E_{**}(P) \rightarrow \bigoplus_{0 \leq i \leq n} \E_{**}(X),
 \varphi \mapsto \sum_i c^i \ncap p_*(\varphi)
$$
is an isomorphism.
%With the notations of \ref{general_cup&slant}, the map
%$$
%\E_{**}(X) \otimes_{H^{**}(X)} H_{**}(X) \rightarrow \E_{**}(P),
%\varphi \otimes x \rightarrow x \ncap p_*(y)
%$$
%is an isomorphism.
\end{enumerate}
\end{cor}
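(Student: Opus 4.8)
The plan is to deduce all three assertions by applying a representable (co)homological functor to the isomorphism $\epsilon_P\colon \M(P) \xrightarrow{\ \sim\ } \bigoplus_{0 \leq i \leq n} \M(X)(i)[2i]$ of Theorem \ref{th:projbdl} and then translating the resulting abstract isomorphism into the stated explicit formula by unwinding the definition of the products and of the $H^{**}$-module structures recalled in \ref{general_cup&slant}.

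For the cohomological statements, fix a motive $\E$ and a bidegree $(n,p)$ and apply $\Hom_\T(-,\E(p)[n])$ to $\epsilon_P$. Using axiom (Stab) to identify $\Hom_\T(\M(X)(i)[2i],\E(p)[n]) = \E^{n-2i,p-i}(X)$ and letting $(n,p)$ range over $\ZZ \times \ZZ$, one obtains an isomorphism $\bigoplus_{i=0}^{n} \E^{**}(X) \xrightarrow{\ \sim\ } \E^{**}(P)$. Since the component of $\epsilon_P$ landing in $\M(X)(i)[2i]$ is the product $p_* \gcup c^i$, precomposing with $\epsilon_P$ a class $x\in\E^{**}(X)$ placed in the $i$-th summand yields $x\circ(p_* \gcup c^i)$, which by the definition of the $H^{**}(P)$-module structure on $\E^{**}(P)$ (paragraph \ref{general_cup&slant}) and the functoriality of $\M$ is precisely $c^i\cdot p^*(x)$ (up to the standard twist and commutativity identifications). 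Hence the isomorphism above is $(x_i)_i \mapsto \sum_{i} c^i\cdot p^*(x_i)$. Specializing to $\E=\un$ gives $H^{**}(P)=\bigoplus_{i=0}^{n} c^i\cdot p^*\big(H^{**}(X)\big)$, which is statement~(1). For statement~(2), expand $\lambda\in H^{**}(P)$ in the basis $1,c,\dots,c^n$ furnished by~(1); using associativity of the module action and the compatibility of cup-products with pullback one checks that the map $x\otimes\lambda \mapsto \lambda\cdot p^*(x)$ is identified, through the induced decomposition $\E^{**}(X)\otimes_{H^{**}(X)}H^{**}(P) = \bigoplus_i \E^{**}(X)$, with the isomorphism just constructed; hence it is an isomorphism. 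When $\E$ is a ringed motive, both sides carry $\E^{**}(X)$-algebra structures via $p^*$ and the map is multiplicative because $p^*$ is a ring homomorphism and cup-product is associative, which yields the algebra statement.

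For statement~(3), apply instead the covariant functor $\Hom_\T(\un(p)[n],\E\otimes-)$ to $\epsilon_P$. Since $\E\otimes-$ is triangulated it carries $\epsilon_P$ to an isomorphism, and (Stab) identifies $\Hom_\T\big(\un(p)[n],\E\otimes \M(X)(i)[2i]\big) = \E_{n-2i,p-i}(X)$; letting $(n,p)$ vary one gets $\E_{**}(P)\xrightarrow{\ \sim\ }\bigoplus_{i=0}^n \E_{**}(X)$. Tracing $\varphi\in\E_{**}(P)$ through $1_\E\otimes\epsilon_P = \sum_i \big(1_\E\otimes(p_* \gcup c^i)\big)$ and unwinding the definition of the pushforward $p_*\colon\E_{**}(P)\to\E_{**}(X)$ together with the $H^{**}(X)$-module (cap) structure on $\E_{**}(X)$ from \ref{general_cup&slant}, the $i$-th component of the image of $\varphi$ is $c^i\ncap p_*(\varphi)$; this is the asserted map, and it is an isomorphism.

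The main obstacle will be the two translation steps: matching the maps produced formally from $\epsilon_P$ with the explicit expressions $\sum_i c^i\cdot p^*(-)$ and $\varphi\mapsto\big(c^i\ncap p_*(\varphi)\big)_i$. This requires careful bookkeeping with the product constructions $\gcup$, $\ncup$, $\ncap$ and the module structures of \ref{general_cup&slant}, with the twist identifications supplied by (Stab), and --- in order to pass from a direct sum to the tensor product over $H^{**}(X)$ in~(2) --- with the $H^{**}(X)$-linearity of $\epsilon_P$, which itself rests on the functoriality of $\M$ and on the compatibility of $c_1$ with pullback already exploited in the proof of Theorem \ref{th:projbdl}.
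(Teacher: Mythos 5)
Your proposal is correct and follows exactly the route the paper intends: the paper derives this corollary simply by applying the representable cohomology/homology functors to the isomorphism $\epsilon_P$ of Theorem \ref{th:projbdl} and using axiom (Stab) to identify the twisted groups, which is precisely your argument, including the bookkeeping that identifies the components $x\circ(p_*\gcup c^i)$ with $c^i\cdot p^*(x)$ and their homological analogues.
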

%Note that if $\E$ is a ringed motive, 
%the cup-product (cf \ref{products&ringed_spectra})
%defines a structure of bigraded commutative ring on 
%$\E^{**}(X)$ and the map of (1) (resp. (2)) is an isomorphism of
%bigraded $\E^{**}(X)$-algebra (resp. $\E^{**}(X)$-module).

\begin{rem}
It can be seen actually that the first assertion of this corollary
is equivalent to the fact $H^{n,m}(\M(X)(r))=H^{n,m-r}(\M(X))$
which is a weak form of the stability axiom (Stab).
\end{rem}

A corollary of the projective bundle theorem is the following result,
classical in topology and first exploited in the homotopy category of schemes
by Morel~:
\begin{cor}
Consider the permutation isomorphism
$\eta:\un(1) \otimes \un(1) \rightarrow \un(1) \otimes \un(1)$
in the symmetric monoidal category $\T$.
Then $\eta=1$.

Let $\E$ be a ringed motive and $X$ be a smooth scheme. \\
 For any $x \in \E^{n,p}(X)$ and $y \in \E^{m,q}(X)$,
$x \ncup y=(-1)^{nm}.y \ncup x$.
\end{cor}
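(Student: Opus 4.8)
The plan is to derive both assertions from the projective bundle theorem (Theorem~\ref{th:projbdl}). For the first, one reduces it to a statement about the braiding of $\T$ on $\Mr(\PP^1)^{\otimes 2}$: since $\un(1)=\Mr(\PP^1)[-2]$, there is a canonical identification $\un(1)\otimes\un(1)=\Mr(\PP^1)^{\otimes 2}[-4]$ under which, by the Koszul sign rule for shifts, $\eta$ corresponds to $(-1)^{(-2)(-2)}=1$ times the braiding $\sigma:=\sigma_{\Mr(\PP^1),\Mr(\PP^1)}$, shifted by $[-4]$. Thus it is enough to show that $\sigma$ is the identity on $\Mr(\PP^1)^{\otimes 2}$.

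To get hold of $\sigma$ geometrically, I would use the chain of natural isomorphisms $\tau_2\colon M(\PP^2/\PP^1)\xrightarrow{\sim}\Mr(\PP^1)^{\otimes 2}$ already built in the proof of Theorem~\ref{th:projbdl} (replace the hyperplane $\PP^1=\{x_0=0\}$ by the point $o=[1:0:0]$ via (Htp) and (Loc); pass to the chart $\AA^2=\{x_0\neq 0\}$ via (Exc); split $M(\AA^2/\AA^2\setminus\{o\})=M(\AA^1/\AA^1\setminus\{0\})^{\otimes 2}$ via (Kun)(a); identify each factor with $\Mr(\PP^1)$ by (Exc), (Htp), (Loc)). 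Let $\rho$ be the automorphism of $\PP^2$ swapping the last two homogeneous coordinates, $[x_0:x_1:x_2]\mapsto[x_0:x_2:x_1]$; it preserves $\PP^1$ and $\AA^2$, fixes $o$, and acts on $\AA^2=\AA^1\times\AA^1$ by the bare coordinate swap. Since every map in the chain $\tau_2$ is natural, $\rho$ is compatible with all of them, and at the final stage the coordinate swap induces exactly the braiding of the two tensor factors (this is the one point where the \emph{symmetry}, not merely the monoidality, of the functor underlying (Kun)(a) is used); hence $\tau_2$ carries the endomorphism $\rho_*$ of $M(\PP^2/\PP^1)$ to $\sigma$. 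Now $\rho$, being induced by a linear automorphism of $\AA^3$, satisfies $\rho^*\L_2\cong\L_2$, so functoriality of $c_1$ (axiom (Orient)) gives $\rho^*c=c$ for $c:=c_1(\L_2)$, hence $\rho^*c^i=c^i$ for $i=0,1,2$ — this only uses that pullback is a ring homomorphism for the cup product, never commutativity. As $\epsilon_{\PP^2}\colon M(\PP^2)\xrightarrow{\sim}\bigoplus_{i=0}^2\un(i)[2i]$ is an isomorphism whose $i$-th component is $p_*\gcup c^i=c^i$, these equalities read $\epsilon_{\PP^2}\circ\rho_*=\epsilon_{\PP^2}$, whence $\rho_*=\mathrm{id}$ on $M(\PP^2)$. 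Finally, the distinguished triangle $M(\PP^1)\to M(\PP^2)\xrightarrow{q}M(\PP^2/\PP^1)\to M(\PP^1)[1]$ is split (by the proof of Theorem~\ref{th:projbdl}), and the induced endomorphism of $M(\PP^2/\PP^1)$, which is $\rho_*$, satisfies $\rho_*\circ q=q\circ\mathrm{id}=q$; composing with a section of $q$ gives $\rho_*=\mathrm{id}$. Therefore $\sigma=\mathrm{id}$ and $\eta=1$.

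For the second assertion I would use the first together with the usual Koszul-sign manipulation in a symmetric monoidal category. With $x\colon M(X)\to\E(p)[n]$ and $y\colon M(X)\to\E(q)[m]$ one has $x\ncup y=\mu\circ(x\otimes y)\circ\delta_*$, the product $\mu$ understood after moving twists and shifts outside. The diagonal is symmetric, so $\delta_*$ is fixed by the braiding of $M(X)^{\otimes 2}$; naturality of the braiding then converts $y\otimes x$ back into $x\otimes y$ at the cost of the braiding on $\E(p)[n]\otimes\E(q)[m]$, which decomposes into its action on the two copies of $\E$ (absorbed by commutativity of $\mu$), on $\un(p)\otimes\un(q)$ (the identity, being a composite of transpositions each equal to $\eta=1$ by the first part), and on the shifts (the Koszul sign $(-1)^{nm}$). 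This yields $y\ncup x=(-1)^{nm}\,x\ncup y$, equivalently the stated formula.

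I do not expect a conceptual obstacle; the delicate point is bookkeeping. One must check that $\tau_2$ is genuinely $\rho$-equivariant with \emph{no} spurious sign twists introduced at any step — especially the Künneth step, which needs the symmetry of the monoidal functor and not just its monoidality — and one must track the shift signs so that $\eta$ comes out as $+\sigma[-4]$ rather than $-\sigma[-4]$; the analogous verification that $\sigma_{\un(p),\un(q)}$ is a composite of copies of $\eta$, hence trivial, is the corresponding point in the second part.
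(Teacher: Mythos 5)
Your proof is correct, but for the key assertion $\eta=1$ you take a genuinely different route from the paper. The paper's argument is three lines: it invokes the general graded-commutativity formula $x\ncup y=(-1)^{nm}\eta^{pq}\,y\ncup x$ (valid in any symmetric monoidal triangulated setting), applies it with $x=y=c_1(\L_2)$ on $\PP^2$ to get $c^2=\eta.c^2$, and concludes $\eta=1$ because, by corollary \ref{cor:projbdl_formula}, $H^{**}(\PP^2)$ is free over $A$ on $1,c,c^2$, so $c^2$ is a free generator and $(\eta-1)\cdot c^2=0$ forces $\eta=1$. You instead realize $\eta$ geometrically: you identify it with the action of the coordinate-swap automorphism $\rho$ of $\PP^2$ on the top cell $M(\PP^2/\PP^1)\simeq\Mr(\PP^1)^{\otimes 2}$ via the chain $\tau_2$ from the proof of theorem \ref{th:projbdl}, and you kill it by showing $\rho_*=\mathrm{id}$ on $M(\PP^2)$ (since $\rho^*\L_2\cong\L_2$ gives $\epsilon_{\PP^2}\circ\rho_*=\epsilon_{\PP^2}$ and $\epsilon_{\PP^2}$ is an isomorphism), then descending along the split epimorphism $q$. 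Both arguments hinge on the orientation through theorem \ref{th:projbdl}, as they must (in $\SH(S)$ the swap on $\un(1)^{\otimes 2}$ is \emph{not} the identity); the paper's buys brevity by citing the freeness statement, while yours makes visible exactly where orientation kills the permutation, at the cost of the equivariance bookkeeping along $\tau_2$ and of the explicit appeal to the \emph{symmetric} compatibility of the K\"unneth isomorphism and to the Koszul convention for braiding versus shift --- but these are the same implicit compatibilities the paper's general formula (cocommutativity of $\delta_*$, the sign $(-1)^{nm}$) already relies on, so you are not assuming more than the paper does. Your deduction of the second assertion is the standard Koszul manipulation and coincides with the paper's, which simply cites the general formula with $\eta=1$.
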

\begin{proof}
In general, for $x \in \E^{n,p}(X)$ and $y \in \E^{m,q}(X)$, we have
$x \ncup y=(-1)^{nm} \eta^{pq}.y \ncup x$.
In particular, when $X=\PP^2$ and $c=c_1(\L_2)$, we get $c^2=\eta.c^2$.
This implies $\eta=1$ from the previous corollary and the other assertion
follows.
\end{proof}

\subsection{The associated formal group law}

\num \label{fgl}
Put $H^{**}(\PP^\infty)=\plim{n>0} H^{**}(\PP^n)$.
Then corollary \ref{cor:projbdl_formula} together with the relation
\eqref{proof_projbdlthm}(ii) implies $H^{**}(\PP^\infty)=A[[c]]$,
free ring of power series over $A$ with generator
 $c=(c_{1,n})_{n>0}$ of degree $(2,1)$.
Moreover, $H^{**}(\PP^\infty \times \PP^\infty)=A[[x,y]]$.
 
Consider the Segre embeddings
 $\sigma_{n,m}:\PP^n \times \PP^m \rightarrow \PP^{n+m+nm}$
for $(n,m) \in \NN^2$
and the induced map on ind-schemes
 $\sigma:\PP^\infty \times \PP^\infty \rightarrow \PP^\infty$.
Then the map
$\sigma^*:H^{**}(\PP^\infty)
  \rightarrow H^{**}(\PP^\infty \times \PP^\infty)$
corresponds to a power series
$$
F=\sum_{i,j} a_{ij}.x^iy^j \in A[[x,y]]
$$
which according to the classical situation\footnote{Recall these properties follows 
from the fact that the coefficients $a_{ij}$ for $i\leq n$, $j\leq m$ 
are determined by the map $\sigma_{n,m}$. The reader can find a more detailed proof 
in \cite{LM}, proof of cor. 10.6.}
in algebraic topology
is a commutative formal group law~:
$$
F(x,0)=x, F(x,y)=F(y,x), F(x,F(y,z))=F(F(x,y),z).
$$
For any $(i,j) \in \NN^2$, the element $a_{i,j} \in A$
 is of homological degree $(2(i+j-1),i+j-1)$
 and the first two relations above are equivalent to
$$
a_{0,1}=1, a_{0,i}=0 \text{ if } i \neq 1, a_{i,j}=a_{j,i}.
$$
Recall also there is a formal inverse associated to $F$,
 that is a formal power series $m \in A[[x]]$ such
  that $F(x,m(x))=0$.
We can find the notation $x+_Fy=F(x,y)$ in the litterature.
For an integer $n \geq 0$,
we put $[n]_F \cdot x=x+_F...+_Fx$,
that is the power series in $x$ equal to
the formal $n$-th addition of $x$ with itself.
These notations will be fixed through the rest of the article.

\begin{prop} \label{nilpotence&FGL}
Let $X$ be a smooth scheme.
\begin{enumerate}
\item For any line bundle $L/X$,
 the class $c_1(L)$ is nilpotent in $H^{**}(X)$.
\item Suppose $X$ admits an ample line bundle.
 For any line bundles $L,L'$ over $X$,
$$
c_1(L_1 \otimes L_2)=F(c_1(L_1),c_1(L_2)) \in H^{2,1}(X).
$$
\end{enumerate}
\end{prop}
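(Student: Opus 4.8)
The plan is to reduce both assertions to statements about $\PP^\infty$, where the formal group law $F$ was \emph{defined}, by exploiting the universal property of line bundles. First I would recall that over a smooth scheme $X$ admitting an ample line bundle $L_0$, any line bundle $L$ becomes, after twisting by a high power of $L_0$, generated by its global sections, hence is the pullback of $\L_n^\vee$ (the dual of the canonical line bundle) along some morphism $f\colon X\to\PP^n$ for $n$ large; more precisely a pair $(L_1,L_2)$ of line bundles is classified, after such a twist, by a morphism $X\to\PP^N\times\PP^M$ and their tensor product corresponds to the Segre composite $X\to\PP^N\times\PP^M\xrightarrow{\sigma}\PP^{N+M+NM}$. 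The point is that the orientation $c_1$ is functorial in $X$ (axiom (Orient)), so pulling back the universal identity $\sigma^*(c)=F(x,y)$ living in $H^{**}(\PP^\infty\times\PP^\infty)=A[[x,y]]$ along this classifying map yields exactly $c_1(L_1\otimes L_2)=F(c_1(L_1),c_1(L_2))$ in $H^{**}(X)$ — provided the right-hand side makes sense, i.e. provided $c_1(L_1)$ and $c_1(L_2)$ are nilpotent. This is why part (1) must be proved first: it guarantees $F(c_1(L_1),c_1(L_2))$ is a finite sum and that the substitution $A[[x,y]]\to H^{**}(X)$ is well defined.

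For part (1), I would argue as follows. If $L$ is \emph{very ample}, then $L^\vee$ is the pullback of $\L_n$ along a closed immersion $f\colon X\hookrightarrow\PP^n$, so $c_1(L)=-c_1(L^\vee)=f^*(c_{1,n})$ up to the formal-inverse correction; but $c_{1,n}$ satisfies $c_{1,n}^{n+1}=0$ by the relation \eqref{proof_projbdlthm}(ii) established in the proof of Theorem~\ref{th:projbdl}, hence $f^*(c_{1,n})$ and therefore $c_1(L)$ is nilpotent. For a general line bundle $L$ on an $X$ admitting an ample $L_0$, write $L=(L\otimes L_0^{\otimes k})\otimes (L_0^{\otimes k})^\vee$ with $L\otimes L_0^{\otimes k}$ and $L_0^{\otimes k}$ both very ample for $k\gg0$; each of the three Chern classes involved is then nilpotent by the very ample case. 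To conclude that $c_1(L)$ itself is nilpotent I use the additivity of $c_1$ as a map $\pic(X)\to H^{2,1}(X)$ — for the motives example this is literally part of the construction (the map is a morphism of abelian groups), and in general one may instead observe that $c_1(L_1\otimes L_2)$, $c_1(L_1)$, $c_1(L_2)$ all live in the image of appropriate pullbacks and combine: concretely, apply part (2) to the pair $(L\otimes L_0^{\otimes k}, (L_0^{\otimes k})^\vee)$ once (2) is known for \emph{very ample} bundles, where nilpotence of the arguments is already in hand, to get $c_1(L)=F(c_1(L\otimes L_0^{\otimes k}),c_1((L_0^{\otimes k})^\vee))$, a power series with no constant term evaluated at nilpotent elements, hence nilpotent. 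So in practice I would prove a \emph{very ample} version of (2) first, deduce (1), and then bootstrap to the general (2).

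The bootstrapping step for part (2) in full generality: given arbitrary $L_1,L_2$ on $X$ with an ample $L_0$, choose $k$ so that $M_i:=L_i\otimes L_0^{\otimes k}$ and $M_0:=L_0^{\otimes k}$ are all very ample. Then $L_1\otimes L_2=(M_1\otimes M_2)\otimes (M_0^{\otimes 2})^\vee$ and $M_1\otimes M_2$, $M_0^{\otimes 2}$ are very ample (product of very ample is very ample, via a further Segre embedding). Applying the very ample case of (2) repeatedly — to $(M_1,M_2)$, to $(M_1\otimes M_2, (M_0^{\otimes2})^\vee)$, to $(M_i, M_0^\vee)$ — and using associativity, commutativity and the formal inverse relations for $F$ recorded in \ref{fgl}, one manipulates the resulting power-series identities to isolate $c_1(L_1\otimes L_2)=F(c_1(L_1),c_1(L_2))$. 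All substitutions are legitimate because every $c_1$ appearing is nilpotent by part (1).

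\textbf{Main obstacle.} The genuinely delicate point is the very ample base case of (2): one must check that the Segre embedding $\sigma_{N,M}$ really does classify the tensor product, i.e. that $\sigma_{N,M}^*\L_{N+M+NM}\cong \L_N\boxtimes\L_M$ as line bundles on $\PP^N\times\PP^M$, and then that the power series $F$ extracted in \ref{fgl} via $\sigma^*$ on $\PP^\infty\times\PP^\infty$ is compatible, under the projective bundle theorem identifications, with pulling back along the classifying map of a concrete pair of very ample bundles — this requires care with the duals ($c_1(\L_n)$ versus $c_1(\L_n^\vee)$, since lines in $E$ give $\L_n\subset p^{-1}E$ while global sections of $L$ correspond to maps to $\PP(\Gamma(L)^\vee)$ with $L=$ pullback of $\mathcal O(1)=\L^\vee$). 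Getting these dualizations and the bookkeeping of which Chern class is a pullback of which universal class exactly right, while keeping all arguments nilpotent so the formal substitutions are valid, is where the real work lies; everything else is formal functoriality plus the relation $c_{1,n}^{n+1}=0$ already proved.
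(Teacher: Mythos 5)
There is a genuine gap in your treatment of part (1): as stated, (1) holds for \emph{every} smooth scheme $X$ and every line bundle $L$, with no ampleness hypothesis, but your argument only covers the case where $X$ admits an ample line bundle (you need $L_0$ to twist with, and you need the very-ample classification). The paper proves (1) quite differently and in full generality: since $L$ is locally trivial and $c_1$ of a trivial bundle vanishes, one argues by noetherian induction on an open cover $X=U\cup V$; if $c_1(L)^n$ dies on $U$ and $c_1(L)^m$ dies on $V$, then by (Loc) these powers lift to classes in $H^{**}_Z(X)$ and $H^{**}_T(X)$ with $Z=X-U$, $T=X-V$ disjoint, and the cup product with supports (paragraph \ref{product_with_support}) lands in $H^{**}_{\emptyset}(X)=0$, giving $c_1(L)^{n+m}=0$. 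Your route cannot recover this generality, and it also has internal circularity risks as written: the "formal-inverse correction" you invoke to pass from $c_1(L^\vee)$ to $c_1(L)$ in the very-ample case is itself an instance of (2) for the pair $(L,L^\vee)$, where $L^\vee$ is anti-ample and hence not covered by your very-ample base case; likewise your bootstrap applies the base case of (2) to pairs such as $(M_1\otimes M_2,(M_0^{\otimes 2})^\vee)$ whose second member is not very ample. These holes are patchable (e.g.\ note $c_1(\L_n^\vee)\in H^{2,1}(\PP^n)$ has vanishing constant term in the $c_{1,n}$-expansion, by restriction to a point, hence is nilpotent), but you do not address them.

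For part (2) your core idea — classify the bundles by maps to projective spaces, compose with the Segre embedding, and pull back the universal identity defining $F$ — is exactly the paper's. The difference is the reduction step: the paper uses the ample hypothesis only to produce a torsor $X'\to X$ under a vector bundle with $X'$ affine (Jouanolou's trick), which is an isomorphism on premotives by (Htp') and (Exc); on an affine scheme every line bundle is generated by its global sections, so $L$ and $L'$ are directly pullbacks of the tautological bundles $\L_n$, $\L_m$ (no dualization, no twisting by ample powers), and the Segre composite computes $c_1(L\otimes L')$ in two ways. This avoids both the very-ample/$\mathcal O(1)$ dual bookkeeping you flag as your main obstacle and the multi-step formal-group-law bootstrapping, whose legitimacy in your version still rests on the unproved general form of (1).
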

\begin{proof}
For the first point, we first remark the question is local in $X$.
As $X$ is noetherian,
 we are reduced by induction to consider an open covering $X=U \cup V$, 
 such that $c_1(L_U)$ (resp. $c_1(L_V)$) is nilpotent
  in $H^{**}(U)$ (resp. $H^{**}(V)$)
   where $L_U$ (resp. $L_V$) is the restrion of $L$ to $U$ (resp. $V$).
Let $n$ (resp. $m$) be the order of nilpotency of $c_1(L_U)$
 (resp. $c_1(L_V)$).
Let $Z=X-U$ (resp. $T=X-V$) and consider the canonical morphism
$\nu_{X,W}:H^{**}_W(X) \rightarrow H^{**}(X)$ for $W=Z,T$.
From axiom (Loc), there exists a class $a$ (resp. $b$) in 
$H^{**}_Z(X)$ (resp. $H^{**}_T(X)$ such that
$a=c_1(L)^n$ (resp. $b=c_1(L)^m$).
As $Z \cap T=\emptyset$, axiom (Loc) implies $a \ncup_{Z,T} b=0$.
Thus, relation \eqref{eq:cup_with_support} implies
 $c_1(L)^{n+m}=0$ as wanted. \\
The first point follows, as $\L$ is locally trivial and the Chern
class of a trivial line bundle is $0$ by definition.

For the second point, the assumption implies there is 
a torsor $\pi:X' \rightarrow X$ under a vector bundle over $X$
such that $X'$ is affine.
From axioms (Htp') and (Exc), we obtain that
 $\pi_*:\M(X') \rightarrow \M(X)$ is an isomorphism.
Thus we are reduced to the case where $X$ is affine. \\
Then, the line bundle $L$ is generated by its section
 (cf \cite[5.1.2,e]{EGA2}),
which means there is a closed immersion
$L \xrightarrow \iota \AA^{n+1}_X$ where $n+1$ is the cardinal
of a generating family. In particular, we get a morphism
$$
f:X \simeq \PP(L) \xrightarrow \iota \PP^n_X \rightarrow \PP^n
$$
with the property that $f^{-1}(\L_n)=L$.
In the same way, we can find a morphism $g:X \rightarrow \PP^m$
such that $g^{-1}(\L_m)=L'$. We consider the morphism
$$
\varphi:X \rightarrow X \times X
 \xrightarrow{f \times g} \PP^n \times \PP^m
  \xrightarrow{\sigma_{n,m}} \PP^{nm+n+m}.
$$
By construction, $\varphi^{-1}(\lambda_{nm+n+m})=L \otimes L'$
and this concludes, computing in two ways the Chern class
 of this line bundle.
\end{proof}

Consider a ringed motive $\E$ 
 with regulator map $\varphi:H^{**} \rightarrow \E^{**}$. \\
The map $\sigma^*:\E^{**}(\PP^\infty)
  \rightarrow \E^{**}(\PP^\infty \times \PP^\infty)$
defines a formal group law $F_\E$ with coefficients in $\E^{**}$
and $F_\E=\sum_{i,j} \varphi_S(a_{i,j}).x^iy^j$.
Thus the regulator map induces a morphism of formal group law
$(A,F) \rightarrow (\E^{**},F_\E)$.

\begin{rem}
In case $F$ is the additive formal group law, $F(x,y)=x+y$, 
for any ringed motive $\E$, 
$F_\E$ is the additive formal group law. 
This is the case for example if $\T=\DMgm$
 or $\T$ is the category of modules over a mixed Weil theory. \\
When $F$ is the universal multiplicative formal group law
$F=x+y+\beta.xy$, the obstruction for $F_\E$ to be additive
is the element $\varphi(\beta)$.
\end{rem}

\subsection{Higher Chern classes}

We now follow the classical approach of Grothendieck
 to define higher Chern classes.
Consider a vector bundle $E$ of rank $n>0$ over a smooth scheme $X$.
Let $\L$ (resp. $p$) be the canonical invertible sheaf 
(resp. projection) of the projective bundle $\PP(E)/X$.
From corollary \ref{cor:projbdl_formula},
there are unique classes $c_i(E) \in H^{2i,i}(X)$ for $i=0,...,n$,
such that
\begin{equation} \label{eq_chern}
\sum_{i=0}^{n} p^*(c_i(E)) \ncup \big(-c_1(\L)\big)^{n-i}=0
\end{equation}
and $c_0(E)=1$.
\begin{df} \label{df:Chern}
With the above notations, we call $c_i(E)$ the $i$-th Chern class of $E$.
We also put $c_i(E)=0$ for any integer $i>n$.
\end{df}

\rem In the case $n=1$, due to our choice of conventions, $\L=p^{-1}(E)$.
The previous relation is not a definition, but a tautology.
This enlighten particularly our choice of sign in the previous relation.
Besides, when $c_1(\L^\vee)=-c_1(\L)$ (in particular when the formal group law
F is additive),
 relation \eqref{eq_chern} agrees precisely with that of \cite{Gro}.

\rem Considering any ringed motive $\E$, with regulator map
  $\varphi:H \rightarrow \E$, $\varphi \circ c_i$ defines
   Chern classes for cohomology with coefficients in $\E$.
When no ringed structure is given on $\E$, we still get an action
of the former Chern classes on the $\E$-cohomology using the action of
the cohomology theory $H$ (cf \ref{general_cup&slant}).

The Chern classes are obviously functorial with respect to pullback
 and invariant under isomorphism of vector bundles.
They also satisfy the Whitney sum formula ; we recall the proof to the reader
as it uses the axiom (Kun)(a) in an essential way.
\begin{lm} \label{additivity_Chern}
Let $X$ be a smooth scheme and consider an exact sequence
of vector bundles over $X$~:
$$
0 \rightarrow E' \rightarrow E \rightarrow E'' \rightarrow 0
$$

Then for any $k \in \NN$,
 $c_k(E)=\sum_{i+j=k} c_i(E') \ncup c_j(F'')$.
\end{lm}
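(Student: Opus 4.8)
The plan is to recast the Whitney formula as the multiplicativity of the \emph{total Chern class} $c(E) := \sum_{i \geq 0} c_i(E)$, i.e.\ as the identity $c(E) = c(E') \ncup c(E'')$, and to prove this by a single computation inside the cohomology ring of the projective bundle $\PP(E)$, comparing two monic relations satisfied by the canonical Chern root of $E$.

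Write $p : \PP(E) \to X$ for the projection, put $n = \mathrm{rk}\,E$, $r = \mathrm{rk}\,E'$, $s = \mathrm{rk}\,E''$ (so $n = r + s$) and set $\zeta := -c_1(\L_{\PP(E)}) \in H^{2,1}(\PP(E))$. By the projective bundle theorem \ref{th:projbdl} and Corollary \ref{cor:projbdl_formula}, $H^{**}(\PP(E))$ is a free $H^{**}(X)$-module with basis $1, \zeta, \dots, \zeta^{n-1}$; hence there is a \emph{unique} monic polynomial $\phi \in H^{**}(X)[t]$ of degree $n$ with $\phi(\zeta) = 0$, and by the defining relation \eqref{eq_chern} this $\phi$ is $\sum_{i=0}^n c_i(E)\,t^{n-i}$. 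Now introduce $\alpha := \sum_{i=0}^r p^*c_i(E')\,\zeta^{\,r-i}$ and $\beta := \sum_{j=0}^s p^*c_j(E'')\,\zeta^{\,s-j}$ in $H^{**}(\PP(E))$; each is monic in $\zeta$, of degree $r$ resp.\ $s$, so the polynomial $\bigl(\sum_i c_i(E')\,t^{r-i}\bigr)\bigl(\sum_j c_j(E'')\,t^{s-j}\bigr)$ is monic of degree $n$. Therefore, once $\alpha \ncup \beta = 0$ is known in $H^{**}(\PP(E))$, uniqueness of $\phi$ forces the coefficientwise identity $\sum_{i+j=k} c_i(E') \ncup c_j(E'') = c_k(E)$ for all $k$, which is the lemma.

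It remains to prove $\alpha \ncup \beta = 0$, and here I would use the geometry of the closed immersion $\PP(E') \hookrightarrow \PP(E)$, with open complement $V := \PP(E) - \PP(E')$. On $\PP(E')$ the canonical line bundle $\L_{\PP(E)}$ restricts to $\L_{\PP(E')}$ and $p$ restricts to the projection $\PP(E') \to X$, so the defining relation \eqref{eq_chern} for $E'$ (of rank $r$) shows that $\alpha$ restricts to $0$ on $\PP(E')$. On the other hand the surjection $E \to E''$ identifies $V$ with an affine bundle over $\PP(E'')$ along which $\L_{\PP(E)}$ is the pullback of $\L_{\PP(E'')}$, so, by (Htp') and the defining relation \eqref{eq_chern} for $E''$ on $\PP(E'')$, the class $\beta$ restricts to $0$ on $V$. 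By axiom (Loc), $\beta$ then lifts to a class $\tilde\beta \in H^{**}_{\PP(E')}(\PP(E))$; and using the cup product with supports of \ref{product_with_support}, its compatibility \eqref{eq:cup_with_support} with the maps $\nu$, and the fact that this product through the smooth closed subscheme $\PP(E')$ is computed from the restriction $\alpha|_{\PP(E')}$, one obtains $\alpha \ncup \beta = \nu_{\PP(E),\PP(E')}\bigl(\alpha|_{\PP(E')} \ncup_{\PP(E),\PP(E')} \tilde\beta\bigr) = 0$ because $\alpha|_{\PP(E')} = 0$.

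The main obstacle is this last step: besides the geometric bookkeeping near the sub-projective-bundle $\PP(E')$ (identifying $\L_{\PP(E)}$ on $\PP(E')$ and on the affine-bundle complement $V$ with the correct canonical line bundles), the essential point is the manipulation of cohomology with supports showing that the product of a class vanishing on $\PP(E')$ with a class supported on $\PP(E')$ is zero. I also expect the sign chosen in \eqref{eq_chern} to be exactly what makes the two restriction computations go through with no correction term, so that the formal group law $F$ never intervenes here --- consistently with the fact that Whitney additivity holds for an arbitrary $F$. Everything else is formal, and it is through the projective bundle theorem \ref{th:projbdl} --- concretely the K\"unneth identity $\epsilon_{\PP^n_X} = M(X) \otimes \epsilon_{\PP^n}$ used in its proof --- that axiom (Kun)(a) enters in the essential way pointed out in the remark preceding the statement.
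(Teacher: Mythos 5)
Your first half is sound: the uniqueness of the monic degree-$n$ relation satisfied by $\zeta=-c_1(\L)$ (via theorem \ref{th:projbdl}), the vanishing $\alpha|_{\PP(E')}=0$, and the vanishing of $\beta$ on $V=\PP(E)-\PP(E')$ (an affine bundle over $\PP(E'')$ along which $\L$ pulls back, so (Htp') and \eqref{eq_chern} apply), hence by (Loc) a lift $\tilde\beta\in H^{**}_{\PP(E')}(\PP(E))$ with $\beta=\nu_{P,\PP(E')}(\tilde\beta)$ --- all of this is correct and does not even require a splitting. The gap is precisely the step you yourself flag as the essential point. From \ref{product_with_support} and \eqref{eq:cup_with_support}, applied with supports $P$ and $\PP(E')$, you only obtain $\alpha\ncup\beta=\nu_{P,\PP(E')}\bigl(\alpha\ncup_{P,\PP(E')}\tilde\beta\bigr)$, where the refined product is $(\alpha\otimes\tilde\beta)\circ\delta''_*$ with $\delta''_*:\M_{\PP(E')}(P)\to \M(P)\otimes\M_{\PP(E')}(P)$; nothing established at this point says this class depends on $\alpha$ only through $i^*\alpha$. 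The expression ``$\alpha|_{\PP(E')}\ncup_{P,\PP(E')}\tilde\beta$'' is not defined by \ref{product_with_support} (that product pairs two classes supported in the same ambient scheme, not a class on the closed subscheme with a supported class), and the assertion that a class vanishing on $\PP(E')$ annihilates $H^{**}_{\PP(E')}(P)$ is genuinely nontrivial: there is no algebraic tubular neighbourhood or retraction of $P$ onto $\PP(E')$, so this compatibility is essentially a purity/deformation-to-the-normal-cone statement of the kind only developed in section 4 (and whose surrounding results in turn use the Whitney formula, e.g.\ remark \ref{Thom&quotient}). As written, the conclusion $\alpha\ncup\beta=0$ is therefore not justified.

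The paper avoids this by making \emph{both} classes supported, on \emph{disjoint} closed subschemes, at the price of first reducing to the split case: since Chern classes commute with pullback and the sequence splits after pullback along the torsor of splittings (an affine bundle, harmless by (Htp')/(Exc)), one may assume $E=E'\oplus E''$. Then $\PP(E'')$ also embeds in $P=\PP(E)$, disjointly from $\PP(E')$, and $P-\PP(E'')$ is an affine bundle over $\PP(E')$; so $\alpha$, which dies on $\PP(E')$, dies on the open set $P-\PP(E'')$ and lifts by (Loc) to a class supported on $\PP(E'')$, while $\beta$ lifts to a class supported on $\PP(E')$ exactly as in your argument. Formula \eqref{eq:cup_with_support} then puts $\alpha\ncup\beta$ in the image of $H^{**}_{\PP(E')\cap\PP(E'')}(P)=H^{**}_{\emptyset}(P)=0$ (by (Loc)), with no module-compatibility statement needed. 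So either prove the missing factorization lemma for the $H^{**}(P)$-action on $H^{**}_{\PP(E')}(P)$ --- substantial extra work at this stage --- or symmetrize your argument after reducing to the split case, which is the paper's proof.
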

\begin{proof}
By compatibility of Chern classes with pullback
 we can assume the sequence above is split.
Let $n$ (resp. $m$) be the rank of $E'/X$ (resp. $E''/X$).
Put $P=\PP(E)$ and consider $c \in H^{2,1}(P)$ 
(resp. $p:P \rightarrow X$)
the first Chern class of the canonical line bundle on
(resp. canonical projection of) $P/X$. \\
Put $a=\sum_{i=0}^n p^*(c_i(E')).c^{n-i}$
 and $b=\sum_{j=0}^m p^*(c_j(E'')).c^{m-j}$
as cohomology classes in $H^{**}(P)$. We have to prove $a \ncup b=0$. \\
Consider the canonical embeddings $i:\PP(E') \rightarrow P$
and $j:P-\PP(E'') \rightarrow P$. 
Then $i^*(a)=0$ which implies by property (Htp') that $j^*(a)=0$. 
Thus there exists $a' \in H^{*,*}_{\PP(E'')}(P)$ such that $a=\nu_F(a')$
where $\nu_F:H^{*,*}_{\PP(E'')}(P) \rightarrow H^{**}(P)$ is 
the canonical morphism. 
Similarly, there exists $b' \in H^{*,*}_{\PP(E')}(P)$ such that $b=\nu_E(b')$
where $\nu_E:H^{*,*}_{\PP(E')}(P) \rightarrow H^{**}(P)$ is the canonical morphism.
Then, relation \eqref{eq:cup_with_support} allows\footnote{This is where axiom (Kun)(a) 
is used.} to conclude because
$\PP(E') \cap \PP(E'')=\emptyset$ in $P$ and $H^{*,*}_\emptyset(P)=0$ 
from property (Loc).
\end{proof}

\rem \label{Chern&nilpotence}
Suppose $X$ admits an ample line bundle
 and consider a vector bundle $E/X$.
As a corollary of the first point of proposition 
\ref{nilpotence&FGL} and the usual splitting principle,
we obtain that the class $c_n(E)$ is nilpotent in $H^{**}(X)$
for any integer $n \geq 0$.
\section{The Gysin triangle}

In this section, we consider closed pairs $(X,Z)$ --
recall $X$ is assumed to be smooth and $Z$ is a closed subscheme of $X$.
We say $(X,Z)$ is \emph{smooth} (resp. of \emph{codimension} $n$) 
if $Z$ is smooth
(resp. has everywhere codimension $n$ in $X$).
A \emph{morphism} of closed pair $(f,g):(Y,T) \rightarrow (X,Z)$ 
is a commutative square
$$
\xymatrix@=10pt{
T\ar[r]\ar_g[d] & Y\ar^f[d] \\
Z\ar[r] & X
}
$$
which is cartesian on the underlying topological space.
This means the canonical embedding $T \rightarrow Z \times_X Y$
is a thickening. We say the morphism is \emph{cartesian}
 if the square is cartesian.
 
The premotive $\M_Z(X)$ is functorial with respect to 
morphisms of closed pairs.

\subsection{Purity isomorphism} \label{sec:purity}

Consider a projective bundle over a smooth scheme $X$ of rank $n$.
For any integer $0\leq r \leq n$, 
we will consider the embedding\footnote{
The change of sign which appears
 in this formula amounts to take $-c$ instead of $c$
  as a generator of the algebra $H^{**}(P)$.}
$$
\lef r {P}:\M(X)(r)[2r] \xrightarrow{(-1)^r}
 \bigoplus_{0 \leq i \leq n} \M(X)(i)[2i]
 \xrightarrow{\epsilon_{P/X}^{-1}} \M(P).
$$
where the first map is the canonical embedding time $(-1)^r$
 and the second one is induced by the isomorphism of theorem \ref{th:projbdl}.

\num \label{def_diags}
Consider a smooth closed pair $(X,Z)$.
Let $N_ZX$ (resp. $B_ZX$) be the normal bundle (resp. blow-up) of $(X,Z)$
and $P_ZX$ be the projective completion of $N_ZX$.
We denote by $B_Z(\AA^1_X)$
the blow-up of $\AA^1_X$ with center $\{0\} \times Z$.
It contains as a closed subscheme the trivial blow-up 
$\AA^1_Z=B_Z(\AA^1_Z)$. 
We consider the closed pair $(B_Z(\AA^1_X),\AA^1_Z)$ over 
$\AA^1$. Its fiber over $1$ is the closed pair $(X,Z)$
and its fiber over $0$ is $(B_ZX \cup P_ZX,Z)$.
Thus we can consider the following deformation diagram~:
\begin{equation} \label{1st_def_diag}
(X,Z) \xrightarrow{\bar \sigma_1} (B_Z(\AA^1_X),\AA^1_Z)
 \xleftarrow{\bar \sigma_0} (P_ZX,Z).
\end{equation}
We will also consider the open subscheme $D_ZX=B_Z(\AA^1_X)-B_ZX$,
which still contains $\AA^1_Z$ as a closed subscheme.
The previous diagram then gives by restriction a second deformation 
diagram~:
\begin{equation} \label{2nd_def_diag}
(X,Z) \xrightarrow{\sigma_1} (D_ZX,\AA^1_Z)
 \xleftarrow{\sigma_0} (N_ZX,Z).
\end{equation}
Note these two deformation diagrams are functorial in $(X,Z)$ with respect
to cartesian morphisms of closed pairs.

\rem \label{rem:comp_2nd_def_space}
As we will see in the followings, one of the advantage to consider 
the deformation space $D_ZX$ is that, when $X$ is a vector bundle over
$Z$ and the embedding $Z \subset X$ is the $0$-section,
 we can define a canonical isomorphism $D_ZX \simeq \AA^1 \times X$.
In fact, when $X=\spec A$ and $Z=\spec{A/I}$,
 $D_ZX=\spec{\oplus_{n \in \ZZ} I^n.t^{-n}}$
with the convention that for $n<0$, $I^n=A$ ($t$ is an indeterminate).
Thus, if $A=A_0[x_1,...,x_n]$, $I=(x_1,...,x_n)$, we get an isomorphism
defined on the affine level by 
$$
A[t',x'_1,..x'_n] \rightarrow \oplus_{n \in \ZZ} I^n.t^{-n},
 t' \mapsto t, x'_i \mapsto t^{-1}x_i.
$$
This isomorphism is independant on the regular sequence parametrizing $I$.
Thus, in the case when $X$ is an arbitrary vector bundle, 
we can glue the isomorphisms obtained by choosing local parametrizations.
\begin{prop}
\label{prop:purity}
Let $n$ be a natural integer.

There exists a unique family of isomorphisms of the form
$$
\pur{X,Z}:\M_Z(X) \rightarrow \M(Z)(n)[2n]
$$
indexed by smooth closed pairs of codimension $n$ such that~:
\begin{enumerate}
\item for every cartesian morphism $(f,g):(Y,T) \rightarrow (X,Z)$ of
smooth closed pairs of codimension $n$, the following diagram is
commutative~:
$$
\xymatrix@R=16pt@C=40pt{
\M_T(Y)\ar^{(f,g)_*}[r]\ar_{\pur{Y,T}}[d]
 & \M_Z(X)\ar^{\pur{X,Z}}[d] \\
\M(T)(n)[2n]\ar^{g_*(n)[2n]}[r] &  \M(Z)(n)[2n].
}
$$
\item Let $X$ be a smooth scheme,
$E$ be a vector bundle over $X$ of rank $n$.
Put $P=\PP(E \oplus 1)$.
Consider the closed pair $(P,X)$ corresponding to the canonical section of $P/X$. 
Then $\pur{P,X}$ is the inverse of the following composition
$$
\M(X)(n)[2n]
 \xrightarrow{\lef n P} \M(P)
 \xrightarrow \pi \M_X(P)
$$
where the second arrow is obtained by functoriality in $\D$.
\end{enumerate}
\end{prop}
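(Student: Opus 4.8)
The plan is to \emph{construct} $\pur{X,Z}$ for an arbitrary smooth closed pair by transporting, along the deformation diagrams of \ref{def_diags}, the isomorphism attached by condition (2) to the projective completion of the normal bundle, and then to prove \emph{uniqueness} by showing that condition (1) rigidifies every $\pur{X,Z}$ in terms of the normal bundle case, which (2) pins down. As a preliminary I would check that for $P=\PP(E\oplus 1)$ as in (2) the composite $\M(X)(n)[2n]\xrightarrow{\lef nP}\M(P)\xrightarrow{\pi}\M_X(P)$ is indeed an isomorphism: in the localization triangle of axiom (Loc) attached to $(P,X)$ the open complement $P-X$ is an affine bundle over $\PP(E)$, so by (Htp') $\M(P-X)\simeq\M(\PP(E))$, and comparing the two instances of Theorem \ref{th:projbdl}, for $P/X$ of rank $n$ and for $\PP(E)/X$ of rank $n-1$, identifies the cofibre $\M_X(P)$ with the top summand $\M(X)(n)[2n]$, the identification being exactly $\pi\circ\lef nP$. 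So in the normal-bundle-completion case the desired isomorphism is available, and the deformation diagrams serve to propagate it.

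The heart of the argument is the following lemma: for every smooth closed pair $(X,Z)$ of codimension $n$ the maps induced on motives with support by the deformation diagram \eqref{2nd_def_diag} (equivalently \eqref{1st_def_diag}),
\[
\M_Z(X)\xrightarrow{\ \sigma_{1*}\ }\M_{\AA^1_Z}(D_ZX)\xleftarrow{\ \sigma_{0*}\ }\M_Z(N_ZX),
\]
are isomorphisms. Since a motive with support in $\AA^1_Z$ depends only on a Nisnevich neighbourhood of $\AA^1_Z$ (axiom (Exc)), and since a smooth closed immersion is Nisnevich-locally the zero section of a trivial vector bundle, a localization argument built on (Exc) reduces this to the case in which $X$ is a vector bundle over $Z$ with $Z$ the zero section. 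There Remark \ref{rem:comp_2nd_def_space} provides a canonical isomorphism $D_ZX\simeq\AA^1\times X$ carrying $\AA^1_Z$ to $\AA^1\times Z$, so by (Kun)(a) $\M_{\AA^1_Z}(D_ZX)\simeq\M(\AA^1)\otimes\M_Z(X)$, and (Htp) (i.e.\ $\M(\AA^1)\simeq\un$) shows that the inclusion of \emph{either} fibre induces the identity; in particular $\sigma_{0*},\sigma_{1*}$ are isomorphisms and $\sigma_{0*}^{-1}\sigma_{1*}=1$. This reduction is the step I expect to be the main obstacle: controlling the global pair through its local models without invoking the Brown--Gersten hypothesis requires a careful use of (Exc) together with the explicit description of \ref{rem:comp_2nd_def_space}, and it is also where the sign convention fixed at the beginning of section \ref{sec:purity} enters, via the choice of generator of $H^{**}(P)$ implicit in $\lef nP$.

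Granting this lemma, I would define
\[
\pur{X,Z}:=\bigl(\pi\circ\lef n{\PP(N_ZX\oplus 1)}\bigr)^{-1}\circ\beta\circ\sigma_{0*}^{-1}\circ\sigma_{1*},
\]
where $\beta\colon\M_Z(N_ZX)\to\M_Z(\PP(N_ZX\oplus 1))$ is induced by the open immersion $N_ZX\hookrightarrow\PP(N_ZX\oplus 1)$ and is an isomorphism by (Exc), the zero section $Z$ lying inside $N_ZX$. Each factor is an isomorphism, hence so is $\pur{X,Z}$. Property (1) follows because every ingredient — the two deformation diagrams of \ref{def_diags}, the open immersion above, Theorem \ref{th:projbdl} and the embeddings $\lef r-$ — is natural with respect to cartesian morphisms of smooth closed pairs (using $N_TY\simeq g^{-1}N_ZX$ in the cartesian case), so the composite is. For property (2) one takes $(X,Z)=(\PP(E\oplus 1),X_0)$ with $X_0$ the canonical section; then $N_{X_0}\PP(E\oplus 1)=E$, so the formula starts and ends at the same object, and it remains to see that $\beta\circ\sigma_{0*}^{-1}\circ\sigma_{1*}$ is the identity of $\M_{X_0}(\PP(E\oplus 1))$. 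This is checked exactly as in the lemma: $\M_{X_0}(-)$ sees only a Nisnevich neighbourhood of $X_0$, the open $\AA^1\times E\subset D_{X_0}\PP(E\oplus 1)$ provides one, and through it $\sigma_{0*},\sigma_{1*}$ both factor as the two fibre inclusions into $\M(\AA^1)\otimes\M_{X_0}(E)$, which agree under $\M(\AA^1)\simeq\un$; hence $\pur{\PP(E\oplus 1),X_0}=(\pi\circ\lef n{\PP(E\oplus 1)})^{-1}$.

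For uniqueness, let $\pur{-}$ be any family satisfying (1) and (2). Applying (1) to the cartesian codimension-$n$ morphisms $\sigma_1\colon(X,Z)\to(D_ZX,\AA^1_Z)$ and $\sigma_0\colon(N_ZX,Z)\to(D_ZX,\AA^1_Z)$ yields two commuting squares whose horizontal arrows are isomorphisms — by the lemma on the top and by (Htp') ($\M(Z)\simeq\M(\AA^1_Z)$) on the bottom — and together they express $\pur{X,Z}$ through $\pur{N_ZX,Z}$ and the structural isomorphisms. Applying (1) to the cartesian open immersion $(N_ZX,Z)\hookrightarrow(\PP(N_ZX\oplus 1),Z)$, whose top arrow is the isomorphism $\beta$ above, expresses $\pur{N_ZX,Z}$ through $\pur{\PP(N_ZX\oplus 1),Z}$, which is prescribed by (2). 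Hence the family is uniquely determined, and it coincides with the one constructed above.
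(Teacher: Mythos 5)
Your overall route is the paper's own: property (2) is established from (Loc), (Htp$'$) and Theorem \ref{th:projbdl}; the general $\pur{X,Z}$ is obtained by transporting that case along the deformation diagrams of \ref{def_diags}; the coherence check on $(\PP(E\oplus 1),X)$ goes through $D_XE\simeq\AA^1\times E$ and (Htp); and the uniqueness argument (deform to the normal bundle, then pass to its projective completion where (2) applies) is the one in the text, up to using the second deformation space instead of the first.

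The genuine gap is the central lemma, that $\sigma_{1*}\colon\M_Z(X)\to\M_{\AA^1_Z}(D_ZX)$ and $\sigma_{0*}\colon\M_Z(N_ZX)\to\M_{\AA^1_Z}(D_ZX)$ are isomorphisms for an arbitrary smooth closed pair. You reduce it to the vector-bundle case by ``a localization argument built on (Exc)'' and yourself flag this as the main obstacle, but that reduction is exactly where the work lies and it is not supplied. Two things are needed. First, a Zariski local-to-global step that does not invoke (BG): for a cover $X=U\cup V$, $W=U\cap V$, one shows that the natural map $\MD{V/V-Z\cap V}{W/W-Z\cap W}\to\MD{X/X-Z}{U/U-Z\cap U}$ is an isomorphism (axioms (Sym), (Exc), (Loc)), runs the same comparison for $(D_ZX,\AA^1_Z)$, and deduces the statement on $X$ from the statements on $U$, $V$, $W$ by (Loc); ``a motive with support only sees a Nisnevich neighbourhood'' is not by itself a gluing principle in a triangulated category. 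Second, the local statement itself: ``Nisnevich-locally the zero section of a trivial bundle'' has to be implemented, e.g.\ by choosing an \'etale parametrization $(f,g)\colon(X,Z)\to(\AA^{d+n}_S,\AA^d_S)$, forming $X'=X\times_{\AA^{n+d}_S}\AA^n_Z$, discarding the spurious components $W=q^{-1}(Z)-Z$ and using $\Omega=X'-W$ as a common \'etale neighbourhood of $Z$ in $X$ and in $\AA^n_Z$; one also needs the functoriality of the deformation diagram for cartesian morphisms so that (Exc) can be applied simultaneously to $(X,Z)$, to $(D_ZX,\AA^1_Z)$ and to $(N_ZX,Z)$. Without these two steps your construction, and the claim in your uniqueness argument that the top horizontal arrows are isomorphisms, are unsupported. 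Two smaller remarks: uniqueness does not actually require the lemma, since for any family satisfying (1) the two commutative squares together with $s_{0*}=s_{1*}$ (axiom (Htp)) already force the deformation maps to be isomorphisms, as in the text; and the sign built into $\lef n P$ plays no role in the lemma, only in normalizing $\pur{P,X}$ through (2).
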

\begin{proof}
Uniqueness~: Consider a smooth closed pair $(X,Z)$ of codimension $n$.
Applying property (1) above to the deformation 
diagram \eqref{1st_def_diag}, we obtain the following commutative 
diagram~:
$$
\xymatrix@C=40pt@R=20pt{
\M_Z(X)\ar^/-6pt/{\bar \sigma_{1*}}[r]\ar_{\pur{X,Z}}[d]
 & \M_{\AA^1_Z}(B_Z(\AA^1_X))\ar|{\pur{B_Z(\AA^1_X),\AA^1_Z}}[d]
 & \M_Z(P_ZX)\ar^{\ \pur{P_ZX,Z}}[d]
                           \ar_{\bar \sigma_{0*}}[l] \\
\M(Z)(n)[2n]\ar^/-2pt/{s_{1*}(n)[2n]}[r]
 & \M(\AA^1_Z)(n)[2n]
 & \M(Z)(n)[2n]\ar_/-3pt/{s_{0*}(n)[2n]}[l] 
}
$$
The morphisms $s_0,s_1:Z \rightarrow \AA^1_Z$ are respectively
the zero section and the unit section of $\AA^1_Z/Z$.
Using axiom (Htp), $s_{0*}=s_{1*}$.
Thus in the above diagram, all morphisms are isomorphisms. 
Now, property (2) stated previously determines uniquely $\pur{P_ZX,Z}$, 
thus $\pur{X,Z}$ is also uniquely determined.

\noindent Existence~: Consider property (2). 
Let $i:\PP(E) \rightarrow P$ be the canonical embedding.
Its corestriction $i':\PP(E) \rightarrow P-X$ is the zero section 
of a vector bundle,
thus it induces an isomorphism on premotives from property (Htp'). 
By (Loc), we then obtain the distinguished triangle~:
$$
\M(\PP(E)) \xrightarrow{i_*} \M(P) \xrightarrow{\pi} \M_X(P)
 \xrightarrow{+1}
$$
We easily obtain $\lef r {\PP(E)} \circ i_*=\lef r P$ for any integer $r<n$.
Thus the composite $\lef n P \circ \pi$ is an isomorphism as required.
We put: $\pur{P,X}=\big( \lef n P \circ \pi \big)^{-1}$.

Considering the proof of uniqueness, we have to show that 
 $\bar \sigma_{0*}$ and $\bar \sigma_{1*}$ are isomorphisms.
Considering the excision axiom (Exc), this is equivalent
to prove the morphisms
$$
\M_Z(X)
 \xrightarrow{\sigma_{1*}} \M_{\AA^1_Z}(D_Z(X))
 \xleftarrow{\sigma_{0*}} \M_Z(N_ZX)
$$
induced by diagram \eqref{2nd_def_diag} are isomorphisms.
In the case $X=\AA^n_Z$ and the inclusion $Z \subset X$ is the $0$-section,
 the result follows from remark \ref{rem:comp_2nd_def_space}
  and axiom (Htp).

We can argue locally for the Zariski topology on $X$.
In fact, consider an open cover $X=U \cup V$, $W=U \cap V$,
 such that the case of 
 $(U,Z \cap U)$, $(V,Z \cap V)$ and $(W,Z \cap W)$ are known.
Using axiom (Sym), (Exc) and (Loc),
the canonical map
$$
\MD{V/V-Z \cap V}{W/W-Z \cap W}
 \rightarrow \MD{X/X-Z}{U/U-Z \cap U}
$$
is an isomorphism, and the same is true when we replace $(X,Z)$
by $(D_ZX,\AA^1_Z)$. This fact, together with the above three assumptions
and axiom (Loc), allows to obtain the result for $(D_ZX,\AA^1_Z)$. \\
Thus we can assume there exists a parametrisation of the closed pair $(X,Z)$,
that is to say a cartesian morphism 
$(f,g):(X,Z) \rightarrow (\AA^{d+n}_S,\AA^d_S)$
such that $f$ is \'etale.
Consider the pullback square
$$
\xymatrix@R=10pt@C=18pt{
X'\ar^p[r]\ar_q[d] & X\ar^f[d] \\
{}\AA^n_Z\ar^{1 \times g}[r] & {}\AA^{n+d}_S. }
$$
There is an obvious closed immersion $Z \rightarrow X'$
and its image is contained in $q^{-1}(Z)$.
As $q$ is \'etale, $Z$ is a direct factor of $q^{-1}(Z)$.
Put $W=q^{-1}(Z)-Z$ and $\Omega=X'-W$. 
Thus $\Omega$ is an open subscheme of $X'$, 
and the reader can check that $p$ and $q$ induce cartesian \'etale 
morphisms
$$
(X,Z) \leftarrow (\Omega,Z) \rightarrow (\AA^n_Z,Z).
$$
The functorialty of \eqref{2nd_def_diag}
 and axiom (Exc) allow to conclude in view of the previous case.

To sum up, the purity isomorphism $\pur{X,Z}$ is defined as the
composite
$$
\M_ZX \xrightarrow{\bar \sigma_{0*}} \M_{\AA^1_Z}\big(B_Z(\AA^1_X)\big)
 \xrightarrow{\bar \sigma_{1*}^{-1}} \M_Z(P_ZX)
  \xrightarrow{\pur{Z,P_ZX}} M(Z)(n)[2n].
$$
We finally have to check the coherence of this definition
 in the case of the closed pair $(P,X)$, $P=\PP(E \oplus 1)$,
  appearing in property (2).
Explicitely, we have to check that in this case
 $\bar \sigma_{1*}^{-1} \circ \bar \sigma_{0*}=1$.
This is easily seen considering the commutative diagram~:
$$
\xymatrix@R=10pt{
\M_X(P)\ar^/-6pt/{\bar \sigma_{1*}}[r] & \M_{\AA^1_X}(B_Z(\AA^1_X))
 & \M_X(P)\ar_/-9pt/{\bar \sigma_{0*}}[l] \\
\M_X(E)\ar^/-4pt/{\sigma_{1*}}[r]\ar[u] & \M_{\AA^1_X}(D_XE)\ar[u]
 & \M_X(E).\ar_/-7pt/{\sigma_{0*}}[l]\ar[u]
}
$$
We have identified the projective normal bundle of $(P,X)$
 (resp. the normal bundle of $(E,X)$) with $P$ (resp. $E$).
According to remark \ref{rem:comp_2nd_def_space}, there is a canonical isomorphism
$D_XE \simeq \AA^1 \times E$ through which $\sigma_0$ (resp. $\sigma_1$)
corresponds to the zero (resp. unit) section.
The homotopy axiom (Htp) allows to conclude.
\end{proof}

\num \label{Thom_class}
Let $X$ be a smooth scheme, $E$ be a vector bundle over $X$ of rank $n$
and put $P=\PP(E \oplus 1)$.
Let $\L$ be the canonical line bundle on $P$,
and $p:P \rightarrow X$ be the canonical projection.
We define the \emph{Thom class} of $E/X$ as the cohomology class
$$
t(E)=
\sum_{i=0}^n p^*(c_i(E)) \ncup \left(-c_1(\L)\right)^{n-i}
$$
in $H^{2n}(P)$. 
This is in fact a morphism $\M(P) \rightarrow \un(n)[2n]$
whose restriction to $\M(\PP(E))$ is zero. This implies
the morphism
$$
p_* \gcup t(E):\M(P) \rightarrow \M(X)(n)[2n]
$$
factors as a morphism $\M_X(P) \rightarrow \M(X)(n)[2n]$
and this latter is equal to $\pur{P,X}$. 
Indeed, $p_* \gcup t(E)$ is a split epimorphism with splitting 
$\lef n P$. \\
We introduce the 
\emph{Thom premotive}\footnote{Analog of the Thom space in algebraic topology.}
as $\MTh(E):=\M_X(E)$ - remark it is functorial with respect to monomorphisms
of vector bundles. Using property (Exc), 
the natural morphism $\MTh(E) \rightarrow M_X(P)$ is an isomorphism.
As a consequence, the morphism $p_* \gcup t(E)$ induces an isomorphism
$\MTh(E):\MTh(E) \rightarrow \M(X)(n)[2n]$ which is precisely
the purity isomorphism $\pur{E,X}$. 
In the litterature, this arrow is called the \emph{Thom isomorphism}.

\begin{rem} \label{Thom&quotient}
Recall the universal quotient bundle $\Q$ on $P$ is defined by 
the exact sequence
$$
0 \rightarrow \L \rightarrow p^{-1}(E \oplus 1) \rightarrow \Q \rightarrow 0.
$$
Thus the Whitney sum formula \ref{additivity_Chern} 
gives: $t(E)=c_n(\Q).$
\end{rem}

\begin{df} \label{df_purity&Gysin}
Let $(X,Z)$ be a smooth closed pair of codimension $n$.
Put $U=X-Z$ and consider the obvious immersions
$i:Z \rightarrow X$ and $j:U \rightarrow X$. \\
\indent Considering the notations of the previous proposition,
we call $\pur{X,Z}$ the \emph{purity isomorphism} associated with $(X,Z)$.
Using this isomorphism together with property (Loc) we obtain a 
distinguished triangle
$$
\M(X-Z) \xrightarrow{j_*} \M(X)
 \xrightarrow{i^*} \M(Z)(n)[2n]
  \xrightarrow{\partial_{X,Z}} \M(X-Z)[1]
$$
called the \emph{Gysin triangle}.
The morphism $i^*$ (resp. $\partial_{X,Z}$)
is called the \emph{Gysin morphism} (resp. \emph{residue morphism})
associated with $(X,Z)$.
\end{df}

\ex \label{ex:Gysin&Thom}
Let $X$ be a smooth scheme and $E/X$ be a vector bundle of rank $n$.
Put $P=\PP(E \oplus 1)$ 
and consider the canonical section $s:X \rightarrow P$ of $P/X$.
Then property (2) of proposition \ref{prop:purity}
implies $s^* \circ \lef n P=1$~:
the Gysin triangle of $(P,X)$ is split and $\partial_{P,X}=0$.
Moreover, remark \ref{Thom_class} and the previous definition implies 
that
$$s^*=p_* \gcup t(E).
$$

\subsection{Base change}

\begin{df} \label{df:refined_Gysin}
Let $(X,Z)$ (resp. $(Y,T)$) be a smooth closed pair of codimension $n$
(resp. $m$). 
Let $(f,g):(Y,T) \rightarrow (X,Z)$ be a morphism of closed pairs.
We define the morphism $(f,g)_!:\M(T)(m)[2m] \rightarrow \M(Z)(n)[2n]$
by the equality $(f,g)_!:=\pur{X,Z} \circ (f,g)_* \circ \pur{Y,T}^{-1}$.
\end{df}
Thus we obtain a commutative diagram
\begin{equation} \label{refind_Gysin}
\xymatrix@R=16pt@C=22pt{
\M(Y-T) \ar^/4pt/{l_*}[r]\ar_{h_*}[d]
 & \M(Y)\ar^/-7pt/{k^*}[r]\ar_{f_*}[d]
 & \M(T)(n)[2n]\ar^{\partial_{Y,T}}[r]\ar^{(f,g)_!}[d]
 & \M(Y-T)[1]\ar^{h_*[1]}[d] \\
\M(X-Z) \ar^/4pt/{j_*}[r]
 & \M(X)\ar^/-7pt/{i^*}[r]
 & \M(Z)(n)[2n]\ar^{\partial_{X,Z}}[r]
 & \M(X-Z)[1]
}\end{equation}
where $i$, $j$, $k$, $l$ are the obvious immersions 
and $h$ is the restriction of $f$.

In what follows, we will compute the morphism $(f,g)_!$
in various cases. 
The commutativity of the second square will give us 
\emph{refined projection formulas}.
The new thing in our study is that any such formula
corresponds to another formula involving residue morphisms 
as we see by considering the third commutative square.

\rem The notation $(f,g)_!$ is to be compared with the notation of \cite{Ful}
for the "refined Gysin morphism". In fact, the reader will notice that
in the case of motivic cohomology, our formulas extend the formulas of Fulton
to the case of arbitrary weights (and arbitrary base). 
Be careful however that our Gysin morphism $i^*:\M(X) \rightarrow \M(Z)(n)[2n]$ 
corresponds to the usual pushout on Chow groups (cf \cite{Deg6}[1.21]). 
The Gysin morphism considered by Fulton is induced 
by the usual functoriality of motives. 
This fact can be understand if we thought of Chow groups over a field studied 
by Fulton as motivic homology with compact support.

\subsubsection{The transversal case}

\begin{prop} \label{transversal}
Consider the hypothesis of definition \ref{df:refined_Gysin}.

Suppose $(f,g)$ is cartesian and $n=m$. Then $(f,g)_!=g_*(n)[2n]$.
\end{prop}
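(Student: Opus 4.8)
The plan is to reduce the statement directly to the functoriality clause \ref{prop:purity}(1) of the purity isomorphism; there is essentially no computation to do, since all the real work was already carried out in the deformation-to-the-normal-cone construction underlying Proposition \ref{prop:purity}. The key observation is that the hypotheses put us exactly in the situation covered by that clause: because $n=m$, the pair $(Y,T)$ is a smooth closed pair of codimension $n$, just like $(X,Z)$, and by assumption $(f,g)\colon(Y,T)\to(X,Z)$ is a cartesian morphism of closed pairs. Hence $(f,g)$ is a cartesian morphism of smooth closed pairs of codimension $n$, which is precisely the input required by \ref{prop:purity}(1).

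First I would invoke \ref{prop:purity}(1) for this morphism, obtaining the commutative square which expresses the identity $\pur{X,Z}\circ(f,g)_*=\bigl(g_*(n)[2n]\bigr)\circ\pur{Y,T}$ in $\T$. Then I would recall the defining formula $(f,g)_!:=\pur{X,Z}\circ(f,g)_*\circ\pur{Y,T}^{-1}$ of Definition \ref{df:refined_Gysin} and substitute: since $\pur{Y,T}$ is an isomorphism by \ref{prop:purity}, this gives $(f,g)_!=\bigl(g_*(n)[2n]\bigr)\circ\pur{Y,T}\circ\pur{Y,T}^{-1}=g_*(n)[2n]$, which is the claim. That is the entire argument, so there is no genuine obstacle beyond checking that the hypotheses match.

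It is worth remarking, though it plays no role in the proof, why this deserves to be called the transversal case. When $(f,g)$ is cartesian and $T$, $Z$ are smooth of the same codimension $n$, the canonical surjection of conormal sheaves $g^{*}(I_Z/I_Z^{2})\twoheadrightarrow I_T/I_T^{2}$ is a surjective map between locally free sheaves of the same rank $n$, hence an isomorphism; dualizing, the canonical map $N_TY\to T\times_Z N_ZX$ is an isomorphism, i.e. $f$ is transversal to $i$ in the sense recalled at the beginning of the paper. Thus the hypothesis ``$(f,g)$ cartesian and $n=m$'' is just a reformulation of transversality in this setting, and Proposition \ref{transversal} is the formal statement that the refined Gysin morphism collapses to the naive pushforward $g_*(n)[2n]$ in that case.
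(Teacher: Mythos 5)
Your reduction is circular given how the paper actually establishes the purity isomorphism. Unwinding Definition \ref{df:refined_Gysin}, the identity $(f,g)_!=g_*(n)[2n]$ is \emph{literally} the commutativity of the square in clause (1) of Proposition \ref{prop:purity}, so quoting that clause does not prove the proposition, it restates it. The trouble is that the proof of Proposition \ref{prop:purity} never verifies clause (1) for the family it constructs: the uniqueness half only uses (1) as a characterizing hypothesis, applied to the deformation morphisms $\bar\sigma_0$, $\bar\sigma_1$ of diagram \eqref{1st_def_diag}; the existence half defines $\pur{X,Z}$ through that deformation, proves that $\bar\sigma_{0*}$ and $\bar\sigma_{1*}$ are isomorphisms, and checks coherence with clause (2) for the pair $(\PP(E\oplus 1),X)$ --- nothing more. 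The naturality of the purity isomorphism in arbitrary cartesian morphisms of codimension-$n$ smooth closed pairs is exactly the content that Proposition \ref{transversal} is there to supply (and which later results such as \ref{second_proj_formula}, \ref{excess} and \ref{ramification} rely on), so assuming it begs the question.

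What is actually needed, and what the paper does, is the following. By functoriality of the deformation diagram \eqref{1st_def_diag} with respect to the cartesian morphism $(f,g)$, and since $\bar\sigma_{0*}$ and $\bar\sigma_{1*}$ are isomorphisms compatible with this functoriality, one replaces $(f,g)_*$ by the induced morphism $(p,g)_*\colon \M_T(P_TY)\to \M_Z(P_ZX)$ on projective completions of normal bundles. Because the square is cartesian and $n=m$ (your conormal-sheaf remark, showing $N_TY\simeq T\times_Z N_ZX$, is precisely the point here), one has $P_TY=P_ZX\times_Z T$, and the compatibility of the projective bundle isomorphism of Theorem \ref{th:projbdl} with base change --- resting on the functoriality of $c_1$ in axiom (Orient) and on (Kun) --- gives the commutation $p_*\circ \lef n {P_TY}=\lef n {P_ZX}\circ g_*(n)[2n]$. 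Combining this with clause (2) of Proposition \ref{prop:purity}, which expresses $\pur{P_ZX,Z}$ and $\pur{P_TY,T}$ in terms of these sections, yields $(f,g)_!=g_*(n)[2n]$. This deformation-plus-base-change verification is the genuine mathematical content of the proposition and is absent from your argument.
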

\begin{proof}
Diagram \eqref{1st_def_diag} is functorial with respect to cartesian morphism.
Let $p:P_TY \rightarrow P_ZX$ be the morphism induced by $(f,g)$ on
the projective completions of the normal bundles.
Through the morphisms $\bar \sigma_{0*}$ and $\bar \sigma_{1*}$ for the
closed pairs $(X,Z)$ and $(Y,T)$, the morphism $(f,g)_*$ is isomorphic to 
$$
(p,g)_*:\M(P_TY,T) \rightarrow \M(P_ZX,Z).
$$
As $n=m$ and $Y=X \times_Z T$, one has $P_TY=P_ZX \times_Z T$. 
Using the compatibility of the projective bundle isomorphism with base change,
we see that the following diagram commutes
$$
\xymatrix@=10pt@C=42pt{
\M(T)(n)[2n]\ar^/5pt/{\lef n {P_TY}}[r]\ar_{g_*(n)[2n]}[d]
 & \M(P_TY)\ar^{p_*}[d] \\
\M(Z)(n)[2n]\ar^/5pt/{\lef n {P_ZX}}[r] & \M(P_ZX)
}
$$
which concludes in view of the property (2) in proposition 
\ref{prop:purity}.
\end{proof}

\begin{cor} \label{second_proj_formula}
Consider a smooth closed pair $(X,Z)$ of codimension $n$
 and $i:Z \rightarrow X$ the corresponding immersion. Put $U=X-Z$.

Then $(1_{Z*} \gcup i_*) \circ i^*=i^* \gcup 1_{X*}$
as a morphism $\M(X) \rightarrow \M(Z \times X)(n)[2n]$, \\
and $(j_* \gcup 1_{U*}) \circ \partial_{X,Z}=\partial_{X,Z} \gcup i_*$
as a morphism $\M(Z)(n)[2n] \rightarrow \M(U \times X)[1]$.
\end{cor}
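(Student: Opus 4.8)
The plan is to deduce both identities simultaneously from the transversality computation (proposition \ref{transversal}) applied to a single cartesian square, fed through the commutative ladder \eqref{refind_Gysin}. Consider the diagonal $\delta_X\colon X\to X\times X$ and the closed pair $(X\times X,\,Z\times X)$, where $Z\times X$ sits inside $X\times X$ via $i\times 1_X$; it is a smooth closed pair of codimension $n$. Since $\delta_X^{-1}(Z\times X)=Z$ and the induced map $Z\to Z\times X$ is the graph $\Gamma_i$ of $i$ (i.e. $\Gamma_i=(1_Z\times i)\circ\delta_Z$), the square
\[
\xymatrix@C=2.2em@R=1.6em{
Z\ar[r]^-{\Gamma_i}\ar[d]_-{i} & Z\times X\ar[d]^-{i\times 1_X}\\
X\ar[r]^-{\delta_X} & X\times X
}
\]
is a \emph{cartesian} morphism of smooth closed pairs of the same codimension $n$. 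Hence proposition \ref{transversal} gives $(\delta_X,\Gamma_i)_!=\Gamma_{i*}(n)[2n]$.

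Next I would plug this morphism into diagram \eqref{refind_Gysin}, with $(Y,T)=(X,Z)$ there and the pair $(X,Z)$ of that diagram now playing the role of $(X\times X,\,Z\times X)$. Commutativity of its middle square yields
\[
\Gamma_{i*}\circ i^*_{X,Z}\;=\;i^*_{X\times X,\,Z\times X}\circ\delta_{X*},
\]
and, writing $j\colon U=X-Z\hookrightarrow X$ and noting that the restriction of $\delta_X$ to $U$ is the graph $\Gamma_j\colon U\to U\times X$ of $j$, commutativity of the third square yields
\[
\Gamma_{j*}[1]\circ\partial_{X,Z}\;=\;\partial_{X\times X,\,Z\times X}\circ\Gamma_{i*}.
\]

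It then remains to identify the terms. By the definition of the external product $\gcup$ of two morphisms one has $\Gamma_{i*}=1_{Z*}\gcup i_*$ and $\Gamma_{j*}=1_{U*}\gcup j_*$, the latter being $j_*\gcup 1_{U*}$ up to the symmetry isomorphism of $\T$ (which just relabels $M(U\times X)$ and $M(X\times U)$). The only remaining input is the compatibility of the purity isomorphism with products: for every smooth scheme $Y$, under the Künneth identifications $\M(X\times Y)=\M(X)\otimes\M(Y)$ and $\M_{Z\times Y}(X\times Y)=\M_Z(X)\otimes\M(Y)$ supplied by (Kun)(a), one has $\pur{X\times Y,\,Z\times Y}=\pur{X,Z}\otimes 1_{Y*}$, whence $i^*_{X\times Y,\,Z\times Y}=i^*_{X,Z}\otimes 1_{Y*}$ and $\partial_{X\times Y,\,Z\times Y}=\partial_{X,Z}\otimes 1_{Y*}$. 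I would prove this by inspecting the explicit construction of $\pur{X,Z}$ recorded at the end of the proof of proposition \ref{prop:purity}: the deformation diagrams \eqref{1st_def_diag} and \eqref{2nd_def_diag} commute with $-\times Y$, the applications of (Exc), (Loc) and (Kun)(b) used there are stable under $-\times Y$, and the building block $\lef n {\PP(N\oplus 1)}$ satisfies $\lef n {\PP(N\oplus 1)\times Y}=\lef n {\PP(N\oplus 1)}\otimes 1_{Y*}$ as a consequence of theorem \ref{th:projbdl} together with (Kun)(a). Granting this, the first displayed identity becomes $(1_{Z*}\gcup i_*)\circ i^*=(i^*_{X,Z}\otimes 1_{X*})\circ\delta_{X*}=i^*\gcup 1_{X*}$, which is the first formula; and the second becomes $(j_*\gcup 1_{U*})\circ\partial_{X,Z}=(\partial_{X,Z}\otimes 1_{X*})\circ(1_{Z*}\otimes i_*)\circ\delta_{Z*}=(\partial_{X,Z}\otimes i_*)\circ\delta_{Z*}=\partial_{X,Z}\gcup i_*$, which is the second.

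The main obstacle is the product–compatibility statement for $\pur{-,-}$: although morally evident, it requires propagating the identification $\M_{Z\times Y}(X\times Y)=\M_Z(X)\otimes\M(Y)$ carefully through every step of the construction of the purity isomorphism — in particular through the sign convention hidden in $\lef n {-}$ and through the final coherence check in the proof of \ref{prop:purity}. Once that lemma is available, everything else is a formal consequence of proposition \ref{transversal} and the ladder \eqref{refind_Gysin}, modulo routine bookkeeping with the Tate twists and with the symmetry constraint of $\T$ entering the definition of $\gcup$.
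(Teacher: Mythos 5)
Your proposal is correct and follows essentially the same route as the paper: the same cartesian morphism of closed pairs $(\delta_X,\gamma_i):(X,Z)\rightarrow(X\times X,Z\times X)$ fed into proposition \ref{transversal} and the ladder \eqref{refind_Gysin}, combined with the product-compatibility lemma $(i\times 1_Y)^*=i^*\otimes 1_{Y*}$, $\partial_{X\times Y,Z\times Y}=\partial_{X,Z}\otimes 1_{Y*}$ (the paper's lemma \ref{lm:Gysin&product}), itself proved exactly as you indicate by tracing $\pur{X\times Y,Z\times Y}=\pur{X,Z}\otimes 1_{Y*}$ through the construction of the purity isomorphism via (Kun) and the projective bundle theorem. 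The hidden permutation isomorphism you note is acknowledged in the paper's remark following the corollary.
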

\begin{proof}
We consider the cartesian square
$$
\xymatrix@R=12pt@C=20pt{
Z\ar_{\gamma_i}[d]\ar^i[r] & X\ar^{\delta_X}[d] \\
Z \times X\ar^{i \times 1_X}[r] & X \times X
}
$$
where $\delta_X$ is the diagonal embedding of $X/S$.
The two formulas then follow from the previous proposition
applied to the morphism of closed pairs 
$(\delta_X,\gamma_i):(X,Z) \rightarrow (X \times X,Z \times X)$
with the help of the following elementary lemma~:
\begin{lm} \label{lm:Gysin&product}
Let $(X,Z)$ be a smooth closed pair of codimension $n$
and $Y$ be a smooth scheme.

Then $(i \times 1_Y)^*=i^* \otimes 1_{Y*}$
 and $\partial_{X \times Y,Z \times Y}=\partial_{X,Z} \otimes 1_{Y*}$.
\end{lm}
\noindent Using axiom (Kun)(a) and (Kun)(b),
 the lemma is reduced to prove that
$\pur{X \times Y,Z \times Y}=\pur{X,Z} \otimes Y$.
From the construction of the purity isomorphism,
 we are reduced to show that for a projective bundle $P/X$,
 $\epsilon_{P \times Y}=\epsilon_{P} \otimes 1_{X*}$
  using the notations of theorem \ref{th:projbdl}.
This last equality follows finally from axiom (Kun)(a) and 
the functoriality of the first Chern class in axiom (Orient).
\end{proof}

\rem
\begin{enumerate}
\item In the formula of this lemma,
 there is hidden a permutation isomorphism for the tensor product. 
In this paper, we will not need to care about this isomorphism. 
However, in some cases, it may result in a change of sign 
 (see \cite{Deg5}, rem. 2.6.2).
\item Considering a ringed premotive $\E$, the previous corollary gives
the usual projection formula for $i$~: for any $z \in \E^{**}(Z)$
and any $x \in \E^{**}(X)$, $i_*(z \ncup i^*(x))=i_*(z) \ncup x$.
\end{enumerate}

\begin{num}\label{fund_class}
Let $(X,Z)$ be a smooth closed pair of codimension $n$,
$i:Z \rightarrow X$ the corres\-ponding closed immersion.
Following Grothendieck (see \cite{Gro}), 
we define the \emph{fundamental class} of $Z$ in $X$ 
as the cohomology class $\eta_X(Z)=i_*(1)$ in $H^{2n,n}(X)$.
As a morphism, it is equal to the composite
$$
\M(X) \xrightarrow{i^*} \M(Z)(n)[2n]
 \xrightarrow{\pi_{Z*}} \un(n)[2n]
$$
where $\pi_Z:Z \rightarrow S$ is the structural morphism of $Z/S$. \\
Suppose that $i$ admits a retraction $p:X \rightarrow Z$.
Then corollary 3.10 gives the following computation\footnote{
Considered in cohomology, this is a well known formula.}
 of the Gysin morphism~:
\begin{equation} \label{Gysin&Thom}
i^*=p_* \gcup \eta_X(Z).
\end{equation}
Suppose given a vector bundle $E/X$ and put $P=\PP(E \oplus 1)$.
Applying example \ref{ex:Gysin&Thom}, we get
$$
\eta_{P}(X)=t(E)
$$
where $X$ is embedded in $P$ through the canonical 
section. Indeed example \ref{ex:Gysin&Thom} is a particular case
of the formula \eqref{Gysin&Thom}. \\
More generally, we can define the \emph{localised fundamental class}
of $Z$ in $X$ as the cohomology class $\bar \eta_X(Z) \in H^{2n,n}_Z(X)$
equal to the composite
$$
M_Z(X) \xrightarrow{\pur{X,Z}} M(Z)(n)[2n]
 \xrightarrow{\pi_{Z*}} \un(n)[2n].
$$
Considering the canonical morphism
 $\nu_{X,Z}:H^{2n,n}_Z(X) \rightarrow H^{2n,n}(X)$,
  we have tautologically $\nu_{X,Z}(\bar \eta_X(Z))=\eta_X(Z)$. \\
For any vector bundle $E/X$ of rank $n$, $P=\PP(E \oplus 1)$,
 the \emph{localised Thom class} $\bar t(E)=\bar \eta_P(X)$ is uniquely
  determined by the Thom class $t(E)$. 
Usually, $\bar t(E)$ is considered as an element of $H^{2n,n}_X(E)$
 using axiom (Exc).
\end{num}

As a last application of the previous corollary, let us remark
the following~:

\begin{cor} \label{Gysin&proj_bdl_iso}
Let $(X,Z)$ be a smooth closed pair of codimension $m$,
and $P$ be a projective bundle of rank $n$ over $X$.

Then for any integer $r \in [0,n]$, the following diagram
is commutative~:
$$
\xymatrix@R=30pt@C=14pt{
\M(P_V)\ar^{\nu_*}[r]
    \ar|{p_{V*} \gcup c_1(\lambda_V)^r}[d]
 & \M(P)\ar^/-6pt/{\iota^*}[r]
    \ar|{p_* \gcup c_1(\lambda)^r}[d]
 & \M(P_Z)(m)[2m]\ar^/2pt/{\partial_\iota}[r]
    \ar|{p_{Z*} \gcup c_1(\lambda_Z)^r}[d]
 & \M(P_V)[1]
    \ar|{p_{V*}[1] \gcup c_1(\lambda_V)^r}[d] \\
\M(V)(r)[2r]\ar^{j_*}[r]
 & \M(Y)(r)[2r]\ar^/-13pt/{i^*}[r]
 & \M(Z)(r+m)[2(r+m)]\ar^/7pt/{\partial_i}[r]
 & \M(V)(r)[2r+1].
}
$$
\end{cor}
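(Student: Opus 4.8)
\emph{Notation.} I keep the notation of the diagram: the closed pair is $(Y,Z)$, I set $V=Y-Z$, and $P$ is the rank-$n$ projective bundle over $Y$, with $P_Z=P\times_Y Z$, $P_V=P\times_Y V=P-P_Z$, and canonical line bundles $\lambda$, $\lambda_Z$, $\lambda_V$ on $P$, $P_Z$, $P_V$. Write $c=c_1(\lambda)\in H^{2,1}(P)$. By functoriality of the first Chern class (axiom (Orient)) the pullbacks of $c$ along $\iota$ and $\nu$ are $c_1(\lambda_Z)$ and $c_1(\lambda_V)$, so $c^r$ pulls back to $c_1(\lambda_Z)^r$ and $c_1(\lambda_V)^r$ since pullback respects cup-products. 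Note that $\iota\colon P_Z\to P$ is a closed immersion of codimension $m$ (pullback of $Z\subset Y$ along $p$), so $(P,P_Z)$ is a smooth closed pair of codimension $m$ and the top row is its Gysin triangle, while the bottom row is the Gysin triangle of $(Y,Z)$ twisted by $\un(r)[2r]$.

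\emph{Reduction.} The plan is to split each vertical arrow. From the very definition of $\gcup$ one has $p_*\gcup c^r=p_*(r)[2r]\circ\bigl(1_{P*}\gcup c^r\bigr)$, and similarly with $p_V$ or $p_Z$ in place of $p$. Hence the diagram of the corollary is the vertical superposition of two diagrams of the same shape. The lower one, $(B)$, is the $\un(r)[2r]$-twist of the morphism of Gysin triangles \eqref{refind_Gysin} attached to the morphism of closed pairs $(p,p_Z)\colon(P,P_Z)\to(Y,Z)$; its upper row is the Gysin triangle of $(P,P_Z)$ twisted by $\un(r)[2r]$ and its lower row is the bottom row of the corollary. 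The upper one, $(A)$, has the untwisted Gysin triangle of $(P,P_Z)$ on top, the same triangle twisted by $\un(r)[2r]$ on the bottom, and vertical arrows $1_{P_V*}\gcup c_1(\lambda_V)^r$, $1_{P*}\gcup c^r$ and their twists and shifts. Since the composite of the two arrows in each column of the superposition is $p_{V*}\gcup c_1(\lambda_V)^r$, $p_*\gcup c^r$, etc., it suffices to check that $(A)$ and $(B)$ commute.

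\emph{The two diagrams.} Diagram $(B)$ I would dispose of at once: the morphism $(p,p_Z)$ is cartesian (because $P_Z=P\times_Y Z$) and both closed pairs have codimension $m$, so Proposition \ref{transversal} gives $(p,p_Z)_!=p_{Z*}(m)[2m]$, and \eqref{refind_Gysin} is then exactly the $r=0$ version of the claimed ladder; twisting by the invertible object $\un(r)[2r]$ preserves commutativity of each square. For diagram $(A)$, the leftmost square is immediate from naturality of the diagonal, $\delta_{P*}\circ\nu_*=(\nu_*\otimes\nu_*)\circ\delta_{P_V*}$, together with $c^r\circ\nu_*=c_1(\lambda_V)^r$. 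The remaining two squares I would obtain from Corollary \ref{second_proj_formula} applied to the closed pair $(P,P_Z)$: composing its two formulas $(1_{P_Z*}\gcup\iota_*)\circ\iota^*=\iota^*\gcup 1_{P*}$ and $(\nu_*\gcup 1_{P_V*})\circ\partial_\iota=\partial_\iota\gcup\iota_*$ with the evaluation of $c^r$ on the tensor factor carried by $P$, and using $c^r\circ\iota_*=c_1(\lambda_Z)^r$, yields $(1_{P_Z*}\gcup c_1(\lambda_Z)^r)\circ\iota^*=\iota^*\gcup c^r$ and $(1_{P_V*}\gcup c_1(\lambda_V)^r)\circ\partial_\iota=\partial_\iota\gcup c_1(\lambda_Z)^r$, which, after inserting the evident Tate twists, are the commutativity of the middle and right squares of $(A)$. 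Superposing $(A)$ over $(B)$ then gives the corollary. I do not expect any genuine obstacle; the only care needed is in the bookkeeping of the orders of the tensor factors and of the Tate twists, all the real content being contained in Proposition \ref{transversal} and Corollary \ref{second_proj_formula}.
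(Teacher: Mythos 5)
Your proof is correct and is essentially the argument the paper intends: the corollary is stated there without a written proof, as an application of Corollary \ref{second_proj_formula} (which itself rests on Proposition \ref{transversal} and the commutative ladder \eqref{refind_Gysin}), and your splitting of each vertical arrow into (cup with $c^r$) followed by (pushforward along the cartesian morphism $(p,p_Z)$ of codimension-$m$ pairs) is the evident way to carry this out. The only point requiring care --- which you flag --- is the bookkeeping of tensor-factor order and Tate twists, i.e.\ the hidden permutation isomorphism already acknowledged in the remark following Corollary \ref{second_proj_formula}.
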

In particular, the Gysin triangle is compatible with the
projective bundle isomorphisms and with the induced
embeddings $\lef r {P_?}$.

\subsubsection{The excess intersection case}

Remark that in the hypothesis of definition \ref{df:refined_Gysin},
we have a canonical closed immersion
$$
N_TY \xrightarrow{\nu} g^*(N_ZX).
$$
In particular, we have necessarily the inequality $n \geq m$. 
\begin{prop} \label{excess}
Consider the hypothesis of definition \ref{df:refined_Gysin}.
Suppose $(f,g)$ is cartesian. \\
Put $e=n-m$ and consider $\xi=g^{-1}(N_ZX)/N_TY$,
 quotient vector bundle over $T$.

Then $(f,g)_!=\left(g_* \gcup_T c_e(\xi)\right)(m)[2m]$.
\end{prop}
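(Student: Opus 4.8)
The plan is to prove the excess formula by reducing to the case of vector bundles via the deformation to the normal cone, then factoring $(f,g)$ into a transversal part over $Z$ (handled by Proposition~\ref{transversal}) and a ``pure excess'' part over $T$, and finally computing the latter from the explicit Thom class and the Whitney sum formula. For the first reduction: the deformation diagram \eqref{1st_def_diag} is functorial for cartesian morphisms of closed pairs, and the purity isomorphism is, by its construction in Proposition~\ref{prop:purity}, the composite of the maps $\bar\sigma_{0*}$, $\bar\sigma_{1*}^{-1}$ coming from that deformation with the isomorphism of Proposition~\ref{prop:purity}(2). Since the $\bar\sigma_{i*}$ are natural, $(f,g)_*$ is intertwined with the analogous maps for the projective completions, whence $(f,g)_!=(p,g)_!$, where $(p,g)\colon(P_TY,T)\to(P_ZX,Z)$ is the cartesian morphism induced on $P_TY=\PP(N_TY\oplus 1)$ and $P_ZX=\PP(N_ZX\oplus 1)$ by the canonical closed immersion $\nu\colon N_TY\hookrightarrow g^{-1}(N_ZX)$, the subschemes $T$, $Z$ being the canonical sections; the excess bundle is unchanged by this reduction. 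Thus I may assume $X=\PP(E\oplus 1)$ and $Y=\PP(F\oplus 1)$ for vector bundles $E/Z$ of rank $n$ and $F/T$ of rank $m$, with $Z$, $T$ the canonical sections and $f$ induced by a closed immersion $F\hookrightarrow g^{-1}(E)$ over $g$; here $\xi=g^{-1}(E)/F$ has rank $e=n-m$.

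\emph{Factorization.} Factoring that closed immersion as $F\hookrightarrow g^{-1}(E)\to E$ and passing to projective completions yields a factorization $(f,g)=pr\circ j$ of closed-pair morphisms, where $j\colon(Y,T)\to\big(\PP(g^{-1}(E)\oplus 1),T\big)$ lies over $\mathrm{id}_T$ and $pr\colon\big(\PP(g^{-1}(E)\oplus 1),T\big)\to(X,Z)$ lies over $g$ and is cartesian of codimension $n$. The refined Gysin morphism is functorial for composition (immediate from Definition~\ref{df:refined_Gysin} and functoriality of $\M_{(-)}(-)$), so $(f,g)_!=pr_!\circ j_!$. By Proposition~\ref{transversal}, $pr_!=g_*(n)[2n]$. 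The morphism $j$ is cartesian over $T$, of codimension $m$ inside codimension $n$, with excess bundle $\xi$; granting $j_!=\big(1_{T*}\gcup_T c_e(\xi)\big)(m)[2m]$, the statement follows from the identity $g_*(n)[2n]\circ\big(1_{T*}\gcup_T c_e(\xi)\big)(m)[2m]=\big(g_*\gcup_T c_e(\xi)\big)(m)[2m]$, which unwinds directly from the definition of $\gcup_T$ and naturality of the diagonal.

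\emph{The pure excess morphism.} Set $G=g^{-1}(E)$, $P_F=\PP(F\oplus 1)$, $P_G=\PP(G\oplus 1)$, with projections $q_F$, $q_G$ and $\bar\jmath\colon P_F\to P_G$, so that $q_G\bar\jmath=q_F$. By paragraph~\ref{Thom_class}, the morphism $q_{G*}\gcup t(G)\colon\M(P_G)\to\M(T)(n)[2n]$ factors through $\pi_G\colon\M(P_G)\to\M_T(P_G)$ as $\pur{P_G,T}$, and $\big(q_{F*}\gcup t(F)\big)\circ\lef m{P_F}=1$; using in addition $\pur{P_F,T}^{-1}=\pi_F\circ\lef m{P_F}$ and the naturality of $\pi$ and of the diagonal, one gets $j_!=\big(q_{F*}\gcup \bar\jmath^{*}t(G)\big)\circ\lef m{P_F}$. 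Now $\bar\jmath$ pulls back the canonical line bundle of $P_G$ to that of $P_F$, and hence identifies the pullback of the universal quotient bundle of $P_G$ with an extension of $q_F^{-1}(\xi)$ by the universal quotient bundle of $P_F$; by Remark~\ref{Thom&quotient}, the Whitney sum formula~\ref{additivity_Chern}, and a rank count (only the term $c_m\cdot c_e$ survives), this gives $\bar\jmath^{*}t(G)=t(F)\ncup q_F^{*}c_e(\xi)$. A direct computation with the products of \S\ref{sec:products} then shows $\big(q_{F*}\gcup(t(F)\ncup q_F^{*}\gamma)\big)\circ\lef m{P_F}=\big(1_{T*}\gcup\gamma\big)(m)[2m]$ for any $\gamma\in H^{2e,e}(T)$, using once more that $\big(q_{F*}\gcup t(F)\big)\circ\lef m{P_F}=1$; taking $\gamma=c_e(\xi)$ completes this step.

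The delicate point will be the first reduction: justifying it requires unwinding the construction of $\pur{}$ and checking naturality of every map involved, and one must take care that the excess bundle $\xi$ --- defined through the normal bundles of the \emph{original} closed pairs --- is correctly matched with $g^{-1}(E)/F$ after the reduction to projective completions. By comparison, the Chern class identity in the last step is short, and the remaining arguments are routine bookkeeping with Tate twists and the products of \S\ref{sec:products}.
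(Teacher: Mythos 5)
Your proposal is correct and follows essentially the same route as the paper: reduce by the (functorial) deformation to the normal cone, split off the cartesian equal-codimension part via Proposition~\ref{transversal}, and compute the remaining ``pure excess'' morphism for a monomorphism of vector bundles over $T$ from the Thom classes, Remark~\ref{Thom&quotient}, the Whitney sum formula~\ref{additivity_Chern} and the exact sequence of universal quotient bundles. The only (immaterial) deviations are that you run the reduction through the first deformation diagram and factor at the level of projective completions, where the paper uses the second diagram and factors $N_TY\to g^{-1}(N_ZX)\to N_ZX$ before completing, and your label of the induced morphism of completed pairs as ``cartesian'' is loose but unused.
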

\rem The integer $e$ is usually called the \emph{excess of intersection},
and $\xi$ the \emph{excess intersection bundle}.
\begin{proof}
The morphism $(f,g)$ induces the following composite morphism on normal
bundles~:
$$
N_TY \xrightarrow{\nu} g^{-1}(N_ZX) \xrightarrow{g'} N_ZX.
$$
Thus, considering now the functoriality of diagram \eqref{2nd_def_diag}
with respect to the cartesian morphism $(f,g)$, 
we obtain $(f,g)_!=(\nu,1_T)_! (g',g)_!$.
From proposition \ref{transversal}, $(g',g)_!=g_*(n)[2n]$.
We conclude using the following lemma~:
\begin{lm}
Let $E$ and $F$ be vector bundles over a smooth scheme $T$
of respective rank $n$ and $m$.
Consider a monomorphism $\nu:F \rightarrow E$ of vector bundles
and put $e=n-m$.

Then $(\nu,1_T)_!=\big(1_{T*} \gcup c_e(E/F)\big)(m)[2m]$.
%
%the following diagram commutes~:
%$$
%\xymatrix@R=14pt@C=90pt{
%{}\MTh(F)\ar^{\MTh(\nu)}[r]\ar_{\pur{F,T}}[d]
% & {}\MTh(E)\ar^{\pur{E,T}}[d] \\
%{}\M(T)(m)[2m]\ar^{1_T \gcup c_e(E/F)(m)[2m]}[r] & {}\M(T)(n)[2n]
%}
%$$
\end{lm}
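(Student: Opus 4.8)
The plan is to compute $(\nu,1_T)_!$ head-on, \emph{without} first reducing to the split case $E=F\oplus(E/F)$, by unwinding the definition of $(\cdot)_!$ through the Thom-class description of purity (Proposition~\ref{prop:purity}(2) and paragraph~\ref{Thom_class}) and then invoking the Whitney sum formula. Write $P_E=\PP(E\oplus 1)$, $P_F=\PP(F\oplus 1)$, with canonical projections $p_E,p_F$ to $T$, and let $j\colon P_F\hookrightarrow P_E$ be the closed immersion $\PP(\nu\oplus 1)$, so that $p_E\circ j=p_F$ and $j^{-1}(T)=T$ (here $T$ denotes the image of the canonical section, a zero section, in either space). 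Denote by $\L_{P_E},\Q_{P_E}$ (resp. $\L_{P_F},\Q_{P_F}$) the canonical line bundle and the universal quotient bundle on $P_E$ (resp. $P_F$), and by $\pi_E\colon\M(P_E)\to\M_T(P_E)$, $\pi_F\colon\M(P_F)\to\M_T(P_F)$ the canonical maps. By (Exc) (cf.\ paragraph~\ref{Thom_class}), $\M_T(E)\simeq\M_T(P_E)$ and $\M_T(F)\simeq\M_T(P_F)$, and under these identifications $(\nu,1_T)_*$ becomes the functorial map $j_*\colon\M_T(P_F)\to\M_T(P_E)$.

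First I would unwind the definition. By Proposition~\ref{prop:purity}(2), $\pur{F,T}^{-1}=\pi_F\circ\lef{m}{P_F}$; by paragraph~\ref{Thom_class}, $\pur{E,T}\circ\pi_E=p_{E*}\gcup t(E)$; and by naturality of the localisation sequence, applied to the cartesian morphism of closed pairs $(P_F,T)\to(P_E,T)$ and using $j^{-1}(T)=T$, one has $j_*\circ\pi_F=\pi_E\circ j_*$, where the second $j_*$ is the functorial map $\M(P_F)\to\M(P_E)$. Feeding these into $(\nu,1_T)_!=\pur{E,T}\circ(\nu,1_T)_*\circ\pur{F,T}^{-1}$ gives $(\nu,1_T)_!=(p_{E*}\gcup t(E))\circ j_*\circ\lef{m}{P_F}$. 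Since $\M$, its monoidal structure and the diagonal are all functorial, and $p_E\circ j=p_F$, the middle factor equals $p_{F*}\gcup j^*t(E)$, so
$$
(\nu,1_T)_!=\big(p_{F*}\gcup j^*t(E)\big)\circ\lef{m}{P_F}.
$$

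Next I would compute $j^*t(E)$. By Remark~\ref{Thom&quotient}, $t(E)=c_n(\Q_{P_E})$. Restricting the tautological sequence $0\to\L_{P_E}\to p_E^{-1}(E\oplus 1)\to\Q_{P_E}\to 0$ along $j$, and using that $j^*\L_{P_E}=\L_{P_F}$ lies inside $p_F^{-1}(F\oplus 1)\subset p_F^{-1}(E\oplus 1)=j^*p_E^{-1}(E\oplus 1)$, one gets a short exact sequence of vector bundles on $P_F$
$$
0\to\Q_{P_F}\to j^*\Q_{P_E}\to p_F^{-1}(E/F)\to 0
$$
of ranks $m,n,e$. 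For degree reasons ($c_a(\Q_{P_F})=0$ for $a>m$, $c_b(E/F)=0$ for $b>e$, and $a+b=n=m+e$), the Whitney sum formula (Lemma~\ref{additivity_Chern}) collapses to the single term
$$
j^*t(E)=c_n(j^*\Q_{P_E})=c_m(\Q_{P_F})\ncup p_F^*\big(c_e(E/F)\big)=t(F)\ncup p_F^*\big(c_e(E/F)\big),
$$
the last equality again by Remark~\ref{Thom&quotient}, since $\Q_{P_F}$ has rank $m$.

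Finally, writing $\beta=c_e(E/F)$, it remains to identify $\big(p_{F*}\gcup(t(F)\ncup p_F^*\beta)\big)\circ\lef{m}{P_F}$ with $\big(1_{T*}\gcup\beta\big)(m)[2m]$. Expanding both $p_{F*}\gcup(t(F)\ncup p_F^*\beta)$ and $\big(1_{T*}\gcup\beta\big)(m)[2m]\circ(p_{F*}\gcup t(F))$ via the triple diagonal of $P_F$, using the identity $\delta_T\circ p_F=(p_F\times p_F)\circ\delta_{P_F}$, cocommutativity and coassociativity of the diagonal, and the triviality of the permutation of two Tate twists (established just after Corollary~\ref{cor:projbdl_formula}; the shifts involved are even, so no sign appears), one sees that these two morphisms $\M(P_F)\to\M(T)(n)[2n]$ coincide. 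Since $(p_{F*}\gcup t(F))\circ\lef{m}{P_F}=1$ by Example~\ref{ex:Gysin&Thom}, this yields $(\nu,1_T)_!=\big(1_{T*}\gcup c_e(E/F)\big)(m)[2m]$, as claimed. I expect the main obstacle to be the bookkeeping of supports in the second paragraph — correctly tracking the three maps $\pi_F$, $j_*$, $\pi_E$ so as to reach the clean expression $\big(p_{F*}\gcup j^*t(E)\big)\circ\lef{m}{P_F}$; once that is in hand, everything reduces to the Whitney-sum identity of the third paragraph and a routine diagonal manipulation.
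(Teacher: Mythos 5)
Your proof is correct and follows essentially the same route as the paper: unwind $(\nu,1_T)_!$ via the Thom-class description of the purity isomorphisms, reduce to the identity $\bar\nu^*t(E)=q^*c_e(E/F)\ncup t(F)$ on the projective completions, and deduce it from the exact sequence $0\to\Q_{P_F}\to j^*\Q_{P_E}\to p_F^{-1}(E/F)\to 0$ together with the Whitney sum formula. The only difference is that you spell out explicitly the support bookkeeping and the final diagonal manipulation that the paper leaves implicit.
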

To prove the lemma, we use the description of $\pur{F,T}$ and $\pur{E,T}$
using the Thom class (cf \ref{Thom_class}).
Let $P$, $Q$ and $\bar \nu:Q \rightarrow P$ be the respective projective completions 
of $E$, $F$ and $\nu$. 
Let $p:P \rightarrow T$ and $q:Q \rightarrow T$ be the canonical projections.
We are reduced to prove the relation 
$\bar \nu^*(t(E))=(q^* c_e(E/F)) \ncup t(F)$ in $H^{2n,n}(Q)$. \\
From remark \ref{Thom&quotient}, we get $t(E)=c_n(\xi_P)$ 
(resp. $t(F)=c_m(\xi_Q)$) where $\xi_P$ (resp. $\xi_Q$) 
is the universal quotient bundle on $P$ (resp. $Q$).
Thus, the relation follows from the Whitney sum formula
\ref{additivity_Chern} and the following exact sequence
of vector bundles over $Q$~:
$$
0 \rightarrow \xi_Q \rightarrow \bar \nu^{-1}\xi_P
 \rightarrow q^{-1}(E/F) \rightarrow 0.
$$
\end{proof}

%\begin{rem}
%Let $(X,Z)$ be a smooth closed pair of codimension $n$,
%$B_Z(\AA^1_X)$ be the blow-up of $\AA^1_X$ with center $\{0\} \times Z$
%and $P_ZX$ be the projective completion of $N_ZX$.
%Consider the cartesian morphism $(f,g):(B_Z(\AA^1_X),P_ZX) \rightarrow (X,Z)$
%induced by the canonical projection.
%Then the previous proposition together with remark \ref{Thom&quotient}
%give $(f,g)_!=g_* \gcup t(E)(1)[2]$.
%\end{rem}

\begin{cor} \label{self_intersection}
Let $(X,Z)$ be a smooth closed pair of codimension $n$. Then~: \\
(1) $i^* i_*=1_Z \gcup c_n(N_ZX)$ as a morphism $\M(Z) \rightarrow \M(Z)(n)[2n]$. \\
(2) $\partial_{X,Z} \circ  (1_Z \gcup c_n(N_ZX))=0$.
\end{cor}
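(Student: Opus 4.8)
The plan is to derive both statements from the excess intersection formula (Proposition~\ref{excess}) applied to a single well-chosen cartesian morphism of closed pairs.

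For~(1) I would view $(Z,Z)$ as a smooth closed pair of codimension $0$ and consider the morphism of closed pairs $(i,1_Z)\colon (Z,Z)\to (X,Z)$ given by the square
$$
\xymatrix@=10pt{
Z\ar^{1_Z}[r]\ar_{1_Z}[d] & Z\ar^i[d] \\
Z\ar_i[r] & X
}
$$
which is cartesian since $Z\times_X Z=Z$ for the closed immersion $i$. Here the excess of intersection is $e=n-0=n$, and the excess intersection bundle is $\xi=1_Z^{-1}(N_ZX)/N_ZZ=N_ZX$, because the normal bundle of $Z$ inside itself vanishes. Proposition~\ref{excess} thus gives $(i,1_Z)_!=1_Z\gcup c_n(N_ZX)$ (no twist appears, as $m=0$). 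To finish, I would identify $(i,1_Z)_!$ with $i^*i_*$: by Definition~\ref{df:refined_Gysin} one has $(i,1_Z)_!=\pur{X,Z}\circ (i,1_Z)_*\circ \pur{Z,Z}^{-1}$, where $\pur{Z,Z}\colon \M_Z(Z)=\M(Z)\to \M(Z)$ is the identity by the normalization~(2) in Proposition~\ref{prop:purity} applied to the rank-$0$ bundle $\PP(0\oplus 1)=Z$; and, factoring the cube in $\D$ that defines $(i,1_Z)_*$ through the one inducing $i_*\colon \M(Z)\to \M(X)$ followed by the one inducing the canonical projection $\pi\colon \M(X)\to \M_Z(X)$, functoriality of $\M$ gives $(i,1_Z)_*=\pi\circ i_*$. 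Hence $(i,1_Z)_!=\pur{X,Z}\circ\pi\circ i_*=i^*\circ i_*$, proving~(1).

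For~(2), the Gysin triangle of Definition~\ref{df_purity&Gysin} is distinguished, so $\partial_{X,Z}\circ i^*=0$; combined with~(1) this yields
$$
\partial_{X,Z}\circ\big(1_Z\gcup c_n(N_ZX)\big)=\partial_{X,Z}\circ i^*\circ i_*=0 .
$$

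The substantive input is entirely Proposition~\ref{excess}; the only points requiring care are the two bookkeeping facts in the second paragraph — that $\pur{Z,Z}$ is literally the identity and that $(i,1_Z)_*$ factors as $\pi\circ i_*$ — and both are formal consequences of the constructions already in place, so I anticipate no real obstacle.
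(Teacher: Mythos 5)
Your proof is correct and takes essentially the same route as the paper, which deduces both statements from Proposition \ref{excess} applied to the morphism of closed pairs $(i,1_Z):(Z,Z)\to(X,Z)$. The bookkeeping you make explicit — that $\pur{Z,Z}$ is the identity, that $(i,1_Z)_*$ factors as $\pi\circ i_*$, and that (2) follows from the vanishing of consecutive composites in the Gysin triangle — is exactly what the paper's one-line proof leaves implicit (via the diagram \eqref{refind_Gysin}).
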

This follows from the previous proposition applied with
$(f,g)=(i,1_Z)$. 
We usually refer to the first formula as the \emph{self-intersection formula}.
%At least, given any ringed premotive $\E$, 
%it induces the usual self-intersection formula for the cohomology theory 
%$\E^{**}$.

\num Consider a vector bundle $E$ over a smooth scheme $X$ of rank $n$.
Let $E^\times$ be the complement of the zero section in $E$ and
$\pi:E^\times \rightarrow X$ be the obvious projection.
Then using property (Htp') and the previous corollary, 
we obtain from the Gysin triangle for $(E,X)$
the following distinguished triangle
$$
\M(E^\times) \xrightarrow{\pi_*} \M(X) \xrightarrow{1_X \gcup c_n(E)}
 \M(X)(n)[2n] \xrightarrow{\partial_{E,X}} \M(E^\times)[1]
$$
which we shall call the \emph{Euler distinguished triangle}.
Indeed, in cohomology with coefficients in a ringed premotive $\E$,
it corresponds to a long exact sequence where one of the arrow 
is the cup product by $c_n(E)$.

As a corollary of the self-intersection formula \ref{self_intersection},
 we obtain the following tool to compute fundamental classes 
 which generalises in our setting a theorem of Grothendieck
  (cf \cite[th. 2]{Gro}).
\begin{cor} \label{fundamental_class_as_chern_class}
Consider a smooth closed pair $(X,Z)$ of codimension $n$.
Let $i$ be the corresponding closed immersion
and $\eta_X(Z)=i_*(1) \in H^{2n,n}(X)$ be the fundamental class of $Z$ in $X$ 
 (cf \ref{fund_class}). \\
Suppose there exists a vector bundle $E$ on $X$ and a section $s$
of $E/X$ such that $s$ is transversal to the zero section $s_0$ of $E$
and $Z=s^{-1}(s_0(X))$.

Then, $\eta_X(Z)=c_n(E)$.
\end{cor}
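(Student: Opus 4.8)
The plan is to realize $Z\subset X$ as the transversal preimage of the zero section of $E$ under $s$, and then to combine the transversal base change formula (Proposition~\ref{transversal}) with the self-intersection formula (Corollary~\ref{self_intersection}).

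First I would fix notation. Let $s_0:X\to E$ be the zero section and $p:E\to X$ the projection. Then $s_0$ is a closed immersion making $(E,s_0(X))$ a smooth closed pair, and there is a canonical identification $N_{s_0(X)}E\cong E$ of vector bundles over $s_0(X)\cong X$. Since $s$ is transversal to $s_0(X)$ and $Z=s^{-1}(s_0(X))$ scheme-theoretically, the square
$$
\xymatrix@=12pt{
Z\ar[r]^{i}\ar[d]_{g} & X\ar[d]^{s} \\
s_0(X)\ar[r] & E
}
$$
is cartesian, where $i$ is the inclusion $Z\subset X$ and $g$ is $i$ read through the isomorphism $X\cong s_0(X)$; transversality gives $N_ZX\cong i^{-1}N_{s_0(X)}E$, hence $\operatorname{rank}(E)=n$ (when $Z\neq\emptyset$; the empty case being trivial) and $(s,g):(X,Z)\to(E,s_0(X))$ is a cartesian morphism of smooth closed pairs of codimension $n$.

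Next I would run the machinery. Applying the commutative diagram~\eqref{refind_Gysin} to $(s,g)$ and using Proposition~\ref{transversal}, which yields $(s,g)_!=g_*(n)[2n]$, the middle square reads
$$
s_0^{*}\circ s_{*}=\big(g_*(n)[2n]\big)\circ i^{*}:\M(X)\longrightarrow\M(X)(n)[2n]
$$
(using $\M(s_0(X))=\M(X)$). Now $s$ and $s_0$ are both sections of $p$, and $p_{*}:\M(E)\to\M(X)$ is an isomorphism by (Htp$'$); from $p_{*}s_{*}=p_{*}s_{0*}=1$ we deduce $s_{*}=s_{0*}$. Hence, by the self-intersection formula (Corollary~\ref{self_intersection}(1)) and the identification $N_{s_0(X)}E\cong E$,
$$
\big(g_*(n)[2n]\big)\circ i^{*}=s_0^{*}\circ s_{0*}=1_X\gcup c_n(E).
$$
Finally I would extract the fundamental class: composing the last identity on the left with the twisted structural morphism $\pi_{X*}:\M(X)(n)[2n]\to\un(n)[2n]$ of $X/S$, the left-hand side becomes $\pi_{X*}\circ g_*(n)[2n]\circ i^{*}=\pi_{Z*}\circ i^{*}$ (since $\pi_X\circ i=\pi_Z$), which is exactly $\eta_X(Z)$ by~\ref{fund_class}; the right-hand side becomes $\big(\pi_{X*}\otimes c_n(E)\big)\circ\delta_{X*}=1\gcup c_n(E)=c_n(E)$, using the identity $\big(\pi_{X*}\otimes 1_{\M(X)}\big)\circ\delta_{X*}=1_{\M(X)}$ which comes from $(\pi_X\times 1_X)\circ\delta_X=1_X$. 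Therefore $\eta_X(Z)=c_n(E)$.

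I do not expect a serious obstacle: all the substance already sits in Propositions~\ref{transversal} and~\ref{self_intersection}. The points deserving a little care are verifying that the displayed square is genuinely cartesian (so that Proposition~\ref{transversal} applies) and the homotopy equality $s_{*}=s_{0*}$; the concluding passage from the endomorphism $s_0^{*}s_{0*}$ of $\M(X)$ to the cohomology class $c_n(E)$ is routine bookkeeping with the definition of the fundamental class and the unit axiom.
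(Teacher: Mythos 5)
Your proof is correct and is exactly the paper's argument: the paper deduces the corollary in one line from Proposition \ref{transversal} applied to the transversal cartesian square built from $s$ and $s_0$ (the same square you draw, up to transposing the roles of the two sections) together with the self-intersection formula \ref{self_intersection} applied to $s_0$. The only difference is that you spell out the small steps the paper leaves implicit, namely $s_*=s_{0*}$ via (Htp$'$) and the final composition with $\pi_{X*}$ turning the endomorphism identity into $\eta_X(Z)=c_n(E)$, and these are carried out correctly.
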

It simply follows from corollary \ref{self_intersection} applied to $s_0$ 
together with proposition \ref{transversal} applied to the following 
transversal square~:
$$
\xymatrix@=10pt{
Z\ar^i[r]\ar[d] & X\ar^{s_0}[d] \\
X\ar^{s}[r] & E.
}
$$

\begin{ex} \label{Panin}
Let $E$ be a vector bundle of rank $n$ over a smooth scheme $X$.
Put $P=\PP(E \oplus 1)$
 and consider $p:P \rightarrow X$ (resp. $s:X \rightarrow P$, $\L$)
  the canonical projection (resp. section, line bundle) of $P/X$.
Consider finally the vector bundle
 $F=\L^\vee \otimes p^{-1}(E)$ over $P$.
The sequence of morphisms of vector bundles over $P$,
$$
\L \rightarrow p^{-1}(E \oplus 1) \rightarrow p^{-1}(E)
$$
gives a section $\sigma$ of $F/P$. We check easily
it is transversal to the zero section and we have $\sigma^{-1}(0)=X$,
while the embedding $\sigma^{-1}(0) \rightarrow P$ is $s$.
Thus we obtain from the previous corollary $\eta_X(P)=c_n(F)$.
Considering paragraph \ref{fund_class} and remark \ref{Thom&quotient}
we thus obtain three expressions of the fundamental class of $X$ in $P$~:
$$
t(E)=c_n\big(p^{-1}(E \oplus 1)/\L\big)
 =c_n\big(\L^\vee \otimes p^{-1}(E)\big).
$$
Note the last equality,
 though obvious in the case where $F$ is the additive formal group,
  is not evident to check directly in the general case.
However, we left as an exercice to the reader to check it 
 using the inverse series of the formal group law $F$ 
  in the case of a line bundle. This implies the general case by the
   splitting principle.
\end{ex}

%\num Consider a smooth closed pair $(X,Z)$ of codimension $n$.
%Let $B_ZX$ be the blow-up of $X$ with center in $Z$ and consider
%the cartesian square
%$$
%\xymatrix@=10pt@C=20pt{
%{}\PP(N_ZX)\ar^/4pt/k[r]\ar_g[d] & B_ZX\ar^f[d] \\
%Z\ar^i[r] & X
%}
%$$
%Let $\xi$ be the universal quotient bundle on $\PP(N_ZX)$
%(cf remark \ref{Thom&quotient}).
%Then according to the above proposition together with remark
%\ref{Thom&quotient},
%$$
%(f,g)_!=c_{n-1}(\xi) \gcup g_*
%$$
%and we get 
%$$
%i^* f_*=(c_{n-1}(\xi) \gcup g_*) k^*
%$$
%analog of \cite[10.7.(a)]{Ful}.

\subsubsection{The ramified case}

In this section, we study the case of a morphism
$(f,g):(Y,T) \rightarrow (X,Z)$ of smooth closed pairs
of same codimension $n$.
This corresponds to the \emph{proper case} in the operation of pullback
of $Z$ along $f$. 
We put $T'=Z \times_X Y$ and consider the canonical thickening $T' \rightarrow T$
induced by $(f,g)$. \\
We first need an assumtion.
Let $T'=\bigcup_{i \in I} T'_i$ be the decomposition into connected components. 
For any $i \in I$,
we also consider the decomposition $T'_i=\bigcup_{j \in J_i} T'_{ij}$
into irreducible components.
Put $T_{ij}=T'_{ij} \times_{T'} T$.
As $T \rightarrow T'$ is a thickening,
the geometric multiplicity $m(T'_{ij})$ of $T'_{ij}$
is an integral multiple of the geometric multiplicity
$m(T_{ij})$ of $T_{ij}$. We introduce the following condition on 
the morphism $(f,g)$:
\begin{enumerate}
\item[(Special)] For any $i \in I$, there exists an integer $r_i \geq 0$
such that for any $j \in J_i$, $m(T'_{ij})=r_i.m(T_{ij})$.
\end{enumerate}
The integer $r_i$ will be called the 
\emph{ramification index}
of $f$ along $T_i$.

\begin{rem}
When $S$ is irreducible, this condition is always fulfilled.
When $S$ is integral, $T'_i$ is irreducible and
the integer $r_i$ is nothing else than the geometric multiplicity 
of $T'_i$.
\end{rem}

Under this assumption, we define intersection multiplicities which
take into account the formal group law $F$ introduced in paragraph 
\ref{fgl}. \\
Let $B$ be the blow-up of $\AA^1_X$ with center $\{0\} \times Z$,
 and $P$ its exceptional divisor.
Put $C=B \times_X Y$, and for any $i \in I$, $Q_i=P \times_T T_i$.
Remark that $Q_i/T_i$ admits a canonical section $s_i$.
We denote by $L_i$ the line bundle over $T_i$ obtain by the 
pullback of the normal bundle $N_{Q_i}(C)$ along $s_i$.
We consider the localised Thom class
$\bar t(L_i) \in H^{2,1}_{T_i}(L_i)$ (cf \ref{fund_class});
we recall it is sent to $1$ by the purity isomorphism
$\pur{L_i,T_i}^*:H^{2,1}_{T_i}(L_i) \rightarrow H^{0,0}(T_i)$. \\
Note that, according to remark \ref{Chern&nilpotence},
 the Thom class $t(L_i)$ is nilpotent. Thus, the same is true
  for $\bar t(L_i)$.
In particular, we can apply the power series $[r_i]_F$ (see paragraph \ref{fgl})
to the element $\bar t(L_i)$ of the $A$-algebra $H^{**}_{T_i}(L_i)$.
This defines an element $[r_i]_F \cdot \bar t(L_i) \in H^{**}_{T_i}(L_i)$
of bidegree $(2,1)$.
\begin{df} \label{inter_mult}
Consider a morphism $(f,g):(Y,T) \rightarrow (X,Z)$
 which satisfies the condition (Special).
Assume $T$ admits an ample line bundle.

We consider the notations introduced above.
For any $i \in I$, 
we define the \emph{$F$-intersection multiplicity} of $T_i$ in $f^{-1}(Z)$
as the element 
$$
r(T_i;f,g)=\pur{L_i,T_i}^*\big( [r_i]_F \cdot \bar t(L) \big)
 \in H^{0,0}(T_i)
$$
where $r_i$ is the ramification index of $f$ along $T_i$.
\end{df}

A straightforward check shows the $F$-intersection multiplicities
 are compatible with flat base change. 
When the formal group law $F$ is additive,
 we easily get that $r(T_i;f,g)=r_i$. \\
In the codimension $n=1$ case, we can also consider the localised
fundamental class $\bar \eta_Y(T_i) \in H^{2,1}_{T_i}(Y)$
introduced in paragraph \ref{fund_class}.
It corresponds to the localised Thom class $\bar t(N_{T_i}(Y))$
under the isomorphisms given by the deformation diagram
\eqref{2nd_def_diag}. Thus applying remark \ref{Chern&nilpotence}
as above, we obtain that the class $\bar \eta_Y(T_i)$ is nilpotent.
In particular, we can consider the class 
$[r_i]_F \cdot \bar \eta_Y(T_i) \in H^{2,1}_{T_i}(Y)$
obtained by applying the power series $[r_i]_F$ of \ref{fgl}.
We then obtain a natural expression of the $F$-intersection multiplicity~:
\begin{lm} \label{mult_divisors}
Consider the hypothesis and assumptions of the previous definition
 and assume $n=1$.
Let $\bar \eta_Y(T_i) \in H^{2,1}_{T_i}(Y)$ be the localised fundamental class 
of $T_i$ in $Y$ (cf paragraph \ref{fund_class}) and 
$\pur{Y,T_i}^*:H^{2,1}_{T_i}(Y) \rightarrow H^{0,0}(T_i)$ be the purity
isomorphism in cohomology.

Then, $r(T_i;f,g)=\pur{Y,T_i}^*\big([r_i]_F \cdot \bar \eta_Y(T_i)\big)$.
\end{lm}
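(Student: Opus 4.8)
The plan is to reduce the identity to the construction of the purity isomorphism of $(Y,T_i)$ and to a comparison of line bundles over $T_i$. By the remark preceding the statement, under the isomorphism $H^{2,1}_{T_i}(N_{T_i}Y)\xrightarrow{\ \sim\ }H^{2,1}_{T_i}(Y)$ coming from the deformation diagram \eqref{2nd_def_diag} of the smooth closed pair $(Y,T_i)$, the localised fundamental class $\bar\eta_Y(T_i)$ corresponds to the localised Thom class $\bar t(N_{T_i}Y)$. From the very construction of $\pur{Y,T_i}$ in \ref{prop:purity}, this isomorphism intertwines $\pur{Y,T_i}$ with $\pur{N_{T_i}(Y),T_i}$; moreover the deformation diagram \eqref{2nd_def_diag} is functorial for \emph{all} cartesian morphisms of closed pairs (see \ref{def_diags}), so applying it to the diagonal $(Y,T_i)\to(Y\times Y,T_i\times T_i)$ shows that the induced isomorphism on cohomology with support respects the cup-products $\ncup_{T_i,T_i}$ and is $A$-linear, hence commutes with the power-series operation $[r_i]_F$ applied to the class $\bar\eta_Y(T_i)$ (which is nilpotent by \ref{Chern&nilpotence}). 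Therefore
\[
\pur{Y,T_i}^*\big([r_i]_F\cdot\bar\eta_Y(T_i)\big)=\pur{N_{T_i}(Y),T_i}^*\big([r_i]_F\cdot\bar t(N_{T_i}Y)\big).
\]

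It remains to match the right-hand side with $r(T_i;f,g)=\pur{L_i,T_i}^*\big([r_i]_F\cdot\bar t(L_i)\big)$ from definition \ref{inter_mult}, and for this I would produce a canonical isomorphism of line bundles $L_i\simeq N_{T_i}(Y)$ over $T_i$. Granting it, the naturality in the vector bundle of the Thom class, of the Thom isomorphism (see \ref{Thom_class}), and of the power-series operation $[r_i]_F$ carries $\bar t(L_i)$ to $\bar t(N_{T_i}Y)$ and $\pur{L_i,T_i}$ to $\pur{N_{T_i}(Y),T_i}$, so the two sides agree; note that $r_i$ then enters the two formulas only through $[r_i]_F$ and not through any twist of the bundle.

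For the isomorphism $L_i\simeq N_{T_i}(Y)$, recall that $L_i=s_i^*N_{Q_i}(C)$ with $C=B_Z(\AA^1_X)\times_X Y$ and $Q_i$ the part over $T_i$ of the exceptional divisor $P$ of $B_Z(\AA^1_X)$; as $n=1$, $N_ZX$ is a line bundle, $P=\PP(N_ZX\oplus 1)$ is a $\PP^1$-bundle over $Z$ with $N_P\big(B_Z(\AA^1_X)\big)=\L_P$, and $Q_i\simeq\PP(N_ZX|_{T_i}\oplus 1)$ over $T_i$. I would establish the isomorphism by the local device of the proof of \ref{prop:purity}: working Zariski-locally on $T_i$, choose a parametrisation reducing $(Y,T_i)$ to the zero section of an affine bundle over $T_i$ and $f$ to a power map in the transverse coordinate, write $B_Z(\AA^1_X)$, its pullback $C$, the divisor $Q_i$, the section $s_i$ and the line bundle $N_{Q_i}(C)$ down explicitly, check that $s_i^*N_{Q_i}(C)\simeq N_{T_i}Y$ there, and glue the resulting canonical local isomorphisms. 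The main obstacle is exactly this computation: when $f$ ramifies along $T_i$ the fibre product $C$ fails to be regular and $Q_i$ is not the Cartier divisor with the naive normal bundle, so one must check carefully that $N_{Q_i}(C)$ pulled back along $s_i$ is the untwisted bundle $N_{T_i}Y$ (rather than, say, $g^*N_ZX\simeq(N_{T_i}Y)^{\otimes r_i}$ in the model case), the multiplicity $r_i$ surfacing only afterwards through the operator $[r_i]_F$.
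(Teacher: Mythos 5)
Your reduction has the right shape: replacing the right-hand side by $\pur{N_{T_i}Y,T_i}^*\big([r_i]_F\cdot \bar t(N_{T_i}Y)\big)$ is legitimate (this is exactly the remark preceding the lemma, and the purity isomorphisms and products are transported by the deformation maps), and after that the whole lemma indeed amounts to comparing $L_i$ with $N_{T_i}Y$ --- in fact only the isomorphism class of $L_i$ matters, since for any line bundle $E/T_i$ the element $\pur{E,T_i}^*\big([r_i]_F\cdot\bar t(E)\big)$ is intrinsic to the closed pair $(E,T_i)$ by the functoriality in proposition \ref{prop:purity}. But the identification $L_i\simeq N_{T_i}Y$ \emph{is} the content of the lemma, and your proposal does not establish it: you defer it to an unspecified chart-by-chart computation followed by a gluing of local isomorphisms whose compatibility is not addressed, and you yourself label this "the main obstacle". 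That is a genuine gap at the central step of the argument.

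The paper closes exactly this gap globally and without coordinates. Since $n=1$, the strict transform of the zero section of $\AA^1_X$ yields a transversal square $Z=\PP(N_ZX)\xrightarrow{s}\PP(N_ZX\oplus 1)=P$, $X=B_ZX\rightarrow B_Z(\AA^1_X)=B$, and its base change along $Y\rightarrow X$ is again cartesian and transversal, of the form $T\xrightarrow{t}Q$, $Y\rightarrow C$, with $t$ the canonical section. Transversality gives by definition a canonical global isomorphism $N_TY\xrightarrow{\sim}t^*N_QC=L$, and proposition \ref{transversal} then transports the localised classes, so that $t^*\pur{C,Q}^*\big([r]_F\cdot\bar\eta_C(Q)\big)=\pur{Y,T}^*\big([r]_F\cdot\bar\eta_Y(T)\big)$; together with the definitional identities relating $\bar t(L)$, $\bar t(N_QC)$ and $\bar\eta_C(Q)$, this is the entire proof. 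Note also that the difficulty you anticipate (singularities of $C$, $Q_i$ not a Cartier divisor with the naive normal bundle) is concentrated along the zero section of $Q_i$, i.e.\ along the trace of the strict transform of $\AA^1_Z$, which the section $t$ used here does not meet: Zariski-locally over $X$, the complement in $B$ of that strict transform is $\AA^1\times X$ over $X$, so its pullback to $C$ is $\AA^1\times Y$ containing $Q$ as $\AA^1\times T$. Once the zero section of $\AA^1_X$ is used globally --- which is precisely the paper's move --- the hard local analysis you planned essentially disappears, and the multiplicity $r_i$ enters, as you expected, only through the operator $[r_i]_F$.
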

\begin{proof}
We may assume $T$ is connected. Thus $I=\{i\}$ and we put $L=L_i$, 
$r=r_i$ with the notations of the previous definition.
As $n=1$, the zero section of $\AA^1_X/X$ induces the following transversal 
square
$$
\xymatrix@=10pt{
Z=\PP(N_ZX)\ar^-s[r]\ar[d] & \PP(N_ZX \oplus 1)=P\ar[d] \\
X=B_ZX\ar[r] & B_Z(\AA^1_X)=B
}
$$
which, after pullback above $Y$ gives a cartesian square, still transversal,
$
\xymatrix@=10pt{
T\ar^-t[r]\ar[d] & Q\ar[d] \\
Y\ar[r] & C}$
with $t$ the canonical section of $Q/T$. Thus we get~:
\begin{align*}
\pur{L,T}^*([r]_F \cdot \bar t(L))
 &=t^* \pur{N_QC,Q}^*([r]_F \cdot \bar t(N_QC))
 =t^* \pur{C,Q}^*([r]_F \cdot \bar \eta_C(Q)) \\
 &=\pur{Y,T}^*([r]_F \cdot \bar \eta_Y(T))
\end{align*}
where the last equality follows from the transversal square above
and proposition \ref{transversal} whereas the other equalities follow 
from the definitions.
\end{proof}

Before stating the main result of this section,
 we need to recall an extension of the functoriality
of the deformation diagram \eqref{2nd_def_diag} to certain morphisms of closed
pairs (see also \cite[proof of 3.3]{Deg2}). \\
Consider a morphism $(f,g):(Y,T) \rightarrow (X,Z)$ of smooth closed pairs
of codimension $1$.
Let $\I$ (resp. $\J$, $\J'$) be the ideal defining 
$Z$ in $X$ (resp. $T$ in $Y$, $T'$ in $Y$).
The map $f$ induces a morphism $\varphi:\I \rightarrow f_* \J'$ of sheaves
over $X$. \\
We consider the second deformation space $D_ZX=B_Z(\AA^1_X)-B_ZX$ as in
\ref{def_diags}. An easy computation shows
$$
D_ZX=\specx X{\bigoplus_{n \in \ZZ} \I^n.u^{-n}}
$$
where $\I^n=\mathcal O_X$ for $n<0$, and $u$ is an indeterminate. \\
Assume\footnote{
As the immersion $T \rightarrow Y$ is regular,
 this can happen only in the codimension $1$ case.
Note it implies $(f,g)$ satisfies the condition (Special)
 and the ramification indexes are all equal to $r$.}
 \underline{$\J'=\J^r$}.
Then we can define a morphism of sheaves of rings over $X$~:
$$
\bigoplus_{n \in \ZZ} \I^{\,n}.u^{-n}
 \rightarrow \bigoplus_{n \in \ZZ} f_*({\J'}^{\,n}).u^{-n}
  \rightarrow \bigoplus_{m \in \ZZ} f_*(\J^{\,m}).v^{-m}
$$
where the first arrow is induced by $\varphi$ and the second
is the obvious inclusion which maps $u$ to $v^r$ as $\J'=\J^r$. \\
Taking the spectrum of these morphisms over $X$, we get a morphism
$$
\rho_r(f,g):D_TY \rightarrow D_ZX
$$
of schemes over $\AA^1$. The fibre of $\rho_r(f,g)$ over $1$ is simply $f$
 and one can check that its fiber over $0$ is a composite morphism
$$
\sigma_r(f,g):N_TY \xrightarrow{\nu} g^*N_ZX \xrightarrow{\mu} N_ZX
$$
such that $\mu$ is induced by $g$
 and $\nu$ is a homogenous morphism of degree $r$.
Thus, considering the respective deformation diagrams \eqref{2nd_def_diag}
for $(X,Z)$ and $(Y,T)$ 
we obtain a commutative diagram of closed pairs
\begin{equation} \label{ext_funct_2nd_def_diag}
\xymatrix@R=18pt@C=26pt{
(Y,T)\ar^/-4pt/{\sigma'_1}[r]\ar_{(f,g)}[d]
 & (D_TY,\AA^1_T)\ar|{(\rho_r(f,g),1 \times g)}[d]
  & (N_TY,T)\ar_/-4pt/{\sigma'_0}[l]\ar^{(\sigma_r(f,g),g)}[d] \\
(X,Z)\ar^/-4pt/{\sigma_1}[r] & (D_ZX,\AA^1_Z)
 & (N_ZX,Z).\ar_/-4pt/{\sigma_0}[l]
}
\end{equation}

\begin{thm} \label{ramification}
Let $(f,g):(Y,T) \rightarrow (X,Z)$ be a morphism of smooth closed
pairs of codimension $n$.
We assume $T$ admits an ample line bundle and $(f,g)$ satisfies
condition (Special).

Then 
$$(f,g)_!=\sum_{i \in I} r(T_i;f,g) \gcup_T g_{i*}$$
where $T=\bigcup_{i \in I} T_i$ is the decomposition into connected component, 
$g_i=g|_{T_i}$ and $r(T_i;f,g)$ is the $F$-intersection multiplicity of $T_i$ in $f^{-1}(Z)$.
\end{thm}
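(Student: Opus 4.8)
The plan is to compute the refined Gysin morphism $(f,g)_!=\pur{X,Z}\circ(f,g)_*\circ\pur{Y,T}^{-1}$ by deforming to the normal cone, thereby replacing the situation by a universal model over which the associated formal group law makes everything explicit. First I would reduce to $T$ connected, using (Add') together with the naturality of $(\cdot)_!$ and of the operation $\gcup_T$ with respect to the decomposition $T=\sqcup_i T_i$; then $I=\{i\}$, $r=r_i$, and I drop the index. Next, as in the footnote to the statement, a short noetherian argument reduces us to $S$ reduced; combined with the standing hypothesis that $T$ carries an ample line bundle, this lets me assume the relevant line bundles on $T$ are generated by their global sections, hence pulled back from a projective space and ultimately from $\PP^\infty$, which will be used at the end.

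\textbf{Step 2: reduction to a degree-$r$ map of line bundles.} I would then pass to the deformation space: the blow-up $B_Z(\AA^1_X)$ of $\AA^1_X$ along $\{0\}\times Z$ together with its \emph{codimension-one} exceptional divisor $P_ZX=\PP(N_ZX\oplus1)$, exactly as in Definition~\ref{inter_mult}. By naturality of the deformation diagram and of the purity isomorphism (Proposition~\ref{prop:purity}(1)), and stripping off the transversal directions via Proposition~\ref{transversal} (and the Thom-class description of purity, \ref{Thom_class}), $(f,g)_!$ is identified with the refined Gysin morphism of a morphism of \emph{codimension-one} closed pairs for which the extended functoriality diagram \eqref{ext_funct_2nd_def_diag} applies (the exceptional divisor providing the needed principal ideal, $T\to T'$ being an $r$-fold thickening along the divisor direction). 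Via \eqref{ext_funct_2nd_def_diag}, functoriality of $(\cdot)_!$ under composition of closed pairs, and Proposition~\ref{transversal} for the "$g$-part" $\mu$, this yields $(f,g)_!=g_*\circ(\nu,1_T)_!$, where $\nu\colon L\to L'$ is a homogeneous morphism of degree $r$ between line bundles over $T$ with $L=N_TY$ (in its blow-up incarnation $L_i$); after a further reduction I may take $\nu$ to be the $r$-th power map $L\to L^{\otimes r}$.

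\textbf{Step 3: the core computation and reassembly.} It then remains to show that $(\nu,1_T)_!$ equals the endomorphism $m\gcup_T 1_{T*}$ of $\M(T)(1)[2]$, where $m:=\pur{L,T}^*\big([r]_F\cdot\bar t(L)\big)$; indeed $m=r(T;f,g)$ by Definition~\ref{inter_mult} and Lemma~\ref{mult_divisors}, and $g_*\circ(m\gcup_T 1_{T*})=m\gcup_T g_*$ by naturality of the tensor product, giving precisely the claimed formula. For this I would extend $\nu$ to a morphism of projective completions $\bar\nu\colon\PP(L\oplus1)\to\PP(L^{\otimes r}\oplus1)$, given fibrewise by $[t:s]\mapsto[t^{r}:s^{r}]$, and use the Thom-class description of purity (\ref{Thom_class}) together with the identity $t(E)=c_n(\Q)$ of Remark~\ref{Thom&quotient}. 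The key point is that $\bar\nu$ carries the canonical line bundle of the target to the $r$-th tensor power of (a base-twist of) the canonical line bundle of the source, so on first Chern classes $\bar\nu^*$ replaces the canonical class by its $r$-th formal multiple $[r]_F\cdot c_1$ by Proposition~\ref{nilpotence&FGL}; plugging this into the defining relation \eqref{eq_chern} and the Whitney formula (Lemma~\ref{additivity_Chern}) computes $\bar\nu^*\big(t(L^{\otimes r})\big)$, hence $(\nu,1_T)_!$, as multiplication by $[r]_F\cdot\bar t(L)$. By Step~1 it is enough to verify this universal identity over $\PP^\infty$, where $H^{**}(\PP^\infty)=A[[c]]$ (\ref{fgl}) turns it into an explicit power-series computation. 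Reassembling Steps 1--3 gives the theorem.

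\textbf{Main obstacle.} The hard part is Step~3: constructing the extension $\bar\nu$ to projective completions and tracking its effect on Thom classes, i.e. proving that the $r$-th power map produces exactly the $r$-th \emph{formal} multiple $[r]_F$ (and not $r$ times) of the relevant class, all while respecting the sign conventions fixed at the beginning of \S\ref{sec:purity} and in the definition of $\lef r P$. A secondary difficulty is the bookkeeping in Step~2: isolating the single ramified codimension-one layer, checking that \eqref{ext_funct_2nd_def_diag} genuinely applies after the deformation, and that the remaining directions are transversal so that Proposition~\ref{transversal} contributes only $g_*$.
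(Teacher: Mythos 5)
Your overall skeleton does match the paper's: reduce to $T$ connected by (Add'), reduce to codimension one by means of the blow-up $B_Z(\AA^1_X)$ and its exceptional divisor, then use the modified deformation \eqref{ext_funct_2nd_def_diag} (with $\J'=\J^r$) and the factorization of $\sigma_r(f,g)$ to isolate a homogeneous degree-$r$ endomorphism of a line bundle, and finally compute on projective completions. One inaccuracy in your Step 2: the auxiliary morphism $(\pi,p)\colon(B,P)\to(X,Z)$ is cartesian but of \emph{unequal} codimensions, so "stripping off the transversal directions via Proposition \ref{transversal}" is not what happens; the paper uses the excess-intersection formula \ref{excess} (the excess bundle being the universal quotient bundle, whence $(\pi,p)_!=s^*$ and $(\pi',q)_!=t^*$) together with both projection formulas, including the identification $t^*[r(Q;f',g')]=r(T;f,g)$, to transport the codimension-one case back. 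This is repairable, but it is not Proposition \ref{transversal} alone.

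The genuine gap is in your Step 3, exactly at the point you yourself flag as the main obstacle. Write $x=c_1(\lambda)$ and $u=p^*c_1(L)$ on $\PP(L\oplus 1)$, $\lambda$ the canonical line bundle. Pulling back the defining expression \eqref{eq_chern}/\ref{Thom_class} for $t(L^{\otimes r})$ along $\bar\nu$ and using $c_1(M^{\otimes r})=[r]_F\cdot c_1(M)$ (proposition \ref{nilpotence&FGL}(2)) only gives the termwise class $[r]_F\cdot u-[r]_F\cdot x$, and for a non-additive $F$ this is \emph{not} formally equal to $[r]_F\cdot(u-x)=[r]_F\cdot t(L)$: the identity $[r]_F(u)-[r]_F(x)=[r]_F(u-x)$ is false as a power-series identity and holds in $H^{**}(\PP(L\oplus1))$ only modulo the projective-bundle relation $x^2=u\ncup x$ (equivalently via the inverse series $m$). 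The same problem occurs if you use $t(E)=c_n$ of the universal quotient bundle and Whitney, since $\bar\nu$ is not a morphism of projective bundles. So "eq\_chern + Whitney" does not compute $\bar\nu^*t(L^{\otimes r})$ as $[r]_F\cdot t(L)$, and without that the identification with $r(T;f,g)$ of definition \ref{inter_mult} is not established. The paper circumvents this by first proving $t(E)=c_n(\lambda^\vee\otimes p^{-1}E)$ (example \ref{Panin}, a geometric argument via corollary \ref{fundamental_class_as_chern_class}, explicitly flagged as non-obvious for general $F$), so that the whole Thom class is the first Chern class of a single line bundle and one application of proposition \ref{nilpotence&FGL}(2) gives $\bar\nu^*t(N^{\otimes r})=[r]_F\cdot t(N)$ on the nose. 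Your fallback — verify the identity universally over $\PP^\infty$ by an explicit power-series computation — could in principle replace example \ref{Panin}, but that computation (in $A[[c]]$ modulo the relation above, or with the inverse series) is precisely the missing content and is nowhere supplied; as written the decisive step of the proof is assumed rather than proved.
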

\begin{proof}
Using axiom (Add'), we can assume $T$ is connected. \\
We first reduce to the codimension $n=1$ case.
Consider the blow-up $B=B_Z(\AA^1_X)$
 and its exceptional divisor $P=\PP(N_ZX \oplus 1)$.
Consider also the cartesian morphism $(p,q):(B,P) \rightarrow (X,Z)$.
If we put $B_Y=B \times_X Y$, $Q=P \times_Z T$, we obtain the following 
commutative diagram of morphisms closed pairs~:
$$
\xymatrix@R=10pt@C=24pt{
(B_Y,Q)\ar^{(f',g')}[r]\ar_{(\pi',q)}[d]
 & (B,P)\ar^{(\pi,p)}[d] \\
(Y,T)\ar^{(f,g)}[r] & (X,Z).
}
$$
By definition, $(f,g)_!(\pi',q)_!=(\pi,p)_!(f',g')_!$. \\
Note that $(\pi,p)$ and $(\pi',q)$ are cartesians.
We can apply proposition \ref{excess} to $(\pi,p)$~:
the excess intersection bundle is the universal quotient bundle
$\xi_0$ on $P$ and $(\pi,p)_!=p_* \gcup c_{n}(\xi_0)$.
Thus, according to remark \ref{Thom&quotient}
 and paragraph \ref{fund_class}, $(\pi,p)_!=s^*$
  where $s:X \rightarrow P$ is the canonical section. \\
Similarly, if we put $\xi={g'}^{-1}(\xi_0)$,
we get $(\pi',q)_!=q_* \gcup c_{n}(\xi)=t^*$ with $t:T \rightarrow Q$
the canonical section.
Note this latter morphism is a split epimorphism with splitting 
 $\lef{n} {Q}$. Thus we get
$$
(f,g)_!=s^* \circ (f',g')_! \circ \lef{n}{Q}.
$$
Remark that $Q=P \times_B B_Y$.
Thus the morphism $(f',g')$ of smooth closed pairs of codimension $1$
satifies the condition (Special)
 and the ramification indexes
  of $f$ along $T$ and $f'$ along $Q$ are equal.
Assume $(f',g')=r(Q;f',g') \gcup g'_*$.
According to the expression above, we get
\begin{align*}
(f,g)_!&=\big(r(Q;f',g') \gcup s^*g'_*\big) \circ \lef{n}{Q}
\stackrel{(1)}=\big(r(Q;f',g') \gcup g_*t^*\big) \circ \lef{n}{Q} \\
&\stackrel{(2)}=\big((r(Q;f',g') \circ t_*) \gcup g_*\big) t^* \circ \lef{n}{Q}
=(r(Q;f',g') \circ t_*) \gcup g_*.
\end{align*}
where equality (1) follows from the projection formula of proposition \ref{transversal}
 and equality (2) from the other projection formula of corollary \ref{second_proj_formula}.
From definition \ref{inter_mult}, the reader can now easily check the equality of the
cohomological classes $t^*\lbrack r(Q;f',g') \rbrack=r(T;f,g)$.
%By definition,
% the class
% $\big(r(Q;f',g') \gcup c_{n-1}(\xi) \gcup p'_*\big) \circ \lef{n-1}{Q}$
%  is equal to the coefficient of $c_1(\lambda_Q)^{n-1}$ in
%$$
%r(Q;f',g') \ncup c_{n-1}(\xi) \in H^{2n-2,n-1}(Q)
%$$
%and the reader can check from definition \ref{inter_mult}
%this is $r(T;f,g)$. Thus, the case of $(f',g')$ implies the
%case of $(f,g)$.

Thus we are reduced to the case $n=1$, $T$ still being connected.
Let $r$ be the ramification index of $f$ along $T$.
Let $\J$ (resp. $\J'$) be the ideal sheaf of $T$ (resp. $T'$) in $Y$.
As $Z \rightarrow X$ and $T \rightarrow Y$ are regular immersions of 
a divisor, we see that necessarily, $\J'=\J^r$.
Considering now diagram \eqref{ext_funct_2nd_def_diag}, we obtain that
$(f,g)_!=(\sigma_r(f,g),g)_!$. 
In view of the factorization of the morphism $\sigma_r(f,g)$,
we then are reduced to the following lemma~:
\begin{lm}
Let $T$ be a smooth scheme which admits an ample line bundle.
Consider a line bundle $N$ over $T$ and
 $N^{\otimes r}$ be its $r$-th tensor power over $T$. \\
Let $\nu:N \rightarrow N^{\otimes r}$ be the obvious homogenous morphism of degree $r$,
 and $(\nu,1_T):(N,T) \rightarrow (N^{\otimes r},T)$ be the
corresponding morphism of closed pairs.

Then $(\nu,1_T)_!=\rho \otimes 1_{T*}$ where 
$\rho$ is the unique element of $H^{00}(T)$ such that
$[r]_F \cdot t(N)=\rho.t(N)$.
\end{lm}
Put $P=\PP(N \oplus 1)$, $P'=\PP(N^{\otimes r} \oplus 1)$
and consider the projective completion $\bar \nu:P \rightarrow P'$
of $\nu$. Let $\L$ (resp. $\L'$) be the canonical line bundle
and $p$ (resp. $p'$) be the canonical projection of $P/T$ (resp. $P'/T$).
An easy computation shows that $\bar \nu^*(\L')=\L^{\otimes r}$.
Recall from \ref{Panin} that the Thom class of $N$ (resp. $L^{\otimes r}$)
is equal to $t(N)=c_1(\L^\vee \otimes p^{-1} N)$
(resp. $t(N^{\otimes r})=c_1(\L'^\vee \otimes p'^{-1} N^{\otimes r})$).
Thus, from the second point of proposition \ref{nilpotence&FGL},
 $\bar \nu^* t(N^{\otimes r})=[r]_F \cdot t(N)$.
This latter class is zero on $P-T$, thus we get the relation
$[r]_F \cdot t(N)=\rho.t(N)$ in $H^{2,1}(P)$.
The conclusion now follows according to the computation of the Thom
isomorphism \ref{Thom_class}.

To finish the proof with that lemma, we remark that
 $\rho=r(T;\nu,1_T)=r(T;f,g)$.
\end{proof}

\begin{cor} \label{pullback_fund_class}
Let $(f,g):(Y,T) \rightarrow (X,Z)$ be a morphism of smooth closed
pairs of codimension $1$.
We assume $T$ admits an ample line bundle and $(f,g)$ satisfies
condition (Special). Let $(T_i)_{i \in I}$ be the connected
components of $T$, and $r_i \in \NN$ be the ramification index of 
$f$ along $T_i$.

Then, for any $i \in I$, the fundamental class $\eta_Y(T_i)$ is 
nilpotent and
$$
f^*(\eta_X(Z))=\sum_{i \in I} [r_i]_F \cdot \eta_T(T_i)
$$
where $[r_i]_F$ is the power series equal to the $r_i$-th formal
sum with respect to the formal group law $F$.
\end{cor}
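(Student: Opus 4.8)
The plan is to reduce the statement to the ramification formula (Theorem~\ref{ramification}) by working with classes with support and then pushing forward along the forgetful maps $\nu$; Lemma~\ref{mult_divisors} is the geometric input that converts $F$-intersection multiplicities into fundamental classes.

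First I would work with the \emph{localised} fundamental class $\bar\eta_X(Z)\in H^{2,1}_Z(X)$, so that $\nu_{X,Z}(\bar\eta_X(Z))=\eta_X(Z)$ (cf.~\ref{fund_class}). The pullback of classes with support along the morphism of closed pairs $(f,g)$ is precomposition with the functoriality map $(f,g)_*:\M_T(Y)\to\M_Z(X)$, so $f^*(\bar\eta_X(Z))=\pi_{Z*}\circ\pur{X,Z}\circ(f,g)_*$, which by Definition~\ref{df:refined_Gysin} equals $\pi_{Z*}\circ(f,g)_!\circ\pur{Y,T}$. Decomposing $T=\bigcup_{i\in I}T_i$ into connected components, property~(Add') gives $\M_T(Y)=\bigoplus_i\M_{T_i}(Y)$ and $\pur{Y,T}=\bigoplus_i\pur{Y,T_i}$, while Theorem~\ref{ramification} gives $(f,g)_!=\sum_i r(T_i;f,g)\gcup_T g_{i*}$. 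Hence the $T_i$-component of $f^*(\bar\eta_X(Z))$ in $H^{2,1}_{T_i}(Y)$ is the composite $\M_{T_i}(Y)\xrightarrow{\pur{Y,T_i}}\M(T_i)(1)[2]\xrightarrow{r(T_i;f,g)\gcup_T g_{i*}}\M(Z)(1)[2]\xrightarrow{\pi_{Z*}}\un(1)[2]$, and it suffices to show this equals $[r_i]_F\cdot\bar\eta_Y(T_i)$.

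Next I would identify that component. Since $\pi_Z\circ g_i=\pi_{T_i}$, functoriality of $\M$ gives $\pi_{Z*}\circ g_{i*}=\pi_{T_i*}$, and unwinding the product of~\ref{sec:products} rewrites the composite as $\pi_{T_i*}\circ\mu_{r(T_i;f,g)}\circ\pur{Y,T_i}$, where $\mu_\rho$ denotes multiplication by a class $\rho\in H^{0,0}(T_i)$ (all braidings and signs harmless, everything being in even bidegree). A one-line diagram chase with the diagonal, using $(\pi_{T_i}\times 1_{T_i})\circ\delta_{T_i}=1_{T_i}$, shows $\pi_{T_i*}\circ\mu_\rho=\rho$ in $H^{0,0}(T_i)$, so the component equals $\rho\circ\pur{Y,T_i}$. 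By Lemma~\ref{mult_divisors}, $\rho=r(T_i;f,g)=\pur{Y,T_i}^*\bigl([r_i]_F\cdot\bar\eta_Y(T_i)\bigr)$, and since $\pur{Y,T_i}^*$ is precomposition with $\pur{Y,T_i}^{-1}$, the component is exactly $[r_i]_F\cdot\bar\eta_Y(T_i)\in H^{2,1}_{T_i}(Y)$. Therefore $f^*(\bar\eta_X(Z))=\sum_i[r_i]_F\cdot\bar\eta_Y(T_i)$.

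Finally I would descend along $\nu$ and settle nilpotence. By naturality of the forgetful maps, $\nu_{Y,T}\circ f^*=f^*\circ\nu_{X,Z}$, so $f^*(\eta_X(Z))=\nu_{Y,T}\bigl(f^*(\bar\eta_X(Z))\bigr)=\sum_i\nu_{Y,T_i}\bigl([r_i]_F\cdot\bar\eta_Y(T_i)\bigr)$; since $\nu_{Y,T_i}$ is multiplicative by formula~\eqref{eq:cup_with_support} and $[r_i]_F$ has zero constant term, this is $\sum_i[r_i]_F\cdot\nu_{Y,T_i}(\bar\eta_Y(T_i))=\sum_i[r_i]_F\cdot\eta_Y(T_i)$. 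The same multiplicativity carries the nilpotence of $\bar\eta_Y(T_i)$ (established in the paragraph preceding Lemma~\ref{mult_divisors}, since $T_i$ carries an ample line bundle) over to nilpotence of $\eta_Y(T_i)$, which in particular makes each formal sum $[r_i]_F\cdot\eta_Y(T_i)$ finite. I expect the one genuinely delicate point to be the bookkeeping in the second paragraph — checking that the purity isomorphisms, the product with supports, and the power-series operation $[r_i]_F$ fit together with no stray twist, and that the connected-component decomposition is simultaneously compatible with $(f,g)_!$, $\pur{Y,T}$ and $\nu_{Y,T}$; the rest is formal manipulation of the products of section~\ref{sec:products}.
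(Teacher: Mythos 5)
Your proof is correct and takes essentially the same route as the paper: both rest on Theorem \ref{ramification}, Lemma \ref{mult_divisors}, and the fact that the forget-support map $\nu_{Y,T_i}=j_{i*}\circ\pur{Y,T_i}^*$ is multiplicative (so that $[r_i]_F$ passes through it and nilpotence is preserved). The only difference is organizational: the paper evaluates the cohomological form of the ramification formula at $z=1$ and works in $H^{**}(Y)$ directly, whereas you compute the pullback of the localised class $\bar\eta_X(Z)$ with supports and only descend along $\nu$ at the very end.
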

\begin{proof}
Let $i:Z \rightarrow X$ and $j_i:T_i \rightarrow Y$ be the canonical
immersions. We simply put $\rho_i=r(T_i;f,g) \in H^{0,0}(T_i)$.
In cohomology, the preceding theorem applied to $(f,g)$ gives the
relation
$$
f^*i_*(z)=\sum_{i \in I} j_{i*}(\rho_i \gcup g_i^*(z))
$$
for $z \in H^{**}(Z)$.
Applied with $z=1$, this gives $f^*(\eta_X(Z))=\sum_i j_{i*}(\rho_i)$.
Recall from lemma \ref{mult_divisors} that 
$\rho_i=\pur{Y,T_i}^*([r_i]_F \cdot \bar \eta_Y(T_i))$.
Tautologically, the composition $j_{i*} \pur{Y,T_i}^*$
is equal to the canonical morphism 
$H^{**}_{T_i}(Y) \rightarrow H^{**}(Y)$ simply obtained
by functoriality. For conclusion, it is sufficient to recall
this latter is a morphism of $A$-algebra 
(cf paragraph \ref{product_with_support}).
\end{proof}

\begin{rem}
In the previous corollary, the integers $r_i$ can be understood as follows:
locally, $Z$ is parametrized by a $S$-regular function 
 $a:X \rightarrow \AA^1$.
Then, $(f,g)$ is special if $a \circ f$ can be written locally
$u.\prod_{i \in I} b_i^{r_i}$ where $u$ is a unit and 
$b_i:Y \rightarrow \AA^1$
is a $S$-regular function parametrizing $T_i$ -- this expression should 
remain the same when we change any of the parameters $b_i$ or $a$.
\end{rem}
\subsection{Crossing Gysin triangles}

The following lemma will be the key point of the main result of this section.
Though it will appear finally as a particular case, we begin by proving it 
to enlighten the proof of theorem \ref{thm:assoc}.
\begin{lm}
\label{lm:proj&Gysin_mot2}
Let $Z$ be a smooth scheme, $E$ and $E'$ be vector bundles over $Z$
of respective ranks $n$ and $m$.
Put $Q=\PP(E \oplus 1)$, $Q'=\PP(E' \oplus 1)$ and $P=Q \times_Z Q'$.

Consider the fundamental class (see paragraph \ref{fund_class}) $\eta_P(Z)$ 
 (resp. $\eta_P(Q)$, $\eta_P(Q')$) of the canonical embedding of $Z$ 
 (resp. $Q$, $Q'$) in $P$, as an element of $H^{**}(P)$.

Then $\eta_P(Z)=\eta_P(Q) \ncup \eta_{P}(Q')$.
\end{lm}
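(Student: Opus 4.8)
The plan is to realize all three subschemes $Z$, $Q$, $Q'$ of $P$ as zero loci of transversal sections of explicit vector bundles, so that each of the three fundamental classes becomes a top Chern class, and then to conclude by the Whitney sum formula.

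First I would unwind the two projective-completion structures on $P=Q\times_Z Q'$. Writing $\pi_Q:Q\to Z$ and $\pi_{Q'}:Q'\to Z$, viewing $P$ as a bundle over $Q'$ it is the projective completion $\PP(\pi_{Q'}^{-1}(E)\oplus 1)$, and viewing it as a bundle over $Q$ it is $\PP(\pi_Q^{-1}(E')\oplus 1)$; under these identifications the two line bundles $\L,\L'$ on $P$ obtained by pulling back the canonical line bundles of $Q/Z$ and $Q'/Z$ are the corresponding canonical line bundles, and the canonical sections are exactly the closed immersions $i_{Q'}\colon Q'\to P$ and $i_Q\colon Q\to P$. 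Denote by $E_P$ (resp.\ $E'_P$) the pullback of $E$ (resp.\ $E'$) to $P$ and set $F=\L^\vee\otimes E_P$ (rank $n$) and $F'=\L'^\vee\otimes E'_P$ (rank $m$). Applying Example \ref{Panin} to the two structures gives $\eta_P(Q')=c_n(F)$ and $\eta_P(Q)=c_m(F')$, each realized by a section $\sigma_F$ of $F$ and $\sigma_{F'}$ of $F'$ (coming from $\L\hookrightarrow E_P\oplus 1\twoheadrightarrow E_P$, resp.\ $\L'\hookrightarrow E'_P\oplus 1\twoheadrightarrow E'_P$) transversal to the zero section, with $\sigma_F^{-1}(0)=Q'$ and $\sigma_{F'}^{-1}(0)=Q$ as subschemes of $P$.

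Next I would put $G=F\oplus F'$, a vector bundle of rank $n+m$ on $P$, with section $\sigma=(\sigma_F,\sigma_{F'})$. Then $\sigma^{-1}(0)=\sigma_F^{-1}(0)\cap\sigma_{F'}^{-1}(0)=Q\cap Q'$ (scheme-theoretic intersection), and I would check that $Q\cap Q'=Z$: this intersection is the fibre product $Q\times_P Q'$, which one computes directly to be $Z$ embedded via its two canonical sections, using that $\pi_Q$ and $\pi_{Q'}$ retract those sections. I would also check that $\sigma$ is transversal to the zero section of $G$; this is local on $P$, and follows because $Q$ and $Q'$ are cut out transversally and their intersection $Z$ is smooth of the expected codimension $n+m=\operatorname{codim}_P Q'+\operatorname{codim}_P Q$. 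Corollary \ref{fundamental_class_as_chern_class} then yields $\eta_P(Z)=c_{n+m}(G)=c_{n+m}(F\oplus F')$.

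Finally I would apply the Whitney sum formula (Lemma \ref{additivity_Chern}) to the split exact sequence $0\to F\to F\oplus F'\to F'\to 0$: since $c_i(F)=0$ for $i>n$ and $c_j(F')=0$ for $j>m$, the only surviving term of $c_{n+m}(F\oplus F')=\sum_{i+j=n+m}c_i(F)\ncup c_j(F')$ is $c_n(F)\ncup c_m(F')=\eta_P(Q')\ncup\eta_P(Q)$, which equals $\eta_P(Q)\ncup\eta_P(Q')$ by graded-commutativity of the cup-product (both classes lie in even cohomological degree). Hence $\eta_P(Z)=\eta_P(Q)\ncup\eta_P(Q')$. I expect the only mildly delicate points to be the bookkeeping identifying $\L$ and $\L'$ with the canonical line bundles for the two projective-completion structures and the verification that $\sigma$ is transversal; everything else is a direct invocation of the already-established results.
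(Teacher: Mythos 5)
Your argument is correct, but it is not the route the paper's own proof takes. The paper works with the localised fundamental class $\bar\eta_P(Z)$ and the deformation to the normal cone: it passes to $B=B_Z(\AA^1_P)$, notes that $\bar\eta_P(Z)$ is pinned down by the requirement that its restriction at the fibre over $0$ be the Thom class $t(E\oplus E')$, and then exhibits an explicit class $t$ on $B$, built as a product of two expressions in the divisor classes of $B_Z(\AA^1\times\PP(E)\times\PP(E'\oplus 1))$ and $B_Z(\AA^1\times\PP(E\oplus 1)\times\PP(E'))$, whose restrictions at $0$ and $1$ give the Thom class and the product $\eta_P(Q)\ncup\eta_P(Q')$ respectively. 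Your proof instead realizes all three classes as top Chern classes via Corollary \ref{fundamental_class_as_chern_class}, Example \ref{Panin} and the Whitney formula \ref{additivity_Chern}; this is exactly the alternative the paper itself records in the remark following the lemma (there phrased with $E=\xi\times_Z\xi'$, the external sum of universal quotient bundles, which by Remark \ref{Thom&quotient} and Example \ref{Panin} is interchangeable with your $F\oplus F'$). Both proofs only use material from sections 3 and 4.2, so there is no circularity with Theorem \ref{thm:assoc}. What each buys: your route is shorter and purely formal once the transversality bookkeeping is done, but it leans on the full strength of \ref{fundamental_class_as_chern_class} (hence on the self-intersection and transversal base-change machinery); the paper's route is more hands-on, staying close to the definition of the purity isomorphism, and has the advantage of identifying the localised class $\bar\eta_P(Z)$, not just its image in $H^{**}(P)$. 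Two small points you should make explicit: the scheme-theoretic identity $(Q\times_Z s'(Z))\cap(s(Z)\times_Z Q')=s(Z)\times_Z s'(Z)$ justifying $\sigma^{-1}(0)=Z$, and the verification of transversality in the sense used in this paper (smoothness of the zero scheme of expected codimension forces $G^\vee|_Z\to I/I^2$ to be an isomorphism, or argue Zariski-locally where $P$ is a product); as written these are asserted rather than checked.
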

\begin{proof}
Put $d=n+m$.
Let $\bar \eta_P(Z)$ be the localised fundamental class of $Z$ in $P$
(cf paragraph \ref{fund_class}).
Consider the deformation diagram \eqref{1st_def_diag} for
the closed pair $(P,Z)$, with $B=B_Z(\AA^1_P)$~:
$$
(P,Z) \xrightarrow{\bar \sigma_1} (B,\AA^1_Z)
 \xleftarrow{\bar \sigma_0} (P,Z).
$$
As $\bar \sigma_0^*$ and $\bar \sigma_1^*$ are isomorphisms,
$\bar \eta_P(Z)$ is uniquely determined by the class 
$\bar t=\bar \sigma_1^*(\bar \eta_P(Z))$
and $\bar t$ is uniquely determined by the fact that
$\bar \sigma_0^*(\bar t)$ corresponds to the Thom class 
$t(E \oplus E')$ in $H^{2d,d}(P)$.

Consider the divisor
$D=B_Z(\AA^1 \times \PP(E) \times \PP(E' \oplus 1))$
(resp. $D'=B_Z(\AA^1 \times \PP(E \oplus 1) \times \PP(E'))$ in $B$
and the class $c=-c_1(D)$ (resp. $c'=-c_1(D')$) in $H^{2,1}(B)$.
Let $\pi$ be the canonical projection of $P/X$.
We define a cohomology class in $H^{2,1}(B)$~:
$$
t=\left(\sum_{0 \leq i \leq n} \pi^*(c_i(E)) \ncup c^{n-i}\right)
 \ncup \left(\sum_{0 \leq j \leq m} \pi^*(c_j(E')) \ncup {c'}^{m-j}\right).
$$
Then $t$ vanishes on $B-\AA^1_Z$ and, by construction, its pullback by $\bar \sigma_0$
is equal to $t(E \oplus E')$. Thus $t$ corresponds to the class $\bar t$
mentionned above, through the map
 $H^{2d,d}_{\AA^1_Z}(B) \rightarrow H^{2d,d}(B)$.
The computation of its pullback by $\bar \sigma_1$ gives the desired formula.
\end{proof}

\begin{rem}
Another way to obtain this lemma is to apply corollary
\ref{fundamental_class_as_chern_class} with $X=P$
and $E=\xi \times_Z \xi'$ where $\xi$ (resp. $\xi'$) is the universal quotient 
bundle of $Q$ (resp. $Q'$) --- compare with remark \ref{Thom&quotient}.
\end{rem}

\begin{thm}
\label{thm:assoc}
Consider a cartesian square of smooth schemes
$\xymatrix@=10pt{
Z\ar^k[r]\ar_l[d] & Y'\ar^j[d] \\
Y\ar^i[r] & X
}
$
such that $i$,$j$,$k$,$l$ are closed immersions of respective pure
codimension $n$, $m$, $s$, $t$. We put $d=n+s=m+t$ and consider the
closed immersion $h:(Y-Z) \longrightarrow (X-Y')$ induced by $i$.

Then, in the following diagram~:
$$
\xymatrix@R=16pt@C=30pt{
\M(X)\ar^{j^*}[r]\ar_{i^*}[d]\ar@{}|{(1)}[rd]
 & \M(Y')(m)[2m]\ar^/-2pt/{\partial_{X,Y'}}[r]\ar^{k^*}[d]
    \ar@{}^{(2)}[rd]
 & \M(X-Y')[1]\ar^{h^*}[d] \\
\M(Y)(n)[2n]\ar_{l^*}[r]
 & \M(Z)(d)[2d]\ar^/-5pt/{\partial_{Y,Z}}[r]
     \ar_{\partial_{Y',Z}}[d]\ar@{}|{(3)}[rd]
 & \M(Y-Z)(n)[2n+1]\ar^{\partial_{X\!-\!Y',Y\!-\!Z}}[d] \\
& \M(Y-Z)(m)[2m+1]\ar_/6pt/{\partial_{X\!-\!Y,Y\!-\!Z}}[r]
 & \M(X-Y \cup Y')[2]
}
$$
squares (1) and (2) are commutative and square (3) is anti-commutative.
\end{thm}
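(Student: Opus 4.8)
The plan is to reduce everything to the defining property of the Gysin morphisms and residues, namely that they sit inside distinguished triangles produced by the purity isomorphism $\pur{X,Z}$ and axiom (Loc), together with the functoriality of purity with respect to cartesian morphisms of closed pairs (Proposition~\ref{prop:purity}(1)) and the naturality of the boundary $\partial_\Delta$ in $\D$ (axiom (Loc)). Square~(1) is the assertion $l^*i^*=k^*j^*$ as morphisms $\M(X)\to\M(Z)(d)[2d]$. I would first treat the case where $i$ and $j$ are sections of vector bundles meeting transversally --- then by the self-intersection / transversal-base-change machinery (Propositions~\ref{transversal} and \ref{excess}) both composites are computed by the same refined Gysin morphism $(\delta,\gamma)_!$ attached to the cartesian square, using that $\pur{}$ is compatible with the deformation diagram \eqref{2nd_def_diag}. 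The general case then follows by the standard deformation-to-the-normal-cone argument already used in the proof of Proposition~\ref{prop:purity}: the whole cartesian cube $Z\to Y'\to X$, $Z\to Y\to X$ deforms, compatibly, to the corresponding cube of normal bundles, and on normal bundles the Gysin morphisms are, by construction, the embeddings $\lef{n}{P}$ of the projective bundle theorem, whose compatibility with products is precisely Lemma~\ref{lm:proj&Gysin_mot2} (applied to $E=N_YX$, $E'=N_{Y'}X$ pulled back to $Z$, whose sum is $N_ZX$). So square~(1) reduces to the equality $\eta_P(Z)=\eta_P(Q)\ncup\eta_P(Q')$ of that lemma, plus formula \eqref{Gysin&Thom} expressing $i^*$ as $p_*\gcup\eta_X(Z)$ in the split (normal-bundle) situation.

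Square~(2), the assertion $\partial_{X,Y'}$ followed by $k^*$ equals $j^*$ followed by $\partial_{Y,Z}$ (up to the evident twist), is the "naturality of the residue with respect to Gysin maps'' and I would deduce it from the functoriality clause of (Loc) applied to a suitable square in $\D$: the residue $\partial_{X,Y'}$ is $\partial_\Delta$ for $\Delta$ the square $(X/X{-}Y')$, and pulling back the whole closed-pair datum along $i$ (which is legitimate because $(i,l):(Y,Z)\to(X,Y')$ is a cartesian morphism of closed pairs once we intersect) gives, via Proposition~\ref{prop:purity}(1) identifying $\M_{Y'}(X)\to\M_Z(Y)(s)[2s]$ with $(i,l)_!$ in the transversal case $=l^*$, a morphism of distinguished triangles whose boundary-square is exactly~(2). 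Concretely this is the diagram~\eqref{refind_Gysin} applied to $(f,g)=(i,l)$ acting on the Gysin triangle of the closed pair $(X,Y')$; the transversality $T\times_X Z$ being the honest intersection $Z$ makes $(i,l)_!=l^*$ by Proposition~\ref{transversal}. Diagram~\eqref{refind_Gysin}'s rightmost square then reads $h_*\partial_{Y,T}=\partial_{X,Z}g_*$ in that notation, which after transposing roles ($i\leftrightarrow j$) is square~(2).

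Square~(3) is the anticommutativity $\partial_{X-Y,Y-Z}\circ\partial_{Y',Z}+\partial_{X-Y',Y-Z}\circ\partial_{Y,Z}=0$ (with the appropriate twists), and this is exactly the "octahedron'' / coherence axiom (Sym)(c): the two closed subschemes $Y,Y'$ of $X$ meeting in $Z$ assemble into a square $\Delta$ of immersions in $\D$ of the shape \eqref{std}, and (Sym)(c) says precisely that the composite of the two boundary maps around one way equals $-1$ times the composite the other way into $\M(X-Y\cup Y')[2]$. The identifications of $\M(\Delta)$, $\M(\Delta')$ with the twisted motives $\M(Z)(d)[2d]$ etc. are supplied by purity (each of $X$, $Y$, $Y'$, $Z$ being smooth), and one must check that the purity isomorphisms intertwine $\epsilon_\Delta$ with the identity --- this is formula item~(3) of the "residue formulas'' list, and is proved the same way as square~(2) by deforming to normal bundles, where $\epsilon$ acts by the $2\times2$ swap and all boundary maps are the explicit split ones from the projective bundle theorem.

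\textbf{Main obstacle.} The genuinely delicate point is bookkeeping the signs: the embeddings $\lef{r}{P}$ carry a $(-1)^r$, the Tate twists introduce Koszul signs under permutation (cf.\ the remark after Lemma~\ref{lm:Gysin&product}), and the anticommutativity in (Sym)(c) must be matched on the nose against the sign produced by the shift $[1]$ colliding with a twist in square~(3). I expect the bulk of the work to be verifying that, after transporting (Sym)(c) through the purity isomorphisms for the four smooth schemes involved, no stray sign appears in squares (1)–(2) and exactly one appears in (3); the structural content is otherwise immediate from Propositions~\ref{prop:purity}, \ref{transversal} and Lemma~\ref{lm:proj&Gysin_mot2}.
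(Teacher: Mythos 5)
Your plan for squares (1) and (3) points at the right ingredients (deformation to the normal cone, Lemma \ref{lm:proj&Gysin_mot2}, and the sign coming from (Sym)(c)), but the argument you give for square (2) does not work, and it is precisely the place where the paper has to introduce a new construction. Diagram \eqref{refind_Gysin} has \emph{covariant} vertical maps: they are the functoriality morphisms $h_*$, $f_*$ and the refined morphism $(f,g)_!$. Applied to the morphism of closed pairs $(i,k):(Y,Z)\rightarrow (X,Y')$ it yields the relation $\partial_{X,Y'}\circ (i,k)_! = h_*[1]\circ\partial_{Y,Z}$, i.e.\ the compatibility of residues with \emph{pushforwards} (formulas (5)--(6) of the introduction). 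Square (2) asserts the contravariant statement $\partial_{Y,Z}\circ k^* = h^*\circ\partial_{X,Y'}$, where $k^*$ and $h^*$ are Gysin morphisms; this is not an instance of \eqref{refind_Gysin}, and no formal transposition turns one into the other. Moreover your appeal to Proposition \ref{transversal} to identify $(i,k)_!$ requires equal codimensions of the two closed pairs, which is not part of the hypotheses (the codimension of $Z$ in $Y$ is $t$, that of $Y'$ in $X$ is $m$, and excess situations such as $Y=Y'=Z$ are allowed), so even the covariant square would in general involve the excess formula \ref{excess}, not \ref{transversal}.

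The missing idea is the intermediate object the paper builds: the quadruple motive $\M(X;Y,Y')=\MD{X/X-Y}{X-Y'/X-Y''}$ together with a purity isomorphism $\pur{X;Y,Y'}:\M(X;Y,Y')\rightarrow\M(Z)(d)[2d]$ satisfying (i) functoriality for transversal morphisms, (ii) symmetry, i.e.\ compatibility with the swap $\epsilon_\Delta$ of axiom (Sym), and (iii) compatibility with the triangle relating it to $\M_Y(X)$ and $\M_{Y-Z}(X-Y')$. Once this is available, axioms (Loc) and (Sym)(c) give a $4\times 4$ diagram of distinguished triangles in which one square anticommutes, and \emph{all three} squares of the theorem are read off simultaneously by transporting through $\pur{X,Y}$, $\pur{X,Y'}$, $\pur{X-Y',Y-Z}$, $\pur{X-Y,Y'-Z}$ and $\pur{X;Y,Y'}$: square (2) comes from property (iii), and square (3) from property (ii). Your sketch asserts exactly the statement of (ii) ("purity intertwines $\epsilon_\Delta$ with the identity") but offers no framework in which to prove it; in the paper this is the hard step, carried out by a two-parameter deformation (a deformation space over $\AA^2$) reducing to the split triple $(Q\times_Z Q',Q,Z)$, where the identity of the two composite Gysin morphisms is exactly Lemma \ref{lm:proj&Gysin_mot2} via the fundamental-class formula \eqref{Gysin&Thom}. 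Without constructing $\pur{X;Y,Y'}$ (or an equivalent device), your square (1) argument also remains incomplete, since comparing $l^*i^*$ with $k^*j^*$ requires knowing how the Gysin morphism of the inner pair behaves along the deformation of the outer one --- a one-parameter deformation of the "cube" is not enough to make both composites simultaneously computable on normal bundles.
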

\begin{proof} Put $Y''=Y \cup Y'$.
Using axiom (Loc) and (Sym)(c), we obtain the following diagram~:
$$
\xymatrix@C=8pt@R=10pt{
(\mathcal D):\M(X-Y'')\ar[r]\ar[d]
 & \M(X-Y)\ar[r]\ar[d]
 & \MD{X-Y}{X-Y''}\ar[r]\ar[d]
 & \M(X-Y'')[1]\ar[d] \\
\M(X-Y')\ar[r]\ar[d]
 & \M(X)\ar[r]\ar[d]\ar@{}|{(1)}[rd]
 & \MD X {X-Y'}\ar[d]\ar[r]\ar@{}|{(2)}[rd]
 & \M(X-Y')[1]\ar[d] \\
\MD{X-Y'}{X-Y''}\ar[r]\ar[d]
 & \MD X {X-Y}\ar[r]\ar[d]
 & \MD{X/X-Y}{X-Y'/X-Y''}
    \ar[r]\ar[r]\ar[d]\ar@{}|{(3)}[rd]
 & \MD{X-Y'}{X-Y''}[1]\ar[d] \\
\M(X-Y'')[1]\ar[r]
 & \M(X-Y)[1]\ar[r]
 & \MD{X-Y}{X-Y''}[1]\ar[r]
 & \M(X-Y'')[2],
}
$$
in which any line or any row is a distinguished triangle,
every square is commutative except square (3)
 which is anticommutative.

We put $\M(X;Y,Y')=\MD{X/X-Y}{X-Y'/X-Y''}$
for short.
The proof will consist in
constructing a purity isomorphism  
$\pur{X;Y,Y'}:\M(X;Y,Y') \rightarrow \M(Z)(d)[2d]$ which
satisfies the following properties~:
\begin{enumerate}
\item[(i)] \textit{Functoriality~:} The morphism $\pur{X;Y,Y'}$ 
is functorial
with respect to morphisms in $X$ which are transversal to $Y$,
$Y'$ and $Z$ respectively. 
\item[(ii)] \textit{Symmetry~:} The following diagram is 
commutative~:
$$
\xymatrix@R=12pt@C=0pt{
\M(X;Y,Y')\ar_/-8pt/{\pur{X;Y,Y'}}[rd]\ar^{\epsilon}[rr] &
 & \M(X;Y',Y)\ar^/-8pt/{\ \ \pur{X;Y',Y}}[ld] \\
& \M(Z)(d)[2d],
}
$$
where $\epsilon$ is the isomorphism given in axiom (Sym).
\item[(iii)] \textit{Compatibility~:} The following diagram 
is commutative~:
$$
\xymatrix@C=16pt@R=24pt{
\M(\frac{X-Y'}{X-\Omega})\ar[r]\ar|{\pur{X-Y',Y-Z}}[d]
 & \M(\frac X {X-Y})\ar[r]\ar|{\pur{X,Y}}[d]
 & \M(X;Y,Y')\ar[r]
     \ar|{\pur{X;Y,Y'}}[d]
 & \M(\frac{X-Y'}{X-\Omega})[1]\ar|{\pur{X-Y',Y-Z}}[d] \\
\M(Y-Z)(n)[2n]\ar[r]
 & \M(Y)(n)[2n]\ar^/1pt/{j^*}[r]
 & \M(Z)(d)[2d]\ar^/-10pt/{\partial_{Y,Z}}[r]
 & \M(Y-Z)(n)[2n+1]
}
$$
\end{enumerate}
With this isomorphism, we can deduce the three relations of the
theorem by considering squares $(1)$, $(2)$, $(3)$ in the above
diagram when we apply the evident purity isomorphisms where we can.
We then are reduced to construct the isomorphism and to prove the
above relations. The difficult one is the second relation because we
have to show that two isomorphisms in a triangulated category
are equal. This forces to be very precise in the construction of the
isomorphism.

We use a construction analog to the construction of the purity 
isomorphism in proposition \ref{prop:purity}.
The first deformation space (cf paragraph \ref{def_diags}) 
for the pair $(X,Y)$ is $B=B_Y(\AA^1_X)$.
We let $P=P_YX$ be the projective completion of the normal bundle of $(X,Y)$.
Consider also the closed pair $(U,V)=(X-Y',Y-Z)$. 
The analog deformation space for $(U,V)$ is $B_U=B \times_X U$
and the projective completion of its normal bundle is $P_V=P \times_Y V$.

The deformation diagrams \eqref{1st_def_diag} for $(X,Y)$ and $(U,V)$ 
induce the following morphisms
$$
\M(X;Y,Y')\!=\!\MD{X/X-Y}{U/U-V}
 \xrightarrow{\bar \sigma_{1*}} \MD{B/B-\AA^1_Y}{B_U/B_U-\AA^1_V}
 \xleftarrow{\bar \sigma_{0*}} \MD{P/P-Y}{P_V/P_V-V}
$$
and the axiom (Loc) together with the purity theorem \ref{prop:purity}
shows $\bar \sigma_{0*}$ and $\bar \sigma_{1*}$ are isomorphisms.

Using the compatibility of the Gysin triangle with the projective bundle
isomorphism (cf corollary \ref{Gysin&proj_bdl_iso}), we
obtain a commutative diagram~:
$$
\xymatrix@C=26pt@R=16pt{
\M(P_V/P_V-V)\ar[r]
 & \M(P/P-U)\ar[r]
 & \MD{P/P-Y}{P_V/P_V-V}\ar^/16pt/{+1}[r]
 & \\
\M(P_V)\ar[r]\ar@{=}[d]\ar[u]
 & \M(P)\ar[r]\ar@{=}[d]\ar[u]
 & \M(\frac{P}{P_V})\ar^/16pt/{+1}[r]\ar[u]
 & \\
\M(P_V)\ar[r]
 & \M(P)\ar[r]
 & \M(P_Z)(s)[2s]\ar^/18pt/{+1}[r]
     \ar_/-3pt/{\pur{P,P_Z}^{-1}}[u]
 & \\
\M(Y-Z)(n)[2n]\ar[r]\ar^{\lef n {P_V}}[u]
 & \M(Y)(n)[2n]\ar^{j^*}[r]\ar^{\lef n {P}}[u]
 & \M(Z)(d)[2d]\ar^/14pt/{+1}[r]
    \ar_{\lef n {P_Z}(s)[2s]}[u]
 &
}
$$
The composite of the vertical maps thus gives a morphism of triangles.
Using property (2) of proposition \ref{prop:purity}, the first two maps
of this morphism are isomorphisms and so is the third.
This last isomorphism together with the maps $\bar \sigma_{1*}$ and $\bar \sigma_{0*}$
gives the desired isomorphism $\pur{X;Y,Y'}$.

Note that property (iii) is obvious by construction. 
Property (i) is easily obtained as in proposition \ref{transversal}.

Thus we have only to prove property (ii).
First of all, we remark that the previous construction implies immediately the
commutativity of the diagram~:
$$
\xymatrix@R=12pt@C=0pt{
\M(X;Y,Y')\ar_/-8pt/{\pur{X;Y,Y'}}[rd]\ar^{\alpha_{(X;Y,Y')}}[rr] &
 & \M(X;Y,Z)\ar^/-8pt/{\ \ \pur{X;Y,Z}}[ld] \\
& \M(Z)(d)[2d],
}
$$
where $\alpha_{(X;Y,Y')}$ is induced by the evident open immersions.

Consider the following map
$$
\beta_{(X;Y,Y')}:
\M_Z(X) \xrightarrow{\pi_{(X,Y,Z)}} \M(X;Y,Z)
 \xrightarrow{\alpha_{(X;Y,Y')}^{-1}} \M(X;Y,Y')
$$
where $\pi_{(X,Y,Z)}$ is obtained by functoriality as usual -- it is an 
isomorphism from axioms (Loc) and (Sym).
Using the coherence axiom (Sym)(b), one checks that 
the following diagram is commutative
$$
\xymatrix@R=12pt@C=0pt{
& \M_Z(X)\ar_/8pt/{\beta_{(X;Y,Y')}}[ld]\ar^/8pt/{\ \ \beta_{(X;Y',Y)}}[rd]
 & \\
\M(X;Y,Y')\ar^\epsilon[rr] & & \M(X;Y',Y). \\
}
$$
Thus, it will be sufficient to prove the commutativity of the
following diagram~:
$$
\xymatrix@R=12pt@C=0pt{
\M_Z(X)\ar_/-8pt/{\pur{X,Z}}[rd]\ar^{\pi_{(X,Y,Z)}}[rr]
 & \ar@{}|/-3pt/{(*)}[d]
 & \M(X;Y,Z)\ar^/-8pt/{\ \ \pur{X;Y,Z}}[ld] \\
& \M(Z)(d)[2d].
}
$$
In the remainings of the proof, 
we consider the triples of smooth schemes
$(X',Y',Z')$ such that $Z' \subset Y' \subset X'$ are closed subschemes.
A morphism of triples $(f,g,h):(X'',Y'',Z'') \rightarrow (X',Y',Z')$
is a morphism of schemes $f:X'' \rightarrow X'$ which is transversal
to $Y'$ and $Z'$, and such that $Y''=f^{-}(Y')$, $Z''=f^{-1}(Z')$.
Using the functoriality of $\pur{X;Y,Z}$, we remark that
diagram $(*)$ is natural with respect to morphisms of triples.

We use the notations of paragraph \ref{def_diags}.
We also put $B(X',Z'):=B_{Z'}(X')$, 
for a closed pair $(X',Z')$, 
and so on for the other schemes depending on a closed pair, 
to clarify the following considerations.
We consider the evident closed pair $(D_ZX,D_ZX|_Y)$ 
and we put $D(X,Y,Z)=D(D_ZX,D_ZX|_Y)$. This scheme is
in fact fibered over $\AA^2$. The fiber over $(1,1)$ is $X$ and the fiber
over $(0,0)$ is $B(B_ZX \cup P_ZX,B_ZX|_Y \cup P_ZX|_Y)$.
In particular, the $(0,0)$-fiber contains the scheme $P(P_ZX,P_ZY)$.

We now put~: $D=D(X,Y,Z)$, $D'=D(Y,Y,Z)$. 
Remark that $D(Z,Z,Z)=\AA^2_Z$. 
Similarly, we put $P=P(P_ZX,P_ZY)$, $Q=P_ZY$.
Remark finally that if we consider $Q'=P_YX|_Z$, then\footnote{This 
is equivalent to the canonical isomorphism $N(N_ZX,N_ZY)=N_ZY \oplus N_YX|_Z$.} 
$P=Q \times_Z Q'$.

From the above description of fibers, we obtain a deformation diagram
of triples~:
$$
(X,Y,Z) \rightarrow (D,D',\AA^2_Z) \leftarrow (E,G,Z).
$$
Note that these morphisms are on the smaller closed subscheme the
$(0,0)$-section and $(1,1)$-section of $\AA^2_Z$ over $Z$, denoted 
respectively by $s_1$ and $s_0$.
Now we apply these morphisms to diagram $(*)$ obtaining the following 
commutative diagram~:
$$
\xymatrix@R=20pt@C=-20pt{
\M_Z(X)\ar|/-15pt/{\pur{X,Z}}[dd]\ar^{\pi_{X,Y,Z}}[rd]\ar[rr]
 && \M_{\AA^2_Z}(D)\ar|/-15pt/{\pur{D,\AA^2_Z}}[dd]\ar[rd]
 && \M_Z(P)\ar|/-15pt/{\pur{P,Z}}[dd]\ar^{\pi_{P,Q,Z}}[rd]\ar[ll]
 \\
& \M(X;Y,Z)\ar|/-0pt/{\quad \pur{X,Y,Z}}[ld]\ar[rr] &
 & \M(D;D',\AA^2_Z)\ar|/-0pt/{\quad \pur{D,D',Z}}[ld] &
 & \M(P;Q,Z)\ar|/-0pt/{\quad \pur{P,Q,Z}}[ld]\ar[ll]
 \\
\M(Z)(d)[2d]\ar@<-3pt>@{}_/-2pt/{{s_1}_*}[rr]\ar[rr]
 && \M(\AA^2_Z)(d)[2d]
 && \M(Z)(d)[2d].\ar^/-2pt/{{s_0}_*}[ll]
}
$$
The square parts of this prism are commutative. As morphisms
${s_1}_*$ and ${s_{0}}_*$ are isomorphisms, the
commutativity of the left triangle is equivalent to the
commutativity of the right one.

Thus, we are reduced to the case of the smooth triple $(P,Q,Z)$.
Now, using the canonical split epimorphism
 $\M(P) \rightarrow \M(P/P-Z)$, we are reduced to prove
the commutativity of the diagram~:
$$
\xymatrix@R=-6pt@C=50pt{
\M(P)\ar_{i^*}[dd]\ar[rd] & \\
 & \MD{P/P-Q}{P-Z/P-Q}
   \ar^/-0pt/{\quad \pur{P,Q,Z}}[ld] \\
\M(Z)(d)[2d]
}
$$
where $i:Z \rightarrow P$ denotes the canonical closed immersion.

Using property (iii) of the
isomorphism $\pur{P,Q,Z}$, we are finally reduced to prove the
commutativity of the triangle
$$
\xymatrix@R=-4pt@C=50pt{
\M(P)\ar_{i^*}[rd]\ar^{j^*}[rr] & 
 & \M(Q)(n)[2n]\ar^{k^*}[ld] \\
& \M(Z)(d)[2d] &
}
$$
where we considered $Z \xrightarrow k Q \xrightarrow j P$ the
canonical closed embeddings. This now simply follows 
from paragraph \ref{fund_class} and lemma \ref{lm:proj&Gysin_mot2}.
\end{proof}

As a corollary (apply commutativity of square (1) in the case $Y'=Z$),
we get the functoriality of the Gysin morphism of a closed immersion~:
\begin{cor}
\label{cor:assoc}
Let $Z \xrightarrow l Y \xrightarrow i X$ be closed immersions between
smooth schemes of respective pure codimension $n$ and $m$.

Then, $l^* \circ i^*=(i \circ l)^*$ as a morphism
$\M(X) \rightarrow \M(Z)(n+m)[2(n+m)]$.
\end{cor}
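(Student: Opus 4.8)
The plan is to deduce the corollary as the special case $Y'=Z$ of Theorem~\ref{thm:assoc}. Concretely, I would apply that theorem to the cartesian square
$$
\xymatrix@=12pt{
Z\ar^{\mathrm{id}_Z}[r]\ar_l[d] & Z\ar^{i\circ l}[d] \\
Y\ar^i[r] & X
}
$$
It is indeed cartesian: since $Z\subset Y\subset X$, the closed immersion $i\circ l\colon Z\hookrightarrow X$ factors through $Y$, so the projection $Y\times_X Z\to Z$ is an isomorphism (inverse induced by $l$). Its four arrows $i$, $i\circ l$, $\mathrm{id}_Z$, $l$ are closed immersions of smooth schemes of pure codimensions $m$, $n+m$, $0$, $n$ respectively, so the hypotheses of Theorem~\ref{thm:assoc} are met, with $d=n+m$ there.

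Specialising the commutativity of square~$(1)$ of Theorem~\ref{thm:assoc} to this instance gives
$$
\mathrm{id}_Z^{\,*}\circ(i\circ l)^* = l^*\circ i^*
$$
as morphisms $\M(X)\to\M(Z)(n+m)[2(n+m)]$. It therefore remains only to observe that the Gysin morphism $\mathrm{id}_Z^{\,*}$ of the identity closed immersion equals $\mathrm{id}_{\M(Z)}$. This follows from Definition~\ref{df_purity&Gysin} together with property~$(2)$ of Proposition~\ref{prop:purity} applied to the rank-$0$ bundle $E=0$ over $Z$: one has $\PP(E\oplus 1)=Z$, its canonical section is $\mathrm{id}_Z$, and the embedding $\lef{0}{Z}$ together with the natural map $\M(Z)=\M(\PP(E\oplus1))\to\M_Z(\PP(E\oplus1))$ are both the identity of $\M(Z)$ (the first by Theorem~\ref{th:projbdl} for a rank-$0$ projective bundle), hence $\pur{Z,Z}=\mathrm{id}$ and $\mathrm{id}_Z^{\,*}=\mathrm{id}_{\M(Z)}$. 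Plugging this into the previous display yields $l^*\circ i^*=(i\circ l)^*$, which is the assertion.

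There is no real obstacle at the level of the corollary: all the substance is contained in Theorem~\ref{thm:assoc}, and the only mild subtlety here is the harmless identification $\mathrm{id}_Z^{\,*}=\mathrm{id}$. If one wished to avoid invoking the full Theorem~\ref{thm:assoc}, one could argue directly along the flag $Z\subset Y\subset X$ using the functoriality of the deformation diagram~\eqref{2nd_def_diag}, chasing the defining composites of $i^*$, $l^*$ and $(i\circ l)^*$ through the purity isomorphisms; but this simply re-runs, in a special case, the construction carried out in the proof of Theorem~\ref{thm:assoc}.
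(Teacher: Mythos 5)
Your proposal is correct and is exactly the paper's argument: the paper also deduces the corollary by specialising the commutativity of square (1) in Theorem \ref{thm:assoc} to the case $Y'=Z$. The only addition is your explicit verification that $\mathrm{id}_Z^{\,*}=\mathrm{id}_{\M(Z)}$, which the paper leaves implicit and which you justify correctly via Proposition \ref{prop:purity}(2).
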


A corollary of this result, using lemma \ref{lm:Gysin&product},
is the compatibility of the Gysin morphism with products~:
\begin{cor} \label{gysin&tensor}
Consider a closed immersion $i:Z \rightarrow X$ 
(resp. $k:T \rightarrow Y$) between smooth schemes of pure codimension $n$
(resp. $m$).

Then $(i \times k)^*=i^* \otimes k^*$ as a morphism\footnote{
When we identify $\M(Z)(n)[2n] \otimes \M(T)(m)[2m]$ with
$\M(Z) \otimes \M(T)(n+m)[2(n+m)]$
through the canonical isomorphism.}~:
\begin{center}
$\M(X) \otimes \M(Y) \rightarrow \M(Z) \otimes \M(T)(n+m)[2(n+m)]$.
\end{center}
\end{cor}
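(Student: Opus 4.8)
The plan is to reduce everything to the functoriality of the Gysin morphism for closed immersions (Corollary~\ref{cor:assoc}) together with Lemma~\ref{lm:Gysin&product}, by factoring $i \times k$ into a product of ``one-sided'' immersions and then computing purely formally.

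First I would factor the closed immersion $i \times k$ as the composite
$$
Z \times T \xrightarrow{i \times 1_T} X \times T \xrightarrow{1_X \times k} X \times Y,
$$
noting that $i \times 1_T$ is a closed immersion of pure codimension $n$ (it is the base change of $i:Z \to X$ along the flat projection $X \times T \to X$) and that $1_X \times k$ is a closed immersion of pure codimension $m$, so that their composite $i \times k$ has pure codimension $n+m$. Applying Corollary~\ref{cor:assoc} to this factorization gives
$$
(i \times k)^* = (i \times 1_T)^* \circ (1_X \times k)^*
$$
as a morphism $\M(X \times Y) \to \M(Z \times T)(n+m)[2(n+m)]$.

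Next I would evaluate the two factors by Lemma~\ref{lm:Gysin&product}. Directly, $(i \times 1_T)^* = i^* \otimes 1_{T*}$; and, after exchanging the two factors via $X \times Y \simeq Y \times X$ — an identification we do not track, exactly as in the remark following Lemma~\ref{lm:Gysin&product} — one also gets $(1_X \times k)^* = 1_{X*} \otimes k^*$. Substituting and using the bifunctoriality of $\otimes$ (the interchange law $(a' \otimes b') \circ (a \otimes b) = (a' \circ a) \otimes (b' \circ b)$ in the symmetric monoidal category $\T$), we obtain
$$
(i \times k)^* = (i^* \otimes 1_{T*}) \circ (1_{X*} \otimes k^*) = i^* \otimes k^*,
$$
which, after the canonical identification $\M(Z)(n)[2n] \otimes \M(T)(m)[2m] = \M(Z) \otimes \M(T)(n+m)[2(n+m)]$, is precisely the claim.

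The whole argument is formal once the factorization is in place; the only bookkeeping point is the permutation isomorphisms of the monoidal structure, which — consistently with Lemma~\ref{lm:Gysin&product} and its remark — we suppress throughout, so there is no genuine obstacle.
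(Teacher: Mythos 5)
Your proposal is correct and follows the same route the paper indicates: the paper derives this corollary precisely from the functoriality of the Gysin morphism for closed immersions (Corollary \ref{cor:assoc}) combined with Lemma \ref{lm:Gysin&product}, which is exactly your factorization $i \times k = (i \times 1_T)\circ(1_X \times k)$ and the ensuing formal computation. Your handling of the suppressed permutation isomorphism also matches the paper's own convention in the remark after Lemma \ref{lm:Gysin&product}.
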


\begin{rem}
In the hypothesis of the previous corollary,
we obtain in terms of fundamental classes~:
$$\eta_{X \times Y}(Z \times T)=\eta_X(Z) \otimes \eta_Y(T).$$
\end{rem}

We also obtain a result of intersection of fundamental classes
 in the case of smooth cycles~:
\begin{cor}
Let $X$ be a smooth scheme,
 $Z$ and $T$ be smooth closed subschemes of $X$. 
We assume that~:
\begin{enumerate}
\item The intersection of $Z$ and $T$ in $X$ is proper.
\item There is a closed subscheme $W$ in $Z \cap T$
 which is smooth, homeomorphic to $Z \cap T$
  and admits an ample line bundle.
\item The induced morphism of closed pairs $(T,W) \rightarrow (X,Z)$
 satisfies condition (Special).
\end{enumerate}
Let $\nu_{X,W}:H^{**}_W(X) \rightarrow H^{**}(X)$ be the canonical morphism.
According to $(Add')$, $H^{**}_W(X)=\bigoplus_{i \in I} H^{**}_{W_i}(X)$
where $(W_i)_{i \in I}$ are the connected components of $W$.
For any $i \in I$, 
we can consider the localised fundamental class $\bar \eta_X(W_i)$
 as an element of $H^{**}_{W}(X)$ (see paragraph \ref{fund_class}). 
We let $\rho_i \in H^{0,0}(W_i)$ be the $F$-intersection multiplicity of $W_i$ 
in $Z \cap T$ (see definition \ref{inter_mult}).
Then, 
$$
\eta_X(Z) \ncup \eta_X(T)
 =\nu_{X,W}\Big({\sum}_{i \in I} \ \rho_i.\bar \eta_X(W_i)\Big)
$$
using the $H^{0,0}(W)$-module structure of $H^{**}_W(X)$ obtained through the
purity isomorphism.
\end{cor}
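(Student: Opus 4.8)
The plan is to reduce the identity to a computation of the pull\nobreakdash-back $j^{*}(\eta_X(Z))$ along the closed immersion $j\colon T\hookrightarrow X$ and then to feed in the ramification formula \ref{ramification}. Since $T$ is a smooth closed subscheme of $X$ of codimension $m$, the projection formula attached to the closed pair $(X,T)$ for the ringed premotive $\un$ (the remark after Corollary \ref{second_proj_formula}) gives
\[
\eta_X(Z)\ncup\eta_X(T)=\eta_X(Z)\ncup j_{*}(1_T)=j_{*}\big(j^{*}(\eta_X(Z))\big),
\]
there being no sign since fundamental classes have even degree and the permutation of $\un(1)^{\otimes 2}$ is the identity, and $j_{*}$ denoting the Gysin push\nobreakdash-forward. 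So one is reduced to computing the class $j^{*}(\eta_X(Z))\in H^{2n,n}(T)$ and then pushing it forward along $j$.

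For the first task I would use the morphism of closed pairs $(j,g)\colon(T,W)\to(X,Z)$ induced by $j$, where $g\colon W\to Z$. This makes sense: $W\hookrightarrow Z\times_X T=Z\cap T$ is a thickening (as $W$ is a closed subscheme of $Z\cap T$ homeomorphic to it), and properness of the intersection together with $W\simeq Z\cap T$ as topological spaces forces $W$ to have codimension $n$ everywhere in $T$, so that $(T,W)$ is a smooth closed pair of codimension $n$, the same as $(X,Z)$. The middle square of diagram \eqref{refind_Gysin} for $(j,g)$ then reads $i^{*}\circ M(j)=(j,g)_{!}\circ\gamma^{*}$, where $i^{*}\colon\M(X)\to\M(Z)(n)[2n]$, $\gamma^{*}\colon\M(T)\to\M(W)(n)[2n]$ are the Gysin morphisms of $Z\hookrightarrow X$ and $W\hookrightarrow T$, and $(j,g)_{!}$ is the refined Gysin morphism of Definition \ref{df:refined_Gysin}; composing with $\pi_{Z*}$ and using the shape of $\eta_X(Z)$ in paragraph \ref{fund_class} gives $j^{*}(\eta_X(Z))=\pi_{Z*}\circ(j,g)_{!}\circ\gamma^{*}$. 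Now $(j,g)$ satisfies (Special) by hypothesis and $W$ carries an ample line bundle, so Theorem \ref{ramification} yields $(j,g)_{!}=\sum_{i\in I}r(W_i;j,g)\gcup_{W}g_{i*}$ over the connected components $W_i$ of $W$; combining this with $\pi_Z\circ g_i=\pi_{W_i}$ and with the decomposition of $\gamma^{*}$ given by axiom (Add$'$) into the Gysin morphisms $\gamma_i^{*}\colon\M(T)\to\M(W_i)(n)[2n]$ of $W_i\hookrightarrow T$, one obtains
\[
j^{*}(\eta_X(Z))=\sum_{i\in I}\pi_{W_i*}\circ\big(r(W_i;j,g)\gcup(-)\big)\circ\gamma_i^{*}.
\]

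It remains to apply $j_{*}$ and reassemble. By functoriality of the Gysin morphism of closed immersions (Corollary \ref{cor:assoc}), the composite of $j^{*}$ with $\gamma_i^{*}$ is the Gysin morphism of the codimension\nobreakdash-$(n+m)$ closed immersion $W_i\hookrightarrow X$, which by the very construction of Gysin morphisms factors through $\M_{W_i}(X)\xrightarrow{\pur{X,W_i}}\M(W_i)(n+m)[2(n+m)]$. Substituting and comparing with the definition of the localised fundamental class $\bar\eta_X(W_i)$ (paragraph \ref{fund_class}) together with the $H^{0,0}(W_i)$\nobreakdash-module structure on $H^{**}_{W_i}(X)$ transported through $\pur{X,W_i}$ --- under which $\bar\eta_X(W_i)$ corresponds to $1$, hence $\rho_i\cdot\bar\eta_X(W_i)$ to $\rho_i=r(W_i;j,g)$ --- each summand of $j_{*}\big(j^{*}(\eta_X(Z))\big)$ becomes $\nu_{X,W_i}\big(\rho_i\cdot\bar\eta_X(W_i)\big)$, and their sum is $\nu_{X,W}\big(\sum_{i\in I}\rho_i\cdot\bar\eta_X(W_i)\big)$, which is the assertion. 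Equivalently, one may first observe via \eqref{eq:cup_with_support} and (Exc) that $\eta_X(Z)\ncup\eta_X(T)=\nu_{X,W}\big(\bar\eta_X(Z)\ncup_{Z,T}\bar\eta_X(T)\big)$, and then run the same computation on the class with supports.

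The step I expect to be the main obstacle is this final reassembly: one must keep careful track of all the Tate twists and shifts and, more delicately, match the morphism\nobreakdash-level composition obtained above against the intended reading of $\nu_{X,W}\big(\sum_i\rho_i\cdot\bar\eta_X(W_i)\big)$ through the purity isomorphisms $\pur{X,W_i}$ and the module structures they define. A subsidiary point requiring attention is the verification that $(j,g)\colon(T,W)\to(X,Z)$ genuinely is a morphism of smooth closed pairs of codimension $n$, which is precisely where the proper\nobreakdash-intersection assumption and the existence of the smooth $W$ homeomorphic to $Z\cap T$ are used; everything else is a formal combination of the projection formula, the base change diagram \eqref{refind_Gysin}, Theorem \ref{ramification} and Corollary \ref{cor:assoc}, all established above.
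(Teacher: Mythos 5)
Your argument is correct and is essentially the paper's own proof: both apply Theorem \ref{ramification} to the morphism of closed pairs $(T,W)\to(X,Z)$ (the hypotheses (1)--(3) guaranteeing it is a morphism of smooth closed pairs of the same codimension satisfying (Special)), then combine the projection formula of Corollary \ref{second_proj_formula}, the functoriality of Corollary \ref{cor:assoc}, and the tautological identification $\mu_{i*}(\rho_i)=\nu_{X,W_i}(\rho_i\cdot\bar\eta_X(W_i))$ via the purity isomorphism. The only difference is the order of operations (you start from the cup product and reduce to a pullback, while the paper pushes the ramification formula forward), which is immaterial.
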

\begin{proof}
We apply theorem \ref{ramification} to the obvious square~:
$$
\xymatrix@=10pt{
W\ar^{\nu'}[r]\ar_g[d] & T\ar^f[d] \\
Z\ar^\nu[r] & X.
}
$$
For any $i \in I$, we let $\nu'_i$ (resp. $\mu_i$) be the immersion
 of $W_i$ in $T$ (resp. $X$).
We thus obtain the formula in $H^{**}(T)$~:
$f^*\nu_*(1)=\sum_{i \in I} \nu'_{i*}(\rho_i)$. \\
Applying $f_*$ to this formula
 and using corollary \ref{second_proj_formula} for the left hand side,
  corollary \ref{cor:assoc} for the right hand side,
   we obtain
    $\eta_X(Z) \gcup \eta_X(T)=\sum_{i \in I} \mu_{i*}(\rho_i)$.
By the very definition now,
 $\mu_{i*}(\rho_i)=\nu_{X,W}(\rho_i.\bar \eta_X(W_i))$.
\end{proof}
\section{Duality and Gysin morphism}

\subsection{Preliminaries}

For the rest of the section, 
we fix a monoidal category $\C$ with unit $\un$.
\begin{df}
Let $M$ an object of $\C$.

We say $M$ is \emph{strongly dualizable} if the following conditions
are fulfilled~:
\begin{enumerate}
\item The functor $M \otimes .$ admits a right adjoint $\uHom(M,.)$.
\item For any object $N$ of $\C$, consider the map
$$
M \otimes \uHom(M,\un) \otimes N \xrightarrow{ad \otimes 1_N} N
$$
induced by the evident adjunction morphism.
Then the adjoint map
$$
\uHom(M,\un) \otimes N \rightarrow \uHom(M,N)
$$
is an isomorphism.
\end{enumerate}
\end{df}
This definition coincides with definition 1.2 of \cite{DP}.
Obviously, strongly dualizable objects are stable by finite sums
and tensor product.
Remark also that any invertible object of $\C$
 for the tensor product is \emph{a fortiori} strongly dualizable.

\begin{df} \label{df:strong_dual}
Consider an object $M$ of $\C$.

A \emph{strong dual} of $M$ is an object $M^\vee$ of $\C$
and two morphisms $\mu:M \otimes M^\vee \rightarrow \un$,
$\epsilon:\un \rightarrow M^\vee \otimes M$
 such that the following composites
\begin{align*}
(i) & \quad M \xrightarrow{1 \otimes \epsilon} M \otimes M^\vee \otimes M
 \xrightarrow{\mu \otimes 1} M \\
(ii) & \quad M^\vee \xrightarrow{\epsilon \otimes 1} M^\vee \otimes M \otimes M^\vee
 \xrightarrow{1 \otimes \mu} M^\vee
\end{align*}
are the identity morphisms.
\end{df}
The conditions of the definition imply that $M^\vee \otimes .$
 is right adjoint to $M \otimes .$ and the natural transformations
  $\epsilon \otimes .$ and $\mu \otimes .$ are the adjunction transformations.
Moreover, $M$ is strongly dualizable
 as condition (2) of the first definition simply follows from
the structural isomorphism
 $(M^\vee \otimes \un) \otimes N \simeq M^\vee \otimes N$
(see also \cite[1.3]{DP}).

Remark we also obtain that $. \otimes M^\vee$ is left adjoint to
 $. \otimes M$ with natural transformation $. \otimes \mu$
  and $. \otimes \epsilon$.
This gives the following reciprocal
  isomorphisms which we describe for future needs~:
\begin{equation} \label{duality&adjunction}
\begin{split}
\Hom_\C(M^\vee,\E) \rightarrow \Hom_\C(\un,\E \otimes M), 
 & \varphi \mapsto (\varphi \otimes 1_M) \circ \epsilon \\
\Hom_\C(\un,\E \otimes M) \rightarrow \Hom_\C(M^\vee,\E),
 & \psi \mapsto (1_\E \otimes \mu) \circ (\psi \otimes 1_{M^\vee}),
 \end{split}
\end{equation}
where $\E$ is any object of $\C$.

The following lemma gives some precisions on the relation between
 "strongly dualizable" and "strong dual"~:
\begin{lm} \label{lm:strong_dual}
Consider a strongly dualizable object $M$ of $\C$.
Let $M^\vee$ be an object of $\C$.

Consider the following sets~:
\begin{enumerate}
\item Couples of morphisms $\mu:M \otimes M^\vee \rightarrow \un$ and 
$\epsilon:M^\vee \otimes M \rightarrow \un$ such that $(M^\vee,\mu,\epsilon)$
is a strong dual of $M$.
\item Morphisms $\mu:M \otimes M^\vee \rightarrow \un$ such that the adjoint map
$\phi:M^\vee \rightarrow \uHom(M,\un)$ is an isomorphism.
\end{enumerate}
We associate to any morphism $\mu$ in (2) the following composite
$$
\epsilon_\mu:\un \xrightarrow{ad'} \uHom(M,M)
 \rightarrow \uHom(M,\un) \otimes M
  \xrightarrow{\phi^{-1} \otimes 1} M^\vee \otimes M
$$
where the first map is the evident adjunction morphism and the second one
is induced by the isomorphism obtained by the property of the
strongly dualizable object $M$.

Then $(\mu,\epsilon_\mu)$ is an element of $(1)$ and the application
$$
(2) \rightarrow (1), \mu \mapsto (\mu,\epsilon_\mu)
$$
is a bijection.
\end{lm}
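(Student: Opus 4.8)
The plan is to show that the map $\mu \mapsto (\mu, \epsilon_\mu)$ is well-defined (lands in set $(1)$) and is a bijection, by constructing an explicit inverse. The inverse should send a strong dual structure $(\mu, \epsilon)$ to its pairing $\mu$; the content is that this $\mu$ lies in $(2)$, i.e.\ its adjoint $\phi\colon M^\vee \to \uHom(M,\un)$ is an isomorphism, and that the two assignments are mutually inverse. The key technical point throughout is the triangle identities of Definition \ref{df:strong_dual} together with the adjunction description recorded just before the lemma.

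First I would verify that if $\mu$ is in $(2)$, then $(\mu,\epsilon_\mu)$ is in $(1)$. Since $M$ is strongly dualizable, $\uHom(M,\un)\otimes(-) \xrightarrow{\sim} \uHom(M,-)$, and one knows in general (see \cite[1.3]{DP}, or a direct check) that $\uHom(M,\un)$ with the evaluation $ad\colon M\otimes\uHom(M,\un)\to\un$ and the coevaluation $\un\xrightarrow{ad'}\uHom(M,M)\xrightarrow{\sim}\uHom(M,\un)\otimes M$ satisfies the triangle identities of Definition \ref{df:strong_dual}. Transporting this structure along the isomorphism $\phi\colon M^\vee\to\uHom(M,\un)$ — using that $\mu$ is by hypothesis the composite $M\otimes M^\vee\xrightarrow{1\otimes\phi}M\otimes\uHom(M,\un)\xrightarrow{ad}\un$ (this is what ``$\phi$ is the adjoint of $\mu$'' means) and that $\epsilon_\mu$ is by definition the image of the canonical coevaluation under $\phi^{-1}\otimes 1$ — shows that $(M^\vee,\mu,\epsilon_\mu)$ is a strong dual of $M$. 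So the map $(2)\to(1)$ is well-defined.

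For bijectivity, I would build the candidate inverse $(1)\to(2)$ by forgetting $\epsilon$, i.e.\ $(\mu,\epsilon)\mapsto\mu$. To see this lands in $(2)$: given a strong dual $(M^\vee,\mu,\epsilon)$, the remarks after Definition \ref{df:strong_dual} say $M^\vee\otimes(-)$ is right adjoint to $M\otimes(-)$ with unit $\epsilon\otimes(-)$ and counit $\mu\otimes(-)$; combined with the (a priori different) adjunction $\uHom(M,-)$ coming from strong dualizability, uniqueness of adjoints gives a canonical isomorphism $M^\vee\xrightarrow{\sim}\uHom(M,\un)$, and one checks this isomorphism is precisely the adjoint $\phi$ of $\mu$ — again a diagram chase using the triangle identities. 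Hence $\mu\in(2)$. Finally, the round trips: $(1)\to(2)\to(1)$ returns $(\mu,\epsilon_{\mu})$, and one must check $\epsilon_\mu=\epsilon$; this follows because in a strong dual the coevaluation $\epsilon$ is determined by $\mu$ via exactly the formula defining $\epsilon_\mu$ (unfold $\epsilon_\mu$ through $\phi$ and use the triangle identity $(i)$ or $(ii)$ to identify it with $\epsilon$). The round trip $(2)\to(1)\to(2)$ is the identity on the nose since both directions only touch $\mu$.

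The main obstacle is the verification that $\epsilon_\mu$ agrees with a given $\epsilon$ when $(\mu,\epsilon)$ is already a strong dual — equivalently, the uniqueness of the coevaluation compatible with a fixed evaluation in a rigid-type situation. This is a standard but somewhat delicate string-diagram / triangle-identity argument: one expresses $\epsilon_\mu$ as $(\phi^{-1}\otimes 1)\circ(\text{iso})\circ ad'$, rewrites $ad'$ and the isomorphism in terms of $\epsilon$ and the counit $\mu$, and collapses the resulting composite using one of the zig-zag identities of Definition \ref{df:strong_dual}. I would present this as the single lemma-internal computation and leave the purely formal adjunction bookkeeping (naturality of $\phi$, uniqueness of adjoints) as routine. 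Everything else is unwinding definitions.
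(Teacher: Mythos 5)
Your proposal is correct, and in fact the paper gives no argument at all here (``We left the easy check to the reader''), so there is nothing to diverge from: your verification --- transporting the canonical strong-dual structure on $\uHom(M,\un)$ along $\phi$, identifying $\phi$ with the comparison isomorphism coming from uniqueness of right adjoints to $M\otimes(-)$, and using uniqueness of the coevaluation for a fixed evaluation to get $\epsilon_\mu=\epsilon$ --- is exactly the standard check the paper intends, in the spirit of \cite[1.2--1.3]{DP}. (Note only that the displayed set (1) in the lemma contains a typo, $\epsilon$ should go $\un\rightarrow M^\vee\otimes M$ as in definition \ref{df:strong_dual}; you treated it correctly as the coevaluation.)
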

We left the easy check to the reader.

\begin{df} \label{df:transpose}
Let $M$ (resp. $N$) be an object of $\C$
 and $(M^\vee,\mu_M,\epsilon_M)$ (resp. $(N^\vee,\mu_N,\epsilon_N)$)
  be a strong dual of $M$ (resp. $N$).

For any morphism $f:M \rightarrow N$, we define the transpose morphism
of $f$ (with respect to the chosen strong duals) as the composite
$$
\tra f:N^\vee \xrightarrow{\mu_M \otimes 1} M^\vee \otimes M \otimes N^\vee
 \xrightarrow{1 \otimes f \otimes 1} M^\vee \otimes N \otimes N^\vee
  \xrightarrow{1 \otimes \epsilon_N} M^\vee.
$$
\end{df}
Remark that the morphism $\tra f$ in the previous definition is characterized
by either one of the next two properties~:
\begin{enumerate}
\item[(i)] The following diagram is commutative~:
$$
\xymatrix@R=13pt@C=22pt{
M \otimes N^\vee\ar^{f \otimes 1}[r]\ar_{1 \otimes \tra f}[d]
 & N \otimes N^\vee\ar^{\mu_N}[d] \\
M \otimes M^\vee\ar^/8pt/{\mu_M}[r] & {}\un.
}
$$
\item[(ii)] The following diagram is commutative~:
$$
\xymatrix@C=42pt@R=14pt{
N^\vee\ar^{\tra f}[r]\ar[d] & M^\vee\ar[d] \\
\uHom(N,\un)\ar^{\uHom(f,\un)}[r] & \uHom(M,\un)
}
$$
where the vertical maps are induced by adjunction from $\mu_N$
 and $\mu_M$ -- cf lemma \ref{lm:strong_dual}.
\end{enumerate}

\subsection{The projective bundle case}

Fix an integer $n \geq 0$.
Using the projective bundle theorem \ref{th:projbdl}
 and axiom (Stab),
we obtain that the motive $\M(\PP^n)$ is strongly dualizable, 
as a finite sum of invertible motives.

Let $\L_n$ be the canonical line bundle on $\PP^n$,
$c'=c_1(\L_n)$.
From the projective bundle theorem \ref{th:projbdl},
$c'$ is a generator of the $A$-algebra $H^{**}(\PP^n)$.
Let $c=c_1(\L^\vee_n)$. According to paragraph \ref{fgl},
$c=m(c')=-c' \mod c'^2$ where $m$ is the inverse series
 associated to the formal group law $F$.
Thus, the class $c$ is still a generator of $H^{**}(\PP^n)$ and
also satisfies the relation $c^{n+1}=0$.
In all this section on duality, we systematically use this
generator.

We consider the following morphism
$$
\mu_{n}:\M(\PP^n) \otimes \M(\PP^n)(-n)[-2n]
 \xrightarrow{\delta^*} \M(\PP^n)
  \xrightarrow{p_*} \un
$$
where $p:\PP^n \rightarrow S$ is the canonical projection
 and $\delta:\PP^n \rightarrow \PP^n \times \PP^n$ the diagonal embedding
  of $\PP^n/S$.

If we consider this morphism as a cohomological class in 
 $H^{2n,n}(\PP^n \times \PP^n)$, 
it is the fundamental class 
$\eta_{\PP^n \times \PP^n}(\PP^n)=\delta_*(1) \in H^{2n,n}(\PP^n \times \PP^n)$
of the diagonal.
Using the projective bundle theorem \ref{th:projbdl},
it can be written
$$
\eta_{\PP^n \times \PP^n}(\PP^n)
=\sum_{0 \leq i,j \leq n} \eta_{i,j}^{(n)}.c^i \ncup d^j
$$
where $\eta_{i,j}^{(n)}$ is an element in
$A^{2(n-i-j),n-i-j}$ and $c$ (resp. $d$) is the first
Chern class of the canonical dual line bundle on the first 
(resp. second) factor of $\PP^n \times \PP^n$.

We define the $(n+1)$-dimensional square matrix
 $M_n=\big(\eta_{i,j}^{(n)}\big)_{0 \leq i,j \leq n}$
over the bigraded ring $A$.
Note that $M_n$ is symmetric.
Remark finally that the morphism induced by adjunction from 
$\mu_{n}$ gives by another application of theorem \ref{th:projbdl}
a morphism
$$
\bigoplus_{i=0}^n \un(n-i)[2(n-i)]
 \rightarrow \bigoplus_{j=0}^n \un(j)[2j]
$$
whose matrix is precisely $M_{n}$.
\begin{lm} \label{pre_comput_cobordism_c}
For any integer $i \geq 0$, put $\eta_i=\eta_{ii}^{(i)} \in A^{-2i,-i}$.
The matrix $M_n$ has the form
$$
\left(
\raisebox{0.5\depth}{
\xymatrix@=1ex{
0\ar@{.}[rr]\ar@{.}[dd]
 & & 0\ar@{.}[lldd]
 & 1\ar@{-}[lllddd] \\
 & & & \eta_1\ar@{-}[lldd]\ar@{.}[dd] \\
0 & & & \\
1 & \eta_1\ar@{.}[rr] & & \eta_n
}
}
\right)
$$
\end{lm}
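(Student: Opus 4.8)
The plan is to pin down every entry $\eta^{(n)}_{ij}$ of $M_n$ by restricting the diagonal class along two (honest, non-excess) cartesian squares and then inducting on $n$. Write $\iota_n\colon\PP^n\hookrightarrow\PP^{n+1}$ for the inclusion of a hyperplane, abbreviate $\eta^{(n)}=\eta_{\PP^n\times\PP^n}(\PP^n)$, and keep $c,d$ for the first Chern classes of the dual canonical line bundles on the two factors, so that by Theorem~\ref{th:projbdl} one has $H^{**}(\PP^m\times\PP^m)=A[c,d]/(c^{m+1},d^{m+1})$; recall also that $\iota_{m,*}(1)=c_1(\L_m^\vee)$, a hyperplane being the transverse vanishing locus of a section of $\L_m^\vee$ (Corollary~\ref{fundamental_class_as_chern_class}). \emph{Step 1 (first row and column).} Fix an $S$-point $x_0$ of $\PP^n$ and let $\lambda\colon\{x_0\}\times\PP^n\hookrightarrow\PP^n\times\PP^n$. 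The intersection of the image of $\lambda$ with the diagonal is the single point $(x_0,x_0)$, and the square is cartesian with $n=n$ (the codimensions add to $2n$); so Proposition~\ref{transversal}, read through the middle square of diagram~\eqref{refind_Gysin}, gives that $\lambda^*(\eta^{(n)})$ is the fundamental class of $(x_0,x_0)$ inside $\{x_0\}\times\PP^n\cong\PP^n$. That point is the transverse zero of a section of $(\L_n^\vee)^{\oplus n}$, hence its class is $c^n$ by Corollary~\ref{fundamental_class_as_chern_class} and the Whitney formula. On the other hand $\lambda^*$ kills the first-factor class $c$ (a line bundle on a point) and carries $d$ to the generator, so $\lambda^*(\eta^{(n)})=\sum_j\eta^{(n)}_{0,j}d^j$; comparing with $c^n$ in the free $A$-module $H^{**}(\PP^n)$ forces $\eta^{(n)}_{0,j}=0$ for $j<n$ and $\eta^{(n)}_{0,n}=1$. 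Since $M_n$ is symmetric, its first column is the same.

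\emph{Step 2 (two recursions).} Consider the closed immersion $\mathrm{id}\times\iota_n\colon\PP^{n+1}\times\PP^n\hookrightarrow\PP^{n+1}\times\PP^{n+1}$. A local computation shows its intersection with the diagonal $\delta_{n+1}(\PP^{n+1})$ is the reduced smooth graph $W=\{(\iota_n y,y):y\in\PP^n\}\cong\PP^n$, and the square is cartesian with both relevant codimensions equal to $n+1$; so Proposition~\ref{transversal} yields $(\mathrm{id}\times\iota_n)^*(\eta^{(n+1)})=\eta_{\PP^{n+1}\times\PP^n}(W)$. Now $W$ is the image of $\delta_{\PP^n}(\PP^n)$ under the closed immersion $\iota_n\times\mathrm{id}\colon\PP^n\times\PP^n\hookrightarrow\PP^{n+1}\times\PP^n$, so by functoriality of the Gysin pushforward (Corollary~\ref{cor:assoc}) $\eta_{\PP^{n+1}\times\PP^n}(W)=(\iota_n\times\mathrm{id})_*(\eta^{(n)})$; writing $\tilde c$ for the first-factor class on $\PP^{n+1}\times\PP^n$, one has $(\iota_n\times\mathrm{id})_*(1)=\tilde c$ (again a hyperplane, by Corollary~\ref{fundamental_class_as_chern_class}), so the projection formula (Corollary~\ref{second_proj_formula}) gives $(\iota_n\times\mathrm{id})_*(\eta^{(n)})=\tilde c\ncup\big(\sum_{0\le i,j\le n}\eta^{(n)}_{ij}\tilde c^{i}d^{j}\big)=\sum_{i,j}\eta^{(n)}_{ij}\tilde c^{i+1}d^{j}$ in $H^{**}(\PP^{n+1}\times\PP^n)=A[\tilde c,d]/(\tilde c^{n+2},d^{n+1})$. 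On the other hand $(\mathrm{id}\times\iota_n)^*(\eta^{(n+1)})=\sum_{0\le a\le n+1,\,0\le b\le n}\eta^{(n+1)}_{ab}\tilde c^{a}d^{b}$, restricting the second factor killing the $d$-exponents equal to $n+1$; comparing coefficients yields
$$\eta^{(n+1)}_{a,b}=\eta^{(n)}_{a-1,b}\ \ (1\le a\le n+1,\ 0\le b\le n),\qquad \eta^{(n+1)}_{0,b}=0\ \ (0\le b\le n).$$
Running the same argument with the two factors exchanged gives $\eta^{(n+1)}_{a,b}=\eta^{(n)}_{a,b-1}$ for $0\le a\le n$, $1\le b\le n+1$.

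\emph{Step 3 (induction and main obstacle).} For $n=0$ one has $\delta_0=\mathrm{id}$ and $\eta^{(0)}=1$, so $M_0=(1)$. Assume $\eta^{(n)}_{ij}=\eta_{i+j-n}$ for all $0\le i,j\le n$, with the conventions $\eta_k=0$ for $k<0$ and $\eta_k=\eta^{(k)}_{kk}$ for $k\ge0$ (so $\eta_0=\eta^{(0)}_{00}=1$). Every pair $(a,b)$ with $0\le a,b\le n+1$ other than $(n+1,n+1)$ satisfies $a=0$, or $b=0$, or $1\le a$ and $b\le n$, or $1\le b$ and $a\le n$; the first two cases are covered by Step~1 and the symmetry of $M_{n+1}$, the last two by the recursions of Step~2 together with the inductive hypothesis, and in each case the value works out to $\eta_{a+b-(n+1)}$ (the various descriptions agree on overlaps, again by the inductive hypothesis). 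The single remaining entry $\eta^{(n+1)}_{n+1,n+1}$ is by definition $\eta_{n+1}=\eta_{(n+1)+(n+1)-(n+1)}$. Hence $M_{n+1}=\big(\eta_{a+b-(n+1)}\big)_{0\le a,b\le n+1}$: zeros strictly above the anti-diagonal, ones on it, and last row $\eta_0=1,\eta_1,\dots,\eta_{n+1}$, which is exactly the displayed shape. The delicate point is Step~2: one must correctly identify the intersection of the diagonal of $\PP^{n+1}$ with $\PP^{n+1}\times\PP^n$ as the graph of $\iota_n$, relate its fundamental class to $\eta^{(n)}$ through the factorization $\PP^n\xrightarrow{\delta}\PP^n\times\PP^n\xrightarrow{\iota_n\times\mathrm{id}}\PP^{n+1}\times\PP^n$, and apply the projection formula with the correct unit coefficient for the hyperplane class; the rest is routine bookkeeping with the projective bundle theorem.
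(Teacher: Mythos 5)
Your proof is correct, and it reaches the key recursion by a genuinely different mechanism than the paper. The paper restricts $\delta'_*(1)$ along \emph{both} factors simultaneously: it applies the excess-intersection formula (Proposition \ref{excess}) to the square formed by $\delta$, $\delta'$ and $\sigma\times\sigma$ (with $\sigma=\iota_n$), where the excess bundle is $\L_n^\vee$, so that $(\sigma\times\sigma)^*\delta'_*(1)=\delta_*(c_1(\L_n^\vee))=c\ncup\delta_*(1)$; comparing coefficients gives $\eta^{(n+1)}_{0,j}=0$ and $\eta^{(n+1)}_{i,j}=\eta^{(n)}_{i-1,j}$ for $0<i\le n$, $0\le j\le n$, and one concludes by induction. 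You instead restrict one factor at a time: the square with $\mathrm{id}\times\iota_n$ is transversal, so only Proposition \ref{transversal} is needed, and the class of the resulting graph is computed through the factorization $\PP^n\xrightarrow{\delta_n}\PP^n\times\PP^n\xrightarrow{\iota_n\times\mathrm{id}}\PP^{n+1}\times\PP^n$ using Corollary \ref{cor:assoc} and the projection formula (Corollary \ref{second_proj_formula}). The geometric input is the same (multiplication by a hyperplane class), but you trade the excess formula for transversal base change plus functoriality of the Gysin morphism; in exchange, the one-sided restriction keeps the top power $\tilde c^{\,n+1}$ of the first-factor class alive, so your recursion is valid for $a$ up to $n+1$ and, together with its mirror and the symmetry of $M_{n+1}$, it pins down every entry of the matrix, including the last row and column, which the paper's simultaneous restriction does not reach directly (there those entries are left to symmetry and the definition $\eta_{n+1}=\eta^{(n+1)}_{n+1,n+1}$); this is a genuine gain in completeness. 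Two small blemishes, neither affecting correctness: your Step 1 is redundant, being subsumed by the Step 2 recursions, symmetry and the base case $M_0=(1)$; and within it the class of the point in $\{x_0\}\times\PP^n$ should be written $d^n$ (the $n$-th power of the generator of that copy of $\PP^n$) rather than $c^n$, with $x_0$ taken to be a coordinate point (or the canonical section of $\PP(1^{\oplus n}\oplus 1)$) so that it really is the transverse zero locus of a section of $(\L_n^\vee)^{\oplus n}$ and Corollary \ref{fundamental_class_as_chern_class} applies.
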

\begin{proof}
First remark the lemma is clear when $n=0$.

Consider the canonical embedding
$\sigma:\PP^n \rightarrow \PP^{n+1}$.
We apply the excess intersection formula \ref{excess} in the
case of the following square
$$
\xymatrix@R=15pt@C=22pt{
{\PP^n}\ar^/-9pt/{\delta}[r]\ar_\sigma[d]
 & {\PP^n \times \PP^n}\ar^{\sigma \times \sigma}[d] \\
{\PP^{n+1}}\ar^/-9pt/{\delta'}[r] & {\PP^{n+1} \times \PP^{n+1}}.
}
$$
In this case, the excess of codimension is $1$
and the excess intersection bundle on $\PP^n$ is the canonical
dual line bundle $\lambda_n^\vee$. 
Proposition \ref{excess} then gives the formula
$(\sigma \times \sigma)^*(\delta'_*(1))=\delta_*(c_1(\lambda_n^\vee))$.

The projection on the first factor
 $p_1:\PP^n \times \PP^n \rightarrow \PP^n$
gives a retraction of $\delta$, and consequently, 
 $\delta_*(c_1(\lambda_n^\vee))=c \ncup \delta_*(1)$.
Thus the previous relation reads~:
$$
\sum_{0 \leq i,j \leq n} \eta_{i,j}^{({n+1})}.c^i \ncup d^j
 =\sum_{0 \leq i,j \leq n} \eta_{i,j}^{(n)}.c^{i+1} \ncup d^j
$$
with the notations which precede the lemma. This in turn gives
the relations
$$
\begin{cases}
 \eta_{0,j}^{(n+1)}=0 & \text{if } 0 \leq j \leq n, \\
 \eta_{i,j}^{(n+1)}=\eta_{i-1,j}^{(n)}
  & \text{if } 0 < i \leq n\text{ and }0 \leq j \leq n,
\end{cases}
$$
which allow to conclude by induction on the integer $n$.
\end{proof}

As a corollary, we obtain from lemma \ref{lm:strong_dual} that
$\mu_n:\M(\PP^n) \otimes \M(\PP^n)(-n)[-2n] \rightarrow \un$
turns $\M(\PP^n)(-n)[-2n]$ into a strong dual of $\M(\PP^n)$.
\begin{df} \label{gysin4projection}
We define the Gysin morphism
 $p^*:\un \rightarrow \M(\PP^n)(-n)[-2n]$
associated to the projection
 $p:\PP^n \rightarrow S$ as the transpose of the morphism
  $p_*:\M(\PP^n) \rightarrow \un$ with respect to the strong duality 
   on $\M(\PP^n)$ induced by $\mu_n$.
   
Moreover, for any smooth scheme $X$, considering the projection
$p_X:\PP^n_X \rightarrow X$, we define the Gysin morphism associated
to $p_X$ as the morphism 
$$
p_X^*:=1 \otimes p^*:\M(X) \rightarrow \M(\PP^n_X)(-n)[-2n].
$$
\end{df}
Using property (ii) after definition \ref{df:transpose}, we obtain
the following way to compute $p^*$.
Consider the inverse matrix
$$
M_n^{-1}=
\left(
\raisebox{0.5\depth}{
\xymatrix@=1ex{
\eta'_n\ar@{.}[rr]\ar@{.}[dd]
 & & \eta'_1\ar@{-}[lldd]
 & 1\ar@{-}[lllddd] \\
 & & & 0\ar@{.}[lldd]\ar@{.}[dd] \\
\eta'_1 & & & \\
1 & 0\ar@{.}[rr] & & 0
}
}
\right)
$$
where $\eta'_i \in A^{-2i,-i}$ is given by the determinant of the
matrix obtained by removing line $0$ and column $n-i$ from $M_n$
times $(-1)^i$. Then
$$
p^*:\un \rightarrow \bigoplus_{i=0}^n \un(i-n)[2(i-n)]
$$
is given by the vector 
 \raisebox{-0.8\depth}{$\begin{pmatrix}\eta'_n \\ \vdots \\ \eta'_1 \\ 1\end{pmatrix}$}.

Note we have the fundamental relation in $A^{-2n,-n}$~:
\begin{equation} \label{relation_cobordism}
\sum_{i=0}^n \eta_i \ncup \eta'_{n-i}=
\begin{cases}
1 & \text{if } n=0 \\
0 & \text{otherwise}
\end{cases}
\end{equation}

\begin{rem}
The coefficients $\eta_i$ and $\eta'_i$ will be determined
 in proposition \ref{fdl_class_diagonal} and corollary \ref{cor:Myschenko}.
\end{rem}

\subsection{The Gysin morphism associated to a projective morphism}

\subsubsection{Preliminary lemmas}

\begin{lm} \label{lm:Gysin1}
Fix a couple of integers $n,m \in \NN$ and a smooth scheme $X$.
Consider the projection morphisms
$$
\xymatrix@=12pt@C=20pt{
{\PP^n_X \times_X \PP^m_X}\ar^/10pt/{q'}[r]\ar_{p'}[d] & {\PP^n_X}\ar^{p}[d] \\
{\PP^m_X}\ar^{q}[r] & X.
}
$$
Then ${q'}^* p^*={p'}^* q^*$.
\end{lm}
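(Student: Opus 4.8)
The statement is a compatibility of the Gysin morphisms $p^*$ and $q^*$ of the two projections of $\PP^n$ and $\PP^m$, pulled back over a smooth base $X$. By the very definition (\ref{gysin4projection}), $p^*$ is obtained by tensoring with $X$ the transpose of $p_* : \M(\PP^n) \to \un$ with respect to the strong duality on $\M(\PP^n)$ furnished by $\mu_n$, and similarly for $q^*$; so both sides of the claimed equality $q'^* p^* = p'^* q^*$ are morphisms $\M(X) \to \M(\PP^n_X \times_X \PP^m_X)(-n-m)[-2n-2m]$, and by tensoring everything with $\M(X)$ we are immediately reduced to the case $X = S$. There the question is whether the composite $\un \xrightarrow{p^*} \M(\PP^n)(-n)[-2n] \xrightarrow{q'^*} \M(\PP^n \times \PP^m)(-n-m)[-2n-2m]$ — where $q' : \PP^n \times \PP^m \to \PP^n$ is the first projection and $q'^* = p^*$-type morphism in the $\PP^m$-direction, i.e. $q'^* = 1_{\M(\PP^n)} \otimes p^*_{\PP^m}$ read appropriately — agrees with the analogous composite through $\M(\PP^m)$.

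The cleanest approach is to use the duality characterization directly. By Theorem \ref{th:projbdl} and axiom (Stab), $\M(\PP^n)$, $\M(\PP^m)$, and hence (since strongly dualizable objects are stable under tensor product) $\M(\PP^n \times \PP^m) = \M(\PP^n) \otimes \M(\PP^m)$ are all strongly dualizable, with strong duals $\M(\PP^n)(-n)[-2n]$, $\M(\PP^m)(-m)[-2m]$, and $\M(\PP^n \times \PP^m)(-n-m)[-2n-2m]$ respectively, the last pairing being $\mu_n \otimes \mu_m$ (up to the evident commutativity isomorphism). Thus the first plan is: show that $q'^* \circ p^*$ is the transpose of $q'_* \circ \ $ nothing — more precisely, that $q'^* p^* : \un \to \M(\PP^n \times \PP^m)^\vee$ is the transpose of $r_* : \M(\PP^n \times \PP^m) \to \un$, where $r : \PP^n \times \PP^m \to S$ is the structural projection; and symmetrically for $p'^* q^*$. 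Since $r = p \circ q' = q \circ p'$ and $r_* = p_* q'_* = q_* p'_*$, the two transposes coincide, giving the claim. The key sub-lemmas needed are: (a) the transpose of $q'_* : \M(\PP^n \times \PP^m) \to \M(\PP^n)$ with respect to the strong duals $\mu_n \otimes \mu_m$ and $\mu_n$ is $q'^* = 1_{\M(\PP^n)} \otimes p^*_{\PP^m}$ — essentially the definition of the Gysin morphism of a projection tensored with $\M(\PP^n)$, combined with naturality of $\mu_\bullet$ under $X$-base change applied to $X = \PP^n$; and (b) transpose is contravariantly functorial, $\tra(g \circ f) = \tra f \circ \tra g$, which is a formal consequence of Definition \ref{df:transpose} and the characterizing property (i) after it.

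The main obstacle is (a): we must check that Definition \ref{gysin4projection}'s clause "$p_X^* := 1 \otimes p^*$ for $p_X : \PP^n_X \to X$" is consistent with the transpose description, i.e. that the strong duality on $\M(\PP^n_X) = \M(X) \otimes \M(\PP^n)$ obtained by tensoring $\mu_n$ with $X$ really is the one relevant for computing $\tra(q'_*)$ when we view $q' : \PP^n \times \PP^m \to \PP^m$ as the base-change of $p : \PP^n \to S$ along $\PP^m \to S$ — so that (a) amounts to matching the two a priori different strong-duality structures on $\M(\PP^n) \otimes \M(\PP^m)$, namely $\mu_n \otimes \mu_m$ and "$(\mu_n \text{ tensored with } \PP^m)$ composed with $\mu_m$ in the $\PP^m$-slot". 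These agree by the associativity/coherence of the monoidal structure and the fact that a strong dual, when it exists, is unique up to canonical isomorphism (so one pins down $\mu_n \otimes \mu_m$ up to the structural isomorphisms). Once that bookkeeping is done, the rest is formal: expand both $q'^* p^*$ and $p'^* q^*$ as $\tra(r_*)$ using (a), (b), and $r_* = p_* q'_* = q_* p'_*$, and conclude. As an alternative, more computational route one can instead invoke the explicit matrix formula for $p^*$ following Definition \ref{gysin4projection} (the vector $(\eta'_n, \dots, \eta'_1, 1)^{t}$) together with the fact that $H^{**}(\PP^n \times \PP^m) = A[c]/(c^{n+1}) \otimes_A A[d]/(d^{m+1})$ splits the exterior-product basis, reducing $q'^* p^* = p'^* q^*$ to the identity $\eta_{\PP^n\times\PP^m}(\PP^n \times \PP^m)^\vee = \eta_{\PP^n}(\PP^n)^\vee \otimes \eta_{\PP^m}(\PP^m)^\vee$ among the structure matrices $M_n, M_m, M_n \otimes M_m$ and their inverses — but the duality-theoretic argument is shorter and coordinate-free, so that is the one I would write up.
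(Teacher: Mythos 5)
Your argument is correct, but it is organized rather differently from the paper's, whose entire proof is ``obvious from definition \ref{gysin4projection}'': since by that definition the relative Gysin morphisms are \emph{defined} by tensoring the absolute ones with the identity (so $p^*=1_{\M(X)}\otimes \pi_n^*$, $q^*=1_{\M(X)}\otimes\pi_m^*$, $q'^*=1_{\M(\PP^n_X)}\otimes\pi_m^*$, $p'^*=1_{\M(\PP^m_X)}\otimes\pi_n^*$, writing $\pi_n^*,\pi_m^*$ for the Gysin morphisms of $\PP^n/S$ and $\PP^m/S$), both composites are literally $1_{\M(X)}\otimes\pi_n^*\otimes\pi_m^*$ under the K\"unneth identification, by the interchange law $(f\otimes 1)\circ(1\otimes g)=f\otimes g=(1\otimes g)\circ(f\otimes 1)$ (the symmetry on the Tate twists being harmless since the permutation on $\un(1)\otimes\un(1)$ is the identity). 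You instead identify both sides with $\tra(r_*)$, $r$ the structural morphism of $\PP^n\times\PP^m$, via contravariant functoriality of transposition; this is a valid alternative and has the merit of foreshadowing Proposition \ref{Gysin=transpose}, but your sub-lemma (a) is itself nothing more than the same tensor bookkeeping (checked through characterization (i) after Definition \ref{df:transpose} together with the identity $\mu_m\circ(1\otimes\pi_m^*)=\pi_{m*}$), so the detour through duality repackages the computation rather than shortening it. One caveat: uniqueness of a strong dual up to canonical isomorphism does not by itself identify the two pairings on $\M(\PP^n)\otimes\M(\PP^m)$ (distinct pairings can exhibit the same object as a strong dual); what makes them coincide is that both are assembled from $\mu_n$, $\mu_m$ and structural isomorphisms, i.e.\ monoidal coherence, which you also invoke --- that, and not uniqueness, should carry the weight of the identification.
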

Obvious from definition \ref{gysin4projection}.

\begin{lm} \label{lm:Gysin2}
Consider a closed immersion $i:Z \rightarrow X$ between smooth schemes
 and an integer $n \geq 0$.
Consider the pullback square
$$
\xymatrix@=10pt{
{}\PP^n_Z\ar^l[r]\ar_{q}[d] & {}\PP^n_X\ar^{p}[d] \\
Z\ar^i[r] & X.
}
$$
Then $l^* p^*=q^* i^*$.
\end{lm}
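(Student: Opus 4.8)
The plan is to reduce the stated identity to the definition of the Gysin morphism of a projection (def.~\ref{gysin4projection}) together with the compatibility of the Gysin morphism of a closed immersion with exterior products (lemma~\ref{lm:Gysin&product}); granting these two ingredients, the equality becomes purely formal.

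First I would unwind the notation. Let $c$ denote the codimension of $i$, which is also the codimension of $l$. Under the canonical identifications $\PP^n_X = X \times \PP^n$ and $\PP^n_Z = Z \times \PP^n$ provided by axiom (Kun)(a), the closed immersion $l$ appearing in the cartesian square is exactly $i \times 1_{\PP^n}$, while $p$ and $q$ are both base changes of the structural projection $\rho\colon \PP^n \to S$. Writing $\rho^*\colon \un \to \M(\PP^n)(-n)[-2n]$ for the Gysin morphism of $\rho$, definition~\ref{gysin4projection} says precisely that $p^* = 1_{\M(X)} \otimes \rho^*$ and $q^* = 1_{\M(Z)} \otimes \rho^*$ through these Künneth isomorphisms.

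Next I would apply lemma~\ref{lm:Gysin&product} with $Y = \PP^n$, which yields $l^* = (i \times 1_{\PP^n})^* = i^* \otimes 1_{\M(\PP^n)}$. With this in hand the bifunctoriality (interchange law) of the tensor product of $\T$ closes the argument: one has $l^* \circ p^* = (i^* \otimes 1_{\M(\PP^n)})\circ(1_{\M(X)} \otimes \rho^*) = i^* \otimes \rho^* = (1_{\M(Z)(c)[2c]} \otimes \rho^*)\circ(i^* \otimes 1_{\un}) = q^* \circ i^*$, which is the assertion.

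I do not expect a genuine obstacle, the argument being formal. The only point that calls for care is keeping track of the commutativity-of-tensor and Tate-twist isomorphisms hidden in the Künneth identifications (the same caveat as in the remark following lemma~\ref{lm:Gysin&product}), and checking that they match on the two sides; this holds because $l$ is literally $i \times 1_{\PP^n}$ and $p$, $q$ are base changes of the single projection $\rho$. If one preferred not to cite lemma~\ref{lm:Gysin&product}, the same input $l^* = i^* \otimes 1_{\M(\PP^n)}$ also follows from corollary~\ref{gysin&tensor} applied to $i$ and the identity immersion $1_{\PP^n}$, using that the Gysin morphism of an identity closed immersion is the identity (its purity isomorphism $\pur{\PP^n,\PP^n}$ being trivial).
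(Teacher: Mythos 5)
Your argument is exactly the one the paper intends: its proof of this lemma is the single line ``it follows easily from definition \ref{gysin4projection} and lemma \ref{lm:Gysin&product}'', and you have simply made explicit the identifications $l=i\times 1_{\PP^n}$, $p^*=1_{\M(X)}\otimes\rho^*$, $q^*=1_{\M(Z)}\otimes\rho^*$ and the interchange law of the tensor product that this line presupposes. So the proposal is correct and takes essentially the same route, with the same (harmless) caveat about the hidden twist and symmetry isomorphisms noted after lemma \ref{lm:Gysin&product}.
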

It follows easily from definition \ref{gysin4projection}
 and lemma \ref{lm:Gysin&product}.
 
\begin{lm} \label{lm:Gysin3}
Consider and integer $n \geq 0$ and a smooth scheme $X$.
Consider the canonical projection $p:\PP^n_X \rightarrow X$.

Then for any section $s:X \rightarrow \PP^n_X$ of $p$,
 we have~: $s^*p^*=1$.
\end{lm}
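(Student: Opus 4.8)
The plan is to reduce the statement $s^*p^* = 1$ to the definition of $p^*$ as a transpose, and then to the explicit matrix computation. First I would observe that by definition \ref{gysin4projection}, $p_X^* = 1 \otimes p^*$ where $p^*:\un \to \M(\PP^n)(-n)[-2n]$ is the transpose of $p_*:\M(\PP^n) \to \un$ with respect to the strong duality induced by $\mu_n$. Writing the section $s:X \to \PP^n_X$ as $1_X$ tensored with a section of $\PP^n/S$ (which exists after base change, or one works directly with the explicit coordinate description), the identity $s^*p^* = 1$ is obtained by tensoring with $\M(X)$ the corresponding identity over the base, namely $s^*p^* = 1_{\un}$ where now $s:S \to \PP^n$ is a section and $s^*:\M(\PP^n) \to \M(S) = \un$ is the Gysin morphism of the closed immersion $s$ (a point in $\PP^n$, codimension $n$).

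Next I would unwind what $s^*p^*$ is using the explicit description given just after definition \ref{gysin4projection}. The morphism $p^*:\un \to \bigoplus_{i=0}^n \un(i-n)[2(i-n)]$ is the column vector $(\eta'_n,\dots,\eta'_1,1)^t$ via the projective bundle isomorphism of theorem \ref{th:projbdl}, with generator $c = c_1(\L_n^\vee)$. On the other hand, $s:S \to \PP^n$ is a closed immersion of codimension $n$, and by the self-intersection / fundamental class formulas (corollary \ref{self_intersection}, paragraph \ref{fund_class}), its Gysin morphism $s^*:\M(\PP^n) \to \un(n)[2n]$ is given — through the projective bundle decomposition $\M(\PP^n) \cong \bigoplus_{i=0}^n \un(i)[2i]$ — by a row vector of cohomology classes, which one identifies as the coefficients of the fundamental class $\eta_{\PP^n}(s(S)) = c^n$ (using that a point is the transverse intersection of $n$ hyperplanes, or computing $s^* = p_* \gcup \eta_{\PP^n}(\mathrm{pt})$ since $s$ admits the retraction $p$, via formula \eqref{Gysin&Thom}). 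The point being that $\eta_{\PP^n}(\mathrm{pt}) = c^n$ in $H^{2n,n}(\PP^n)$ picks out the bottom component.

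Then the composite $s^*p^*$ becomes the pairing of the row vector representing $s^*$ against the column vector $(\eta'_n,\dots,\eta'_1,1)^t$ representing $p^*$. Since $s^*$ corresponds to capping with $c^n$ followed by $p_*$, and $p_*$ kills all powers $c^i$ with $i < n$ while sending $c^n \mapsto 1$ (the orientation normalization and the projective bundle theorem), only the last component $1$ of $p^*$ survives, giving $s^*p^* = 1$. Alternatively — and this is cleaner — one uses property (ii) after definition \ref{df:transpose}: $p^*$ is characterized by $\mu_n \circ (1 \otimes p^*) = $ (adjoint data), so $s^*p^*$ is computed as $\mu_n$ evaluated on $s$ in one slot, which is $p_* \delta^* (s \otimes 1)$; since $\delta^*(s \otimes 1)$ on $\PP^n$ recovers $s$ itself (the diagonal restricted along a point section is that point section), this equals $p_* s_* = (p\circ s)_* = 1_{X*} = 1$, using functoriality of $\M$ applied to $p \circ s = 1_X$.

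The main obstacle will be bookkeeping the identification of $s^*$ — the Gysin morphism of the point section — with the correct component under the projective bundle isomorphism, including getting the sign right (the generator is $c = c_1(\L_n^\vee) = -c_1(\L_n) \bmod c^2$, and $\lef r P$ carries a $(-1)^r$). This is exactly the kind of delicate sign bookkeeping flagged at the beginning of section \ref{sec:purity}. The cleanest route avoiding it is the last one above: argue purely at the level of transposes, using that $\delta^*$ composed with $(s \otimes 1)$ equals $s_*$ (which follows from the base-change/transversality statement, since the square expressing the diagonal pulled back along $s \times 1$ is transverse, cf. the proof of corollary \ref{second_proj_formula}), so $s^* p^* = p_* \circ s_* = (p s)_* = 1$ directly from the defining property of the transpose and functoriality of $\M$, with no matrix entries ever appearing.
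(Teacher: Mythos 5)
Your opening reduction is where the proof breaks. A section $s$ of $\PP^n_X/X$ is \emph{not} of the form $1_X$ tensored with an $S$-point of $\PP^n$: it is the graph of an arbitrary morphism $\nu:X \rightarrow \PP^n$ (e.g.\ the diagonal section of $\PP^1_{\PP^1}$), and for such $s$ neither $s$ nor its Gysin morphism $s^*$ decomposes as $1_{X*}\otimes(\text{something over }S)$; in particular $\eta_{\PP^n_X}(s(X))$ is not $c^n$ but involves Chern classes pulled back from $X$, so the ``only the last component survives'' pairing argument does not apply. This generality is not a luxury: in the proof of lemma \ref{lm:Gysin4} (hence in the very well-definedness of the Gysin morphism, definition \ref{df:Gysin}) the lemma is applied to the graph of $Y \rightarrow \PP^n_X \rightarrow \PP^n$, which is typically non-constant. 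The paper's proof is organized precisely around this point: it writes $s^* = p_*\gcup(\pi_{X*}s^*)$ via the retraction formula \eqref{Gysin&Thom}, factors $s=\nu\times 1_X$, and uses the transversal base-change formula \ref{transversal} (plus the compatibility $\pi^*\nu_*=(1\times\nu)_*p^*$ coming from definition \ref{gysin4projection}) for the two cartesian squares built on $1\times\nu$, thereby reducing the general statement to the single identity $\delta^*\pi^*=1$ for the diagonal of $\PP^n/S$; none of this reduction appears in your proposal.

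What you do prove (modulo repairs) is essentially that key special case, and there your ``cleaner route'' is close to the paper's own options (2)--(3) for $\delta^*\pi^*=1$: use the transpose characterization of $p^*$ together with transversality of the diagonal against the slice $\{s\}\times\PP^n$. But two of your intermediate identities are wrong as stated. First, $\delta^*\circ(s\times 1)_*$ is not $s_*$; transversal base change (prop.\ \ref{transversal} via diagram \eqref{refind_Gysin}) gives $\delta^*(s\times 1)_*=s_*\circ s^*$ (suitably twisted), and only after composing with $p_*$ and using $p_*s_*=1$ do you get $\mu_n\circ(s_*\otimes 1)=s^*$, which is the identity you actually need before invoking $\mu_n\circ(1\otimes p^*)=p_*$. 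Second, ``$p_*$ kills all powers $c^i$ with $i<n$'' is false for a general formal group law: the pushforward induced by $p^*$ sends $c^i$ to $[\PP^{n-i}]=\eta'_{n-i}$, which is nonzero in general (e.g.\ $[\PP^1]=-a_{1,1}$); what is true, and what your matrix argument should say, is that $s^*=c^n$ is the projection onto the top summand of the chosen decomposition, so that pairing against the vector $(\eta'_n,\dots,\eta'_1,1)$ extracts the entry $1$. Even with these repairs, the argument covers only constant sections, so the lemma as stated remains unproved.
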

\begin{proof}
Recall from paragraph \ref{fund_class} that 
 $s^*=p_* \gcup_{\PP^n_X} (\pi_{X*} s^*)$ where $\pi_X:X \rightarrow S$
  is the structural morphism of $X/S$.
We easily obtain the following relation~:
$$
(p_* \gcup_{\PP^n_X} 1) \circ p^*=1 \gcup_X p^*.
$$
Thus~: $s^* p^*=1 \gcup_X (\pi_{X*} s^* p^*)$.

As $s$ is a section of $p$, it can be written $s=\nu \times 1_X$
 for a closed immersion $\nu:X \rightarrow \PP^n_S$.
Consider the following cartesian squares~:
$$
\xymatrix@R=14pt@C=24pt{
X\ar^/-5pt/s[r]\ar_\nu[d]
 & {}\PP^n_S \times X\ar^/5pt/p[r]\ar^{1_{\PP^n_S} \times \nu}[d]
 & X\ar^\nu[d] \\
{}\PP^n_S\ar^/-5pt/\delta[r] & {}\PP^n_S \times {}\PP^n_S\ar^/5pt/\pi[r] & {}\PP^n_S
}
$$
where $\delta$ is the diagonal embedding and $\pi$ the canonical 
projection on the second factor.
Using the projection formula for each square 
-- for the first square, this is \ref{transversal}, for the second square, it follows
easily from definition \ref{gysin4projection} -- we obtain~:
$\nu_*s^*p^*=\delta^*\pi^*\nu_*$.

As $\pi_{X*}=\pi_{\PP^n_S*} \nu_*$, we thus are reduced to prove
 $\delta^* \pi^*=1$. To conclude, the reader has the choice~:
\begin{enumerate}
\item A direct computation shows that the matrix of
$\pi^*$ (resp. $\delta^*$),
 through the projective bundle isomorphism \ref{th:projbdl}, is 
\begin{align*}
& \left(\delta_i^k.\eta'_{n-j}\right)_{(j,k) \in [0,n]^2, \ i \in [0,n]} \\
\text{resp. } & \left(\eta_{j+k-l-n}\right)_{l \in [0,n], \ (j,k) \in [0,n]^2}.
\end{align*}
The fundamental relation \eqref{relation_cobordism} allows to conclude.
\item Use definition \ref{df:transpose} to compute $\pi^*=1 \otimes p^*$
 in terms of the duality pairing $(\mu_n,\epsilon_{\mu_n})$
 (cf lemma \ref{lm:strong_dual}).
Apply the projection formula \ref{transversal} to compute
directly $\delta^* \pi^*$ ; the second relation of 
definition \ref{df:strong_dual} concludes.
\item Prove $\delta^*=\tra(\delta_*)$ using characterization (i) after definition
 \ref{df:transpose} (and the usual projection formula \ref{transversal}).
\end{enumerate}
\end{proof}

\subsubsection{Definition}

\begin{lm} \label{lm:Gysin4}
Consider a commutative diagram~:
$$
\xymatrix@C=14pt@R=-4pt{
& {}\PP^n_X\ar^p[rd] & \\
Y\ar^k[ru]\ar_i[rd] & & X \\
& {}\PP^m_X\ar_q[ru] &
}
$$
where $i$ (resp. $k$) is a closed immersion of codimension $r$
 (resp. $s$) and $p$ (resp. $q$) is the canonical projection.
Then, $k^*p^*=i^*q^*$.
\end{lm}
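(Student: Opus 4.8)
The plan is to deduce the identity from Lemmas~\ref{lm:Gysin1}, \ref{lm:Gysin2}, \ref{lm:Gysin3} and the functoriality of the Gysin morphism of a closed immersion (Corollary~\ref{cor:assoc}), using the fibre product $W=\PP^n_X\times_X\PP^m_X$ as a common refinement of the two factorizations of the structural morphism $f:Y\to X$. One observes that $W$ is simultaneously the trivial bundle $\PP^m_{\PP^n_X}$ over $\PP^n_X$, with canonical projection $q':W\to\PP^n_X$, and the trivial bundle $\PP^n_{\PP^m_X}$ over $\PP^m_X$, with canonical projection $p':W\to\PP^m_X$. Since $pk=qi=f$, the pair $m'=(k,i):Y\to W$ is a well-defined morphism with $q'\circ m'=k$ and $p'\circ m'=i$; it factors as $Y\xrightarrow{\sigma}Y\times_{\PP^n_X}W\xrightarrow{\tilde k}W$, where $\tilde k$ is the base change of $k$ along $q'$. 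As $Y\times_{\PP^n_X}W=\PP^m_Y$, this exhibits $m'$ as the composite of the closed immersion $\tilde k$ (pure codimension $s$, being a base change of $k$ along the smooth morphism $q'$) with the section $\sigma$ of the canonical projection $\bar q:\PP^m_Y\to Y$ (a closed immersion of pure codimension $m$); in particular $m'$ is a closed immersion of pure codimension $m+s$.

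The key step is the claim $m'^*\circ q'^*=k^*$, together with its mirror image $m'^*\circ p'^*=i^*$; here $q'^*$, $p'^*$, $\bar q^*$ denote projective bundle Gysin morphisms in the sense of Definition~\ref{gysin4projection}, while $k^*$, $i^*$, $\tilde k^*$, $\sigma^*$, $m'^*$ denote Gysin morphisms of closed immersions. For the first claim I would apply Lemma~\ref{lm:Gysin2} to the closed immersion $k:Y\to\PP^n_X$ and the exponent $m$: the associated pullback square has top arrow $\tilde k:\PP^m_Y\to\PP^m_{\PP^n_X}=W$, and the lemma yields $\tilde k^*\circ q'^*=\bar q^*\circ k^*$. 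Composing on the left with $\sigma^*$ and using Corollary~\ref{cor:assoc} (so that $\sigma^*\circ\tilde k^*=(\tilde k\circ\sigma)^*=m'^*$) and Lemma~\ref{lm:Gysin3} (so that $\sigma^*\circ\bar q^*=1$) gives $m'^*\circ q'^*=\sigma^*\circ\tilde k^*\circ q'^*=\sigma^*\circ\bar q^*\circ k^*=k^*$. The mirror identity $m'^*\circ p'^*=i^*$ is obtained by the same argument with the roles of $(n,k)$ and $(m,i)$ interchanged.

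It remains to combine these with Lemma~\ref{lm:Gysin1}, which gives $q'^*\circ p^*=p'^*\circ q^*$. Composing on the left with $m'^*$ yields
$$
k^*\circ p^*=m'^*\circ q'^*\circ p^*=m'^*\circ p'^*\circ q^*=i^*\circ q^*,
$$
as desired. The main obstacle is purely organisational: one must check carefully that $m'$ is a closed immersion admitting the stated factorization $m'=\tilde k\circ\sigma$, and throughout keep straight which composites are Gysin morphisms of closed immersions and which are projective bundle Gysin morphisms, verifying in each case that the pure-codimension hypotheses needed to invoke the three preceding lemmas and Corollary~\ref{cor:assoc} are satisfied (e.g.\ $\tilde k$ of codimension $s$, $\sigma$ of codimension $m$, $m'$ of codimension $m+s=n+r$). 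Once this bookkeeping is in place, the statement is a formal consequence of Lemmas~\ref{lm:Gysin1}--\ref{lm:Gysin3} and the functoriality of the closed-immersion Gysin morphism.
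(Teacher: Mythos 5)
Your proposal is correct and follows essentially the same route as the paper: introduce the fibre product $\PP^n_X\times_X\PP^m_X$, reduce via Lemma \ref{lm:Gysin1} to showing that the Gysin morphism of the induced closed immersion $(k,i)$ composed with the bundle projections recovers $k^*$ and $i^*$, and establish this by factoring through $\PP^m_Y$ (resp.\ $\PP^n_Y$) and invoking Lemmas \ref{lm:Gysin2}, \ref{lm:Gysin3} and Corollary \ref{cor:assoc}. The only difference is presentational (the paper phrases the intermediate step as a reduction to the case $m=0$, $q=1_X$), so nothing further is needed.
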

\begin{proof}
Let us introduce the following morphisms~:
$$
\xymatrix@C=22pt@R=8pt{
&& & {}\PP^n_X\ar^p[rd] & \\
Y\ar@/^5pt/^k[rrru]\ar@/_5pt/_i[rrrd]\ar|\nu[rr]
 && {}\PP^n_X \times_X {}\PP^m_X\ar_/8pt/{q'}[ru]\ar^/8pt/{p'}[rd]
 & & X. \\
&& & {}\PP^m_X\ar_q[ru] &
}
$$
Applying lemma \ref{lm:Gysin1},
 we are reduced to prove $k^*=\nu^* {q'}^*$ and $i^*=\nu^* {p'}^*$.
In other words, we are reduced to the case $m=0$ and $q=1_X$.

In this case, we introduce the following morphisms~:
$$
\xymatrix@R=5pt@C=7pt{
Y\ar@{=}[rrddd]\ar^/4pt/s[rrd]\ar@/^8pt/^k[rrrrd] && & \\
&& {}\PP^n_Y\ar_l[rr]\ar^q[dd] && {}\PP^n_X\ar^p[dd] \\
&& && \\
&& Y\ar^i[rr] && X
}
$$
Then the lemma follows from lemma \ref{lm:Gysin2}, 
 lemma \ref{lm:Gysin3} and corollary \ref{cor:assoc}.
\end{proof}

Consider smooth schemes $X$ and $Y$ and a projective morphism
$f:Y \rightarrow X$ of codimension $d$.
Consider an arbitrary factorization
 $Y \xrightarrow i \PP^n_X \xrightarrow p X$ of $f$
 into a closed immersion of codimension $d+n$ and
 the canonical projection.
The preceding lemma shows that the composite morphism
$$
\M(X) \xrightarrow{p^*} \M(\PP^n_X)(-n)[-2n]
 \xrightarrow{i^*} \M(Y)(d)[2d]
$$
is independent of the chosen factorization.
\begin{df} \label{df:Gysin}
Considering the above notations, we define 
the Gysin morphism associated to $f$ as the morphism
$$
f^*:=i^*p^*:\M(X) \rightarrow \M(Y)(d)[2d].
$$
\end{df}

\subsubsection{Properties}

\num \label{general_Gysin&product}
Let us first remark that, 
as a corollary of \ref{gysin&tensor},
we obtain~: $(f \times g)^*=f^* \otimes g^*$
for any projective morphisms $f$ and $g$.

\begin{prop} \label{functoriality4general_Gysin}
Consider projective morphisms 
$Z \xrightarrow g Y \xrightarrow f X$
between smooth schemes.

Then $g^*f^*=(fg)^*$.
\end{prop}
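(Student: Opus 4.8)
The plan is to reduce the functoriality statement $g^*f^* = (fg)^*$ to the already-established properties of Gysin morphisms: functoriality for closed immersions (Corollary \ref{cor:assoc}), the base-change compatibilities for projections (Lemmas \ref{lm:Gysin1}, \ref{lm:Gysin2}), the section relation (Lemma \ref{lm:Gysin3}), and the well-definedness of $f^*$ via any factorization (Lemma \ref{lm:Gysin4}, which is what Definition \ref{df:Gysin} relies on). First I would choose a projective factorization $Y \xrightarrow{i} \PP^n_X \xrightarrow{p} X$ of $f$ with $i$ a closed immersion, and a projective factorization $Z \xrightarrow{k} \PP^m_Y \xrightarrow{q} Y$ of $g$ with $k$ a closed immersion, so that by definition $f^* = i^* p^*$ and $g^* = k^* q^*$.

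The core of the argument is to produce a single projective factorization of $fg$ that is ``compatible'' with both of the chosen factorizations, and to that end I would form the fiber product square over $X$ relating $\PP^m_Y = \PP^m_X \times_X Y$ to $\PP^m_X$; concretely, pulling back $\PP^m_X \to X$ along $p:\PP^n_X \to X$ gives $\PP^m_X \times_X \PP^n_X$, and there is a cartesian square
$$
\xymatrix@=12pt{
\PP^m_Y \ar[r]\ar[d] & \PP^m_X\times_X\PP^n_X \ar[d] \\
Y \ar^i[r] & \PP^n_X
}
$$
induced by the closed immersion $i$. Applying Lemma \ref{lm:Gysin2} to this square shows that the Gysin morphism of the projection $\PP^m_Y \to Y$ pulled up through $i$ agrees with the Gysin morphism of $\PP^m_X\times_X\PP^n_X \to \PP^n_X$ composed with $i^*$; meanwhile Lemma \ref{lm:Gysin1} says the two ways of pushing the projection $\PP^m_X\times_X\PP^n_X \to \PP^n_X \to X$ and $\PP^m_X\times_X\PP^n_X \to \PP^m_X \to X$ around the square give equal Gysin morphisms. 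Composing the closed immersions $Z \xrightarrow{k} \PP^m_Y \to \PP^m_X\times_X\PP^n_X$ and using Corollary \ref{cor:assoc} to split the resulting Gysin morphism, one assembles a factorization of $fg$ through a projective bundle over $X$, and by Lemma \ref{lm:Gysin4} (applied on $X$) the composite $i^*p^*$ followed by $k^*q^*$ rewrites as the single Gysin morphism $(fg)^*$ computed via that factorization.

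The step I expect to be the main obstacle is bookkeeping the diagram of fiber products and closed immersions so that every application of Lemmas \ref{lm:Gysin1}--\ref{lm:Gysin3} and Corollary \ref{cor:assoc} is literally of the stated form; in particular one must be careful that the composite $Z \to \PP^m_Y \hookrightarrow \PP^m_X\times_X\PP^n_X \to \PP^{N}_X$ (after a Segre embedding, if one wants a genuine projective space rather than a product of projective bundles over $X$) is still a closed immersion, and that the Gysin morphism of the Segre-composed projection decomposes correctly — but this is exactly the content already packaged into Lemma \ref{lm:Gysin4}, so the work is to exhibit the right commuting diagram feeding into it rather than to prove anything genuinely new. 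Once the diagram is in place, the equality $g^* f^* = (fg)^*$ follows by chasing it, using the independence of $f^*$, $g^*$, and $(fg)^*$ on the chosen factorizations.
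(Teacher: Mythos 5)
Your reduction steps are sound and in fact coincide with the paper's: writing $f^*=i^*p^*$ and $g^*=k^*q^*$, Lemma \ref{lm:Gysin2} applied to your cartesian square, Lemma \ref{lm:Gysin1} for the two projections of $\PP^m_X\times_X\PP^n_X$, and Corollary \ref{cor:assoc} give
$g^*f^*=(l\circ k)^*\,\pi^*\,\rho^*$, where $l:\PP^m_Y\to\PP^m_X\times_X\PP^n_X$ is induced by $i$, $\pi$ is the projection onto $\PP^m_X$ and $\rho:\PP^m_X\to X$ the canonical projection. The gap is in your last step. Lemma \ref{lm:Gysin4} only compares two factorizations of a single morphism, each of the form (closed immersion into a trivial $\PP^N_X$) followed by (canonical projection); your expression is a three-term composite whose two right-hand factors are both bundle projections, and you cannot contract it into the required shape. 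On the one hand, the composite $\pi\circ l\circ k:Z\to\PP^m_X$ is in general \emph{not} a closed immersion for an arbitrarily chosen factorization of $g$ (take $Z=Y$, $g=\mathrm{id}_Y$ factored through $\PP^m_Y$ via a constant map $Y\to\PP^m$: then $\pi\circ l\circ k=(\mathrm{const},f)$, which is an immersion only if $f$ is), so you cannot use Corollary \ref{cor:assoc} and Lemma \ref{lm:Gysin4} over $\PP^m_X$ to absorb it. On the other hand, identifying $\pi^*\rho^*$ with the Gysin morphism of the composite projection $\PP^m_X\times_X\PP^n_X\to X$ defined via a Segre embedding $\sigma$ (i.e. proving $\sigma^*P^*=\pi^*\rho^*$) is precisely an instance of the functoriality you are trying to establish; none of Lemmas \ref{lm:Gysin1}--\ref{lm:Gysin4} supplies it, and the duality arguments that would (\ref{thm:duality}, \ref{Gysin=transpose}) come later and depend on this proposition.

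The fix is to choose the factorizations compatibly in the other direction, which is what the paper does: first factor $fg$ as $Z\xrightarrow{j}\PP^m_X\xrightarrow{q}X$ with $j$ a closed immersion, and then compute $g^*$ using the induced factorization $Z\xrightarrow{(j,g)}\PP^m_X\times_X Y=\PP^m_Y\to Y$, which is legitimate since $g^*$ is independent of the chosen factorization (Lemma \ref{lm:Gysin4}). With this choice the composite $Z\to\PP^n_X\times_X\PP^m_X\to\PP^m_X$ is the closed immersion $j$ itself, so after your applications of \ref{lm:Gysin2}, \ref{lm:Gysin1} and \ref{cor:assoc} the final identification is a clean application of Lemma \ref{lm:Gysin4} over the base $\PP^m_X$, comparing the factorization of $j$ through $\PP^n_{\PP^m_X}=\PP^n_X\times_X\PP^m_X$ with the trivial one; this yields $g^*f^*=j^*q^*=(fg)^*$ with no Segre embedding and no unproved composite-of-projections identity.
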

\begin{proof}
We choose a factorization
$Y \xrightarrow i \PP^n_X \xrightarrow p X$
(resp. $Z \xrightarrow j \PP^m_X \xrightarrow q X$)
 of $f$ (resp. $fg$) and we introduce the diagram
$$
\xymatrix@C=16pt@R=6pt{
&& {}\PP^m_X\ar^q@/^18pt/[rrddd] && \\
& & {}\PP^n_X \times_X {}\PP^m_X\ar^/5pt/{q'}[rd]\ar_{p'}[u] & & \\
& {}\PP^m_Y\ar^{q''}[rd]\ar^/-6pt/{i'}[ru] & & {}\PP^n_X\ar|p[rd] & \\
Z\ar|g[rr]\ar|k[ru]\ar^j@/^18pt/[rruuu] && Y\ar|f[rr]\ar^i[ru] && X.
}
$$
in which $p'$ is deduced from $p$ by base change and so on for $q'$ and $q''$.

Then, by using the factorizations given in the preceding diagram,
the proposition follows directly using \ref{lm:Gysin2}, \ref{lm:Gysin1},
\ref{cor:assoc} and finally \ref{lm:Gysin4}.
\end{proof}

\begin{prop} \label{compatibility_Residues/Gysin}
Consider a commutative square of smooth schemes
$$
\xymatrix@=10pt{
T\ar^k[r]\ar_g[d] & Y\ar^f[d] \\
Z\ar^i[r] & X
}
$$
such that $i$ is a closed immersion
 and $f$ is a projective morphism.
Let $h$ be the pullback of $f$ on $X-Z$.
Let $n$, $m$, $s$, $t$ be the respective codimension
 of $i$, $k$, $f$, $g$.
Note that $n+s=m+t$ and put $d=n+s$.
 
Then the following square is commutative~:
$$
\xymatrix@R=14pt@C=24pt{
\M(T)(d)[2d]\ar^/-10pt/{\partial_{Y,T}}[r]
 & \M(Y-T)(s)[2s+1] \\
\M(Z)(n)[2n]\ar^{\partial_{X,Z}}[r]\ar^{g^*}[u]
 & \M(X-Z)[1]\ar_{h^*}[u]
}
$$
\end{prop}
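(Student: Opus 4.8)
The plan is to reduce to two special cases by factoring the projective morphism $f$, and to dispatch each case with results already at hand.

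First I would choose, using the definition of projectivity, a factorization $Y \to \PP^N_X \to X$ of $f$ into a closed immersion $j$ followed by the canonical projection $p$, and then pull back along $i$ to obtain a commutative "stacked" diagram
$$
\xymatrix@=12pt{
T\ar[r]\ar[d] & Y\ar[d] \\
\PP^N_Z\ar[r]\ar[d] & \PP^N_X\ar[d] \\
Z\ar[r] & X
}
$$
whose middle row is $\PP^N_Z=\PP^N_X\times_X Z$. Here the top square is (topologically) cartesian with all four arrows closed immersions, and the bottom square consists of canonical projections. By Definition \ref{df:Gysin} and the functoriality of the Gysin morphism (Proposition \ref{functoriality4general_Gysin}), $f^{*}$, $g^{*}$ and $h^{*}$ factor through the corresponding composites in this diagram: writing $j'\colon T\to\PP^N_Z$ for the base change of $j$, $j_0\colon Y-T\to\PP^N_{X-Z}$ for its restriction, and $p'\colon\PP^N_Z\to Z$, $p_0\colon\PP^N_{X-Z}\to X-Z$ for the projections, one has $g^{*}=j'^{*}p'^{*}$ and $h^{*}=j_0^{*}p_0^{*}$. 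Hence it suffices to prove the commutativity of the residue square separately for the top square (a closed immersion over a closed immersion) and for the bottom square (projections over projections), and then to paste these two commutativities.

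For the top square, the identity to be proved is exactly the commutativity of square $(2)$ in Theorem \ref{thm:assoc}, applied to the cartesian square $\xymatrix@=8pt{T\ar[r]\ar[d]&Y\ar[d]\\\PP^N_Z\ar[r]&\PP^N_X}$ after the evident relabelling of schemes and immersions; so nothing new is needed there. For the bottom square I would argue directly from the construction of the Gysin morphism of a projection: by Definition \ref{gysin4projection} one has $p^{*}_X=1_{\M(X)}\otimes p^{*}$ and $p^{*}_{X-Z}=1_{\M(X-Z)}\otimes p^{*}$ with one and the same morphism $p^{*}\colon\un\to\M(\PP^N)(-N)[-2N]$, while Lemma \ref{lm:Gysin&product} (with the extra factor $\PP^N_S$, so that $\PP^N_X=X\times\PP^N_S$) identifies $\partial_{\PP^N_X,\PP^N_Z}$ with $\partial_{X,Z}\otimes 1_{\M(\PP^N)*}$. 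The two composites in the residue square of the bottom square are then, up to the evident twists and suspensions, both equal to $\partial_{X,Z}\otimes p^{*}$ by the interchange law for $\otimes$, hence they agree. Pasting this with the top case and using the factorizations $h^{*}=j_0^{*}p_0^{*}$, $g^{*}=j'^{*}p'^{*}$ and the naturality of the residue triangles, one obtains $\partial_{Y,T}\,g^{*}=h^{*}\,\partial_{X,Z}$.

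I expect the genuine work to lie not in the structure of the argument — which is a formal pasting of two already-established facts — but in the bookkeeping of Tate twists, suspensions, and the attendant permutation signs as one passes through the factorization: one must check that the codimensions stack correctly ($j$ of codimension $d+N$, $\PP^N_Z\hookrightarrow\PP^N_X$ of codimension $n$, the projections of "codimension" $-N$, with the relation $n+s=m+t=d$ preserved at each stage) and that the various monoidal and symmetry identifications are compatible. As elsewhere in the paper, the fact that the relevant permutation isomorphisms are harmless — in particular the triviality of the transposition of $\un(1)\otimes\un(1)$, cf.\ the corollary following \ref{cor:projbdl_formula} — will be used without further comment, and the routine sign verifications left to the reader.
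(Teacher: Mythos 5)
Your argument is the same as the paper's: the paper also reduces, via the defining factorization of $f$ through $\PP^N_X$, to the case of a projective bundle projection (handled by Lemma \ref{lm:Gysin&product} together with Definition \ref{gysin4projection}) and the case of a closed immersion (handled by square (2) of Theorem \ref{thm:assoc}), and then pastes. Your write-up just makes explicit the pulled-back factorization and the bookkeeping that the paper compresses into two sentences, so it is correct and essentially identical in approach.
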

\begin{proof}
By construction of the Gysin morphism, we have only to consider
the case where $f$ is the projection of a projective bundle or 
a closed immersion. 
It follows from lemma \ref{lm:Gysin&product} in the first case
 and from theorem \ref{thm:assoc} in the second.
\end{proof}

\begin{rem}
Applying the two preceding propositions and case (i) of the following
proposition, we obtain that the Gysin triangle is functorial with respect
to the Gysin morphism of a projective morphism in the case of
a cartesian square as in the preceding statement.
\end{rem}

\begin{prop} \label{projection4general_Gysin}
Consider a cartesian square of smooth schemes
$$
\xymatrix@=10pt{
Y'\ar^g[r]\ar_q[d] & X'\ar^p[d] \\
Y\ar^f[r] & X
}
$$
such that $f$ (resp. $g$) is a projective morphism
of codimension $n$ (resp. $m$). Note that necessarily,
 $n \geq m$.
\begin{enumerate}
\item[(i)]
Suppose $n=m$ and $Y \times_X X'$ is smooth (\emph{i.e.} $Y'=Y \times_X X'$).

Then $f^*p_*=q_*g^*$.
\item[(ii)] Suppose $Y \times_X X'$ is smooth and $n>m$.
Put $e=n-m$. We attach to the above square a vector bundle $\xi$
of rank $e$ called the excess intersection bundle~:
choose a projective bundle $P/X$
 and a factorization $Y \xrightarrow i P \xrightarrow p X$
 of $f$ into a closed immersion followed by the canonical projection.
We obtain a canonical embedding $N_{Y'}(P \times_X X') \rightarrow q^* N_Y(P)$
and denote by $\xi$ the quotient bundle over $Y'$.
This definition is independent of the choice of the factorization as
shown in \cite{Ful}, proof of prop. 6.6.

Then, $f^*p_*=\big(q_* \gcup c_e(\xi)\big) \circ g^*$.
\end{enumerate}
\end{prop}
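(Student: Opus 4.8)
The plan is to reduce the statement to the two building blocks out of which the general Gysin morphism is defined, namely the projection of a projective bundle and a closed immersion, and to verify the base change formula for each of these separately. Given a factorization $Y \xrightarrow{i} \PP^N_X \xrightarrow{\bar p} X$ of $f$ into a closed immersion of codimension $n+N$ followed by the canonical projection, pulling back along $p:X' \to X$ yields a factorization $Y \times_X X' \xrightarrow{i'} \PP^N_{X'} \xrightarrow{\bar p'} X'$ of the base-changed map. Since $Y' = Y \times_X X'$ by hypothesis, the square splits into a ``projective bundle square'' and a ``closed immersion square'', and $f^* = i^* \bar p^*$, $g^* = i'^* \bar p'^*$ by definition \ref{df:Gysin}. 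So the first step is to record that it suffices to prove the two cases.

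For the projective bundle case, I would use Lemma \ref{lm:Gysin2}: there the formula $l^* p^* = q^* i^*$ is established for a cartesian square over a closed immersion, but the same argument — going back to the defining transpose relation in definition \ref{gysin4projection} and the compatibility of the projective bundle isomorphism of theorem \ref{th:projbdl} with arbitrary base change — gives $g^* p_* = q_* f^*$ when $f$ is replaced by the canonical projection of a projective bundle and $p$ is arbitrary; the key input is simply that $p_*$ (the structural pushout coming from the projective bundle theorem) and $p^*$ (its transpose) are both defined purely via the decomposition $\M(\PP^N_X) \simeq \bigoplus_i \M(X)(i)[2i]$, which is natural in $X$. In fact one can be even more economical: both $f^* p_*$ and $q_* g^*$ are morphisms in $\T$ between the same source and target, and by the projective bundle theorem a morphism out of $\M(\PP^N_{X'})$ is determined by its components after $p'_* \gcup c^i$; checking these components reduces the identity to naturality of the first Chern class (axiom (Orient)) and the projection formula of proposition \ref{transversal}.

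For the closed immersion case, i.e.\ $f = i:Z \to X$ a closed immersion of codimension $n$, the cartesian square has $Z' = Z \times_X X'$ smooth, and $g = i':Z' \to X'$ is again a closed immersion of codimension $n$ (since $n=m$ here, as $e=0$ when $f$ is a closed immersion being base-changed along $p$ with $Y \times_X X'$ smooth — wait, that is not automatic). Here one must be careful: the square $Z' \to X' \to X \leftarrow Z$ need not be transversal, and the correct statement in case (ii) involves $c_e(\xi)$. So the right move is: when $f$ is a closed immersion, apply proposition \ref{excess} (excess of intersection) directly to the cartesian square, which gives exactly $(p,q)_! = (q_* \gcup c_e(\xi)) \circ \text{(transpose)}$; unwinding the definition of $(-)_!$ in terms of the purity isomorphisms and the functoriality square \eqref{refind_Gysin}, and using that $i^* = p^* \gcup \bar\eta$-type formulas relate the refined Gysin to $i^*$ and to $i_* = p_*$ on the other side, yields $f^* p_* = (q_* \gcup c_e(\xi)) g^*$. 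In case (i) with $e=0$ this degenerates to proposition \ref{transversal}'s $(f,g)_! = g_*$ and hence $f^* p_* = q_* g^*$.

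The main obstacle I anticipate is bookkeeping the excess bundle $\xi$ consistently across the reduction: when $f$ factors as $i$ followed by $\bar p$, the excess bundle for the composite square must be identified with the excess bundle for the closed-immersion square $i$ (the projective bundle part contributing nothing, since $Y' = Y\times_X X'$ forces transversality there). Verifying that $N_{Y'}(P\times_X X')/q^*N_Y(P)$ computed via the factorization agrees with the intrinsic $\xi$ — and that it is independent of the factorization, as cited from \cite[prop. 6.6]{Ful} — is the delicate point; once that compatibility is in hand, combining the projective bundle case (which is ``transversal'', no excess) with proposition \ref{excess} for the closed immersion $i$ gives the general formula by functoriality of $(-)^*$ (proposition \ref{functoriality4general_Gysin}) and of $(-)_*$. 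A final routine check is that the cup product $q_* \gcup c_e(\xi)$ behaves correctly under the two compositions, which follows from the projection formula in corollary \ref{second_proj_formula}.
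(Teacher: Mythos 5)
Your proposal follows essentially the same route as the paper's (very terse) proof: factor $f$ through a trivial projective bundle, pull the factorization back along $p$, observe that the projection case is trivial from the tensor-product definition \ref{gysin4projection}, and reduce the closed-immersion case to propositions \ref{transversal} and \ref{excess} via the middle square of diagram \eqref{refind_Gysin}. The ``delicate point'' you flag about the excess bundle is not actually an obstacle: the statement itself defines $\xi$ through a chosen factorization (independence being cited from Fulton), so once that factorization is fixed, the bundle appearing in proposition \ref{excess} applied to the closed-pair morphism $(P\times_X X',Y')\rightarrow(P,Y)$ is literally the $\xi$ of the statement, and no further identification is needed.
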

\begin{proof}
In each case,
 we reduce to the corresponding assertion for a closed immersion 
 (\ref{transversal}, \ref{excess} and \ref{ramification}) 
 by choosing a factorization of $f$ into a closed immersion
  followed by a projection and by considering its pullback on $X'$. 
Indeed, the assertion (i) when $f$ is the projection of a projective bundle
 is trivial.
\end{proof}

We obtain finally the analog of corollary \ref{second_proj_formula}~:
\begin{cor} \label{2ndproj_formula4general_Gysin}
Let $f:Y \rightarrow X$ be a projective morphism between smooth scheme
 of pure codimension $d$.

Then $(1_{Y*} \gcup f_*) \circ f^*=f^* \gcup 1_{X*}$
as a morphism $\M(X) \rightarrow \M(Y \times X)(d)[2d]$.
\end{cor}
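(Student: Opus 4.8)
The plan is to imitate the proof of corollary \ref{second_proj_formula} essentially verbatim, with two substitutions: the transversal base change formula for closed immersions (proposition \ref{transversal}) is replaced by its projective counterpart, case (i) of proposition \ref{projection4general_Gysin}, and lemma \ref{lm:Gysin&product} is replaced by the compatibility of the general Gysin morphism with exterior products recalled in \ref{general_Gysin&product}.

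First I would introduce the graph $\gamma_f=(1_Y,f):Y\to Y\times X$ of $f$ together with the diagonal $\delta_X:X\to X\times X$ of $X/S$, and form the square
$$\xymatrix@=12pt{Y\ar^f[r]\ar_{\gamma_f}[d] & X\ar^{\delta_X}[d] \\ Y\times X\ar^{f\times 1_X}[r] & X\times X.}$$
This is a cartesian square of smooth schemes: the fibre product $(Y\times X)\times_{X\times X}X$ consists of the data $((y,x'),x)$ with $f(y)=x'=x$, hence is canonically $Y$, its two projections being $\gamma_f$ and $f$. Moreover $f\times 1_X$ is the pullback of $f$ along the first projection $X\times X\to X$, so it is again projective of pure codimension $d$, and $Y\times X$ is smooth. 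Thus the hypotheses of case (i) of proposition \ref{projection4general_Gysin} are satisfied, the two projective morphisms $f\times 1_X$ and $f$ sharing the codimension $d$, and that proposition yields the identity
$$(f\times 1_X)^*\circ\delta_{X*}=\gamma_{f*}\circ f^*$$
of morphisms $\M(X)\to\M(Y\times X)(d)[2d]$.

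It then only remains to rewrite both sides in the form appearing in the statement. On the left, \ref{general_Gysin&product} (applied with the second factor the identity $1_X$, whose Gysin morphism is $1_{X*}$) gives $(f\times 1_X)^*=f^*\otimes 1_{X*}$, so that $(f\times 1_X)^*\circ\delta_{X*}=(f^*\otimes 1_{X*})\circ\delta_{X*}=f^*\gcup 1_{X*}$. On the right, the graph factors as $Y\xrightarrow{\delta_Y}Y\times Y\xrightarrow{1_Y\times f}Y\times X$, so by axiom (Kun)(a) one has $\gamma_{f*}=(1_{Y*}\otimes f_*)\circ\delta_{Y*}=1_{Y*}\gcup f_*$, whence $\gamma_{f*}\circ f^*=(1_{Y*}\gcup f_*)\circ f^*$. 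Combining the two, $(1_{Y*}\gcup f_*)\circ f^*=f^*\gcup 1_{X*}$, which is the assertion. I do not expect a genuine obstacle here; the hard part, such as it is, will be to make sure the square above is cartesian with the two relevant codimensions honestly equal to $d$ (so that the transversality hypothesis of \ref{projection4general_Gysin}(i) applies), and to keep track of the twist $(d)[2d]$ and of the permutation isomorphisms tacit in the $\gcup$ notation, as already signalled in the remark following lemma \ref{lm:Gysin&product}.
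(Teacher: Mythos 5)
Your proof is correct and follows exactly the route the paper takes: the paper's own proof of this corollary simply says it is the same argument as for corollary \ref{second_proj_formula}, with proposition \ref{projection4general_Gysin}(i) replacing proposition \ref{transversal} and the formula $(f\times g)^*=f^*\otimes g^*$ of \ref{general_Gysin&product} replacing lemma \ref{lm:Gysin&product}, applied to the cartesian square formed by the graph $\gamma_f$ and the diagonal $\delta_X$. Your verification that this square is cartesian with both projective morphisms of codimension $d$, and the identifications $(f\times 1_X)^*\circ\delta_{X*}=f^*\gcup 1_{X*}$ and $\gamma_{f*}=1_{Y*}\gcup f_*$, are exactly the details the paper leaves implicit.
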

The proof is the same as for \ref{second_proj_formula} using assertion (i)
 of the proposition above and the formula of \ref{general_Gysin&product}.

\num \label{hyp:ramif_general_Gysin}
We now consider the analog of the ramification formula 
\ref{ramification}.
Consider a commutative square of smooth schemes
$$
\xymatrix@=10pt{
T\ar^q[r]\ar_g[d]\ar@{}|\Delta[rd] & Y\ar^f[d] \\
Z\ar_p[r] & X
}
$$
which is cartesian on the underlying topological spaces
 and such that $p$ and $q$ are projective morphisms of codimension $n$. 
We assume $T$ admits an ample line bundle. \\
Put $T'=T \times_X Y$ and 
note the morphism $T \rightarrow T'$ induce by $\Delta$ is a thickening. 
Let $T'=\bigcup_{i \in I} T'_i$ 
 (resp. $T'_i=\bigcup_{j \in J_j} T'_{ij}$) be the decomposition into
  connected (resp. irreducible) components. 
Put $T_i=T'_i \times_{T'} T$ and $T_{ij}=T'_{ij} \times_{T'} T$.
We introduce the following condition on $\Delta$~:
\begin{enumerate}
\item[(Special)] For any $i \in I$, there exists an integer $r_i \geq 0$
such that for any $j \in J_i$, $m(T'_{ij})=r_i.m(T_{ij})$.
\end{enumerate}
In this case, the integer $r_i$ will be called the 
\emph{ramification index}
of $f$ along $T_i$.

Consider a factorization $Z \xrightarrow i P \xrightarrow \pi X$
of $p$ into a closed immersion and the projection of a projective bundle.
We put $Q=P \times_X Y$ and consider the obvious morphism of closed pairs
$(h,g):(Q,T) \rightarrow (P,Z)$. Of course, $\Delta$ satisfies (Special)
if and only if $(h,g)$ satisfies (Special). Moreover, for any $i \in I$,
the element $r(T_i;h,g)$ is independent of the chosen factorization.
Indeed, taking into account the compatibility of $F$-intersection multiplicity
with flat base change, this boils down to the following lemma~:
\begin{lm}
Consider a commutative diagram of smooth schemes
$$
\xymatrix@C=15pt@R=6pt{
& T'\ar[rrd]\ar_t[ld]\ar'[d]^/2pt/{g'}[dd] & & \\
T\ar[rrr]\ar_g[dd] & & & Y\ar^f[dd] \\
& Z'\ar_/-2pt/s[ld]\ar[rrd] & & \\
Z\ar[rrr] & & & X
}
$$
such that $T$ and $T'$ are connected and admits an ample line bundle,
 $t=s \times_Z T$ and $(f,g)$ (resp. $(f,g')$)
 is a morphism of smooth closed pairs satisfying condition
  (Special) with ramification index $r$. \\
Then, $r(T';f,g')=t^*r(T;f,g) \in H^{0,0}(T')$.
\end{lm}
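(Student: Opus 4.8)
Looking at this lemma, we have two factorizations of $f$ (through $Z$ and through $Z'$), and we want to compare the $F$-intersection multiplicities $r(T;f,g)$ and $r(T';f,g')$. The key point is that $T' \to T$ comes from base change along $s : Z' \to Z$, and we need to trace through the definition of $r(T_i;f,g)$ in Definition \ref{inter_mult}.

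The plan is to unwind Definition \ref{inter_mult} for both sides. Recall that $r(T;f,g)$ is defined via the blow-up $B = B_Z(\AA^1_X)$ with exceptional divisor $P$, then $C = B \times_X Y$, $Q = P \times_Z T$ with its canonical section $s_Q$, and $L = s_Q^*(N_Q C)$ is a line bundle on $T$; finally $r(T;f,g) = \pur{L,T}^*([r]_F \cdot \bar t(L))$. The corresponding data for $(f,g')$ uses $B' = B_{Z'}(\AA^1_X)$, $P'$ its exceptional divisor, $C' = B' \times_X Y$, $Q' = P' \times_{Z'} T'$, and $L' = (s_{Q'})^*(N_{Q'}C')$. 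Since $s : Z' \to Z$ is a closed immersion and $t = s \times_Z T : T' \to T$, the functoriality of the deformation-to-the-normal-cone construction gives a natural map relating these blow-ups and their exceptional divisors. In fact the essential claim is that $L' \cong t^*(L)$ as line bundles over $T'$: the blow-up $B'$ maps to $B \times_Z Z'$ (or rather sits inside a suitable fibre product), and on exceptional divisors $P' \cong P \times_Z Z'$, hence $Q' \cong Q \times_T T'$, and the normal bundle $N_{Q'}C'$ restricted along the canonical section is the pullback along $t$ of $N_Q C$ restricted along its canonical section. This uses that everything in sight is a regular immersion of codimension one (the divisor case), so normal bundles behave well under the base change $t$.

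Granting $L' \simeq t^* L$, the rest is formal. The localised Thom class is compatible with pullback: $\bar t(L') = $ (image of $\bar t(L)$ under the base change map on cohomology with supports), and since $[r]_F$ is a universal power series with coefficients in $A$, applying it commutes with the ring map $t^* : H^{**}_T(L) \to H^{**}_{T'}(L')$. The purity isomorphisms fit into a commutative square (by part (1) of Proposition \ref{prop:purity}, functoriality with respect to the cartesian morphism $(L',T') \to (L,T)$ induced by $t$), so $\pur{L',T'}^* \circ t^* = t^* \circ \pur{L,T}^*$ in cohomology. Combining, $r(T';f,g') = \pur{L',T'}^*([r]_F \cdot \bar t(L')) = t^*\pur{L,T}^*([r]_F \cdot \bar t(L)) = t^* r(T;f,g)$, which is exactly the assertion. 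One should also check that the ramification index is genuinely the same $r$ on both sides — but this is part of the hypothesis (both $(f,g)$ and $(f,g')$ satisfy (Special) with index $r$), so no work is needed there.

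The main obstacle will be establishing the isomorphism $L' \simeq t^* L$ carefully, i.e. checking that the deformation/blow-up construction is compatible with the base change $Z' \to Z$ in the precise sense needed. This is genuinely a computation with the functoriality of $B_Z(\AA^1_X)$ and its exceptional divisor $\PP(N_ZX \oplus 1)$, and one must be careful because $B'$ is not literally $B \times_Z Z'$ in general — but since $s$ is a closed immersion one gets a closed immersion $B' \hookrightarrow B \times_X \AA^1_{Z'}$-type comparison, and restricting to exceptional divisors and then pulling back along the canonical section of $Q'/T'$ yields precisely $t^*$ applied to the analogous line bundle on $T$. Once this identification of line bundles is in hand, everything else is a direct application of results already proven: Proposition \ref{prop:purity}(1) for the purity isomorphisms, the compatibility of localised Thom classes with pullback, and the naturality of the power series operation $[r]_F \cdot (-)$ with respect to $A$-algebra maps.
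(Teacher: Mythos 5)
Your overall strategy coincides with the paper's: everything reduces to identifying the line bundle $L'$ of Definition \ref{inter_mult} computed for $(f,g')$ with $t^*L$, after which the compatibility of localised Thom classes with pullback, the fact that $[r]_F$ is a universal power series with coefficients in $A$, and the naturality of the purity isomorphism (Proposition \ref{prop:purity}(1)) give $r(T';f,g')=t^*r(T;f,g)$ formally; that second half of your argument is fine and is what the paper means by ``the lemma follows from the definition of $F$-intersection multiplicities''. The genuine gap is in your justification of $L'\simeq t^*L$. The claimed isomorphisms $P'\simeq P\times_Z Z'$ and hence $Q'\simeq Q\times_T T'$ are false as soon as $Z'$ has positive codimension $c$ in $Z$, which is exactly the relevant situation (in the application to independence of the factorization, $Z'$ sits in $Z$ as a section of a projective bundle): $P'=\PP(N_{Z'}X\oplus 1)$ has relative dimension $\mathrm{codim}(Z',X)=n+c$ over $Z'$, whereas $P\times_Z Z'=\PP\big((N_ZX)|_{Z'}\oplus 1\big)$ has relative dimension $n$; equivalently $N_{Z'}X$ and $(N_ZX)|_{Z'}$ have different ranks. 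For the same reason there is no natural morphism between $B'=B_{Z'}(\AA^1_X)$ and $B=B_Z(\AA^1_X)$ in either direction (the pullback to one blow-up of the ideal of the other centre is not invertible), so ``functoriality of the deformation construction'' and your ``$B'\hookrightarrow B\times_X\AA^1_{Z'}$-type'' comparison do not produce the comparison map you need; and since $Z\subset X$ and $Z'\subset X$ are not of codimension one, your ``everything in sight is a regular immersion of codimension one'' remark only covers the divisors $Q\subset C$ and $Q'\subset C'$ and by itself relates nothing between the two deformation spaces.

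For comparison, the paper does not identify $P'$ with any base change of $P$: it compares the divisor $P'\subset B'$ with the divisor $P\subset B$ by a cartesian square asserted to be transversal, pulls this square back over $Y$ to obtain a transversal cartesian square comparing $Q'\subset C'$ with $Q\subset C$, and then transversality gives $N_{Q'}C'$ as the pullback of $N_QC$; restricting along the canonical sections, which are compatible with $t$, yields $L'=t^*L$. Note that this only compares the normal line bundles of the two divisors, which is rank-consistent, unlike $P'\simeq P\times_Z Z'$. So the single step of your proof that required real geometric input --- producing a comparison of $(Q',C')$ with $(Q,C)$ compatible with the canonical sections --- is precisely the step where your proposed identifications fail, and it must be replaced by an actual construction of such a comparison (as in the paper) or by a direct computation of $L$ and $L'$.
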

\begin{proof}
Consider the blow-up $B=B_Z(\AA^1_X)$ (resp. $B'=B_{Z'}(\AA^1_X)$)
and its exceptional divisor $P$ 
 (resp. $P'$).
As $Z' \subset Z$,
 we get a cartesian transversal square, together with its pullback
  over $Y$
$$
\begin{array}{ccc}
\xymatrix@=10pt{
P'\ar[r]\ar[d] & B',\ar[d] \\
P\ar[r] & B.
} &
\text{pullback above $Y$ :}
&
\xymatrix@=10pt{
Q'\ar[r]\ar[d] & C'.\ar[d] \\
Q\ar[r] & C
}
\end{array}
$$
The second square is still transversal.
Put $L=N_QC|_Z$ and $L'=N_{Q'}(C')|_{Z'}$. Thus, $L'=L|_{Z'}$.
According to this equality, the lemma follows from the definition
of $F$-intersection multiplicities.
\end{proof}

\begin{df}
Consider the notations and hypothesis of \ref{hyp:ramif_general_Gysin},
 assuming the square $\Delta$ satisfies condition (Special).
For any $i \in I$, we define the $F$-intersection multiplicity $r(T_i;\Delta)$ 
 of $T_i$ in the pullback square $\Delta$ as the well defined element 
  $r(T_i;h,g)$ according to the notations above.
\end{df}

The following proposition is now a corollary of \ref{ramification}~:
\begin{prop}
Consider the hypothesis and notations of the preceding definition.
Put $g_i=g|_{T_i}$ and $q_i=q|_{T_i}$.

Then, $p^*f_*=\sum_{i \in I} \big(r(T_i;\Delta) \gcup g_{i*}\big)q_i^*$.
\end{prop}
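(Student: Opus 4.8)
The plan is to reduce the statement to the ramification formula already proved for closed immersions, namely theorem \ref{ramification}, by factoring the projective morphism $p$ through a projective bundle. First I would choose a factorization $Z \xrightarrow{i} P \xrightarrow{\pi} X$ of $p$ into a closed immersion of codimension $n+N$ (for a suitable $N$) followed by the canonical projection $\pi$ of a projective bundle $P/X$. Pulling back over $Y$, set $Q = P \times_X Y$ with projection $\pi':Q \rightarrow Y$ and the induced closed immersion $h:T \hookrightarrow Q$, so that we obtain the morphism of smooth closed pairs $(h,g):(Q,T) \rightarrow (P,Z)$. By definition \ref{df:Gysin}, $p^* = i^* \pi^*$, and the square $\Delta$ satisfies condition (Special) precisely when $(h,g)$ does, with matching ramification indices; moreover $r(T_i;\Delta) = r(T_i;h,g)$ by the preceding definition and lemma.

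Next I would assemble the commutative square relating $\pi$, $\pi'$, $f$ and the projection $Q \to P$, obtaining by lemma \ref{lm:Gysin2} (or rather its cartesian version, proposition \ref{projection4general_Gysin}(i) applied to the projective bundle projection, which is trivial in that case) the base-change identity $\pi^* f_* = f'_* {\pi'}^*$ where $f':Q \to P$ is the projection. Wait---more carefully: the cartesian square here is $Q = P\times_X Y$ over $P \to X \leftarrow Y$, so proposition \ref{projection4general_Gysin}(i) gives $\pi^* f_* = f'_* {\pi'}^*$ with $f'$ the base change of $f$. Actually what I need is to push the whole computation down: starting from $p^* f_* = i^* \pi^* f_*$, use the base-change identity for the projective bundle square to rewrite $\pi^* f_* = f'_* {\pi'}^*$ where now $f'$ is the pullback of $f$; then apply theorem \ref{ramification} to the morphism of closed pairs $(h,g):(Q,T)\to(P,Z)$, which after pulling back through the projection yields $i^* f'_* = \sum_{i\in I}\big(r(T_i;h,g) \gcup g_{i*}^{Q}\big)h_i^*$ along the normal-bundle data of $Q$, and finally compose with ${\pi'}^*$ and use functoriality of the Gysin morphism (proposition \ref{functoriality4general_Gysin}) together with the projection formula of corollary \ref{second_proj_formula} to collapse $h_i^* {\pi'}^* = q_i^*$ and to move the class $r(T_i;h,g) = r(T_i;\Delta)$ past the Gysin morphisms via the second projection formula.

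The bookkeeping of the various cup products and the identification $g_{i*}^Q \circ (\text{stuff}) = g_{i*}$ after composing with ${\pi'}^*$ is the step requiring care: one must track that the $F$-intersection multiplicity class, which a priori lives in $H^{0,0}(T_i)$ computed via the normal bundle data on $Q$, agrees with the class $r(T_i;\Delta)$ defined in the statement, and that the projection formulas (proposition \ref{transversal}, corollary \ref{second_proj_formula}) license interchanging $r(T_i;\Delta) \gcup (-)$ with the Gysin morphism $q_i^*$ in the correct order. This is precisely the same manipulation carried out at the end of the proof of theorem \ref{ramification} when reducing the general codimension case to codimension one, so I would invoke that pattern rather than redo it.

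The main obstacle I anticipate is not any single deep fact---all the ingredients (base change for the projective bundle projection, functoriality of Gysin morphisms, the closed-immersion ramification formula, the two projection formulas) are already in hand---but rather ensuring the compatibility of the $F$-intersection multiplicity under the factorization, i.e.\ that $r(T_i;h,g)$ is genuinely independent of the chosen factorization and equals $r(T_i;\Delta)$; this is handled by the lemma preceding the definition of $r(T_i;\Delta)$ in paragraph \ref{hyp:ramif_general_Gysin}, which reduces it to the behaviour of the line bundle $L = N_Q C|_Z$ under base change along $Z' \subset Z$. Granting that lemma, the proof is a formal concatenation: $p^* f_* = i^*\pi^* f_* = i^* f'_* {\pi'}^* = \big(\sum_i (r(T_i;\Delta)\gcup g_{i*})h_i^*\big){\pi'}^* = \sum_i (r(T_i;\Delta)\gcup g_{i*}) q_i^*$, and I would present it in that order.
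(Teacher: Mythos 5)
Your argument is correct and is essentially the proof the paper intends: the proposition is stated there as an immediate corollary of theorem \ref{ramification}, obtained by exactly your reduction --- factor $p$ through a projective bundle, base-change the bundle projection (proposition \ref{projection4general_Gysin}(i), which is trivial for a bundle projection), apply the closed-immersion ramification formula to the induced morphism of closed pairs $(Q,T)\rightarrow(P,Z)$ via the diagram \eqref{refind_Gysin}, and recombine using $k_i^*\pi'^*=q_i^*$ and the independence lemma giving $r(T_i;h,g)=r(T_i;\Delta)$. Apart from a notational slip (you use $h$ both for the immersion $T\hookrightarrow Q$ and, later as $f'$, for the pullback $Q\rightarrow P$) and a superfluous appeal to corollary \ref{second_proj_formula} --- the multiplicity class composes directly with $q_i^*$, nothing needs to be moved past a Gysin morphism --- your proposal matches the paper's route.
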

 
\subsection{The duality pairing}

Let $X/S$ be a smooth projective scheme of pure dimension $n$.
Let $p:X \rightarrow S$ (resp. $\delta:X \rightarrow X \times X$)
 be its structural morphism (resp. its diagonal embedding).

Then we define morphisms
\begin{align*}
\mu_X: \ & \un \!\xrightarrow{p^*} \!\M(X)(-n)[-2n]
 \!\xrightarrow{\delta_*}\! \M(X \times X)(-n)[-2n]
  \!=\!\M(X)(-n)[-2n] \otimes \M(X) \\
\epsilon_X: \ & \M(X) \otimes \M(X)(-n)[-2n]
 \xrightarrow{\delta^*} \M(X) \xrightarrow{p_*} \un.
\end{align*}

The following result is now a formality~:
\begin{thm} \label{thm:duality}
Consider the notations above.

Then $\big(\M(X)(-n)[-2n],\mu_X,\epsilon_X\big)$
 is a strong dual of $\M(X)$.
\end{thm}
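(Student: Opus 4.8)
The plan is to verify the two triangle identities (i) and (ii) of Definition \ref{df:strong_dual}, namely that
$$
\M(X) \xrightarrow{1 \otimes \epsilon_X} \M(X) \otimes \M(X)(-n)[-2n] \otimes \M(X) \xrightarrow{\mu_X \otimes 1} \M(X)
$$
is the identity, and similarly for the dual composite. Since $\M(X)$ is built from the schemes $X$ and $X \times X$, the strategy is to expand $\mu_X$ and $\epsilon_X$ into their defining pieces ($p^*$, $\delta_*$, $\delta^*$, $p_*$) and reduce the identities to statements about the Gysin morphism of a closed immersion and of a projection, which have already been established. First I would recall that $\delta^*$ is available because $\delta$ is a closed immersion between smooth schemes (Definition \ref{df:Gysin} via Corollary \ref{cor:assoc}, since $\delta$ factors through a projective bundle over $X \times X$), and $p^*$ is available because $p$ is projective (Definition \ref{df:Gysin}); so both $\mu_X$ and $\epsilon_X$ are legitimate morphisms in $\T$.

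The key computation is to rewrite the first composite $(\mu_X \otimes 1) \circ (1 \otimes \epsilon_X)$ using functoriality and base change. Expanding, this is
$$
\M(X) \xrightarrow{1 \otimes p^*} \M(X) \otimes \M(X)(-n)[-2n]
 \xrightarrow{1 \otimes \delta_* } \M(X) \otimes \M(X)(-n)[-2n] \otimes \M(X)
 \xrightarrow{ \delta^* \otimes 1} \M(X) \otimes \M(X)
 \xrightarrow{ p_* \otimes 1} \M(X),
$$
where I have regrouped $(\mu_X \otimes 1) = (p_* \otimes 1)(\delta^* \otimes 1)$ and $(1 \otimes \epsilon_X)$ appropriately after inserting the definitions. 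The plan is to recognize the middle part $ (\delta^* \otimes 1)(1 \otimes \delta_*)$ as coming from a base change square: the cartesian square expressing $X \times_X (X \times X) \cong X \times X$ with projections, to which one applies the transversal base change formula (Proposition \ref{transversal}, or rather its extension \ref{projection4general_Gysin}(i)) to slide $\delta^*$ past $\delta_*$. Concretely, the composite $\delta^* \circ (1_X \times p)^* \circ (\text{something})$ should collapse, using $p \circ \delta = 1_X$ in one factor, to the identity after one further application of the projection formula. I expect the cleanest route is: apply base change to turn $(1 \otimes \delta_*)$ followed by $\delta^*$ into $\delta_* $ of a diagonal composed with a Gysin pullback along a diagonal, then use that the diagonal composed with a projection back to $X$ is the identity, together with $s^* p^* = 1$ for sections (Lemma \ref{lm:Gysin3}) and the functoriality $g^* f^* = (fg)^*$ (Proposition \ref{functoriality4general_Gysin}).

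The second identity (ii), involving $\M(X)(-n)[-2n]$ in the first slot, is formally dual and should follow by the same manipulation with the roles of the two factors swapped, using the symmetry of $\delta$ (the diagonal is symmetric under the swap of factors of $X \times X$) and Corollary \ref{gysin&tensor} / \ref{general_Gysin&product} for compatibility of Gysin morphisms with tensor products, together with the fact (established after Corollary \ref{cor:projbdl_formula}) that the relevant permutation isomorphism on Tate twists is the identity, so no sign intervenes. I anticipate the main obstacle is purely bookkeeping: tracking the several instances of the associativity and commutativity constraints of the monoidal structure, and the Tate twists $(-n)[-2n]$, so that the base change square is applied to exactly the right cartesian diagram; the underlying geometric input ($p\delta = 1$, transversality of the relevant squares, functoriality of $(-)^*$) is all in hand, which is why the statement is "a formality" once the diagram chase is set up carefully. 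I would also remark that this is precisely the abstract pattern by which $\M(\PP^n)(-n)[-2n]$ was shown to be a strong dual of $\M(\PP^n)$ via $\mu_n$ in Lemma \ref{pre_comput_cobordism_c} and Definition \ref{gysin4projection}; here the projective bundle theorem is replaced by the already-proven functoriality and base change properties of the general Gysin morphism, so no new hard lemma is needed.
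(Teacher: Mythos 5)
Your proposal is correct and follows essentially the same route as the paper's proof, which likewise reduces the two identities of Definition \ref{df:strong_dual} to the compatibility $(f\times g)^*=f^*\otimes g^*$ (\ref{general_Gysin&product}), the transversal base change formula \ref{projection4general_Gysin}(i) applied to the cartesian square of partial diagonals (yielding $(\delta\times 1)^*(1\times\delta)_*=\delta_*\delta^*$), and the functoriality \ref{functoriality4general_Gysin}, the paper merely leaving this diagram chase implicit. Two cosmetic points: your labels $\mu_X$ and $\epsilon_X$ are interchanged relative to the theorem's statement (your expanded composite is nevertheless the right one), and the final cancellation $\delta^*(1\times p)^*=((1\times p)\delta)^*=1$ needs only \ref{functoriality4general_Gysin}, Lemma \ref{lm:Gysin3} playing no role since $1\times p$ is not a trivial projective bundle projection.
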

\begin{proof}
Each identity of definition \ref{df:strong_dual} 
is an easy application of \ref{general_Gysin&product},
\ref{projection4general_Gysin}(i)
 (the usual projection formula)
  and proposition \ref{functoriality4general_Gysin}.
\end{proof}

\num \label{explicit_duality_coh_hom}
\textit{Applications}~: Consider the notations of the previous
proposition and let $\E$ be a motive.
\begin{enumerate}
\item We define the fundamental class $\tau_X \in H_{2n,n}(X)$ of $X$
 as the element
$$
p^*:\un \rightarrow \M(X)(-n)[-2n].
$$
We also consider $\eta \in H^{2n,n}(X \times X)$ the fundamental class 
of the diagonal $\delta$.

Then the isomorphisms of \eqref{duality&adjunction} with $M=\M(X)$
 gives exactly, considering the definitions of cap-product
  and slant product (cf \ref{general_cup&slant}),
the following reciprocal isomorphisms~:
\begin{equation} \label{alexander_duality}
\begin{split}
\E^{r,p}(X) & \leftrightarrows \E_{2n-r,p-n}(X) \\
x & \mapsto x \ncap \tau_X \\
\eta/y & \mapsfrom y.
\end{split}
\end{equation}
This is the \emph{Poincar\'e duality isomorphism},
 as it appears in algebraic topology (cf \cite{Ada},
 \cite[14.41, 14.42]{Swi}). 
To the knowledge of the author, the first appearance
 of this precise form of duality in algebraic geometry
 is in \cite{PY}.
\item Suppose $\E$ is a ringed motive.
In this case, the regulator maps
\begin{align*}
\varphi_X:&H^{2n,n}(X) \rightarrow \E^{2n,n}(X) \\
\psi_X:&H_{2n,n}(X) \rightarrow \E_{2n,n}(X)
\end{align*}
allow to define the fundamental class of $X$ (resp. the fundamental
class of the diagonal)
with coefficients in $\E$ as the image $\psi_X(\tau_X)$ (resp. $\varphi_X(\eta)$)
 of the corresponding class with coefficients in $H$.
Moreover, we can obviously express the isomorphisms above
 with this classes (cf number (1) above),
  obtaining a Poincar\'e duality purely
   in terms of the cohomology theory $\E^{**}$.
\item Suppose $\E$ is a ringed motive.

The morphism
$$
p_*:\E^{**}(X) \rightarrow A
$$
induced by the Gysin morphism of $p$ is usually called the \emph{trace morphism}
(relative to $S$).

We suppose the cohomology $\E^{**}$ satisfies the following K\"unneth
 property~:
for any motives $M,N,P \in \{\un,\M(X),\M(X)(-n)[-2n]\}$,
the pairing
$$
\E^{**}(M) \otimes_A \E^{**}(N) \otimes_A \E^{**}(P)
 \rightarrow \E^{**}(M \otimes N \otimes P)
$$
is an isomorphism.

Then it follows formally that 
$$
\big(\E^{**}(\M(X)(-n)[-2n]),\E^{**}(\mu),\E^{**}(\epsilon)\big)
$$
is a strong dual of $\E^{**}(\M(X))$ in the category
of graded $A$-modules.

More concretely, the pairing (induced by $\E^{**}(\mu)$)
$$
\E^{**}(X) \otimes_A \E^{**}(X) \rightarrow A,
 x \otimes y \mapsto p_*(x \ncup y)
$$
is a perfect pairing of graded $A$-modules.
This is usually called the \emph{Poincar\'e duality pairing}\footnote{
This way of deducing the usual Poincar\'e duality from the
Alexander duality and the K\"unneth formula was explained
to me by D.C.Cisinski.}
for the cohomology theory $\E^{**}$.

Note it implies that $E^{**}(X)$ is a projective finitely
generated graded $A$-module (see \cite[1.4]{DP}).
\end{enumerate}

\begin{ex}
The conditions of point (3) are fulfilled when $X$ is a Grassmanian scheme over $S$,
 or more generally a cellular variety over $S$, without any assumption on $\E$.
In \cite{CD2}, we study cohomology theories $\E^{**}$ which satisfies
the K\"unneth formula.
\end{ex}

The Gysin morphism determine the duality pairing defined above.
Reciprocally, this duality determines the Gysin morphism as shown in the
next proposition.
\begin{prop} \label{Gysin=transpose}
Let $f:Y \rightarrow X$ be a morphism between smooth projective $S$-schemes.
Suppose $X$ (resp. $Y$) is of constant relative dimension $n$ (resp. $m$)
over $S$.

Then
$$
f^*=\tra(f_*)(-n)[-2n]
$$
where the transpose morphism on the right hand side is taken with respect
to the strong duals of $\M(X)$ and $\M(Y)$ obtained in
the previous theorem.
\end{prop}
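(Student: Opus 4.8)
The plan is to verify that $f^*$ satisfies the characterizing property (i) following Definition~\ref{df:transpose} of the transpose of $f_*$, namely that the square
$$
\xymatrix@R=13pt@C=30pt{
\M(X) \otimes \M(Y)(-m)[-2m]\ar^{f_* \otimes 1}[r]\ar_{1 \otimes f^*}[d]
 & \M(Y) \otimes \M(Y)(-m)[-2m]\ar^{\epsilon_Y}[d] \\
\M(X) \otimes \M(X)(-n)[-2n]\ar^/10pt/{\epsilon_X}[r] & \un
}
$$
commutes (after twisting by $(-n)[-2n]$ and suitably reindexing). Spelling out $\epsilon_X$ and $\epsilon_Y$ via their definitions in terms of $\delta_X^*$, $p_{X*}$ and $\delta_Y^*$, $q_{Y*}$, the assertion becomes the identity $p_{X*} \circ \delta_X^* \circ (f_* \otimes f^*) = q_{Y*} \circ \delta_Y^* \circ (1 \otimes f^*)$ of morphisms $\M(X) \otimes \M(Y) \to \un$ in the appropriate twist. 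Since by the uniqueness part of Lemma~\ref{lm:strong_dual} a strong dual is determined by the pairing $\mu$ (equivalently $\epsilon$), once this square commutes the morphism $1 \otimes f^*$ is forced to equal $\tra(f_*)(-n)[-2n]$, which is exactly the claim.

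First I would reduce everything to three basic compatibilities already established: the multiplicativity $(f \times g)^* = f^* \otimes g^*$ of the Gysin morphism (paragraph~\ref{general_Gysin&product}), functoriality $g^* f^* = (fg)^*$ (Proposition~\ref{functoriality4general_Gysin}), and the projection/base-change formula $f^* p_* = q_* g^*$ in the transversal cartesian case (Proposition~\ref{projection4general_Gysin}(i)) — these are precisely the ingredients invoked in the proof of Theorem~\ref{thm:duality}. The key geometric input is the commutative square
$$
\xymatrix@=12pt{
Y\ar^{(1_Y,f)}[r]\ar_f[d] & Y \times X\ar^{f \times 1_X}[d] \\
X\ar_{\delta_X}[r] & X \times X
}
$$
which is cartesian (its pullback is the graph of $f$, and the fibre product $X \times_{X \times X}(Y \times X)$ is the graph of $f$, smooth and isomorphic to $Y$). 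Applying covariant functoriality of $\M$ and the Gysin functoriality/base-change formulas to this square, together with the analogous square for $\delta_Y$ and the identity $\delta_X \circ f = (f \times f) \circ \delta_Y$, I would rewrite $\delta_X^* \circ (f_* \otimes 1_X)$ in terms of $f_* \circ \delta_Y^*$ modulo the projection formula, and similarly push $f^*$ through. The computation then collapses to $p_{X*} f_* = q_{Y*}$ on the relevant piece, which holds because $p_X \circ f = q_Y$ and the Gysin morphism is contravariantly functorial for projective morphisms (Proposition~\ref{functoriality4general_Gysin} again).

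The main obstacle will be bookkeeping: keeping track of the Tate twists, the degree shifts, and especially the commutativity/permutation isomorphisms hidden in the tensor factors (cf.\ the warning in the remark after Lemma~\ref{lm:Gysin&product}), so that the two composites are literally equal rather than equal up to a sign. In particular one must be careful that the diagonal $\delta_X$ splits through $\M(Y \times X)$ in a way compatible with the chosen identification $\M(X)(-n)[-2n] \otimes \M(X) = \M(X \times X)(-n)[-2n]$ used to define $\mu_X$ and $\epsilon_X$. I expect the cleanest route is to avoid diagram-chasing with explicit coordinates and instead argue purely at the level of the characterization (ii) after Definition~\ref{df:transpose}: show that $1 \otimes f^*$, followed by the adjunction map $\uHom(\M(X),\un) \to \M(X)(-n)[-2n]$, agrees with $\uHom(f,\un)$ precomposed with the adjunction map for $Y$ — this is a clean consequence of the projection formula $\epsilon_X \circ (f_* \otimes 1) = \epsilon_Y \circ (1 \otimes f^*)$ and the fact that $\epsilon_X$, $\epsilon_Y$ induce the duality isomorphisms of Theorem~\ref{thm:duality}.
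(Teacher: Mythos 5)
Your plan is essentially the paper's own proof: one checks characterization (i) of the transpose for $f_*$ via the cartesian square formed by the graph $(1_Y,f):Y\rightarrow Y\times X$ and $\delta_X$, using the transversal base change formula (proposition \ref{projection4general_Gysin}(i)), Gysin functoriality (proposition \ref{functoriality4general_Gysin}) and $(f\times g)^*=f^*\otimes g^*$, exactly as in the text. Only the bookkeeping in your displayed square needs fixing: the common source must be $\M(Y)\otimes\M(X)(-n)[-2n]$, with $f_*\otimes 1$ landing in $\M(X)\otimes\M(X)(-n)[-2n]$ and $1\otimes f^*$ (suitably twisted) landing in $\M(Y)\otimes\M(Y)(-m)[-2m]$, so the identity to verify is $p_*\delta_X^*(f_*\otimes 1)=q_*\delta_Y^*(1\otimes f^*)$ rather than the displayed one involving $f_*\otimes f^*$, and the final collapse $p_*f_*=q_*$ is covariant functoriality of $\M$, not Gysin functoriality.
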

\begin{proof}
Consider the structural projections $p:X \rightarrow S$,
 $q:Y \rightarrow S$ and the diagonal embeddings
  $\delta_X:X \rightarrow X \times X$,
   $\delta_Y:Y \rightarrow Y \times Y$.
Let $n$ be the dimension of $X$
Put $\M(X)^\vee=\M(X)(-n)[-2n]$ and $\M(Y)^\vee=\M(Y)(-m)[-2m]$.

According to the first point which follows definition \ref{df:transpose},
we have to prove the following square is commutative~:
$$
\xymatrix@R=12pt@C=20pt{
\M(Y) \otimes \M(X)^\vee\ar^{f_* \otimes 1}[r]\ar_{1 \otimes f^*}[d]
 & \M(X) \otimes \M(X)^\vee\ar^{p_*\delta_X^*}[d] \\
\M(Y) \otimes \M(Y)^\vee\ar^/16pt/{q_*\delta_Y^*}[r] & {}\un.
}
$$
We introduce the following cartesian square~:
$$
\xymatrix@R=10pt@C=16pt{
Y\ar^/-6pt/\gamma[r]\ar_f[d] & Y \times X\ar^{f \times 1}[d] \\
X\ar^/-5pt/{\delta_X}[r] & X \times X
}
$$
Note that $f_* \otimes 1=(f \times 1)_*$ and $1 \otimes f^*=(1 \times f)^*$
(cf \ref{general_Gysin&product}). The result follows from the computation~:
$$
p_*\delta_X^*(f \times 1)_*=p_*f_*\gamma^*=q_*\delta_Y^*(1 \times f)^*
$$
which uses \ref{projection4general_Gysin}(i)
 and \ref{functoriality4general_Gysin}.
\end{proof}

\subsection{Two illustrations}

\num \label{cobordism_classes} \textit{Cobordism classes}.---
\begin{df}
Let $X$ be a smooth projective scheme of pure dimension $n$.
Let $p:X \rightarrow S$ be its structural projection.

We define the cobordism class of $X/S$ as the element of $A$,
of (cohomological) degree $(-2n,-n)$,
$$
[X]=\un \xrightarrow{p^*} \M(X)(-n)[-2n]
 \xrightarrow{p_*} \un(-n)[-2n].
$$
\end{df}
In other words, $[X]=p_*(1)$ as a cohomological class.
Note that according to definition \ref{gysin4projection}
 and what follows it, we obtain that $[\PP^n]=\eta'_n$.
Note also that $[X \sqcup Y]=[X]+[Y]$ (from axiom (Add))
 and $[X \times_S Y]=[X] \ncup [Y]$
  (from \ref{general_Gysin&product}).

\begin{ex}
Consider a factorization $X \xrightarrow i \PP^N \xrightarrow \pi S$
of the morphism $p$ into a closed immersion followed by the canonical
projection. Let $c=N-n$ be the codimension of $i$.
Let $\eta_{\PP^N}(X) \in H^{2c,c}(\PP^N)$ be the fundamental class
 associated to the embedding $i$. 
Then from corollary \ref{cor:assoc},
 $[X]=p_*(\eta_{\PP^N}(X))$ (as a cohomological class). \\
Thus, to compute this cobordism class, 
we can use the projective bundle theorem, 
which implies we can write
$\eta_{\PP^N}(X)=\sum_{i=0}^N x_i.c^i$
where $c$ is the Chern class of the dual canonical line bundle,
 and $x_i$ is an element of $A$. Then,
$$
[X]=\sum_{i=0}^N x_i \ncup [\PP^{N-i}]
$$
as $p_*(c^i)=[\PP^{N-i}]$ according to definition \ref{gysin4projection}.
\end{ex}

We want to compute now the cobordism class $[\PP^n]$.
Let $\delta:\PP^n \rightarrow \PP^n \times \PP^n$ be the diagonal
embedding. According to definition \ref{gysin4projection},
we have to compute 
\begin{equation} \label{fundamental_class_diag_pdl}
\delta_*(1)=\sum_{0 \leq i,j \leq n} \eta_{i+j-n}.c^i \ncup d^j
\end{equation}
with the notations preceding lemma \ref{pre_comput_cobordism_c}.

Let $p_1,p_2:\PP^n \times \PP^n \rightarrow \PP^n$ be the
 projections respectively on the first and second factor.
Let $\L$ (resp. $\Q$) be the canonical line bundle 
(resp. quotient bundle) 
 on $\PP^n$. Consider the vector bundles 
  $\L^{(i)}=p_i^{-1}(\lambda)$ and $\Q^{(i)}=p_i^{-1}(\Q)$
   for $i=1,2$.
In the preceding expression, $c=c_1(\L_1^\vee)$ and $d=c_1(\L_2^\vee)$.
Put $E=\uHom(\L_1,\Q_2)=\L_1^\vee \otimes \Q_2$.
We get a section $s$ of this vector bundle considering the canonical
morphism
$$
\L_1 \rightarrow \AA^{n+1} \times \PP^n=\PP^n \times \AA^{n+1}
 \rightarrow \Q_2.
$$
It is well known (see \cite{PPS}) that $\delta(\PP^n)$
is the subscheme defined by $s=0$.
Thus according to corollary \ref{fundamental_class_as_chern_class},
$\delta_*(1)=c_n(E)=c_n(\L_1^\vee \otimes \Q_2)$.
From this expression, we obtain easily~:
\begin{enumerate}
\item \emph{Additive case}~: When the formal group law is additive\footnote{
 This is the case for the category $DM(S)$.}
 (\emph{i.e.} $F(x,y)=x+y$), according to a well known formula
  (cf \cite[ex. 3.2.2]{Ful}),
$$c_n(\L_1 \otimes \Q_2)=\sum_{i=0}^n c_1(\L_1)^i \ncup c_{n-i}(\Q_2)
=\sum_{i=0}^n c^i \ncup d^{n-i}.
$$
Thus, $[\PP^n]=0$ if $n>0$.
%\item \emph{Multiplicative case}~: 
%When the formal group law is multiplicative\footnote{
%This is the case for modules over the spectrum $BGL$.}
% (\emph{i.e.} $F(x,y)=x+y+\beta.xy$ for an element $\beta \in A_{2,1}$,
%the computation gives
%$$
%c_n(\L \otimes \Q)=
%(\sum_{i=0}^n c^i \ncup d^{n-i}) \ncup (1+\beta.c \ncup d).
%$$
%Thus $\eta_1=\beta$ and $\eta_i=0$ if $i>1$. In this case,
%$[\PP^n]=(-\beta)^n$.
\item \emph{Case $n=1$}~: As $c^2=d^2=0$, we simply obtain~: \\
\indent $c_1(\L_1 \otimes \Q_2)=F(c_1(\L_1),c_1(\Q_2))=c+d+a_{1,1}.c \ncup d$. \\
Thus $\eta_1=a_{1,1}$ which implies
$[\PP^1]=-a_{1,1}$.
\end{enumerate}

In the general case, we obtain the following computation~:
\begin{prop} \label{fdl_class_diagonal}
With the notations introduced above,
$$
\delta_*(1)=\sum_{0 \leq i,j \leq n} a_{1,i+j-n}.c^i \ncup d^j.
$$
\end{prop}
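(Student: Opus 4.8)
The plan is to compute the Chern class $c_n(E)$ with $E = \L_1^\vee \otimes \Q_2$ using the splitting principle together with the formal group law, and then match the resulting expression against the definition of the coefficients $\eta_{i+j-n}$ given by \eqref{fundamental_class_diag_pdl}. Since we have already identified $\delta_*(1) = c_n(\L_1^\vee \otimes \Q_2)$ via corollary \ref{fundamental_class_as_chern_class}, the entire proposition reduces to a purely formal computation inside the bigraded ring $H^{**}(\PP^n \times \PP^n) = A[[c,d]]/(c^{n+1}, d^{n+1})$ (using the projective bundle theorem and the relation \eqref{proof_projbdlthm}(ii)), combined with the Whitney sum formula \ref{additivity_Chern} and the addition formula $c_1(L \otimes L') = F(c_1(L), c_1(L'))$ from proposition \ref{nilpotence&FGL}(2).

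The key steps I would carry out, in order: first, recall that on $\PP^n$ the universal quotient bundle $\Q$ has Chern roots, say $y_1, \dots, y_n$, and that by the defining relation of Chern classes the total Chern class of $\L \oplus \Q = \mathcal O^{n+1}$ is trivial, so $c_1(\L)$ together with $y_1,\dots,y_n$ are the roots of a bundle with vanishing positive Chern classes; concretely, writing $x = c_1(\L_1^\vee) = c$ on the first factor, the roots $-y_j$ of $\Q_2^\vee$ on the second factor satisfy $\prod_j (1 - y_j t) \cdot (1 - c_1(\L_2) t) = 1$ formally. Second, apply the splitting principle: $E = \L_1^\vee \otimes \Q_2$ has Chern roots $F^-(c, -y_j)$ where I write the tensor product of line bundles additively for the group law, i.e. the root corresponding to $y_j$ is $c_1(\L_1^\vee \otimes L_j)$ for $L_j$ the $j$-th line summand of $\Q_2$; by proposition \ref{nilpotence&FGL}(2) this equals $F(c, c_1(L_j))$. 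Third, form $c_n(E) = \prod_{j=1}^n F(c, c_1(L_j))$ and expand using $F(c,y) = \sum_{k,l} a_{k,l} c^k y^l$. The crucial simplification is that $c^{n+1} = 0$ and $d^{n+1} = 0$, and moreover the class lives in bidegree $(2n,n)$ so only the degree-$n$ part survives; one then needs to show that after symmetrizing over the roots $c_1(L_j)$ (which amounts to re-expressing symmetric functions of the roots in terms of $d = c_1(\L_2^\vee)$), the coefficient of $c^i d^j$ is exactly $a_{1, i+j-n}$.

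The main obstacle — and the step I'd spend the most care on — is the bookkeeping in this last symmetrization: showing that $\prod_{j=1}^n F(c, c_1(L_j))$, after reduction modulo $c^{n+1}$ and re-expression in the generator $d$, collapses to $\sum_{i,j} a_{1,i+j-n} c^i d^j$. The key observation making this tractable is that only the \emph{linear-in-each-root} terms matter: because $\Q_2$ has rank $n$ and we are taking the top Chern class (a product of $n$ factors), and because each factor $F(c, y_j) = \sum_{k,l} a_{k,l} c^k y_j^l$ contributes, any monomial involving $y_j^l$ with $l \geq 2$ for some $j$ forces, after accounting for degrees and the vanishing $d^{n+1}=0$, a compensating drop that only the coefficients $a_{1,*}$ survive — more precisely one uses that $\Q_2$ being a quotient of the trivial bundle, the elementary symmetric functions $e_k(y_1,\dots,y_n)$ are (up to sign and lower-order group-law corrections) exactly $c_k(\Q_2)$, and $\sum_k c_k(\Q_2) (-c_1(\L_2))^{n-k}$-type sums telescope against powers of $d$. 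I would verify the cases $n=1$ (already done in the text, giving $\eta_1 = a_{1,1}$) and $n=2$ explicitly to pin down the sign conventions, then run the general splitting-principle argument, most likely by computing $\prod_j F(c,y_j)$ as a power series in $c$ with coefficients that are symmetric functions of the $y_j$, truncating at $c^n$, and identifying the coefficient of $c^i$ as $a_{1, \bullet}$ times the appropriate Schur/complete-homogeneous combination of the $y_j$ that equals $d^{i'}$ after the projective bundle relation on the second factor. The functoriality and base-change compatibilities needed are all already available, so no new geometric input is required beyond what is cited.
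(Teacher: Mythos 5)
Your reduction of the statement to $\delta_*(1)=c_n(\L_1^\vee\otimes\Q_2)$ coincides with the paper's, but the route you take from there has a genuine gap at its central step. The claimed mechanism for the collapse --- that ``only the linear-in-each-root terms matter'', so that monomials involving $y_j^l$ with $l\geq 2$ drop out and this is why only the coefficients $a_{1,*}$ survive --- is false. Test it at $n=2$: writing $e_1(y)=-m(d)=d-a_{1,1}d^2$ and $e_2(y)=m(d)^2=d^2$ modulo $d^3$, the expansion of $F(c,y_1)F(c,y_2)$ modulo $c^3,d^3$ contains the contribution $a_{1,2}\,c^2(y_1^2+y_2^2)=-a_{1,2}\,c^2\ncup d^2$ coming precisely from the quadratic-in-root monomials $a_{1,2}c\,y_j^2$, and the coefficient of $c^2\ncup d^2$ comes out as $2a_{2,1}-a_{1,2}$, which equals the asserted $a_{1,2}$ only after invoking the symmetry $a_{2,1}=a_{1,2}$; the $a_{1,1}^2$ terms likewise cancel only after substituting $e_1=d-a_{1,1}d^2$. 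If you discard the $y_j^l$, $l\geq 2$, terms as your heuristic suggests, you get the wrong coefficient ($2a_{1,2}$). For general $n$ the collapse to $\sum a_{1,i+j-n}\,c^i\ncup d^j$ requires cancellations among all the $a_{k,l}$ and the coefficients of the inverse series $m$ which encode the identities of the formal group law (at least $F(x,m(x))=0$ and the symmetry of $F$); your sketch asserts this collapse, checks it for $n=1,2$, but gives no argument in general, and the mechanism you offer would discard exactly the terms that are needed.

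What is missing is the device that makes the verification a one-line check instead of an open-ended symmetric-function identity, and it is exactly what the paper's proof supplies: pass from $\PP^n\times\PP^n$ to $\PP^\infty\times\PP^n$, so that $B=H^{**}(\PP^\infty\times\PP^n)=(A[d]/d^{n+1})[[\tilde c]]$ and $\tilde c$ is no longer nilpotent; apply the Whitney formula \ref{additivity_Chern} once to the twisted Euler sequence $0\to\tilde\L_1^\vee\otimes\L_2\to\tilde\L_1^\vee\otimes\AA^{n+1}\to\tilde\L_1^\vee\otimes\Q_2\to 0$ to obtain $\tilde c^{\,n+1}=F(\tilde c,m(d))\ncup c_n(\tilde\L_1^\vee\otimes\Q_2)$; observe that $F(\tilde c,m(d))$ is a non-zero-divisor in $B$; and then merely check that the candidate expression satisfies this relation, which reduces to the single identity $F(x,m(x))=0$. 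On $\PP^n\times\PP^n$, where you work, the analogous relation is vacuous (there $c^{n+1}=0$), so it does not pin down $c_n(E)$, and the splitting-principle expansion only closes up if you actually prove the symmetrization identity --- in effect by reinstating the factor $F(\tilde c,m(d))$ corresponding to the sub-line-bundle $\L_2$ and dividing, i.e.\ by rediscovering the paper's argument. So either carry out and prove the general cancellation (the $n\leq 2$ cases are not a proof), or adopt the $\PP^\infty$ division argument; as written, the proposal does not contain the idea that makes the computation go through.
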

\begin{proof}
Consider the ind-scheme $\PP^\infty \times \PP^n$ and
the embedding $\tau:\PP^n \times \PP^n \rightarrow \PP^\infty \times \PP^n$.
Let $\tilde p_1$ (resp. $p_2$) be the projection on the first (resp. second)
factor of $\PP^\infty \times \PP^n$. Put 
$\tilde \L_1=\tilde p_1^{-1}(\L)$, $\L_2=p_2^{-1}(\L)$ and $\Q_2=p_2^{-1}(\Q)$.
Thus, with a little abuse of notation, 
 $c_n(\L_1 \otimes \Q_2)=\tau^* c_n\left(\tilde \L_1 \otimes \Q_2\right)$.
 
According to the definition of $\Q$,
 we consider the short exact sequence ~:
$$
0 \rightarrow \tilde \L_1^\vee \otimes \L_2
 \rightarrow \tilde \L_1^\vee \otimes \AA^{n+1}
  \rightarrow \tilde \L_1^\vee \otimes \Q_2 \rightarrow 0.
$$
From the Whitney sum formula \ref{additivity_Chern},
 we thus obtain the relation~:
$$
c_{n+1}(\tilde \L_1^\vee \otimes \AA^{n+1})
 =c_1(\tilde \L_1^\vee \otimes \L_2) \ncup c_n(\tilde \L_1^\vee \otimes \Q_2).
$$
Put $\tilde c=c_1(\tilde \L_1)$, $d=c_1(\L_2)$ as cohomology classes
in $B=H^{**}(\PP^\infty \times \PP^n)$. Note moreover the
$A$-algebra $B$ is equal to $(A[d]/{d^{n+1}})[[\tilde c]]$.
In terms of the fundamental group law $F$ and its inverse power series $m$,
the preceding relation reads\footnote{This expression for computing 
$\delta_*(1)$ was also obtained in \cite{Panin}.}
$\tilde c^{n+1}=F(\tilde c,m(d)) \ncup c_n(\tilde \L_1^\vee \otimes \Q_2)$.

We have to prove~:
$$
c_n(\tilde \L_1^\vee \otimes \Q_2)=
 \sum_{0 \leq i,j \leq n} a_{1,i+j-n}.\tilde c^i \ncup d^j \mod \tilde c^{n+1}.
$$

Let $m(x)=\sum_{k>0} m_k.x^k$ (thus $m_1=-1, m_2=a_{1,1}$, etc).
For any integers $l,s$, we put
$$
M_{l,s}=\sum_{\stackrel{k_1+...+k_l=s}{k_1,...,k_l>0}} m_{k_1}...m_{k_l}
$$
when $(l,s) \neq (0,0)$, and $M_{0,0}=1$.
Thus, $F(\tilde c,m(d))=\sum_{k,l,s} a_{k,l}M_{l,s}.\tilde c^k \ncup d^s$.
In particular, $F(\tilde c,m(d))=u.\tilde c+v$ where $u$ is invertible in $B$
 and $v$ is nilpotent. This implies $F(\tilde c,m(d))$ is a non zero divisor in $B$
  and we are reduced to prove~:
$$
F(\tilde c,m(d)) \ncup \sum_{0 \leq i,j \leq n} a_{1,i+j-n}.\tilde c^i \ncup d^j
 =0 \mod \tilde c^{n+1}.
$$
The left hand side can be expanded (modulo $\tilde c^{n+1}$) as the sum~:
$$
\sum_{0 \leq u,v \leq n}
 \left(\sum_{k,l,s} a_{k,l} M_{l,s} a_{1,u+v-n-k-s}\right).\tilde c^u \ncup d^v.
$$
Finally, for any integers $u,v \in [0,n]$, the coefficient of $\tilde c^u \ncup d^v$
in the preceding sum can be written
$$
\sum_w \left(\sum_{k,l} a_{k,l}M_{l,w-k}\right)a_{1,u+v-n-w}.
$$
This is zero according to the relation $F(x,m(x))=0$.
\end{proof}

From definition \ref{gysin4projection}, the previous proposition
reads $\eta_i=a_{1,i}$.
As a corollary (cf relation \eqref{relation_cobordism}),
 we recover the classical Myschenko theorem together with a nice expression
  of $[\PP^n]$ as a determinant~:
\begin{cor} \label{cor:Myschenko}
\begin{enumerate}
\item For any integer $n \geq 0$,
$$
[\PP^n]=(-1)^n.\det\left(
\raisebox{0.5\depth}{
\xymatrix@=0.1ex{
0\ar@{.}[rr]\ar@{.}[dd]
 & & 0\ar@{.}[lldd] & 1\ar@{-}[lllddd]
 & a_{1,1}\ar@{-}[lllldddd] \\
&&&& a_{1,2}\ar@{-}[lllddd]\ar@{.}[ddd] \\
0 & & & & \\
1 & & & & \\
a_{1,1} & a_{1,2}\ar@{.}[rrr] & & & a_{1,n}
}
}
\right).
$$
\item For any integer $n>0$, 
 $\sum_{0 \leq i \leq n} a_{1,i}.[\PP^{n-i}]=0$.
\end{enumerate}
\end{cor}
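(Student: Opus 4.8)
The plan is to reduce the corollary to Proposition \ref{fdl_class_diagonal} together with the explicit description of the Gysin morphism $p^*$ of the projection $p:\PP^n \rightarrow S$ obtained just after Definition \ref{gysin4projection}. First I would identify the coefficients $\eta_k \in A$ introduced before Lemma \ref{pre_comput_cobordism_c}: comparing the expression \eqref{fundamental_class_diag_pdl} with the one given by Proposition \ref{fdl_class_diagonal} for the fundamental class $\delta_*(1)$ of the diagonal in $\PP^n \times \PP^n$, and using that $\{c^i \ncup d^j\}_{0 \leq i,j \leq n}$ is an $A$-basis of $H^{**}(\PP^n \times \PP^n)$ by the projective bundle theorem \ref{th:projbdl}, I obtain $\eta_k = a_{1,k}$ for all $k$, with the conventions $a_{1,0} = 1$ and $a_{1,k} = 0$ for $k < 0$ forced by $F(x,0) = x$.

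For part (1), I would recall from paragraph \ref{cobordism_classes} that $[\PP^n] = \eta'_n$ and that, by the paragraph following Definition \ref{gysin4projection}, $\eta'_n$ equals $(-1)^n$ times the determinant of the matrix obtained from $M_n$ by deleting its zeroth row and zeroth column. By Lemma \ref{pre_comput_cobordism_c} the matrix $M_n$ has $(i,j)$-entry $\eta_{i+j-n}$; deleting row $0$ and column $0$ and substituting $\eta_k = a_{1,k}$ produces precisely the displayed $n \times n$ Hankel matrix, which gives the asserted formula for $[\PP^n]$.

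For part (2), I would use $[\PP^{n-i}] = \eta'_{n-i}$ (again by \ref{cobordism_classes}) and $a_{1,i} = \eta_i$ from the first step, and substitute into the fundamental relation \eqref{relation_cobordism}, namely $\sum_{i=0}^n \eta_i \ncup \eta'_{n-i} = 0$ for $n > 0$; this reads exactly $\sum_{0 \leq i \leq n} a_{1,i} \ncup [\PP^{n-i}] = 0$.

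Essentially all the content is already in Proposition \ref{fdl_class_diagonal}, so there is no serious obstacle; the only care needed --- and the likeliest source of an error --- is matching the index shift $i+j-n$ and the global sign $(-1)^n$ with the sign conventions fixed in Lemma \ref{lm:strong_dual}, Lemma \ref{pre_comput_cobordism_c} and the explicit form of $M_n^{-1}$, so I would double-check part (1) on the cases $n=1,2$ (where it reads $[\PP^1] = -a_{1,1}$ and $[\PP^2] = a_{1,2}-a_{1,1}^2$) before writing out the general determinant manipulation.
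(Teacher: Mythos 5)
Your proposal is correct and follows essentially the same route as the paper: identify $\eta_i=a_{1,i}$ by comparing Proposition \ref{fdl_class_diagonal} with \eqref{fundamental_class_diag_pdl}, then read off part (1) from $[\PP^n]=\eta'_n$ and the determinant expression for the entries of $M_n^{-1}$, and part (2) from the fundamental relation \eqref{relation_cobordism}. Your index/sign bookkeeping (row $0$, column $n-i=0$, factor $(-1)^n$) and the sanity checks for $n=1,2$ are consistent with the paper's conventions.
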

The usual formulation of the relations given in (2) uses
 the series $p(x)=\sum_i [\PP^i].x^i$
  and $\omega(x)=\frac{\partial F}{\partial y}(x,0)$.
It reads $p(x)=\omega(x)^{-1}$.

\begin{rem}
An interesting problem is to extend this computation 
 to the case of an arbitrary projective bundle $\PP(E)$.
We hope the fundamental class $\eta_{\PP(E) \times \PP(E)}(\PP(E))$
as an explicit description in terms of the coefficients $a_{1,i}$
and the Chern classes of $E$ which would give an expression
of $[\PP(E)]$ as a determinant analog of the above.
This will give a counter-part to a classical formula of Quillen.
\end{rem}

\num \label{blow-up_fml} \textit{Blow-up formulas}.---

\begin{prop} \label{lm1:proj_bdl_fml}
Let $(X,Z)$ be a smooth closed pair and $B$ be the blow-up
of $X$ with center $Z$. Let $f:B \rightarrow X$ be the canonical
projection.

Then, $f_*f^*=1.$
\end{prop}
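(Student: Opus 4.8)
The plan is to reduce, by the projection formula, to the statement that $f$ has ``relative degree $1$'', and then to deduce that from the fact that $f$ is an isomorphism over $X-Z$. Since $f$ is projective of relative dimension $0$, the Gysin morphism $f^*\colon\M(X)\to\M(B)$ of definition \ref{df:Gysin} is defined (with no twist), so $f_*f^*$ is an endomorphism of $\M(X)$. First I would invoke the projection formula \ref{2ndproj_formula4general_Gysin}, $(1_{B*}\gcup f_*)\circ f^*=f^*\gcup 1_{X*}$, and postcompose both sides with the morphism $\M(B)\otimes\M(X)\to\M(X)$ obtained by applying the structural projection $\pi_{B*}\colon\M(B)\to\M(S)=\un$ to the first factor. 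Since $(\pi_{B*}\otimes 1)\circ\delta_{B*}=1_{\M(B)}$, the left hand side collapses to $f_*\circ f^*$, while the right hand side becomes $(\tau\otimes 1_{\M(X)})\circ\delta_{X*}$, where $\tau:=\pi_{B*}\circ f^*$ is a class in $H^{0,0}(X)$. Applying the same identity over $X$ gives $(\pi_{X*}\otimes 1_{\M(X)})\circ\delta_{X*}=1_{\M(X)}$, so it suffices to prove the equality $\tau=1$ in $H^{0,0}(X)$.

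To prove $\tau=1$ I would use that over $U=X-Z$ the morphism $f$ restricts to an isomorphism $h\colon B-P\xrightarrow{\ \sim\ }U$, where $P$ is the exceptional divisor, and the Gysin morphism of an isomorphism is inverse to the functorial pushforward; hence the restriction of $\tau$ to $U$ is $\pi_{(B-P)*}\circ h^*=1_U$. Therefore $\tau-1$ dies in $H^{0,0}(U)$, so by the localization sequence of the Gysin triangle of $(X,Z)$ together with the purity isomorphism \ref{prop:purity} it is the image of a class in $H^{0,0}_Z(X)\simeq H^{-2n,-n}(Z)$, where $n=\operatorname{codim}(Z,X)$. To see this class vanishes I would analyse the cartesian square formed by the exceptional divisor $P=\PP(N_ZX)$, which maps to $B$ by a closed immersion $k$ and to $Z$ by the projective bundle $p$, lying over $i\colon Z\to X$ and $f\colon B\to X$ (all four schemes being smooth, so the square is genuinely cartesian). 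Applying the excess--intersection formula \ref{projection4general_Gysin}(ii) to $f^*\circ i_*$, with excess bundle the universal quotient bundle $Q$ on $P$, and carrying $\pi_{B*}$ through the resulting expression, the required vanishing reduces to the identity $p_*\bigl(c_{n-1}(Q)\bigr)=1$ in $H^{0,0}(Z)$. This last identity I would establish by the splitting principle (as in lemma \ref{additivity_Chern} and remark \ref{Chern&nilpotence}), reducing to the case in which $N_ZX$ contains a trivial direct summand $\mathcal O$: then $\PP(N_ZX)$ is a projective completion, $c_{n-1}(Q)$ is the Thom class of the complementary bundle by remark \ref{Thom&quotient}, and example \ref{ex:Gysin&Thom} together with functoriality of Gysin morphisms \ref{functoriality4general_Gysin} gives $p_*(c_{n-1}(Q))=1$.

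The projection-formula reduction and the analysis over $U$ are routine. The main obstacle is the last step: turning ``$f$ is birational'' into the vanishing of the class in $H^{-2n,-n}(Z)$, equivalently the identity $p_*c_{n-1}(Q)=1$ for the projectivized normal cone. This is precisely the place where the choice of orientation (the underlying formal group law) could a priori enter the computation, and it must be controlled by the splitting principle, reducing to bundles with a trivial summand where it becomes the vanishing of a residue along a section.
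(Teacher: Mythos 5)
Your first paragraph is sound: the projection formula \ref{2ndproj_formula4general_Gysin} does reduce the proposition to the single equality $\tau:=\pi_{B*}\circ f^*=1$ in $H^{0,0}(X)$, i.e.\ to $f_*(1_B)=1_X$ in cohomological notation, and the restriction $\tau|_U=1$ over $U=X-Z$ is unproblematic. The gap is in how you propose to finish. From $\tau|_U=1$ the localization sequence only tells you that $\tau-1$ lies in the \emph{image} of $H^{0,0}_Z(X)\simeq H^{-2n,-n}(Z)$, the comparison map being, after purity, the cohomological pushforward $i_*$; it does not hand you a specific lift, and $i_*$ is not injective, so ``showing this class vanishes'' is not yet a well-posed goal. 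What your excess-intersection computation actually yields is the restriction $\tau|_Z=p_*\bigl(c_{n-1}(\Q)\bigr)$ (apply $\pi_{B*}$ to $f^*i_*=(k_*\gcup c_{n-1}(\Q))\circ p^*$), and knowing $\tau|_U=1$ together with $\tau|_Z=1$ does not force $\tau=1$: if $\tau-1=i_*(\alpha)$, then by the self-intersection formula \ref{self_intersection} its restriction to $Z$ is $\alpha\ncup c_n(N_ZX)$, and $c_n(N_ZX)$ is nilpotent (remark \ref{Chern&nilpotence}), so the two restrictions cannot detect $\alpha$. Moreover the identity you reduce to, $p_*(c_{n-1}(\Q))=1$ in $H^{0,0}(Z)$, is nowhere established in this generality: lemma \ref{lm2:proj_bdl_fml} only proves this class is a \emph{unit}, and the remark following it stresses that equality with $1$ is clear for the additive formal group law --- for a non-trivial $N_ZX$ and a general $F$ this is exactly where correction terms coming from the formal group law appear. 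Your proposed derivation by ``splitting principle, reducing to a trivial direct summand'' also does not go through: the splitting principle gives, after an injective pullback, a filtration with line-bundle quotients, not a trivial summand, and twisting $N_ZX$ by a line bundle changes $\Q$ and hence $c_{n-1}(\Q)$.

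By contrast, the paper's proof never computes anything on the exceptional divisor and so never meets these formal-group-law quantities. It deforms the situation: with $B'=B_{\{0\}\times Z}(\AA^1_X)$ and $f':B'\to\AA^1_X$, transversal base change \ref{projection4general_Gysin}(i) along the unit section $s_1$ of $\AA^1_X$ gives $f'^*s_{1*}=\bar\sigma_{1*}$, hence $f'^*=\bar\sigma_{1*}\pi_*$ and $f'_*f'^*=1$ by (Htp); one then transports this identity to the fibre over $0$, where the strict transform is $B$, by a second base change along $s_0$ and (Htp) again. If you want to salvage your route, you must either exhibit an explicit lift of $\tau-1$ in $H^{-2n,-n}(Z)$ and prove it vanishes, or argue globally over $\AA^1_X$ as the paper does; restricting to $Z$ and to $U$ separately cannot conclude.
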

\begin{proof}
Let $s_1$ (resp. $s_0$, $\pi$) be the unit section 
(resp. zero section, canonical projection) 
of $\AA^1_X/X$. 
Let $B'$ be the blow-up of $\AA^1_X$ with center $0 \times Z$.
We consider the following cartesian square~:
$$
\xymatrix{
X\ar^{\bar \sigma_1}[r]\ar@{=}[d] & B'\ar^{f'}[d] \\
X\ar^{s_1}[r] & {}\AA^1_X.
}
$$
From the projection formula \ref{projection4general_Gysin}(i),
we obtain $f'^*s_{1*}=\bar \sigma_{1*}$ which implies
$f'^*=\bar \sigma_{1*} \pi_*$ by the axiom (Htp).

Thus we deduce easily~: $f'_*f'^*=f'_*\bar \sigma_{1*} \pi_*=s_{1*} \pi_*=1$.

Finally we consider the cartesian diagram~:
$$
\xymatrix@=10pt{
B\ar^f[r]\ar_\nu[d] & X\ar^{s_0}[d] \\
B'\ar^{f'}[r] & {}\AA^1_X
}
$$
Using once again the projection formula \emph{loc. cit.} we get~:
$f'_*f'^*s_{0*}=s_{0*}f_*f^*$. This concludes using axiom (Htp).
\end{proof}

\begin{lm} \label{lm2:proj_bdl_fml}
Let $P/X$ be a projective bundle over a smooth scheme $X$
 of pure dimension $d$.
Let $\Q$ be the canonical quotient bundle of $P/X$
 and put $e=c_d(\Q)$ seen as a morphism $\M(P) \rightarrow \un(d)[2d]$.

Then, $(p_* \gcup e) \circ p^*$ is an isomorphism.
\end{lm}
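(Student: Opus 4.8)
The plan is to prove the stronger statement that $(p_*\gcup e)\circ p^*$ is the \emph{identity} of $\M(X)$. First I would reduce to the trivial projective bundle: the Gysin morphism $p^*$, the map $p_*=\M(p)$, the class $e=c_d(\Q)$ (functoriality of Chern classes) and the diagonal maps entering the definition of $\gcup$ are all compatible with flat base change; since a projective bundle is Zariski-locally trivial, it is enough to treat $P=\PP^d_X$, and then, using axiom (Kun) together with Definition \ref{gysin4projection} (which defines $p_X^*$ as $1\otimes p^*$), one reduces further to $X=S$, $P=\PP^d_S$.

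For $P=\PP^d_X$ the point is the identification $\PP^d_X=\PP(\AA^d_X\oplus 1)$. By Remark \ref{Thom&quotient} the class $e=c_d(\Q)$ is the Thom class $t(\AA^d_X)$ of the trivial rank $d$ bundle, and by paragraph \ref{fund_class} this Thom class is the fundamental class $\eta_{\PP^d_X}(X)$ of the image of the canonical section $s\colon X\to\PP^d_X$. As $s$ is a section of $p$, it admits $p$ as a retraction, so formula \eqref{Gysin&Thom} gives $s^*=p_*\gcup\eta_{\PP^d_X}(X)=p_*\gcup e$ as morphisms $\M(\PP^d_X)\to\M(X)(d)[2d]$. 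Hence $(p_*\gcup e)\circ p^*=s^*\circ p^*$, which is the identity of $\M(X)$ by Lemma \ref{lm:Gysin3}; in particular it is an isomorphism.

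Alternatively, and without passing to the trivial bundle, one may argue globally: post-composing the projection formula of Corollary \ref{2ndproj_formula4general_Gysin} (applied to $f=p$) with $e\otimes 1_{\M(X)}$ rewrites $(p_*\gcup e)\circ p^*$ as $(p_!(e)\otimes 1_{\M(X)})\circ\delta_*$, where $p_!(e)=e\circ p^*\in H^{0,0}(X)$; one then checks $p_!(e)=1$ by exactly the computation above (now Zariski-locally on $X$), so that the composite is $\M$ applied to $X\xrightarrow{\delta}X\times X\xrightarrow{\mathrm{pr}}X$, i.e. the identity.

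I expect the only real point of care to be this first reduction: one must check that the Gysin morphism $p^*$ --- which is defined only through a factorization of $p$ as a closed immersion into a \emph{trivial} projective bundle followed by a projection --- together with $p_*$, $e$ and the diagonal, restricts correctly to an open cover and that the resulting local identities glue. Everything after that is a direct application of Remark \ref{Thom&quotient}, paragraph \ref{fund_class}, and Lemma \ref{lm:Gysin3}.
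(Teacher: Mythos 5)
Your local computation is exactly the one the paper uses: for the trivialized bundle $\PP^d_X=\PP(\AA^d_X\oplus 1)$ one has $e=c_d(\Q)=t(\AA^d_X)=\eta_{\PP^d_X}(X)=s_*(1)$ by Remark \ref{Thom&quotient} and \ref{fund_class}/\ref{ex:Gysin&Thom}, so $p_*\gcup e=s^*$ and $s^*p^*=1$ by Lemma \ref{lm:Gysin3}; and the reduction of the composite to cup-product with the class $p_*(e)\in H^{0,0}(X)$ via the projection formula \ref{2ndproj_formula4general_Gysin} is also the paper's first step. The gap is the local-to-global step you rely on in both versions of your argument. From the compatibility with base change you only learn that the restriction of $p_*(e)$ to each member of a trivializing Zariski cover equals $1$; you then conclude $p_*(e)=1$ globally (equivalently, that $(p_*\gcup e)\circ p^*$ is the identity of $\M(X)$). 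In this axiomatic setting that inference is not available: nothing forces the restriction map $H^{0,0}(X)\to\bigoplus_i H^{0,0}(U_i)$ to be injective (the category is only assumed to satisfy (Loc), (Exc), etc.; the Brown--Gersten/Mayer--Vietoris property (BG) is explicitly not assumed, and even with it the Mayer--Vietoris sequence has an $H^{-1,0}$ term in front). The same objection applies to "gluing" the identity of endomorphisms of $\M(X)$ from local identities: morphisms $\M(X)\to\M(X)$ do not form a sheaf, and a five-lemma argument would at best give "isomorphism", not "identity", and would additionally require compatibility with the boundary maps, which you do not address.

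What the local data does give is that $p_*(e)-1$ vanishes on each piece of a finite cover, hence is nilpotent by the disjoint-support argument used in Proposition \ref{nilpotence&FGL}(1); therefore $p_*(e)$ is a \emph{unit} of $H^{**}(X)$ and cup-product with it is invertible, which is precisely the statement of the lemma and is how the paper argues. Note that the paper is deliberately careful on this point: the remark following the lemma (and the footnote in the introduction) asserts that the composite is the identity only when the formal group law is additive, so the "stronger statement" you claim is exactly what the paper avoids, and your proof of it does not go through. Replacing your final step "hence $p_*(e)=1$" by "hence $p_*(e)=1+\epsilon$ with $\epsilon$ nilpotent, so $p_*(e)$ is a unit and $1_{X*}\gcup p_*(e)$ is an isomorphism" repairs the argument and recovers the paper's proof.
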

\begin{proof}
Using the projection formula \ref{2ndproj_formula4general_Gysin},
 we have to prove that the cohomological class $p_*(e) \in H^{00}(X)$ is a unit.
By compatibility of this class with base change
 and invariance under isomorphisms of projective bundles,
  we reduce to the case of $P=\PP^d_S$. 
Let $s:S \rightarrow \PP^d_S$ be the canonical section.
Then, $e=s_*(1)$ (cf remark \ref{Thom&quotient} combined
with example \ref{ex:Gysin&Thom}). Thus, following lemma
\ref{lm:Gysin3}, $p_*(e)=1$.
\end{proof}

\begin{rem}
In the case of an additive formal group law, we can easily see that $p_*(e)=1$
 for any projective bundle $P/X$ which implies the composite 
 isomorphism of the lemma is just the identity.
\end{rem}

\num \label{notations_proj_bdl_fml}
Let $X$ be a smooth scheme, $Z$ be a smooth closed subscheme of $X$
of pure codimension $n$. 
Let $B$ be the blow-up of $X$ with center $Z$
and 
$P$ be the exceptional divisor. Consider the cartesian squares~:
$$
\xymatrix@R=14pt@C=24pt{
P\ar^k[r]\ar_p[d] & B\ar^f[d] & B-P\ar_/3pt/l[l]\ar^h[d]\\
Z\ar^i[r] & X & X-Z.\ar_/3pt/j[l]
}
$$
We let $\L$ (resp. $\Q=p^{-1}(N_ZX)/\L$) be the canonical line bundle
(resp. quotient bundle) on $P=\PP(N_ZX)$ and we put~: 
$e=c_{n-1}(\Q)$.
\begin{prop} \label{proj_bdl_fml1}
Using the notations above,
 the short sequence
$$
0 \rightarrow \M(P)
 \xrightarrow{
\text{\scriptsize $\begin{pmatrix} p_* \\ k_* \end{pmatrix}$}} \M(Z) \oplus \M(B)
  \xrightarrow{(-i_*,f_*)} \M(X) \rightarrow 0
$$
is split exact with splitting 
{\tiny $\begin{pmatrix} 0 \\ f^* \end{pmatrix}$}.

By abuse of notation,
 we denote by $\M(P/Z)$ the kernel\footnote{If we had a splitting
  $s:Z \rightarrow P$ of $p$, this will be the motive associated
   to the immersion $s$.}
of the split monomorphism $p_*$
 and let $\tilde k_*:\M(P/Z) \rightarrow \M(B)$
  be the morphism induced by $k_*$.
Then, we obtain an isomorphism
$$
\M(P/Z) \oplus \M(X) \xrightarrow{\left(\tilde k_*,f^*\right)} \M(B).
$$
\end{prop}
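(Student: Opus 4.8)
The strategy is to deduce the split exactness of the displayed sequence from the second assertion, the isomorphism $(\tilde k_*,f^*)\colon\M(P/Z)\oplus\M(X)\to\M(B)$, and to obtain the latter by comparing the Gysin triangles of $(B,P)$ and of $(X,Z)$ along $f$. First I would dispose of the formal points. The given square commutes, so by covariant functoriality $f_*k_*=i_*p_*$, i.e.\ $(-i_*,f_*)\circ\binom{p_*}{k_*}=0$; by Proposition~\ref{lm1:proj_bdl_fml}, $f_*f^*=1_{\M(X)}$, so $\binom{0}{f^*}$ is a section of $(-i_*,f_*)$ and the latter is a split epimorphism. Applying the projective bundle theorem~\ref{th:projbdl} to $p\colon P=\PP(N_ZX)\to Z$ (a $\PP^{n-1}$-bundle), $p_*$ is the degree-zero component of the isomorphism $\epsilon_P$, hence a split epimorphism, so $\M(P)=\M(Z)\oplus\M(P/Z)$ with $\M(P/Z)=\ker p_*$; in particular $f_*\tilde k_*=f_*k_*|_{\M(P/Z)}=i_*p_*|_{\ker p_*}=0$. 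A short computation with $2\times 2$ block matrices in the additive category $\T$ then shows that, granting that $(\tilde k_*,f^*)$ is an isomorphism, the displayed sequence is split exact with the indicated splitting $\binom{0}{f^*}$. So the content is the isomorphism $(\tilde k_*,f^*)$; and since $f_*$ is split epi with section $f^*$ we have $\M(B)=\ker(f_*)\oplus f^*\M(X)$ and $\tilde k_*$ maps $\M(P/Z)$ into $\ker(f_*)$, so it suffices to prove that $\tilde k_*\colon\M(P/Z)\to\ker(f_*)$ is an isomorphism.

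Next I would exploit that $f$ is the blow-up along the smooth centre $Z$: it restricts to an isomorphism $h\colon B-P\to X-Z$ and the square $(B,P)\to(X,Z)$ is cartesian with $f^{-1}(Z)=P$. Thus diagram~\eqref{refind_Gysin} for the morphism of closed pairs $(f,p)\colon(B,P)\to(X,Z)$ is a morphism of distinguished triangles whose left vertical $h_*$ is an isomorphism. Passing to cones, since $\mathrm{Cone}(h_*)=0$ one obtains an isomorphism $\mathrm{Cone}(f_*)\xrightarrow{\ \sim\ }\mathrm{Cone}\big((f,p)_!\big)$, induced by $k^*$; equivalently, $k^*$ restricts to an isomorphism $\ker(f_*)\xrightarrow{\ \sim\ }\ker\big((f,p)_!\big)$ (both being direct summands, the second because $(f,p)_!$ will be a split epimorphism). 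To make the target explicit, I would compute $(f,p)_!$ by the excess intersection formula: the square is cartesian with excess $n-1$ and excess intersection bundle $\xi=p^{-1}(N_ZX)/N_PB$; since the normal bundle of the exceptional divisor is the canonical line bundle, $N_PB=\L$, so $\xi=\Q$ and Proposition~\ref{excess} gives $(f,p)_!=\big(p_*\gcup c_{n-1}(\Q)\big)(1)[2]=(p_*\gcup e)(1)[2]$ with $e=c_{n-1}(\Q)$ as in~\ref{notations_proj_bdl_fml}. By Lemma~\ref{lm2:proj_bdl_fml} the composite $(p_*\gcup e)\circ p^*$ is an isomorphism, so $p_*\gcup e$ is a split epimorphism and $\ker\big((f,p)_!\big)=\ker(p_*\gcup e)(1)[2]$.

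It then remains to show that the composite
$$\M(P/Z)\xrightarrow{\ \tilde k_*\ }\ker(f_*)\xrightarrow{\ k^*\ }\ker(p_*\gcup e)(1)[2]$$
is an isomorphism. By the self-intersection formula (Corollary~\ref{self_intersection}) applied to the divisor $P\subset B$, $k^*k_*=1_P\gcup c_1(\L)$, so this composite is the restriction to $\M(P/Z)$, corestricted to $\ker(p_*\gcup e)(1)[2]$, of the cup product with $c_1(\L)$ on $\M(P)$. Checking that it is invertible is a linear-algebra computation carried out through the projective bundle isomorphism $\epsilon_P$ and the defining Chern relation $\sum_{i=0}^{n}p^*\big(c_i(N_ZX)\big)\ncup\big(-c_1(\L)\big)^{n-i}=0$ for $P=\PP(N_ZX)$; this verification, together with the precise identification $N_PB=\L$ and the bookkeeping of signs hidden in the purity isomorphisms of Proposition~\ref{prop:purity}, is the main technical point. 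Once $\tilde k_*$ is known to be an isomorphism onto $\ker(f_*)$, the isomorphism $(\tilde k_*,f^*)\colon\M(P/Z)\oplus\M(X)\to\M(B)$ follows at once, and then, by the block-matrix computation indicated in the first paragraph, so does the split exactness of the short sequence.
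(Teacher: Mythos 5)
Your strategy is sound and runs closely parallel to the paper's, with one genuine difference of routing. The paper compares the Gysin triangle of $(X,Z)$ (augmented by the summand $\M(P/Z)$) with that of $(B,P)$ through the Gysin \emph{pullbacks} $h^*$, $(\tilde k_*,f^*)$, $(k^*\tilde k_*,p^*)$ -- which costs the commutativity checks \ref{projection4general_Gysin}(i), \ref{functoriality4general_Gysin} and \ref{compatibility_Residues/Gysin} -- and then reduces everything to the invertibility of $(k^*\tilde k_*,p^*)$. You instead use the pushforward ladder \eqref{refind_Gysin} for $(f,p)$, whose commutativity comes for free, identify $(f,p)_!=(p_*\gcup e)(1)[2]$ by excess intersection \ref{excess} (with $\xi=p^{-1}(N_ZX)/\L=\Q$), split it using \ref{lm2:proj_bdl_fml}, and reduce to the invertibility of $1_{P*}\gcup c_1(\L)$ from $\ker p_*$ to $\ker(p_*\gcup e)(1)[2]$; by self-intersection this is the same computation as the paper's, with the $p^*$-column already split off. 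This reduction is correct, but the phrase ``passing to cones, induced by $k^*$'' should be replaced by the argument it stands for: apply $\Hom_\T(T,-)$ to the two triangles, use that $h_*$ is invertible and that $f_*$ and $(f,p)_!$ are split epimorphisms, and do the five-lemma chase; Yoneda then shows that $k^*$ restricts to an isomorphism between the kernel summands.

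The genuine shortfall is the last step, which you only assert. Proving that $1_{P*}\gcup c_1(\L)$ is invertible between those two summands is exactly where the content lies, and doing it ``globally'' from the relation $\sum_{i}p^*(c_i(N_ZX))\ncup(-c_1(\L))^{n-i}=0$, as your sketch suggests, is not a formality: through $\epsilon_P$ the map is a shift matrix corrected by terms involving $c_i(N_ZX)$, $i\geq 1$, and likewise $e=c_{n-1}(\Q)$ is not simply $\pm c_1(\L)^{n-1}$, so invertibility of the resulting matrix is not evident without further input (for instance nilpotence of the $c_i(N_ZX)$, which is not assumed here). The missing ingredient is the paper's localization: the assertion is Zariski-local in $X$ (by the usual (Loc)/(Exc) patching as in the proofs of \ref{th:projbdl} and \ref{prop:purity}, the map being natural with respect to restriction to open subschemes), so one may assume $N_ZX$ trivial; then $c_1(\L)^n=0$ and $e=(-1)^{n-1}c_1(\L)^{n-1}$, and through \ref{th:projbdl} your map becomes the shift
$$
\bigoplus_{i=1}^{n-1}\M(Z)(i)[2i]\longrightarrow\bigoplus_{j=0}^{n-2}\M(Z)(j+1)[2j+2]
$$
with entries $\pm 1$, hence an isomorphism -- which is precisely the matrix the paper writes down. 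With that reduction added (and the small cone argument made explicit), your proof closes.
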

\begin{proof}
The previous short sequence is obviously a complex.
The fact {\tiny $\begin{pmatrix} 0 \\ f^* \end{pmatrix}$} is a splitting
 is proposition \ref{lm1:proj_bdl_fml}.
 
We directly prove the last assertion of the proposition
 which then concludes.
Consider the following diagram~:
$$
\xymatrix@C=39pt@R=35pt{
\M(X-Z)
 \ar^-{\text{\scriptsize $\begin{pmatrix} 0 \\ j_* \end{pmatrix}$}}[r]
 \ar_{h^*}[d]
 \ar@{}|{(1)}[rd]
 & *{{\!\begin{array}{c}\M(P/Z) \\ \oplus \\ \M(X)\end{array}}\!}
 \ar[r]^-{\text{\scriptsize $\begin{pmatrix} 1\!\!\!\! & 0 \\ 0\!\!\!\! & i^* \end{pmatrix}$}}
 \ar^-{(\tilde k_*,f^*)}[d]
 \ar@{}|{(2)}[rd]
 & {\!\!\begin{array}{c}\M(P/Z) \\ \oplus \\ \M(Z)(n)[2n]\end{array}\!\!}
 \ar[r]^-{(0,\partial_{X,Z})}\ar^-{(k^*\tilde k_*,p^*)}[d]
 \ar@{}|{(3)}[rd]
 & M(X-Z)[1]\ar^-{h^*}[d] \\
\M(B-P)\ar_-{l_*}[r]
 & \M(B)\ar_-{k^*}[r]
 & \M(P)(1)[2]\ar_-{\partial_{B,P}}[r]
 & \M(B-P)[1]
}
$$
The two horizontal lines are distinguished triangles.
It is commutative~: 
for square $(1)$, 
 use the projection formula \ref{projection4general_Gysin}(i),
for square $(2)$, the functoriality of the Gysin morphism
\ref{functoriality4general_Gysin}, for square $(3)$,
the compatibility of residues and Gysin morphism \ref{compatibility_Residues/Gysin}
 and the defining property of the residue $\partial_{B,P}$. \\
As $h$ is an isomorphism,
 we are reduced to prove $(k^*\tilde k_*,p^*)$ is an isomorphism.
 
The normal bundle of $k:P \rightarrow B$ is the canonical line bundle $\L$.
Thus, from the self-intersection formula \ref{self_intersection},
 $k^* k_*=1_{P*} \gcup c$ with $c=c_1(\L)$.
The remaining assertion is local in $X$ so that we can assume
that $N_ZX$ is trivializable.
Finally, we compute easily the matrix of 
$$
(k^*k_*,p^*):\oplus_{i=0}^{n-1} M(Z)(i)[2i] \oplus M(Z)(n)[2n]
 \rightarrow \oplus_{i=0}^{n-1} M(Z)(i+1)[2i+1]
$$
obtained through the projective bundle isomorphism \ref{th:projbdl}~:
$$
\left(
\raisebox{0.5\depth}{
\xymatrix@=0.2ex{
0\ar@{.}[ddddd]\ar@{-}[rrrrrrddddd] & 1\ar@{-}[rrrrrdddd] & 0\ar@{.}[rrrr]\ar@{-}[rrrrddd] &&&& 0\ar@{.}[ddd] & [\PP^n] \\
&&&&&&& [\PP^{n-1}]\ar@{.}[dddd] \\
&&&&&&& \\
&&&&&& 0 & \\
&&&&&& 1 & \\
0\ar@{.}[rrrrrr] &&&&&& 0 & 1
}
}
\right).$$
As the matrix of $(k^*\tilde k_*,p^*)$ is obtained from the above one removing
the first column, it is obviously invertible.
\end{proof}

\begin{prop} \label{proj_bdl_fml2}
Consider the notations \ref{notations_proj_bdl_fml}.
The short sequence
$$
0 \rightarrow \M(B)
 \xrightarrow{
\text{\scriptsize $\begin{pmatrix} k^* \\ f_* \end{pmatrix}$}}
 \M(P)(1)[2] \oplus \M(X)
  \xrightarrow{(p_* \gcup e,-i^*)} \M(Z)(n)[2n] \rightarrow 0
$$
is split exact with pseudo-splitting
 {\tiny $\begin{pmatrix} p^* \\ 0 \end{pmatrix}$}.

Let $C$ be the cokernel of the split mono
 $p^*:\M(Z)(n-1)[2n-2] \rightarrow \M(P)$ and
  $\tilde k^*:\M(B) \rightarrow C(1)[2]$ the morphism induced by $k^*$.
Then the following morphism is an isomorphism~:
$$
\M(B) \xrightarrow{
 \text{\scriptsize $\begin{pmatrix} \tilde k^* \\ f_* \end{pmatrix}$}}
  C(1)[2] \oplus \M(X).
$$
\end{prop}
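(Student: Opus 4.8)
The plan is to mimic the proof of Proposition~\ref{proj_bdl_fml1}: I will exhibit $\binom{\tilde k^*}{f_*}$ as the middle vertical map in a morphism of distinguished triangles comparing the Gysin triangle of $(B,P)$ with a twisted direct summand of the Gysin triangle of $(X,Z)$, arrange that the two outer vertical maps are isomorphisms, and conclude by the five lemma in $\T$; the split exactness of the short sequence will then follow by bookkeeping.

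\textbf{Preliminary identities.} First I record two facts. \emph{(i) The sequence is a complex.} Applying the excess intersection base change formula (Proposition~\ref{projection4general_Gysin}(ii), equivalently \ref{excess}) to the commutative square built from $i$, $k$, $p$ and $f$ --- for which $Z\times_X B=P=\PP(N_ZX)$ is smooth, $N_PB=\L$, and hence the excess bundle over $P$ is $\Q=p^{-1}(N_ZX)/\L$, of rank $n-1=e$ --- yields $i^*f_*=(p_*\gcup c_{n-1}(\Q))\circ k^*=(p_*\gcup e)\circ k^*$, so that $(p_*\gcup e,-i^*)\circ\binom{k^*}{f_*}=0$. \emph{(ii) The pseudo-splitting.} Lemma~\ref{lm2:proj_bdl_fml}, applied to the $\PP^{n-1}$-bundle $P=\PP(N_ZX)$ over $Z$ with quotient bundle $\Q$, shows that $u:=(p_*\gcup e)\circ p^*$ is an automorphism; hence $(p_*\gcup e,-i^*)\circ\binom{p^*}{0}=u$ is invertible, i.e. $\binom{p^*}{0}$ is a pseudo-splitting. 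In particular $p^*\colon\M(Z)(n-1)[2n-2]\to\M(P)$ is a split monomorphism, so $C$ is a well-defined direct summand of $\M(P)$ and $\M(P)(1)[2]\cong\M(Z)(n)[2n]\oplus C(1)[2]$. As the footnote to the statement records, $u$ is the identity precisely when $F$ is additive, which is why ``pseudo'' cannot be dropped.

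\textbf{The comparison.} I would then build a morphism of distinguished triangles whose bottom row is the direct sum of the trivial triangle on $C(1)[2]$ with the Gysin triangle of $(X,Z)$,
$$\M(X-Z)\longrightarrow C(1)[2]\oplus\M(X)\longrightarrow C(1)[2]\oplus\M(Z)(n)[2n]\longrightarrow\M(X-Z)[1],$$
whose top row is the Gysin triangle of $(B,P)$, whose first vertical map is the isomorphism $h_*$, whose second vertical map is $\binom{\tilde k^*}{f_*}$, and whose third vertical map $\psi\colon\M(P)(1)[2]\to C(1)[2]\oplus\M(Z)(n)[2n]$ is, up to the automorphism $u$, the splitting isomorphism of (ii). The leftmost square commutes because $f_*l_*=j_*h_*$ and $\tilde k^*l_*=0$; the middle square commutes by the definition of $\tilde k^*$ together with identity (i); the rightmost square --- which is where the automorphism $u$ must be absorbed into the choice of $\psi$ --- commutes by the compatibility of residues with Gysin morphisms, exactly as for square $(3)$ in the proof of Proposition~\ref{proj_bdl_fml1} (Proposition~\ref{compatibility_Residues/Gysin} and the naturality of $\partial$). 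Since $h_*$ and $\psi$ are isomorphisms, the five lemma gives that $\binom{\tilde k^*}{f_*}$ is an isomorphism, which is the last assertion of the proposition.

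\textbf{Conclusion; the main obstacle.} Finally, combining $\M(B)\cong C(1)[2]\oplus\M(X)$ with $\M(P)(1)[2]\cong\M(Z)(n)[2n]\oplus C(1)[2]$ identifies the middle term of the short sequence with $\M(Z)(n)[2n]\oplus\M(B)$; under this identification, and using identity (i), the maps $\binom{k^*}{f_*}$ and $(p_*\gcup e,-i^*)$ become the canonical inclusion and projection up to the automorphism $u$, which gives the asserted split exactness together with the pseudo-splitting $\binom{p^*}{0}$. The main difficulty --- as is typical for blow-up formulas --- is the rightmost square of the comparison: distilling the precise compatibility (including signs, Tate twists, and the automorphism $u=(p_*\gcup e)\circ p^*$) between $\partial_{B,P}$, $\partial_{X,Z}$, $h_*$ and $p_*\gcup e$ from Proposition~\ref{compatibility_Residues/Gysin} and the naturality of the residue. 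A secondary, purely clerical nuisance is carrying $u$ consistently through the reduction of the first assertion; it cannot be replaced by the identity unless the formal group law $F$ is additive.
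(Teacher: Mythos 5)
Your strategy is the paper's own proof: the complex identity $i^*f_*=(p_*\gcup e)\circ k^*$ via the excess formula, the pseudo-splitting via Lemma \ref{lm2:proj_bdl_fml}, and the comparison of the Gysin triangle of $(B,P)$ with the direct sum of the Gysin triangle of $(X,Z)$ and the trivial triangle on $C(1)[2]$, concluded by the five lemma; the final bookkeeping is also the same. One correction, though, to your justification of the rightmost square, which you yourself single out as the main difficulty. The identity needed there is $h_*\circ\partial_{B,P}=\partial_{X,Z}\circ(p_*\gcup e)$, and this is \emph{not} an instance of Proposition \ref{compatibility_Residues/Gysin}: that proposition relates residues to Gysin pullbacks ($\partial_{Y,T}\,g^*=h^*\,\partial_{X,Z}$) and is what serves for square $(3)$ in Proposition \ref{proj_bdl_fml1}, where the vertical maps are $p^*$ and $h^*$; here the vertical maps are of pushforward type, and the correct source is the excess intersection formula read on the residue square of diagram \eqref{refind_Gysin} for the cartesian morphism $(f,p):(B,P)\rightarrow(X,Z)$, i.e.\ Proposition \ref{excess} giving $(f,p)_!=p_*\gcup c_{n-1}(\Q)$ together with the commutativity of \eqref{refind_Gysin} (formula (6) of the introduction). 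Relatedly, no absorption of the automorphism $u=(p_*\gcup e)\circ p^*$ into the third vertical map is required: taking it to be $\begin{pmatrix}\pi\\ p_*\gcup e\end{pmatrix}$, with $\pi:\M(P)(1)[2]\rightarrow C(1)[2]$ the canonical projection, the square commutes on the nose by the identity above, and this map is an isomorphism because $\M(P)(1)[2]$ decomposes as $\mathrm{Im}(p^*)\oplus\mathrm{Ker}(p_*\gcup e)$; the automorphism $u$ only intervenes in verifying that decomposition, not in the commutativity. With this adjustment your argument coincides with the one in the text.
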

\begin{rem}
This second blow-up formula is a generalization of \cite[6.7(a)]{Ful}.
In case $X$ and $Z$ are projective smooth,
 it is simply the dual statement of the previous proposition
  using \ref{Gysin=transpose}.
More precisely, from \ref{proj_bdl_fml1} (resp. \ref{proj_bdl_fml2})
the morphism 
$$
\text{\scriptsize $\begin{pmatrix} k_* & p_* \\ f^* & 0 \end{pmatrix}$}
\text{ (resp. \scriptsize $\begin{pmatrix} k^* & f_* \\ p^* & 0 \end{pmatrix}$)}
$$
is an isomorphism. These two matrices are dual.
\end{rem}
\begin{proof}
The above sequence is a complex from the excess intersection formula
\ref{excess} applied to the morphism $(f,p)$. The pseudo-splitting of this 
sequence is exactly lemma \ref{lm2:proj_bdl_fml}.
We thus are reduced to the last assertion. \\
Let $\pi:\M(P)(1)[2] \rightarrow C(1)[2]$ be the canonical projection.
Consider the following diagram~:
$$
\xymatrix@C=37pt@R=30pt{
\M(B-P)\ar^{l_*}[r] \ar@{}|{(1)}[rd]
 \ar_{h^*}[d]
 & \M(B)\ar^{k^*}[r] \ar@{}|{(2)}[rd]
 \ar_-{\text{\scriptsize $\begin{pmatrix} \tilde k^* \\ f_* \end{pmatrix}$}}[d]
 & \M(P)(1)[2]\ar^{\partial_{B,P}}[r] \ar@{}|{(3)}[rd]
 \ar^-{\text{\scriptsize $\begin{pmatrix} \pi \\ p_* \gcup e \end{pmatrix}$}}[d]
 & \M(B-P)[1]\ar^{h^*}[d] \\
\M(X-Z)
 \ar_-{\text{\scriptsize $\begin{pmatrix} 0 \\ j_* \end{pmatrix}$}}[r]
 & {\!\begin{array}{c}C(1)[2] \\ \oplus \\ \M(X)\end{array}\!}
 \ar_-{\text{\scriptsize $\begin{pmatrix} 1\!\!\!\! & 0 \\ 0\!\!\!\! & i^* \end{pmatrix}$}}[r]
 & {\!\!\begin{array}{c}C(1)[2] \\ \oplus \\ \M(Z)(n)[2n]\end{array}\!\!}
 \ar_-{(0,\partial_{X,Z})}[r]
 & M(X-Z)[1]
}
$$
The horizontal lines are distinguished triangles. The diagram is commutative~:
$(1)$ follows from definitions,
 $(2)$ is a consequence of the excess intersection formula \ref{excess} for $(f,p)$
  and $(3)$ is a consequence of the same formula, considered for residues.
Finally, we are reduced to prove that 
$\text{\scriptsize $\begin{pmatrix} \pi \\ p_* \gcup e \end{pmatrix}$}$
is an isomorphism. But $\mathrm{coKer}(p^*) \simeq \mathrm{Ker(p_* \gcup e)}$
by a canonical isomorphism so that the latter morphism is simply the 
decomposition isomorphism associated to the split epimorphism $p_* \gcup e$.
\end{proof}

\bibliographystyle{amsalpha}
\bibliography{gysin}

%%--------------------Here the manuscript ends--------------------------------
\Addresses
\end{document}